\newcommand{\bbZ}{\mathbb{Z}}
\newcommand{\bbR}{\mathbb{R}}
\newcommand{\zinfty}{\bar{\zeta}_\infty}
\newcommand{\qinfty}{\bar{\xi}_\infty}
\newcommand\E[1]{\mathbb{E}\left[#1\right]}
\newcommand\Var[1]{\textrm{Var}\left[#1\right]}
\renewcommand\P[1]{\mathbb{P}\left(#1\right)}
\newcommand{\step}{\chi}
\newcommand{\calF}{\mathcal{F}}
\newcommand{\calT}{\mathcal{T}}
\newcommand{\barz}{\bar{z}}
\newcommand{\bara}{\bar{a}}
\newcommand{\calS}{\mathcal{S}}
\newcommand{\calD}{\mathcal{D}}
\newcommand{\calC}{\mathcal{C}}
\newcommand\revcolor[1]{{#1}}
\newcommand{\pfix}[1]{{#1}}
\newcommand\numberthis{\addtocounter{equation}{1}\tag{\theequation}}
\renewcommand\Var[1]{\textrm{Var}\left[#1\right]}
\newcommand{\calU}{\mathcal{U}}
\newcommand{\bbC}{\mathbb{C}}
\newcommand{\drift}{d}
\newcommand{\moment}{M}
\newcommand{\barq}{\bar{q}}
\newcommand{\bars}{\bar{s}}
\newcommand{\baru}{\bar{u}}
\newsavebox{\selvestebox}
\def\EMAIL#1{\href{mailto:#1}{#1}}
\def\URL#1{\href{#1}{#1}}         
\theoremstyle{TH}
\newtheorem{condition}{Condition}
\begin{document}


\RUNAUTHOR{Varma and Maguluri}

\RUNTITLE{Heavy Traffic in Matching Queues}

\TITLE{A Heavy Traffic Theory of Matching Queues}

\ARTICLEAUTHORS{%
\AUTHOR{Sushil Mahavir Varma}
\AFF{Industrial and Operations Engineering, University of Michigan Ann Arbor \EMAIL{sushilv@umich.edu}, \URL{https://sites.google.com/view/sushil-varma/home}}
\AUTHOR{Siva Theja Maguluri}
\AFF{Industrial and Systems Engineering, Georgia Institute of Technology, \EMAIL{siva.theja@gatech.edu}, \URL{https://sites.google.com/site/sivatheja/}}
} 

\ABSTRACT{%
Motivated by emerging applications in online matching platforms and marketplaces, we study a matching queue. Customers and servers that arrive in a matching queue depart as soon as they are matched. It is known that a matching queue without an external control is unstable and so we study its behavior for a general state-dependent control. While state-dependent control is an effective lever to regulate the throughput and delay, it often comes at a cost for matching platforms in practice. Optimizing this fundamental trade-off motivates the use of small amounts of control, so we study a matching queue in an asymptotic regime where the state-dependent control decreases to zero. Unlike the heavy traffic regime in classical queues, there are two different ways the control can be sent to zero, via a magnitude scaling parameter $\epsilon$ that goes to zero and a time scaling parameter $\tau$ that goes to infinity.

Depending on the cost of control, we show that the rates of $\epsilon$ and $\tau$ that optimize the trade-off between delay and cost of control could correspond to three different regimes. As we traverse these regimes, we observe a phase transition in the limiting distribution of the matching queue. We show that a low cost of control corresponds to the regime $\epsilon \tau \rightarrow 0$ and we call it the delay-driven regime. The limiting behavior in this regime is an asymmetrical Laplace distribution. On the other hand, $\epsilon \tau \rightarrow \infty$ is the cost-driven regime corresponding to a high cost of control where the limiting behavior is either a uniform or a truncated exponential distribution. We christen the in-between regime of $\epsilon\tau \rightarrow (0, \infty)$ the hybrid regime where the limiting behavior is a Gibbs distribution. These results are obtained by novel generalizations of the transform method, where each regime requires new ideas. The hybrid regime employs inverse Fourier transforms while the other two regimes engineer multiple complex exponential test functions.

}%


\KEYWORDS{Dynamic Pricing, Stationary Distribution, Transform Method, Matching Platforms}
\MSCCLASS{60K25 (primary), 60F05 (secondary)}
\ORMSCLASS{Primary: Queues – Limit Theorems; secondary: Probability – Markov processes}

\maketitle
\section{Introduction}
Since the seminal work of \citet{erlang1909theory} in the context of telecommunication systems more than a century ago, queueing theory has emerged as a well-established discipline that has had an impact on a large number of applications including wired and wireless networks, cloud computing, manufacturing systems, transportation systems, etc. The central building block of queueing theory is a single server queue, which has a fixed server, customers that wait until they are served in a first-come-first-served (FCFS) manner, and then depart immediately thereafter. In addition, there is a queue or a waiting space for the customers to wait, and a stochastic model of the arrivals and services. 
While the single server queue is well-understood when the arrivals and service are memory-less,
there is no closed-form expression for the stationary distribution of the queue length for general distributions.
Therefore, queueing systems are studied in various asymptotic regimes, including the heavy-traffic regime, where the arrival rate approaches the service rate. 

More precisely, suppose  $\epsilon$  denotes the difference between the service rate and arrival rate, in heavy traffic one studies the queue in the limit when $\epsilon \rightarrow 0$. When $\epsilon=0$, the queue becomes unstable (null-recurrent). However,  \citet{kingman} showed that the limiting distribution of the queue length multiplied by $\epsilon$ is exponential. Moreover, the mean of the exponential depends only on the variance of the inter-arrival and service distributions, but not on the whole distribution. Recent work by \citet{atilla, hurtado2020transform, Walton_SteinHT} has also characterized the rate of convergence to the exponential, thus enabling us to approximate the stationary queue length when $\epsilon$ is not small.

Recent developments in online platforms and matching markets such as ride-hailing, food delivery services, etc. have led to an interest in the study of matching queues, also referred to as two-sided queues. In this paper, we will use the two terms interchangeably. In a matching queue, \textit{both} servers and customers arrive, wait until they are matched, and then immediately depart the system. 
The behavior of matching queues is different from that of classical queues. In particular, a matching queue is never stable without external control. To see this, note that if the arrival rates on the two sides don't match, the system is clearly unstable. But when the rates match, it is analogous to a symmetric random walk on a line, which is null-recurrent as discussed by \citet{sushil_blockchain}. Therefore, matching queues have to be always studied under an external control that modulates the arrival rates in a \textit{state-dependent} manner with levers such as prices in online platforms. In contrast to a large amount of literature on classical queues, there is comparatively very little work on matching queues. 

The goal of this paper is to develop a heavy-traffic theory of matching queues, that will enable us to completely characterize the queueing behavior in an appropriately defined asymptotic regime. Analogous to classical queues, except in special cases, it is hard to obtain the exact stationary distribution of queue length (when it exists) in a matching queue. Further difficulties arise exclusively in the case of matching queues. We point out two of them here. First, as discussed in the previous paragraph, a matching queue is naturally unstable. Due to this difficulty, most of the previous work involving matching queues either characterizes the transient behavior as done by \citet{matchingqueues, dynamictypematchinghu}, or considers a simplified model as done by \citet{caldentey2009fcfs, adan2012exact}. We overcome this challenge by considering state-dependent arrivals which results in a stable system. Secondly, there is no natural notion of `heavy traffic' here. To quote from  \citet{matchingqueues}, ``As there are no processing resources, there is no obvious notion of heavy traffic''. We overcome this challenge as follows. Suppose that the uncontrolled arrivals have an equal rate on both sides. Now, we study the system in the regime when the state-dependent control goes to zero. As mentioned before, in the regime when the control is zero, the system is null-recurrent, and the regime is reminiscent of the heavy-traffic regime of a single-server queue. 

The practical motivation for such an asymptotic regime is the trade-off observed in matching platforms between the efficient system performance as a result of state-dependent control and the cost associated with implementing such control. We outline two examples here and discuss them in detail in Section \ref{sec: motivation}. Firstly, in the context of ride-hailing systems, e.g., see \citet{kim2017value,ridehailing_sigmetrics,sushil_selfish_arxiv,banerjeeridehailing}, pricing policies that are either static or a small perturbation of it are shown to be near-optimal as they optimize the trade-off between the profit obtained by the system operator (cost of control) and the delay experienced by the customers and servers (system performance). The second example is payment channel networks studied by \citet{sushil_blockchain, spider_nsdi}, wherein, agents are connected by payment channels with limited capacity and attempt to transfer funds among each other via these channels. To ensure the payment channels do not run out of capacity in the long run (system performance), the more expensive blockchain is employed once in a while to complete the transaction requests (cost of control). Thus, our analysis is of relevance to understanding the fundamental trade-off between the cost of control and system performance, which is ubiquitous in matching platforms.

In matching queues, there are two ways in which the state-dependent control can be sent to zero. The first is clearly to scale the magnitude of the control by $\epsilon$ which is sent to zero. The second is to apply the control only at larger and larger values of queues, which we control using a parameter $\tau$ that is sent to $\infty$. These parameters are more precisely defined in \eqref{eq: general_pricing_policy}. 
As discussed in Section~\ref{sec: motivation}, depending on the cost of control, the rates of $\epsilon$ and $\tau$ that optimize the trade-off between system performance and cost of control could correspond to three different limiting values of $\epsilon\tau$. 
The optimal regime with a low cost of control corresponds to $\epsilon\tau \rightarrow 0$ and we call it the delay-driven regime. On the other hand, $\epsilon \tau \rightarrow \infty$ is the optimal regime for a high cost of control and we call it the cost-driven regime. The in-between regime when $\epsilon\tau$ goes to a positive number is called the hybrid regime. The primary contribution of this paper is to demonstrate that there is a phase transition in the limiting behavior of the matching queue as we traverse from the delay-driven regime to the cost-driven regime, via the hybrid regime.



\subsection{Main Contributions}
The main contributions of this paper are the following.

\begin{itemize}
    \item To illustrate the phase-transition behavior, we first, consider the simplest matching queue with the simplest possible control, viz., the one with Bernoulli arrivals that is controlled with a two-price policy. Analogous to an $\pfix{\text{M/M/1}}$ (more precisely, a $\pfix{\text{Geo/Geo/1}}$) queue, one can obtain an exact stationary distribution here. By explicitly taking the asymptotic limit of this distribution, we show that the appropriately scaled queue length converges to a Laplace distribution in the delay-driven regime, a Uniform distribution in the cost-driven regime, and a hybrid of the above two in the hybrid regime. This is presented in Section \ref{sec: bernoulli}.
    \item 
    The motivation to study the asymptotic behavior is, of course, its utility when we can't find the exact stationary distribution. Thus, in Section \ref{sec: main_theorem}, we study a general matching queue with general arrival distributions and control policies resulting in a much richer limiting behavior. The stationary distribution of the imbalance generalizes from a Laplace to an asymmetric Laplace in the delay-driven regime, from a Uniform to either a Uniform, truncated exponential, or a Dirac measure in the cost-driven regime, and lastly, the distribution in the hybrid regime generalizes to a Gibbs distribution. These limiting distributions crucially depend on the (pre-limit) state-dependent control even though it vanishes in the heavy-traffic limit, highlighting that our results embody the first-order effect of the control on the stationary distribution.
    These results are summarized in Table \ref{tab: phase_transition}.
    \item We prove these results by developing generalizations of the transform method of \citet{hurtado2020transform} to handle state-dependent control. In particular, directly applying the transform method fails as one does not obtain a closed-form expression of the transform but an implicit equation instead, owing to the state-dependent control. Each of the three regimes requires non-trivial ideas to tackle the associated challenges which we outline below. 
    \begin{itemize}
        \item \textit{Hybrid Regime:} We solve the functional equation by taking its inverse Fourier transform\footnote{For any random variable with a ``nice enough'' PDF, the characteristic function is the Fourier transform of the PDF\pfix{.}}, resulting in a differential equation in the PDF of the limiting imbalance. One can then solve the differential equation to obtain the imbalance distribution. However, one cannot assume the existence of the PDF. 
        We tackle this issue by interpreting the imbalance distribution as a generalized function operating on Schwartz functions, motivated by the theory of distributions pioneered by  \citet{halperin1952introduction}. The spirit of the proof is then the same as when the PDF exists, albeit with additional technical overhead.
        \item \textit{Delay-Driven Regime:} The novelty in this regime is to circumvent the implicit equation by engineering \emph{multiple} complex exponential Lyapunov functions, together resulting in a closed-form expression of the characteristic function. The idea is to exploit symmetry in the underlying process. The first Lyapunov function shows that (appropriately scaled) absolute imbalance has an exponential distribution. The second Lyapunov function then separately establishes symmetry. Combining the two, we conclude the limiting distribution is Laplace.
        \item \textit{Cost-Driven Regime:} Similar to the delay-driven regime, we circumvent the implicit equation by engineering multiple Lyapunov functions---the novelty is in engineering a \emph{combination of real and complex} exponential Lyapunov functions to establish the characteristic function. Such an approach is contrary to the previous work that restricts to just complex or just real exponential Lyapunov functions to obtain the characteristic function and MGF respectively. We make connections to a $\text{G/G/1}$ queue with a finite waiting area (Section~\ref{sec: finite_buffer}) highlighting intuition for the Lyapunov functions used.
    \end{itemize}
    
    \item 
    The above results show that the heavy-traffic behavior in matching queues is much richer than that of a classical single-server queue. This is due to the state-dependent control in a matching queue, while classical heavy-traffic theory focuses on the case when the arrival rates in a single server queue are fixed. In Section \ref{sec: discussion} we show that even the classical single server queue exhibits a phase-transition behavior if the arrival rates of customers are modulated in a state-dependent manner. We obtain these results by using the generalizations of the transform method that we develop which shows the generality of the technique.
\end{itemize}
\begin{table}[bth!]
\TABLE{A Summary of Heavy-Traffic Phase Transitions in Matching Queues.
    \label{tab: phase_transition}}{
    \begin{tabular}{>{\centering\arraybackslash}p{90pt}| >{\centering\arraybackslash}p{105pt}|>{\centering\arraybackslash}p{100pt}|>{\centering\arraybackslash}p{95pt}}
    \hline
         &  Delay-Driven Regime &  Hybrid Regime &  Cost-Driven Regime \\
         &   $\epsilon \tau\rightarrow 0$ &   $\epsilon \tau \rightarrow (0,\infty)$ & $\epsilon \tau \rightarrow  \infty$ \\
         \hline
         & & & \\
         Bernoulli Arrivals, Two Price Policy (Proposition \ref{prop: bernoulli})& Laplace & Hybrid & Uniform \\ [-25pt]
     & \includegraphics[width=0.2\textwidth]{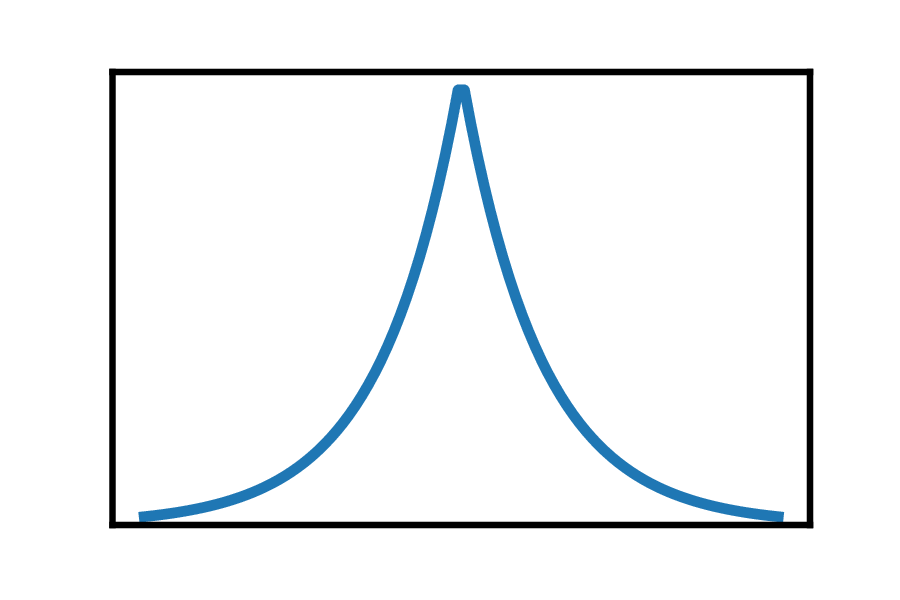} &\includegraphics[width=0.2\textwidth]{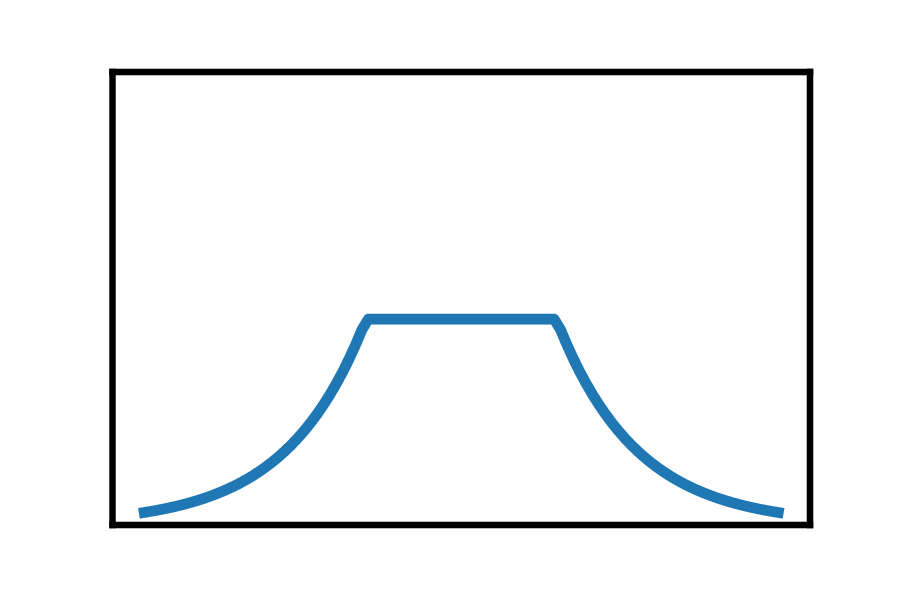} & \includegraphics[width=0.2\textwidth]{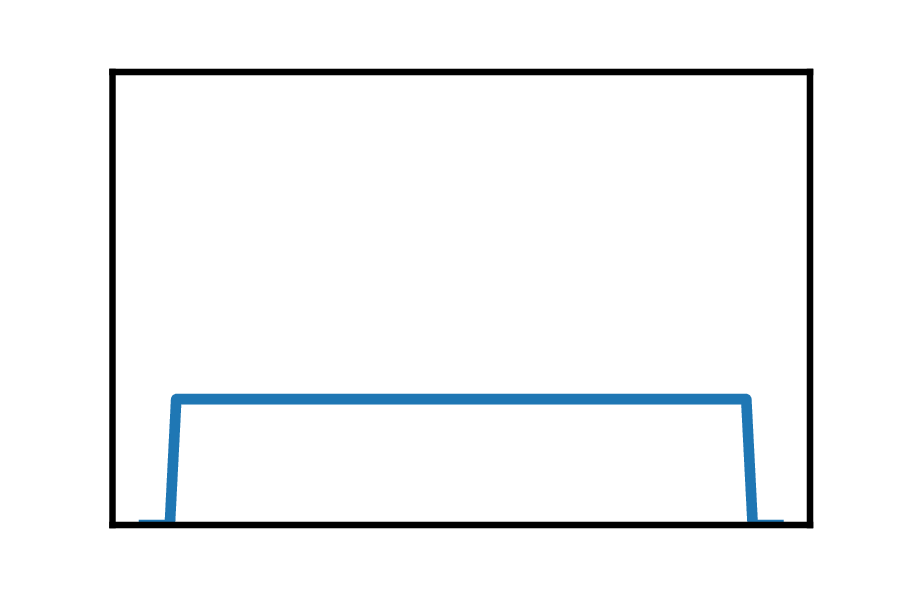} \\ \hline 
    General Arrival and Pricing & Asymmetric Laplace & Gibbs Distribution &  Uniform/Trun Expo   \\   [-10pt]
    & Theorem \ref{theo: case_1} & Theorem \ref{theo: case_2_itm} & Theorem \ref{theo: profit_driven_regime_3} \\ 
  \hline
  Proof Technique. (Complex exponential Lyapunov functions)& (i) Exploiting symmetry around zero (ii) Auxiliary Lyapunov functions & Inverse Fourier transform & (i) Exploiting bounded support (ii) Auxiliary Lyapunov functions \\ \hline
    \end{tabular}}{}
\end{table}
\subsection{Literature Review}
In this section, we present several lines of work that either relate to the applications of our model or our proof techniques.

\subsubsection{Applications of Matching Queues}
In this section, we first present literature specifically on matching queues and position our paper. Then, we present a brief overview of the vast literature on pricing and revenue management. 

There has been a surge of interest in matching queues in recent years with applications that include ride-hailing and online markets studied by \citet{banerjeeridehailing,banerjee2018state,kanoria2019backpressure,sushil_selfish_arxiv,ridehailing_sigmetrics}, kidney exchange by \citet{yashkanoriabarter,akbarpour2017thickness}, and matching markets by \citet{dynamictypematchinghu}, etc. In addition, several models in the literature are closely related to two-sided queues like the dynamic matching models considered by \citet{adan2012exact,caldentey2009fcfs,adan2018reversibility,cadas2020flexibility, weiss2020directed}, assemble-to-order systems studied by \citet{matchingqueues,plambeck2006optimal,reiman2015asymptotically,dougru2010stochastic,song1998order, song2002performance}, and several similar formulations of matching queues as considered by \citet{dynamictypematchinghu,ondemandservers,aveklouris2021matching,ozkan2020joint,amywardpricingmatching,blanchet2021asymptotically}, etc. Most of the outlined work focuses on the transient analysis of a matching queue as it is inherently unstable without external control. Steady-state analysis under external control has been done by \citet{ondemandservers,blanchet2021asymptotically,ridehailing_sigmetrics,sushil_selfish_arxiv} but the focus is on finding upper and lower bounds on the mean delay and profit. In addition, \citet{castro2020matching} proves a product form distribution of the delay for a special case of matching queues with reneging under FCFS. On the other hand, our focus is on pricing or equivalently state-dependent arrivals as the external control. To the best of our knowledge, we are the first ones to provide the limiting distribution of a matching queue under state-dependent arrivals.

As our analysis can be \pfix{adapted} to analyze pricing policies in the context of online marketplaces, we provide a brief literature survey on revenue management. The book by \citet{talluri2006theory} provides a thorough summary of the classical results in revenue management. Pricing in the context of queueing was studied by \citet{low1974optimal,pricinginqueuing2001, Tsitsiklis2000congestion,ks2020optimal} and several structural results of the optimal pricing policy were proved. Recently, \citet{kim2017value} considers pricing in a single server queue and presents near-optimal pricing policies. Pricing in ride-hailing systems was considered by \citet{besbes2021surge, bimpikis2019spatial, castillo2017surge, guda2019your, cachon2017role} in a static/non-queueing setting, and insights into surge pricing were outlined. Furthermore, \citet{banerjeeridehailing,banerjee2018state,kanoria2019backpressure} studies the ride-hailing system as a closed queueing network and proposes near-optimal static and dynamic pricing policies. Other controls are also analyzed in the literature, like matching by \citet{banerjee2018state}, relocation by \citet{braverman2019empty, hosseini2021dynamic}, and joint pricing and matching by \citet{amywardpricingmatching,ozkan2020joint,SIGMETRICS_strategic_servers,sushil_selfish_arxiv}. Most of the past work on pricing focused on either structural properties of the optimal policy or characterizing a near-optimal pricing policy with objectives like throughput, revenue, or social benefits. Such an analysis generally involves characterizing the mean delay and bounding the tail delay. On the other hand, our focus is to characterize the complete distribution of the delay for a broad class of pricing policies.

\subsubsection{Technical Novelty}
In this section, we first discuss phase transition and the queueing models that exhibit phase transition. Next, we discuss the proof techniques to analyze queueing systems and position our methodological contributions in the literature.

Phase transition is of course a widespread phenomenon in many systems in general and a large class of queueing systems. One example is the behavior of many server queues in heavy traffic, where the famous Halfin-Whitt phase transition was presented by  \citet{HalfinWhitt_Regime}. Load balancing systems also are known to exhibit phase transitions in the many-servers-heavy-traffic regime by \citet{LiuLei_JSQ_UniversalScaling, Hurtado_JSQ_alpha_discrete}, even though the behavior is not yet completely characterized. In the case of a single server queue with state-independent control, we observe a trivial phase transition. In particular, the queue is stable when under-loaded and unstable when over-loaded. Now, consider a single server queue with state-dependent control which can be achieved by either considering abandonment or pricing. A single server queue with abandonment is stable even when it is overloaded. Such a system is known to exhibit a phase transition in the limiting distribution of queue length, as it moves from under-loaded to over-loaded, studied in the literature by \citet{ward2005diffusion, huang2018beyond, he2013one, lee2020stationary, baccelli1984single}. We show that phase transition is also observed when pricing is used as a state-dependent control. \pfix{We also show such a behavior in a matching queue.} To the best of our knowledge, such a phase transition with pricing or equivalently, state-dependent arrivals as the external control hasn't been observed in the literature.

Single server queues in heavy traffic have been extensively studied in the literature. A popular approach is to use diffusion limits and study the resultant Brownian control problem. This was first done in literature by \citet{kingman}. Even though they analyze the waiting time in a $\pfix{\text{G/G/1}}$ queue in continuous time, it is equivalent to the queue lengths in a single server queue in discrete time. Later, this method was generalized to further analyze settings like heterogeneous customers by \citet{harrison1988brownian}, parallel servers by \citet{harrison1998heavy,mandelbaum_stolyar2004cmu}, generalized switch by \citet{stolyar2004maxweight}, and generalized Jackson networks by \citet{gamarnik2006validity}, etc. More recently, direct methods that work with the original system, as opposed to the diffusion limit have been developed. One of these methods is the drift method introduced by \citet{atilla} which analyzes the single server queue. This method was further generalized to analyze switch by \citet{MagSri_SSY16_Switch}, flexible load balancing by \citet{zhou2018flexible}, and generalized switch by \citet{Hurtado-gen-switch-SIGMETRICS}, etc. Other methods that directly work with the original system include, the transform method of \citet{hurtado2020transform}, basic adjoint relationship (BAR) method of \citet{braverman_BAR}, and Stein's method of \citet{gurvich2014diffusion}. In addition, a single-server queue with state-dependent control was studied by \citet{kim2017value}.
In this paper, we adopt the transform method introduced by \citet{hurtado2020transform} and use it in the context of a stochastic processing network comprising classical queues. This method provides an explicit formula for the limiting moment generating function (MGF) for classical queues. However, in the case of matching queues, we get an implicit equation involving the MGF due to the state-dependent control. Solving this implicit equation is a major challenge and we develop the inverse Fourier transform method to address this difficulty in the hybrid regime. In the delay-driven and cost-driven regimes, we generalize the characteristic function method by using multiple Lyapunov functions exploiting the underlying symmetry to obtain a closed-form expression of the characteristic function. Thus, the aforementioned methods generalize the transform method and so, they are widely applicable.
\subsection{Notation} \label{sec:notation}
We denote the set of non-negative integers (including 0) by $\bbZ_+$. The imaginary number $\sqrt{-1}$ is denoted by $j$. For a real number $x \in \bbR$, we denote its positive part by $[x]^+\overset{\Delta}{=} \max\{x,0\}$ and negative part by $[x]^-\overset{\Delta}{=} \max\{-x,0\}$. In addition, we denote the smallest integer greater than or equal to $x$ by $\lceil x \rceil$ and the largest integer smaller than or equal to $x$ by $\lfloor x \rfloor$. For a set $A \subseteq \bbR$, we denote its indicator function $\mathbbm{1}_{x \in A}$ by $\mathbbm{1}\{A\}$. Also, we define the sign function with domain $\bbR$ and range $\{-1, 1\}$  by $\textrm{sgn}(x)$ which is equal to 1 for all $x \geq 0$ and -1 otherwise. Moreover, we define the clip function with domain $\{(x, x_{\max}, x_{\min}) \in \bbR^3: x_{\min} \leq x_{\max}\}$ and range $\bbR$ as $\text{clip}(x, x_{\max}, x_{\min}) \overset{\Delta}{=} \max\{\min\{x, x_{\max}\}, x_{\min}\}$. If $x_{\min} = -x_{\max}$, then, we simply write $\text{clip}(x, x_{\max}, x_{\min})$ as $\text{clip}(x, x_{\max})$.
Fourier transform of a function $\varphi: \bbR \rightarrow \bbC$ is either denoted by (whenever it exists) $\calF(\varphi)$ or $\hat{\varphi}$ and inverse Fourier transform is denoted by either $\calF^{-1}(\varphi)$ or $\check{\varphi}$. We denote the variable in Fourier transform by $\omega \in \bbR$. Note that $\omega$ does not represent an element in the sample space. Any function $f,g : \bbR \rightarrow \bbC$ of $\epsilon \in \bbR$ such that $\lim_{\epsilon \rightarrow 0}\frac{|f(\epsilon)|}{|g(\epsilon)|}=0$ is denoted by $f(\epsilon)=o(g(\epsilon))$. We denote the space of infinitely differentiable functions with domain $\bbR$ and range $\bbC$ by $\calC_{\textrm{pol}}^\infty(\bbR)$, where $\text{pol}$ stands for ``polynomial like''. A sequence of real-valued random variables $\{X_n\}$ converging in distribution to $X$ is denoted by $X_n \overset{D}{\rightarrow } X$. We consider several distributions in the paper, and so, we define them here for convenience. We denote a Laplace distribution by $\operatorname{Laplace}(m, \lambda, \kappa)$, where \revcolor{$m \in \bbR$ is a location parameter, $\lambda > 0$ is a \pfix{rate} parameter, and $\kappa > 0$ is an asymmetry parameter.} The \pfix{PDF} $f_{\operatorname{Laplace}}(\cdot)$ of $\operatorname{Laplace}(m, \lambda, \kappa)$ is given by
\begin{align*}
f_{\operatorname{Laplace}}(x; m, \lambda, \kappa) = \frac{\lambda}{\kappa+1/\kappa}\begin{cases}
        e^{\frac{\lambda}{\kappa}(x-m)} &\textit{if } x < m \\
        e^{-\lambda \kappa(x-m)} &\textit{if } x \geq m.
    \end{cases} \numberthis \label{eq: def_laplace}
\end{align*}
When $\kappa=1$, the above simplifies to a symmetric Laplace distribution, in which case, we simply write $\operatorname{Laplace}(m, \lambda)$. Next, we denote a truncated exponential distribution by $\operatorname{TrunExp}\pfix{(\lambda, \Phi)}$, where \revcolor{$\lambda \in \bbR$ is a \pfix{rate} parameter, and $\Phi \subseteq \bbR$ is a Borel measurable set such that $\int_{t \in \Phi} e^{-\lambda t} dt < \infty$.} The \pfix{PDF} $f_{\operatorname{TrunExp}}(\cdot)$ of truncated exponential distribution is given by
\begin{align*}
    f_{\operatorname{TrunExp}}(x;\lambda, \Phi) = \frac{e^{-\lambda x}}{\int_{t \in \Phi} e^{-\lambda t} dt} \mathbbm{1}\{x \in \Phi\}. \numberthis \label{eq: def_trun_expo}
\end{align*}
\revcolor{When $\Phi = \bbR_+$ and $\lambda > 0$}, the above simplifies to an exponential distribution denoted by $\operatorname{Exp}(\lambda)$. Next, we denote a uniform distribution by $\operatorname{Uniform}(\Phi)$, \revcolor{where $\Phi \subseteq \bbR$ is Borel measurable set with finite, non-zero Lebesgue measure.} The \pfix{PDF} $f_{\operatorname{Uniform}}(\cdot)$ of Uniform distribution is given by
\begin{align*}
    f_{\operatorname{Uniform}}(x;\Phi) = \frac{\mathbbm{1}\{x\in\Phi\}}{|\Phi|}, \numberthis \label{eq: def_uniform}
\end{align*}
\revcolor{where $|\Phi|$ is the Lebesgue measure of $\Phi$. When $\Phi$ is a singleton, i.e., $\Phi = \{t\}$ for some $t \in \bbR$, then, $\operatorname{Uniform}(\Phi)$ is a Dirac-delta distribution. Lastly, we denote a Gibbs distribution by $\operatorname{Gibbs}\pfix{(g)}$, where $g: \bbR \rightarrow \bbR$ such that $\int_{-\infty}^\infty e^{-\int_0^x g(y) dy} dx < \infty$.} The \pfix{PDF} $f_{\operatorname{Gibbs}}(\cdot)$ of Gibbs distribution is given by
\begin{align*}
    f_{\operatorname{Gibbs}}(x;g) = \frac{e^{-\int_0^x g(y) dy}}{\int_{-\infty}^\infty e^{-\int_0^x g(y) dy} dx}, \numberthis \label{eq: def_gibbs}
\end{align*}
\revcolor{where $\int_0^x g(y) dy$ for $x < 0$ is interpreted as $-\int^0_x g(y) dy$. Consider the special case of $g(x) = b \mathbbm{1}\{x \geq c\} - b \mathbbm{1}\{x \leq -c\}$ for some $b, c > 0$, where the resultant distribution is Uniform in $[-c, c]$ stitched with Laplace tails and we denote this special case by $\operatorname{Hybrid}(b, c)$.}

\section{Model} \label{sec: model}
We consider a matching queue operating in discrete time with customers and servers both arriving in the system. 
At a given time epoch $k$, let $q^c(k)$ and $q^s(k)$ be the number of customers and servers waiting in the queue respectively.
A waiting customer is matched to a server (and vice versa) as soon as possible, and the pair instantaneously departs from the system. Therefore, both servers and customers cannot be waiting at the same time, and so the main quantity of interest is the imbalance in the queue defined by $z(k)\overset{\Delta}{=}q^c(k)-q^s(k)$. Thus, for any $k \in \bbZ_+$, we have $q^c(k)q^s(k)=0$ with probability 1. Hence, it suffices to consider the imbalance $z(k)$ as the state descriptor for the queue as $q^s(k)=[z(k)]^-$ and $q^c(k)=[z(k)]^+$.

Consider a matching queue with customers and servers both arriving in the system with exogenous arrival rates $\lambda^\star$ and $\mu^\star$ respectively. \revcolor{For brevity, we abuse the terminology and say ``arrival rate'' instead of ``expected number of arrivals in one time slot''.} We take $\lambda^\star=\mu^\star$ to balance the arrival rates, otherwise one of the queue lengths will go to infinity. Unfortunately, $\lambda^\star=\mu^\star$ is not a sufficient condition for stability as the system will be null recurrent in this case (refer to \citet[Section III B]{sushil_blockchain} for more detailed explanation). Thus, we need additional external control to stabilize the system. In general, we consider state-dependent arrival rates. It is often the case with matching platforms that the system operator can use pricing to influence the arrival rate of customers and servers. For example, the system operator can increase the customer price to reduce its arrival rate as fewer customers would be willing to accept the higher price and similarly, increase the server price to increase its arrival rate as more servers would be willing to serve for a higher price offered.

\revcolor{Given the time epoch $k \in \bbZ_+$ and imbalance $z \in \bbZ$, let $a^c(z, k) \in \bbZ_+$ and $a^s(z, k) \in \bbZ_+$ be the random variables denoting the number of customer and server arrivals respectively. We assume that the random variables $\{a^c(z, k): z \in \bbZ, k \in \bbZ_+\} \cup \{a^s(z, k): z \in \bbZ, k \in \bbZ_+\}$ are mutually independent. Let $\lambda_{\max}, \mu_{\max}, \sigma_{\max}, \moment > 0$ be constants and $\sigma^c: \bbR \rightarrow [0, \sigma_{\max}]$, $\sigma^s : \bbR \rightarrow [0, \sigma_{\max}]$ be continuous, measurable functions such that for all $z \in \bbZ$, $\{a^c(z, k): k \in \bbZ_+\}$ are identically distributed with $\max_{z\in \bbZ}|\E{a^c(z, 1)}| \leq \lambda_{\max}$, $\max_{z\in \bbZ}|\Var{a^c(z, 1)}| \leq \sigma_{\max}$, $\max_{z \in \bbZ}\E{|a^c(z,1)|^3} \leq \moment$, and $\Var{a^c(z, 1)} = \sigma^c\left(\E{a^c(z, 1)}\right)$. Similarly, for all $z \in \bbZ$, $\{a^s(z, k): k \in \bbZ_+\}$ are identically distributed with $\max_{z\in \bbZ}|\E{a^s(z, 1)}| \leq \mu_{\max}$, $\max_{z\in \bbZ}|\Var{a^s(z, 1)}| \leq \sigma_{\max}$, $\max_{z \in \bbZ}\E{|a^s(z,1)|^3} \leq \moment$, and $\Var{a^s(z, 1)} = \sigma^s\left(\E{a^s(z, 1)}\right)$. In addition, define functions $\lambda : \bbZ \rightarrow [0, \lambda_{\max}]$, $\mu:\bbZ \rightarrow [0, \mu_{\max}]$ as $\lambda(z) \overset{\Delta}{=}\E{a^c(z, 1)}$, $\mu(z)\overset{\Delta}{=}\E{a^s(z, 1)}$ for all $z \in \bbZ$. We denote $(\lambda(\cdot), \mu(\cdot))$ as the pricing policy.
For notational convenience, for all $z \in \bbZ$, we let $a^c(z)$ and $a^s(z)$ \pfix{be} generic random variables with the same distribution as $a^c(z, 1)$ and $a^s(z, 1)$ respectively. Now, we define imbalance as a discrete-time Markov chain (DTMC) denoted as $\{z(k): k \in \bbZ_+\}$. In particular, given $z(0) \in \bbZ_+$, the evolution of the imbalance is given by
\begin{align} \label{eq: imabalance_evolution}
    z(k+1)=z(k)+a^c(z(k),k)-a^s(z(k),k) \quad \forall k \in \bbZ_+.
\end{align}
We restrict ourselves to arrival distributions $(a^c(z), a^s(z): z \in \bbZ)$} such that the underlying DTMC is irreducible and aperiodic with state-space $\bbZ$. For example, irreducibility is guaranteed if there is a non-zero probability that the DTMC will transition to a higher or a lower value for any $z \in \bbZ$. We later impose a weaker restriction than this (see Condition~\ref{ass: irreducibility}) for our analysis. Given irreducibility, aperiodicity is guaranteed if there exists a $z \in \bbZ$ such that $a^c(z) = a^s(z)$ with non-zero probability. To reiterate, we assume the DTMC governing the imbalance is irreducible (over $\bbZ$) and aperiodic. Therefore, given the pricing policy, if the DTMC is positive recurrent, there exists a unique stationary distribution and we say that the DTMC is stable. We denote the imbalance in steady state with a bar on top, i.e., $\barz$. 

Ideally, one would like to analytically obtain the exact distribution of the imbalance in the steady state. However, this is not possible in general, so we study the matching queue in an asymptotic regime. We are interested in the performance of pricing policies such that the external control vanishes, analogous to the heavy traffic regime in a single server queue as explained in the introduction. \revcolor{In particular, we consider a sequence of pricing policies parametrized by $\eta$ and restrict ourselves to the following family of policies characterized by two parameters, $\epsilon_\eta>0$ and $\tau_\eta>0$ as well as Borel measurable control curves $\phi^c : \bbR \rightarrow \bbR$, $\phi^s : \bbR \rightarrow \bbR$ and maximum and minimum arrival rates $\lambda_{\max} \geq \lambda_{\min} \geq 0$ and $\mu_{\max} \geq \mu_{\min} \geq 0$. For all $z \in \bbZ$ and $\eta > 0$, we have
\begin{align} \label{eq: general_pricing_policy}
    \lambda_\eta(z)=\text{clip}\left(\lambda^\star_\eta+\epsilon_\eta\phi^c\left(\frac{z}{\tau_\eta}\right), \lambda_{\max}, \lambda_{\min}\right), \  \mu_\eta(z)=\text{clip}\left(\mu^\star+\epsilon_\eta\phi^s\left(\frac{z}{\tau_\eta}\right), \mu_{\max}, \mu_{\min}\right).
\end{align}
To ensure the above quantities are well defined, we assume $\lambda_{\min} < \mu^\star < \lambda_{\max}$ and $\mu_{\min} < \mu^\star < \mu_{\max}$. One may view $(\lambda^\star_\eta, \mu^\star)$ to be the exogenous arrival rates and the term involving $(\phi^c, \phi^s)$ to be the fluctuations around these arrival rates, achieved via an external control such as pricing. As the exogenous arrival rates may not be exactly equal in practice, we allow them to be unequal, i.e., we allow the exogenous customer arrival rate to depend on $\eta$ with $\lambda^\star_\eta \neq \mu^\star$ but their difference is vanishingly small: we consider $\lambda^\star_\eta=\mu^\star - \drift\min\{\epsilon_\eta, 1/\tau_\eta\}$ for some $\drift \in \bbR$ introducing an additional lower order drift.

For any $\eta > 0$, the customer and server arrivals given the imbalance $z$ in time-slot $k$ is denoted by $a_\eta^c(z, k)$ and $a_\eta^s(z, k)$ respectively. For completeness, we summarize all the assumptions we impose on the arrival distribution for any $\eta > 0$. As before, $a_\eta^c(z, k), a_\eta^s(z, k)$ are mutually independent for all $z \in \bbZ$ and $k \in \bbZ_+$ with $\{a_\eta^c(z, k) : k \in \bbZ_+\}$ being identically distributed as well as $\{a_\eta^s(z, k) : k \in \bbZ_+\}$ being identically distributed for all $z \in \bbZ$. We let $a_\eta^c(z)$ and $a_\eta^s(z)$ \pfix{be} generic random variables with the same distribution as $a^c_\eta(z, 1)$ and $a_\eta^s(z, 1)$ respectively for all $z\in\bbZ$. For all $z \in \bbZ, \eta > 0$, the means are given by $\E{a_\eta^c(z)} = \lambda_\eta(z)$ and $\E{a_\eta^s(z)} = \mu_\eta(z)$ as defined in \eqref{eq: general_pricing_policy}, the variances by $\Var{a^c_\eta(z)} = \sigma^c(\lambda_\eta(z))$ and $\Var{a^s_\eta(z)} = \sigma^s(\mu_\eta(z))$, and the third moments are bounded, i.e., $\E{|a^c_\eta(z)|^3} \leq \moment$ and $\E{|a^s_\eta(z)|^3} \leq \moment$. Then, for any $\eta > 0$, the imbalance $\{z_\eta(k) \in \bbZ : k \in\bbZ_+\}$ is given by \eqref{eq: imabalance_evolution} with $(a^c, a^s)$ replaced by $(a^c_\eta, a^s_\eta)$ and $z_\eta(0) = 0$. As before, we restrict to arrival distributions $(a^c_\eta(z), a^s_\eta(z) : z \in \bbZ)$ such that $\{z_\eta(k) \in \bbZ : k \in\bbZ_+\}$ is irreducible and aperiodic.

As we do not necessarily assume $(\phi^c, \phi^s)$ to be bounded functions, we additionally introduce the upper and lower bounds $(\lambda_{\max}, \lambda_{\min}, \mu_{\max}, \mu_{\min})$ on $(\lambda_\eta(\cdot), \mu_\eta(\cdot))$. In particular, as we assume the absolute third moment of the arrivals to be bounded, the arrival rates must be upper bounded too. In addition, the arrival rates are trivially lower bounded by $0$. More generally, we lower bound it by $\lambda_{\min}, \mu_{\min} \geq 0$.}


\revcolor{This class of state-dependent controls generalizes the ones} in the literature studied by \citet{kim2017value, SIGMETRICS_strategic_servers, ridehailing_sigmetrics} which are shown to have good performance in terms of delay and profit. Such a class of controls was first introduced by \citet{kim2017value} in the context of a classical single server queue. They presented near-optimal static, two-price, and dynamic pricing policies that are of the form \eqref{eq: general_pricing_policy}. This form of state-dependent control was further shown to be near-optimal in the context of two-sided queues by \citet{SIGMETRICS_strategic_servers, ridehailing_sigmetrics}.

The parameter $\epsilon_\eta$ modulates the magnitude of the control, which we call the magnitude scaling parameter. By picking it such that $\lim_{\eta \uparrow \infty} \epsilon_\eta=0$, we let the control vanish. The influence of the parameter $\tau_\eta$ is more subtle. It lets us tune the scale of the imbalance $z$ at which we apply the control.  In other words, by doubling $\tau_\eta$, we apply the same control only when the imbalance is doubled. With a larger $\tau_\eta$, the rate of change of $z_\eta(k) / \tau_\eta$ decreases in time and so, we call it the time scaling parameter. If we let $\lim_{\eta \uparrow \infty} \tau_\eta =\infty$, we end up applying no state-dependent control, and so this is equivalent to removing the control. Thus, we will study the matching queue when $\lim_{\eta \uparrow \infty} \epsilon_\eta=0$ \emph{and/or} $\lim_{\eta \uparrow \infty} \tau_\eta =\infty$.  
The parameter $\epsilon_\eta$  is similar to the heavy-traffic parameter in a classical single server queue. The parameter $\tau_\eta$ is new in this context and it appears because we use state-dependent control.

Whenever the DTMC $\{z_\eta(k): k \in \bbZ_+\}$ is positive recurrent, let $\barz_\eta$ denote a random variable with distribution same as its stationary distribution. In the asymptotic regime when the control goes to zero, the imbalance $\barz_\eta$ also blows up because we know that the system is null recurrent when there is no external control. Therefore, we need to scale it by the rate at which it blows up to study its limiting behavior. A striking feature of the matching queue is that this rate as well as the limiting behavior crucially depends on the rate at which $\epsilon_\eta$ and $\tau_\eta$ converge to $0$ and $\infty$ respectively. In particular, we define $l \overset{\Delta}{=} \lim_{\eta \uparrow \infty} \epsilon_\eta \tau_\eta$ and consider three cases. 
When $l=0$, we will see that $\barz_\eta=\Theta(1/\epsilon_\eta)$ and 
so we study the limiting behavior of $\epsilon_\eta \barz_\eta$ as $\eta \uparrow \infty$. 
When $l=\infty$, we will see that $\barz_\eta=\Theta(\tau_\eta)$ and so we study the limiting behavior of $\barz_\eta/\tau_\eta$ as $\eta \uparrow \infty$. 
Lastly, in the other case when $l \in (0,\infty)$, we will see that $\barz_\eta=\Theta(1/\epsilon_\eta)=\Theta(\tau_\eta)$, and we study the limiting behavior of  $\epsilon_\eta \barz_\eta$ as $\eta \uparrow \infty$, which is same as that of  $\barz_\eta/\tau_\eta$ up to the multiplicative factor $l$. 
The objective of this paper is to characterize the limiting distribution of appropriately scaled imbalance ($\epsilon_\eta\barz_\eta$ or $\barz_\eta/\tau_\eta$) in the steady state as $\eta \uparrow \infty$ for any given $\{\epsilon_\eta\}$, $\{\tau_\eta\}$, $\phi^c$, and $\phi^s$. We will demonstrate a phase transition in the limiting distribution across the three regimes described above. In the next section, we present a simple example to illustrate this behavior. In the further sections, all the quantities concerned with the $\eta^{th}$ system are sub-scripted by $\eta$. 

\section{An Illustrative Example: Bernoulli Matching Queue} \label{sec: bernoulli}
The goal of this section is to illustrate the phase transition phenomenon exhibited by the limiting distribution of the scaled imbalance by considering a simple system. We consider a matching queue operating under the two-price policy and when the arrivals are Bernoulli. First, we define the two-price policy formally.
\begin{definition} \label{def: two_price_policy}
The two price policy is a special case of \eqref{eq: general_pricing_policy} with $\phi^c(x)=-\mathbbm{1}\{x>1\}$ and $\phi^s(x)=-\mathbbm{1}\{x<-1\}$. Specifically, for $\lambda^\star = \mu^\star \in (0, 1)$, we have
\begin{align} \label{eq: two_price_policy}
    \lambda_\eta(z)=\lambda^\star-\epsilon_\eta \mathbbm{1}\{z>\tau_\eta\}, \ \mu_\eta(z)=\mu^\star-\epsilon_\eta \mathbbm{1}\{z<-\tau_\eta\} \quad \forall z \in \bbZ, \forall \eta>0.
\end{align}
\end{definition}
\begin{figure}[hbt!]
\FIGURE{
\begin{tikzpicture}[scale=1.3]
    \draw[black, thick,<->] (-4,0) -- (4,0);
    \draw[black, thick, ->] (0, -0.25) -- (0,1.25);
    \draw[blue, thick] (-3.5, 0.75) -- (2.5, 0.75);
    \draw[blue, thick] (2.5, 0.75) -- (2.5, 0.5);
    \draw[blue, thick] (2.5, 0.5) -- (3.5, 0.5);
    \draw[orange, dashed, very thick] (3.5, 0.75) -- (-2.5, 0.75);
    \draw[orange, dashed, very thick] (-2.5, 0.75) -- (-2.5, 0.5);
    \draw[orange, dashed, very thick] (-2.5, 0.5) -- (-3.5, 0.5);
    \draw[black, dashed] (2.5, 0.5) -- (2.5, 0);
    \node at (2.5, -0.2) {$\tau_\eta$};
    \draw[black, dashed] (-2.5, 0.5) -- (-2.5, 0);
    \node at (-2.5, -0.2) {$-\tau_\eta$};
    \node at (0.5, 0.94) {$\lambda^\star=\mu^\star$};
    \filldraw[black] (0, 0.75) circle (1.2pt);
    \draw [decorate, thick, decoration = {brace, mirror}] (3.6 ,0.5) -- (3.6, 0.75);
    \node at (3.78, 0.625) {$\epsilon_\eta$};
    \draw[black] (4.2, 0.2) -- (4.2, 0.6) -- (6, 0.6) -- (6, 0.2) -- (4.2, 0.2);
    \draw[black] (4.2, 0.6) -- (4.2, 1) -- (6, 1) -- (6, 0.6);
    \node at (5.5, 0.4) {$\mu_\eta(z)$};
    \draw[orange, dashed, very thick] (4.3, 0.4) -- (4.9, 0.4);
    \node at (5.5, 0.8) {$\lambda_\eta(z)$};
    \draw[blue, thick] (4.3, 0.8) -- (4.9, 0.8);
    \node at (4, -0.2) {$z$};
\end{tikzpicture}
}{Illustration of the two price policy defined in \eqref{eq: two_price_policy} \label{fig: two_price_policy}}{}
\end{figure}
In other words, when there are too many customers in the system $(z>\tau_\eta)$, we increase the price for the customers which leads to the reduction in the arrival rate by $\epsilon_\eta$. Similarly, when there are too many servers $(z<-\tau_\eta)$, we decrease the price offered to the servers which leads to an $\epsilon_\eta$ reduction in the arrival rate. Thus, the two-price policy is a simple intuitive pricing policy, where $\epsilon_\eta$ is the perturbation of the arrival rates when the imbalance is outside a threshold $\tau_\eta$. 
This example also illustrates the need for two different parameters $\epsilon_\eta$ and $\tau_\eta$, and how they can be separately tuned to make the control vanish. We also present an illustration of the policy in Figure \ref{fig: two_price_policy}.
It has been shown by \citet{ridehailing_sigmetrics, sushil_selfish_arxiv} that the two-price policy is near-optimal in terms of the profit earned by the system operator and delay experienced by the customers and servers.

Consider a matching queue operating under the two-price policy given by Definition \ref{def: two_price_policy} such that $\lambda^\star<1$ and $\lambda^\star-\epsilon_\eta>0$. The arrivals are Bernoulli i.e. $a^c_\eta(z)=1$ with probability $\lambda_\eta(z)$ and $0$ otherwise. Similarly, $a^s_\eta(z)=1$ with probability $\mu_\eta(z)$ and $0$ otherwise. 
We will use this example to illustrate that the imbalance exhibits phase transition for $l=0$ and $l=\infty$. In particular, we show that appropriately scaled imbalance converges to a Laplace distribution for $l=0$, a Uniform distribution for $l=\infty$, and a hybrid of Laplace and Uniform distribution for $l \in (0,\infty)$, where the \pfix{PDF} of $\textrm{Hybrid}(b,c)$ is given by \eqref{eq: def_gibbs}.
A Laplace distribution with parameters $0$ and $b$ is a two-sided exponential distribution with \pfix{rate $b$ (equivalently, mean $1/b$)} centered at $0$. The hybrid distribution essentially flattens the parts between $-c$ and $c$ as shown in Figure \ref{fig: cdf_pdf_phase_transition}. Note that, when $c=0$, the hybrid distribution is the same as $\operatorname{Laplace}(0,b)$, and when $b \rightarrow 0$, the hybrid distribution is approximately a uniform distribution with support $[-c,c]$, denoted by $\operatorname{Uniform}([-c,c])$. Now, we present the phase transition formally. 
\begin{proposition} \label{prop: bernoulli} Let $\{\epsilon_\eta\}_{\eta>0}$ and $\{\tau_\eta\}_{\eta>0}$ be such that $\lim_{\eta \uparrow \infty}\epsilon_\eta \tau_\eta=l \in [0, \infty]$.
Consider a matching queue operating under the two-price policy given by Definition \ref{def: two_price_policy} with $\lambda^\star=\mu^\star \in (0, 1)$. In addition, also assume that $a^c_\eta(z) \sim \textrm{Bernoulli}(\lambda_\eta(z))$ and $a^s_\eta(z) \sim \textrm{Bernoulli}(\mu_\eta(z))$ for all $z \in \bbZ, \eta>0$.
Then, as $\eta \uparrow \infty$, we have:
\begin{enumerate}
    \item When $l=0$, we have
    \begin{align*}
        \epsilon_\eta \barz_\eta \overset{D}{\rightarrow} \operatorname{Laplace}\left(0,\pfix{\frac{2}{\lambda^\star(1-\lambda^\star)+\mu^\star(1-\mu^\star)}}\right)
    \end{align*}
    \item When $l \in (0,\infty)$, we have
    \begin{subequations}
    \label{eq: hybrid_distribution}
    \begin{align}
        \epsilon_\eta \barz_\eta &\overset{D}{\rightarrow} \operatorname{Hybrid}\left(\pfix{\frac{2}{\lambda^\star(1-\lambda^\star)+\mu^\star(1-\mu^\star)}},l\right)  \\
     \frac{\barz_\eta}{\tau_\eta} &\overset{D}{\rightarrow} \operatorname{Hybrid}\left(\pfix{\frac{2l}{\lambda^\star(1-\lambda^\star)+\mu^\star(1-\mu^\star)}},1\right).
    \end{align}
    \end{subequations}
    \item When $l=\infty$, we have
    \begin{align*}
        \frac{\barz_\eta}{\tau_\eta} \overset{D}{\rightarrow} \operatorname{Uniform}([-1,1])
    \end{align*}
\end{enumerate}
\end{proposition}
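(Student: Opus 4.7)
The plan is to exploit the birth--death structure of $\{z_\eta(k)\}$ to obtain its stationary distribution in closed form and then read off each of the three regimes by direct asymptotic analysis. Under Bernoulli arrivals the imbalance chain has one-step increments in $\{-1,0,1\}$, and under the two-price policy of Definition~\ref{def: two_price_policy} the up/down probabilities are piecewise constant on the three regions $\{z<-\tau\}$, $\{|z|\le\tau\}$, $\{z>\tau\}$. Writing $\rho\overset{\Delta}{=}\lambda^\star=\mu^\star$ and applying detailed balance, the ratio $\pi_\eta(z+1)/\pi_\eta(z)=P(z\to z+1)/P(z+1\to z)$ equals $1$ for $|z|<\tau$, equals $\alpha_\eta\overset{\Delta}{=}\frac{(\rho-\epsilon)(1-\rho)}{\rho(1-\rho+\epsilon)}$ for $z\ge\tau+1$, equals $1/\alpha_\eta$ for $z\le-\tau-1$ by reflection symmetry, and picks up a $1+o(1)$ boundary factor at $z=\pm\tau$. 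Consequently $\pi_\eta$ is a flat plateau of length $2\tau+1$ flanked by geometric tails with ratio $\alpha_\eta$,
\[
\pi_\eta(z)=\pi_\eta(\tau)\,\alpha_\eta^{(|z|-\tau)^+}\,\bigl(1+o(1)\bigr).
\]

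Next I would Taylor-expand $\alpha_\eta=1-\epsilon/[\rho(1-\rho)]+O(\epsilon^2)$ and normalize via $\pi_\eta(\tau)^{-1}=(2\tau+1)+2\alpha_\eta/(1-\alpha_\eta)+O(1)$, which gives $\pi_\eta(\tau)\sim \epsilon/\bigl[2(\epsilon\tau+\rho(1-\rho))\bigr]$. Each regime then follows from evaluating the resulting mass function at the appropriate scale. For $l=0$, rescaling by $\epsilon_\eta$ sends the plateau to a region of vanishing mass and drives $\pi_\eta(\lfloor x/\epsilon_\eta\rfloor)/\epsilon_\eta\to \exp(-|x|/[\rho(1-\rho)])/[2\rho(1-\rho)]$, recovering the $\mathrm{Laplace}(0,\rho(1-\rho))$ density. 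For $l\in(0,\infty)$ the same rescaling retains a uniform plateau density $1/[2(l+\rho(1-\rho))]$ on $|x|\le l$ together with exponential tails of rate $1/[\rho(1-\rho)]$ for $|x|>l$, which is precisely the $\textrm{Hybrid}(\rho(1-\rho),l)$ density; the second line of~\eqref{eq: hybrid_distribution} then follows from the identity $\bar z_\eta/\tau_\eta=(\epsilon_\eta\bar z_\eta)/(\epsilon_\eta\tau_\eta)$. For $l=\infty$, rescaling by $\tau_\eta$ makes the tails carry mass $O(1/(\epsilon_\eta\tau_\eta))\to 0$, so the plateau alone determines the limit and $\bar z_\eta/\tau_\eta\overset{D}{\to}\calU(-1,1)$.

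The argument is essentially a careful expansion rather than a deep calculation, so the main obstacle is bookkeeping. In particular, one must control the compounded $O(\epsilon^2)$ correction inside $\alpha_\eta^{\lfloor x/\epsilon\rfloor-\tau}$ across $O(1/\epsilon)$ many terms; this is handled by writing $\log\alpha_\eta=-\epsilon/[\rho(1-\rho)]+O(\epsilon^2)$ so that the accumulated error stays $o(1)$ uniformly on compacts in $x$. One must also absorb the floor operations into $o(1)$ terms when passing from the continuous scaling variable to integer states, and upgrade pointwise convergence of the probability mass function to convergence in distribution of the scaled variable via Scheff\'e's lemma on the integer lattice. A convenient sanity check is that the three limits fit into a single parametric family since $\textrm{Hybrid}(b,0)=\mathrm{Laplace}(0,b)$ and $\textrm{Hybrid}(b,c)\to\calU(-c,c)$ as $b\to 0$, so the phase transition is continuous across the parameter $l$.
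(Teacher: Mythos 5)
Your approach is correct and starts from exactly the same place as the paper: the chain is a birth–death process, and detailed balance yields the explicit stationary distribution with a flat plateau of width $2\lfloor\tau\rfloor+1$ flanked by geometric tails of ratio $\alpha_\eta=1-\epsilon/(m+\epsilon\mu^\star)$ with $m=\rho(1-\rho)$, which is precisely the quantity the paper calls $1-\epsilon/(m+\epsilon\mu^\star)$ raised to the power $|i|-\lfloor\tau\rfloor-1$. Where you diverge is in how you pass to the limit. The paper computes the moment generating function $\E{e^{\epsilon t\bar z_\eta}}$ (or $\E{e^{t\bar z_\eta/\tau}}$) by summing the three geometric pieces in closed form, Taylor-expands each piece, and identifies the limiting MGF with that of the Laplace, Hybrid, or Uniform distribution; this dovetails with the transform methods used in the rest of the paper. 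You instead prove a local limit theorem: you show that the rescaled mass function $\pi_\eta(\lfloor x/\epsilon_\eta\rfloor)/\epsilon_\eta$ (or $\pi_\eta(\lfloor x\tau_\eta\rfloor)\tau_\eta$) converges pointwise to the target density, controlling the accumulated error in $\alpha_\eta^{O(1/\epsilon)}$ via $\log\alpha_\eta=-\epsilon/[\rho(1-\rho)]+O(\epsilon^2)$, and then invoke Scheff\'e's lemma to upgrade to total-variation (hence weak) convergence. Both routes are sound. Yours is somewhat more direct — it avoids the bookkeeping of summing three geometric series and matching against a target MGF, and it delivers the slightly stronger conclusion of total-variation convergence — while the paper's MGF route is chosen for stylistic consistency with the characteristic-function and inverse-Fourier methods that carry the general theorems. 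One small caution in your version: when applying Scheff\'e on the integer lattice you should extend $\pi_\eta$ to a genuine density on $\bbR$ by making it piecewise constant on cells of width $\epsilon_\eta$ (respectively $1/\tau_\eta$), so that both the pre-limit and the limit are honest probability densities before invoking the lemma; with that in place the argument closes cleanly, and the $l=\infty$ case follows because the two geometric tails contribute total mass $\Theta(1/(\epsilon_\eta\tau_\eta))\to 0$.
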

\begin{figure}[t]
\FIGURE{
    \begin{minipage}[b]{0.48\textwidth}
    \FIGURE{
    \includegraphics[width=\linewidth]{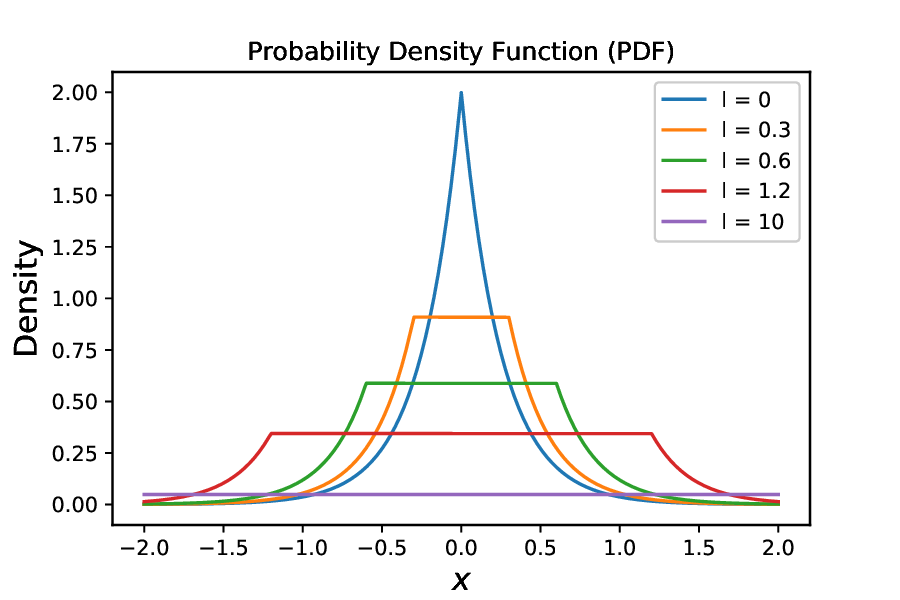}}{Laplace to Hybrid Transition
     \label{fig: lap_to_hyb}}{}
    \end{minipage}
     \begin{minipage}[b]{0.48\textwidth}
     \FIGURE{
    \includegraphics[width=\linewidth]{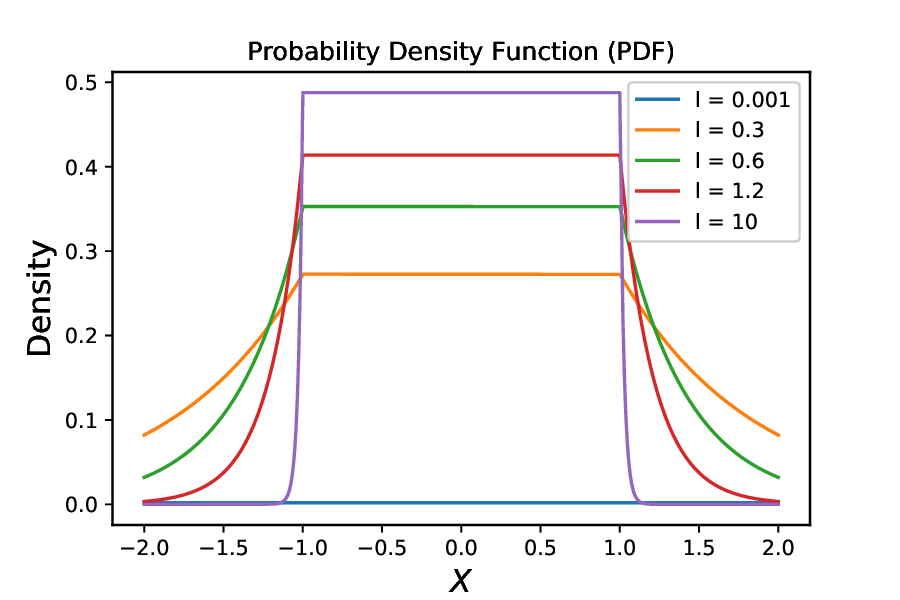}}{Hybrid to Uniform Transition
    \label{fig: hyb_to_uni}}{}
    \end{minipage}}{PDF of the limiting distribution of $\epsilon_\eta\barz_\eta$ and $\frac{\barz_\eta}{\tau_\eta}$ respectively in a single link matching queue with Bernoulli arrivals under the two-price policy
    \label{fig: cdf_pdf_phase_transition}}{}
\end{figure}
The proof of the proposition is presented in Appendix \ref{app: bernoulli}. 
The key idea is that the resulting DTMC governing the imbalance is a simple 
discrete-time birth and death process. This enables us to explicitly evaluate the stationary distribution for every $\eta$. Taking appropriate limits we get the three cases presented in the proposition. 

The above proposition presents a phase transition from a Laplace distribution to a uniform distribution. 
An illustration of the limiting distribution for various values of $l$ is presented in Figure \ref{fig: cdf_pdf_phase_transition}.
For this, we consider $\lambda^\star=\mu^\star=0.5$ and plot the limiting distribution of $\epsilon_\eta \barz_\eta$ and $\frac{\barz_\eta}{\tau_\eta}$ for different values of $l \in [0,\infty)$. When $l=0$, the PDF flattens out for $\frac{\barz_\eta}{\tau_\eta}$ which means that the probability mass escapes to infinity. Thus, to get a meaningful limit when $l=0$, we need to scale $\barz_\eta$ by $\epsilon_\eta$ as $\epsilon_\eta$ decays to zero faster than $1/\tau_\eta$. It can be seen in Figure \ref{fig: lap_to_hyb} that the limiting PDF of $\epsilon_\eta\barz_\eta$ is a Laplace distribution when $l=0$. Similarly, as $l$ becomes very large, the PDF of $\epsilon_\eta \barz_\eta$ flattens out to zero which means that the probability mass escapes to infinity. Thus, to get a meaningful limit when $l=\infty$, we need to scale $\barz_\eta$ by $\tau_\eta$ as $\frac{1}{\tau_\eta}$ decays to zero faster than $\epsilon_\eta$. It can be seen in Figure \ref{fig: hyb_to_uni} that the limiting PDF of $\frac{\barz_\eta}{\tau_\eta}$ is nearly a uniform distribution for $l=10$.

Another way to interpret this phase transition is as follows: The limiting distribution of $\epsilon_\eta \barz_\eta$ in the hybrid regime is $\operatorname{Hybrid}(\pfix{\frac{1}{\lambda^\star(1-\lambda^\star)}},l)$. Now, if we let $l \rightarrow 0$, then the Hybrid distribution converges to the Laplace distribution which is the limiting distribution of $\epsilon_\eta \barz_\eta$ in the delay-driven regime. In addition, in the hybrid regime, the limiting distribution of $\barz_\eta/\tau_\eta$ is $\operatorname{Hybrid}(\pfix{\frac{l}{\lambda^\star(1-\lambda^\star)}},1)$. Now, if we let $l \rightarrow \infty$, then the Hybrid distribution converges to  Uniform distribution which is the limiting distribution of $\barz_\eta/\tau_\eta$ in the cost-driven regime. We will later see that such a phase transition holds in more generality.

To intuitively understand the delay-driven regime, consider the case when $\tau_\eta$ is a constant. Then, $\epsilon_\eta \tau_\eta \rightarrow 0$ as $\epsilon_\eta \rightarrow 0$. In this case, $\epsilon_\eta$ acts similarly to the heavy traffic parameter in a single server queue. The result we obtain is also analogous. In particular, the limiting distribution of scaled queue length in a single server queue is exponential and we obtain a Laplace distribution for two-sided queues which is a two-sided exponential distribution. This is because imbalance is a signed random variable. The proposition says that the limiting distribution is invariant to the growth rate of $\tau_\eta$ as long as $l=0$.

Next, to intuitively understand the cost-driven regime, consider $\epsilon_\eta$ to be a constant. Then, $\epsilon_\eta \tau_\eta \rightarrow \infty$ as $\tau_\eta \rightarrow \infty$. Outside the threshold, we have a drift towards zero which is always bounded away from zero. Due to this, the mass of the limiting distribution of imbalance is concentrated between the two thresholds. Inside the threshold, all the states are identical to each other which leads to a uniform distribution between the thresholds. The proposition proves that even when $\epsilon_\eta \rightarrow 0$, we will observe such a distribution as long as $l=\infty$.  

Lastly, we observe a mixed behavior when $l \in (0,\infty)$. In particular, (corresponding to $\epsilon_\eta\barz_\eta$) when $l \rightarrow 0$, the parameter $c$ in the hybrid distribution converges to 0 which makes it the Laplace distribution and when $l \rightarrow \infty$, it converges to $\infty$ which results in an ill-defined distribution because it appears to be an infinitely spread uniform distribution. 
This is because $\barz_\eta=\Theta(\tau_\eta)$ and so, if the imbalance is scaled by $\tau_\eta$ instead of $\epsilon_\eta$, we obtain a uniform distribution between $\pfix{[-1,1]}$.

The primary reason for considering the limiting regimes is to understand the stationary behavior even when we are unable to explicitly find it. Therefore, in the rest of the paper, we consider the matching queue under general arrivals and a general pricing policy.
\section{\revcolor{Cost of State-Dependent Control: Bernoulli Matching Queue}} \label{sec: motivation}
As seen in the previous section, state-dependent control greatly influences the steady-state imbalance. Thus, by appropriately designing state-dependent control $(\phi^c, \phi^s, \epsilon_\eta, \tau_\eta)$, statistics like the mean, moments, or tail of the imbalance can be optimized. However, it is costly to implement such state-dependent control in practice. Thus, a trade-off between a low cost of control and maintaining a small imbalance arises. We illustrate that the control that optimizes this trade-off can correspond to any of the three regimes, depending on the cost of control.

Let $c: \bbR^2 \rightarrow \bbR$ be a function that maps the amount of customer and server control to its cost. In addition, let $h: \bbZ \rightarrow \bbR$ be a function that captures the cost of waiting. Now, we restrict ourselves to the control of the form \eqref{eq: general_pricing_policy}, and consider the following objective:
\revcolor{\begin{align*}
    \min_{\epsilon_\eta, \tau_\eta, \phi^c, \phi^s} \eta\E{c\left(\epsilon_\eta\phi^c\left(\frac{\barz_\eta}{\tau_\eta}\right), \epsilon_\eta\phi^s\left(\frac{\barz_\eta}{\tau_\eta}\right)\right)} + \E{h(\barz_\eta)}. \numberthis \label{eq:cost_waiting_obj}
\end{align*}}
\revcolor{In practice, one could view the limit $\eta \rightarrow \infty$ as a large-market regime as done by \citet{kim2017value, varma_twosided_or, SIGMETRICS_strategic_servers}, i.e., exogenous arrival rate (size of the market) scales with $\eta$. In a discrete-time system,  this amounts to scaling down the time between consecutive transitions by $\eta$, and so, the cost of waiting is scaled down by $\eta$, or equivalently, the cost of control is scaled up by $\eta$.}

For illustration, we restrict to the special case of two-price policy and Bernoulli arrival distributions with $\lambda^\star=\mu^\star=1/2$ as in Section~\ref{sec: bernoulli} \revcolor{and focus on minimizing the order of $\eta$ in \eqref{eq:cost_waiting_obj}, especially as the constants in \eqref{eq:cost_waiting_obj} could be further optimized by carefully designing the control curves $(\phi^c, \phi^s)$.} Set $c(x, y) = |x+y|^\alpha$ for some $\alpha \in (0, \infty)$ and $x , y \in \bbR$, and $h(z)=|z|$ for $z \in \bbZ$, then the above objective can be simplified to get
\revcolor{\begin{align*}
    \min_{\epsilon_\eta, \tau_\eta} \eta \epsilon_\eta^\alpha \P{|\barz_\eta| > \tau_\eta} + \E{|\barz_\eta|}. 
\end{align*}}
Now, as the arrivals are governed by Bernoulli distribution, the underlying Markov Chain is a birth and death process. By the stationary distribution given in \eqref{eq: stead_state_bernoulli} in Appendix~\ref{app: bernoulli}, we get
\begin{align*}
    \P{|\barz_\eta| > \tau_\eta} &= \frac{1}{4\lfloor \tau_\eta \rfloor \epsilon_\eta + 1+2\epsilon_\eta} \approx \frac{1}{4\tau_\eta \epsilon_\eta + 1}, \\ \E{|\barz_\eta|} &= \frac{2\lfloor \tau_\eta\rfloor \epsilon_\eta \left(1+\lfloor \tau_\eta \rfloor\right)+\lfloor \tau_\eta \rfloor + \frac{1}{4\epsilon_\eta}+\frac{1}{2}}{4\lfloor \tau_\eta \rfloor \epsilon_\eta + 1+2\epsilon_\eta} \approx \frac{2\tau_\eta^2 \epsilon_\eta + \tau_\eta + \frac{1}{4\epsilon_\eta}}{4\tau_\eta \epsilon_\eta + 1}.
\end{align*}
In the approximation step, we drop the floors and approximate $\lfloor \tau_\eta \rfloor + 1$ by $\tau_\eta$ as  $\lfloor \tau_\eta \rfloor + 1 = \Theta(\tau_\eta)$ for any $\tau_\eta \geq 1$. Using the above expressions, the optimization problem is simplified to
\begin{align*}
    \min_{\epsilon_\eta, \tau_\eta} \frac{\eta \epsilon_\eta^\alpha + 2\tau^2_\eta \epsilon_\eta + \tau_\eta + \frac{1}{4\epsilon_\eta}}{4\tau_\eta \epsilon_\eta + 1}. \numberthis \label{eq: simplified_objective}
\end{align*}
 Now, we illustrate that the optimal regime in \eqref{eq: simplified_objective} corresponds to one of the three regimes depending on the value of $\alpha$. In particular, a low cost of control will correspond to the delay-driven regime and a high cost will correspond to the cost-driven regime.

\textbf{Case I [Low Cost of Control] $(\alpha>1)$:} As $\epsilon_\eta < 1$ and $\alpha > 1$, this regime corresponds to a low cost of control ($\eta \epsilon_\eta^\alpha$ is small). In this case, \eqref{eq: simplified_objective} is minimized for $\epsilon_\eta = \Theta(\eta^{-\frac{1}{1+\alpha}})$ and any choice of $\tau_\eta$ such that $\epsilon_\eta\tau_\eta \rightarrow l \in [0, \infty)$. Thus, the optimal control falls in either the \emph{delay-driven or the hybrid regime}. \pfix{Applications to online marketplaces such as ride-hailing systems---where the state-dependent control is implemented via dynamic pricing---fall under this case.}  \citet{varma_twosided_or} showed that a static policy $\lambda(z) = \lambda^\star, \mu(z)=\mu^\star$ upper bounds the achievable profit and an $\epsilon_\eta$ perturbation reduces the profit by $\Theta(\eta \epsilon^2_\eta)$, corresponding to the cost of control with $\alpha=2$.

\textbf{Case II [Medium Cost of Control] $(\alpha=1)$:} In this case, \eqref{eq: simplified_objective} is minimized when $(\epsilon_\eta, \tau_\eta)$ are such that either $\epsilon_\eta = \Theta(1/\sqrt{\eta}), \epsilon_\eta\tau_\eta \rightarrow [0, \infty)$ or $\tau_\eta=\Theta(\sqrt{\eta}), \epsilon_\eta\tau_\eta \rightarrow (0, \infty]$. Thus, the optimal control could correspond to \emph{any of the three regimes}. Application to payment channel networks like lightning network for Bitcoin falls under this case, e.g., see \citet{sushil_blockchain}. Two users form a payment channel by both depositing some funds. The transaction requests can then be processed by adjusting the ownership of funds in the payment channel. However, if not enough capacity is available, the transaction is either queued, lost, or processed via the blockchain. The goal is to ensure high throughput of transactions while minimizing the usage of blockchain as it is an expensive outside option. The state-dependent control is the transactions via blockchain and so the cost of $\epsilon_\eta$ control is $\Theta(\eta \epsilon_\eta)$, proportional to the frequency/amount of transactions, i.e., we have $\alpha=1$.




\textbf{Case III [High Cost of Control] $(\alpha \in (0, 1))$:} In this case, \eqref{eq: simplified_objective} is minimized by $\tau_\eta = \Theta(\sqrt{\eta})$ and $\epsilon_\eta = \pfix{\Theta}(1)$. Thus, the optimal control \emph{corresponds to the cost-driven regime}. A ride-hailing system with an additional customer dissatisfaction cost due to surge pricing falls under this case. \revcolor{In a ride-hailing system, the platform can apply surge pricing (control) to reduce customer arrival rate $\lambda_\eta(z)$, but doing so may upset customers or reduce usage (i.e., high cost of applying the control). A high cost of control is modeled well with $\alpha<1$. Another example with high cost of control is manufacturing systems with machine setup-costs. Suppose in a manufacturing line you have machines that can be reconfigured (control) to shift between product types to match demand, but a reconfiguration has a high cost (time + lost output). The control (reconfiguring) is expensive, so you tolerate some mismatch between supply and demand (imbalance) until it becomes very large before reconfiguring.}

\section{Phase Transition: General Arrivals and Pricing Policy} \label{sec: main_theorem}
\revcolor{We now consider the more challenging setting of general control defined in \eqref{eq: general_pricing_policy} and general arrival distributions discussed in Section~\ref{sec: model}.} First, we show that under suitable conditions on the state-dependent control, the DTMC is positive recurrent. Then, we state the main results of the paper: limiting stationary distribution of the imbalance in the hybrid, delay-driven, and cost-driven regimes.
\subsection{Positive Recurrence}
To ensure positive recurrence, we need a drift that pushes the imbalance towards zero. In particular, if the imbalance is a very large positive value, then there are a lot of customers in the queue and a sensible pricing policy will either reduce the customer arrival rate or increase the server arrival rate. Similarly, if the imbalance is a very large negative value, increasing the customer arrival rate or decreasing the server arrival rate would be sensible. We present the following condition on the control curves which ensures the same. 
\revcolor{\begin{condition}[Negative Drift]
There \pfix{exist} $\delta, K, B>0$ such that $\phi^c(x)-\phi^s(x)<-\delta$ and $\left[\phi^c(x)\right]^+ + \left[\phi^s(x)\right]^{-} \leq B$ for all $x>K$, and $\phi^c(x)-\phi^s(x) >\delta$ and $\left[\phi^c(x)\right]^- + \left[\phi^s(x)\right]^{+} \leq B$ for all $x<-K$.\label{ass: neg_drift}
\end{condition}}
\revcolor{Note that the condition $\phi^c(x)-\phi^s(x)<-\delta$ for $x>K$ is necessary to ensure the customer arrival rate is less than the server arrival rate when the imbalance is a large positive value. However, this condition alone is not sufficient as $\phi^c, \phi^s$ can both grow without bound, which could, for example, result in $\lambda_\eta(z)=\mu_\eta(z)=\lambda_{\max}$ for $z$ large enough. Thus, we additionally impose $\left[\phi^c(x)\right]^+ + \left[\phi^s(x)\right]^{-} \leq B$ for $x > K$.} \pfix{We also require the control curves to grow sub-quadratically, captured by the following condition.}
\begin{condition} \label{ass:poly_growth} \textbf{(sub-Quadratic Growth)} There exists $\beta \in (0, 2)$ such that $|\phi^k(x)| \leq |x|^{\beta}$ for all $x \in \bbR$ and $k \in \{c, s\}$.
\end{condition}
Now, under \pfix{these conditions}, we will show that the underlying DTMC is positive recurrent.
\begin{proposition} \label{prop: pos_rec}
Assume Condition \ref{ass: neg_drift} holds for some $\delta, K, B > 0$ and Condition \ref{ass:poly_growth} holds for some $\beta \in (0, 2)$. Also, 
\begin{align}
    |\lambda^\star_\eta - \mu^\star| \leq \frac{\delta \epsilon_\eta}{2} \quad \forall \eta > 0. \label{eq:small_exogenous_diff}
\end{align}
Then, \revcolor{for all $\eta > \eta_p$ for some $\eta_p>0$}, the DTMC $\{z_{\eta}(k): k \in \bbZ_+\}$ is positive recurrent. Moreover, 
\begin{align}
    -\E{\barz_\eta\left(\lambda_\eta(\barz_\eta)-\mu_\eta(\barz_\eta)\right)} \leq 2\moment. \nonumber
\end{align}
In addition, we have
\revcolor{\begin{align}
    \E{|\barz_\eta|}&\leq \frac{4\moment+2\epsilon_\eta \tau_\eta K (2K^\beta+\delta)}{\epsilon_\eta \delta}, \label{eq: imbalance_expectation} \\
        \E{\barz_\eta^2} &\leq \frac{4\moment}{\epsilon_\eta \delta} + \frac{K^2\tau_\eta^2\left(\delta + 4K^\beta\right)}{\delta} + \frac{8\moment\left(4\moment+2\epsilon_\eta \tau_\eta K  (2K^\beta+\delta)\right)}{ \epsilon_\eta^2 \delta^2}.  \label{eq: imbalance_second_moment}  
\end{align}}
\end{proposition}
The aforementioned bounds will be useful for the analysis later. Note that \eqref{eq:small_exogenous_diff} essentially follows for $\delta \leq 2|\drift|$ from the expression $\lambda^\star_\eta = \mu^\star - \drift \min\{\epsilon_\eta, 1/\tau_\eta\}$ in Section~\ref{sec: model}. The proof employs the Foster-Lyapunov theorem. We analyze the drift of the quadratic test function, $z^2$, and show that it is negative (bounded away from zero) outside a finite set, which immediately implies positive recurrence. The bounds on the imbalance follow from a well-known corollary of Foster-Lyapunov theorem by \citet[Proposition 6.13]{hajekrandomprocbook}. \revcolor{Similarly, the second moment of the imbalance is bounded by studying the drift of the cubic test function: $z^3$.} The details of the proof are deferred to Appendix \ref{app: pos_rec}. We assume \eqref{eq:small_exogenous_diff} for the rest of the paper to ensure positive recurrence.

\subsection{Main Result: Phase Transition} \label{sec: inverse_fourier_transform}
Under \eqref{eq:small_exogenous_diff}, without the loss of generality, we consider $\lambda^\star_\eta = \lambda^\star$ for $\lambda^\star=\mu^\star$ whenever $\epsilon_\eta \tau_\eta \rightarrow l \in [0, \infty)$ as the difference $\lambda^\star_\eta - \mu^\star = \drift\min\{\epsilon_\eta, 1/\tau_\eta\} = \pfix{\Theta(\epsilon_\eta)}$ can be absorbed in the state-dependent control $\epsilon_\eta \phi^i(z/\tau_\eta)$ for $i \in \{c, s\}$. Now, we start by considering the case of $l \in (0,\infty)$. In this regime, it turns out that the limiting distribution explicitly depends on the control curves $\phi^c(\cdot)$ and $\phi^s(\cdot)$. \revcolor{To characterize this distribution, first define $g_{x, y}: \bbR \rightarrow \bbR$ for $x,y \in \bbR$ as follows:
\begin{align}
    g_{x,y}(\cdot)\overset{\Delta}{=} \frac{2x}{\sigma^c(\lambda^\star)+\sigma^s(\mu^\star)}\left(\phi^s\left(\frac{\cdot}{y}\right)-\phi^c\left(\frac{\cdot}{y}\right)\right). \label{eq: gibbs_distribution}
\end{align}
Intuitively, $g_{x, y}(z)$ characterizes the drift towards zero for imbalance equal to $z$}. We show that the limiting stationary distribution of the imbalance in this regime is $\operatorname{Gibbs}(g_{x, y})$ with appropriately defined $x, y \in \bbR$. Recall that Gibbs distribution is defined in \eqref{eq: def_gibbs}. \pfix{Along with the sub-Quadratic Growth condition (Condition~\ref{ass:poly_growth}), we need the following technical condition to state the limiting behavior in this regime.}
\begin{condition}[Smoothness]
 $\phi^c(\cdot), \phi^s(\cdot) \in \calC_{\textrm{pol}}^\infty(\bbR)$, i.e. they are infinitely differentiable. \label{ass: c_pol}
\end{condition}
Now, we present the main theorem in this regime.
\begin{theorem}[Hybrid Regime] \label{theo: case_2_itm} Let $\{\epsilon_\eta\}_{\eta>0}$ and $\{\tau_\eta\}_{\eta>0}$ be such that $\lim_{\eta \uparrow \infty}\epsilon_\eta \tau_\eta=l \in (0, \infty)$ and \revcolor{$\lambda^\star_\eta = \lambda^\star$ for all $\eta > 0$ with $\lambda^\star=\mu^\star$}. Consider the positive recurrent DTMC $\{z_\eta(k): k \in \bbZ_+\}$ \revcolor{for any $\eta >\eta_p$} and for any $\phi^c(\cdot)$ and $\phi^s(\cdot)$ satisfying Conditions \ref{ass: neg_drift} and \ref{ass:poly_growth} and let $\barz_\eta$ denote its steady state random variable. If, in addition, \revcolor{Condition \ref{ass: c_pol} is satisfied}, then as $\eta \uparrow \infty$, we have,
\begin{align}
    \epsilon_\eta \barz_\eta \overset{D}{\rightarrow} \operatorname{Gibbs}\left(g_{1,l}\right), \quad
    \frac{\barz_\eta}{\tau_\eta}\overset{D}{\rightarrow} \operatorname{Gibbs}\left(g_{l,1}\right). \label{eq: gibbs_small_and_large_l}
\end{align}
\end{theorem}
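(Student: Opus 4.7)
The plan is to apply the stationarity identity $\E{f(\barz_\eta(k+1)) - f(\barz_\eta(k))} = 0$ to the complex-exponential test function $f_s(z) = e^{js\epsilon_\eta z}$, Taylor expand to second order in $\epsilon_\eta$, pass to the limit to obtain a Fourier-space identity for the limiting measure, and then apply the inverse Fourier transform to convert it into a first-order linear ODE for the density. The ODE is explicitly solvable and yields the Gibbs density as claimed.

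First, Proposition \ref{prop: pos_rec} combined with $\epsilon_\eta \tau_\eta \to l < \infty$ gives a uniform bound on $\E{\epsilon_\eta |\barz_\eta|}$, so $\{\epsilon_\eta \barz_\eta\}$ is tight and every subsequence admits a further subsequence converging in distribution to some random variable $Z$. Next, using the evolution \eqref{eq: imabalance_evolution}, writing $\Delta_\eta = a^c_\eta - a^s_\eta$ (bounded by $2A_{\max}$), and substituting the conditional moments $\E{\Delta_\eta \mid \barz_\eta} = \epsilon_\eta(\phi^c(\barz_\eta/\tau_\eta) - \phi^s(\barz_\eta/\tau_\eta))$ and $\E{\Delta_\eta^2 \mid \barz_\eta} = \sigma^c(\lambda^\star) + \sigma^s(\mu^\star) + o(1)$ (by continuity of $\sigma^c, \sigma^s$), Taylor-expanding $e^{js\epsilon_\eta \Delta_\eta} - 1$ to order $\epsilon_\eta^2$ and dividing by $\epsilon_\eta^2$ gives
\[ js\, \E{e^{js \epsilon_\eta \barz_\eta}(\phi^c(\barz_\eta/\tau_\eta) - \phi^s(\barz_\eta/\tau_\eta))} = \frac{s^2 (\sigma^c(\lambda^\star) + \sigma^s(\mu^\star))}{2}\, \E{e^{js \epsilon_\eta \barz_\eta}} + o(1). \]
Since $\barz_\eta/\tau_\eta = (\epsilon_\eta \barz_\eta)/(\epsilon_\eta \tau_\eta) \overset{D}{\rightarrow} Z/l$ and $\phi^c, \phi^s$ are bounded and continuous, the limit along the convergent subsequence yields
\[ js\, \E{e^{jsZ}(\phi^c(Z/l) - \phi^s(Z/l))} = \frac{s^2(\sigma^c(\lambda^\star) + \sigma^s(\mu^\star))}{2}\, \E{e^{jsZ}}. \]

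To invert this, set $\psi(x) = \phi^s(x/l) - \phi^c(x/l)$. Assuming $Z$ has a density $\pi$, the identity reads $-js\, \widehat{\psi \pi}(s) = \tfrac{s^2 (\sigma^c + \sigma^s)}{2}\, \hat\pi(s)$; cancelling $-js$ for $s \neq 0$ and extending by continuity gives $\widehat{\psi \pi}(s) = \tfrac{js(\sigma^c + \sigma^s)}{2}\, \hat\pi(s)$. Applying $\calF^{-1}$ and using $\calF^{-1}(js \hat\pi) = -\pi'$ yields the ODE $\pi'(x) = -g_{1,l}(x) \pi(x)$, whose unique normalizable solution is the density in \eqref{eq: gibbs} with $g = g_{1,l}$. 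Because every subsequential limit must coincide with this same distribution, the full sequence $\epsilon_\eta \barz_\eta$ converges to $\textrm{Gibbs}(g_{1,l})$. The second statement in \eqref{eq: gibbs_small_and_large_l} then follows by the continuous mapping theorem together with a change-of-variables check, using the scaling identity $l\, g_{1,l}(lx) = g_{l,1}(x)$ to see that $Z/l \sim \textrm{Gibbs}(g_{l,1})$.

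The principal obstacle is the inverse Fourier step: the subsequential limit $Z$ is a priori only a probability measure, so promoting it to a smooth density on which $\calF^{-1}$ acts requires a bootstrap argument. This is exactly where Condition \ref{ass: c_pol} enters: the infinite differentiability of $\phi^c, \phi^s$ and the limiting Fourier identity can be iterated to force $\hat\pi$ to decay faster than any polynomial, hence $\pi$ is smooth and all manipulations above are justified. A secondary technical point is the uniform control of the Taylor remainder and of the continuity error from $\sigma^c, \sigma^s$, so that the $o(1)$ term is genuinely uniform in $\eta$; the uniform bound $|a^c_\eta|, |a^s_\eta| \leq A_{\max}$ and continuity of $\sigma^c, \sigma^s$ at $\lambda^\star, \mu^\star$ make this routine.
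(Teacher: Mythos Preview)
Your three-step skeleton --- tightness from Proposition~\ref{prop: pos_rec}, drift of $e^{js\epsilon_\eta z}$ Taylor-expanded to second order to obtain the limiting identity $\E{e^{jsZ}g_{1,l}(Z)}=js\,\E{e^{jsZ}}$, and then an inverse-Fourier argument converting this into the ODE $\pi'=-g_{1,l}\pi$ --- is exactly the paper's approach, and your derivation of the second convergence via $Z/l$ and the scaling identity is also how the paper finishes.

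The one genuine gap is your handling of Step~3. You assume the subsequential limit $Z$ has a density $\pi$, manipulate the Fourier identity classically, and then propose to justify the density assumption after the fact by a bootstrap (``iterate the identity to force $\hat\pi$ to decay faster than any polynomial''). This argument is circular as stated: the identity $\widehat{\psi\pi}(s)=\tfrac{js(\sigma^c+\sigma^s)}{2}\hat\pi(s)$ gives $|s||\hat\pi(s)|\le\|\psi\|_\infty$, i.e.\ one power of decay, but iterating requires already knowing $\psi\pi$ satisfies an analogous identity, which it does not. More fundamentally, you cannot bootstrap regularity of $\pi$ without first knowing the limiting measure is absolutely continuous. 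The paper avoids this by never assuming a density: it interprets $\varphi\mapsto\E{\varphi(Z)}$ directly as a tempered distribution $\calT_{\barz_\infty}$, rewrites the Fourier identity as the distributional ODE $\calT_{\barz_\infty}'=g_{1,l}\calT_{\barz_\infty}$, and invokes a classical result on first-order linear ODEs in $\calS'(\bbR)$ to conclude that the \emph{only} solution is the Gibbs density (in particular, that $Z$ has a density is a conclusion, not a hypothesis). This is also where Condition~\ref{ass: c_pol} actually enters: one needs $g_{1,l}\in\calC_{\textrm{pol}}^\infty$ so that the product $g_{1,l}\calT_{\barz_\infty}$ is a well-defined tempered distribution --- not for a regularity bootstrap.
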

To understand the Gibbs distribution, consider the special case of the two-price policy from Definition \ref{def: two_price_policy}. Here, we get $g_{1,l}(z)=2\frac{\mathbbm{1}\{z>l\}-\mathbbm{1}\{z<-l\}}{\sigma^c(\lambda^\star) + \sigma^s(\mu^\star)}$ for $l \in (0, \infty)$. In addition, as the distribution of the arrivals is Bernoulli, we have $\sigma^c(\lambda^\star) = \lambda^\star(1-\lambda^\star)$ and $\sigma^s(\mu^\star)=\mu^\star(1-\mu^\star)$. Thus, we have
\begin{align*}
    \int_0^x g_{1,l}(y) dy &= \frac{2}{\lambda^\star(1-\lambda^\star)+\mu^\star(1-\mu^\star)}\left(\mathbbm{1}\{x>l\}(x-l)-\mathbbm{1}\{x<-l\}(l+x)\right).
\end{align*}
Therefore, $\operatorname{Gibbs}(g)$ as defined in \eqref{eq: def_gibbs} is the hybrid distribution given by \eqref{eq: hybrid_distribution}. 
While this is consistent with Proposition \ref{prop: bernoulli}, we cannot directly apply Theorem \ref{theo: case_2_itm} because the control curves under the two-price policy have jumps, and so do not satisfy Condition \ref{ass: c_pol}. This suggests that the Condition \ref{ass: c_pol} may not be necessary in Theorem \ref{theo: case_2_itm}, and is possibly an artifact of our proof. 
While relaxing Condition \ref{ass: c_pol} is future work, note that it is 
not too restrictive because 
 all polynomials satisfy it. Any continuous control curve can be then approximated by a polynomial arbitrarily well due to the Stone-Weierstrass theorem. The proof of Theorem~\ref{theo: case_2_itm} is presented in Section~\ref{sec: proof_itm}.

Now, we consider the limiting distribution of $\operatorname{Gibbs}(g_{1,l})$ defined above in the Theorem \ref{theo: case_2_itm} as $l \rightarrow 0$. Note that, we have
\begin{align*}
    e^{-\frac{2}{\sigma^c(\lambda^\star)+\sigma^s(\mu^\star)}\int_0^x \left(\phi^s\left(\frac{t}{l}\right)-\phi^c\left(\frac{t}{l}\right)\right) dt} \rightarrow \begin{cases}
     e^{-x\frac{2}{\sigma^c(\lambda^\star)+\sigma^s(\mu^\star)} \left(\phi^s(\infty)-\phi^c(\infty)\right)} &\textit{if } x>0 \\
     e^{\pfix{-}x\frac{2}{\sigma^c(\lambda^\star)+\sigma^s(\mu^\star)} \left(\phi^s(-\infty)-\phi^c(-\infty)\right)} &\textit{if } x<0,
    \end{cases} \numberthis \label{eq: asym_laplace_calc}
\end{align*}
Observe that the right-hand side is the PDF of a Laplace or an asymmetric Laplace distribution as defined in \eqref{eq: def_laplace}. 
Thus, we expect the limiting distribution of $\epsilon_\eta \barz_\eta$ to be (asymmetric) Laplace distribution in the delay-driven regime. However, note that this is not a formal proof due to unjustified limit interchanges. We now first state the result formally and then rigorously prove it using a different approach. \revcolor{Now, to state the result in the delay-driven regime, we impose the following condition:
\begin{condition}[Finite Limits] \label{cond:limits}
    The limits $\lim_{x \rightarrow \infty} \phi^i(x) < \infty$ and $\lim_{x \rightarrow -\infty} \phi^i(x) < \infty$ exists and denote them by $\phi^i(\infty)$ and $\phi^i(-\infty)$ respectively for $i \in \{c,s\}$.
\end{condition}}
Now, we define the following step function:
\begin{align*}
    \step(x) = \begin{cases}
        \phi^s(\infty) - \phi^c(\infty) &\textit{if } x \geq 0 \\
        \phi^s(-\infty) - \phi^c(-\infty) &\textit{otherwise}.
    \end{cases} \numberthis \label{eq:step_function}
\end{align*} 
Additionally, we require the following technical assumption on the arrival distributions in the rest of the section:
\begin{condition}[$\pm$ Jumps] \label{ass: irreducibility}
Assume Condition~\ref{ass: neg_drift} holds for some $\delta, K, B > 0$. Then, there exists $p_{\min}>0$ such that for all $\eta >0$, \revcolor{$|z| \leq K \max\left\{\tau_\eta, 1/\epsilon_\eta\right\}$}, we have
\begin{align*}
\P{a^c_\eta(z)>a^s_\eta(z)} \geq p_{\min}, \  \P{a^c_\eta(z)<a^s_\eta(z)} \geq p_{\min}.
\end{align*}
\end{condition}
Intuitively, given a time epoch $k$ and a small imbalance \revcolor{$|z| \leq K\max\{\tau_\eta, 1/\epsilon_\eta\}$}, there is a non-zero probability that the DTMC will transition to a higher or a lower value of imbalance. Now, we have the following theorem.
\begin{theorem}[Delay-Driven Regime] \label{theo: case_1} Let $\{\epsilon_\eta\}_{\eta>0}$ and $\{\tau_\eta\}_{\eta>0}$ be such that $\lim_{\eta \uparrow \infty}\epsilon_\eta \tau_\eta=0$ and \revcolor{$\lambda^\star_\eta = \lambda^\star$ for all $\eta > 0$ with $\lambda^\star = \mu^\star$}. Consider the positive recurrent  DTMC $\{z_\eta(k): k \in \bbZ_+\}$ \revcolor{for any $\eta >\eta_p$} and for any $\phi^c(\cdot)$ and $\phi^s(\cdot)$ satisfying Conditions \ref{ass: neg_drift} and \ref{ass:poly_growth} and let $\barz_\eta$ denote its steady state random variable. If, in addition, \revcolor{Conditions \ref{cond:limits} and \ref{ass: irreducibility} are satisfied}, then we have,
\begin{align*}
    \epsilon_\eta \barz_\eta \overset{D}{\rightarrow} \operatorname{Laplace}\left(0,\frac{2\sqrt{|\step(1)\step(-1)|}}{\sigma^c(\lambda^\star)+\sigma^s(\mu^\star)}, \sqrt{\frac{|\step(1)|}{|\step(-1)|}}\right).
\end{align*}
\end{theorem}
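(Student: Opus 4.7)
The plan is to adapt the characteristic function method of \cite{hurtado2020transform}, generalized to handle state-dependent control. Set $C := \sigma^c(\lambda^\star) + \sigma^s(\mu^\star)$ and $\psi_\eta(\theta) := \E{e^{j\theta\epsilon_\eta\barz_\eta}}$; the latter is well defined by Proposition \ref{prop: pos_rec}. The mean bound \eqref{eq: imbalance_expectation} together with $\epsilon_\eta\tau_\eta\rightarrow 0$ gives that $\E{|\epsilon_\eta\barz_\eta|}$ is uniformly bounded in $\eta$, so $\{\epsilon_\eta\barz_\eta\}_{\eta>0}$ is tight. It therefore suffices to show that every distributional subsequential limit $X$ of $\epsilon_\eta\barz_\eta$ is $Laplace(0,C/2)$, after which convergence of the full sequence follows.

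First, I derive an asymptotic fixed-point equation for $\psi_\eta$. Writing $\Delta_\eta(z) := a^c_\eta(z,k) - a^s_\eta(z,k)$, stationarity of $\barz_\eta$ and the evolution \eqref{eq: imabalance_evolution} yield
\begin{equation*}
\E{e^{j\theta\epsilon_\eta\barz_\eta}\left(1 - e^{j\theta\epsilon_\eta\Delta_\eta(\barz_\eta)}\right)} = 0.
\end{equation*}
Expanding the inner exponential to third order in Taylor series, controlling the remainder uniformly via $|\Delta_\eta|\le 2A_{\max}$, and substituting $\E{\Delta_\eta(z)\mid z} = \epsilon_\eta(\phi^c(z/\tau_\eta) - \phi^s(z/\tau_\eta))$ together with $\E{\Delta_\eta^2(z)\mid z} = \sigma^c(\lambda_\eta(z)) + \sigma^s(\mu_\eta(z)) + O(\epsilon_\eta^2)$, and dividing by $\epsilon_\eta^2$, yields
\begin{equation*}
-j\theta\,\E{e^{j\theta\epsilon_\eta\barz_\eta}\!\left(\phi^c(\barz_\eta/\tau_\eta) - \phi^s(\barz_\eta/\tau_\eta)\right)} + \frac{\theta^2}{2}\,\E{e^{j\theta\epsilon_\eta\barz_\eta}\!\left(\sigma^c(\lambda_\eta(\barz_\eta)) + \sigma^s(\mu_\eta(\barz_\eta))\right)} = O(\epsilon_\eta).
\end{equation*}
Along any subsequence with $\epsilon_\eta\barz_\eta\overset{D}{\rightarrow} X$, the second expectation converges to $(\theta^2 C/2)\psi_X(\theta)$ by the uniform convergences $\lambda_\eta\rightarrow\lambda^\star$ and $\mu_\eta\rightarrow\mu^\star$ (since $\phi^c,\phi^s$ are bounded) together with continuity of $\sigma^c,\sigma^s$.

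The crux is the limit of the first expectation, where the quality-driven scaling enters. Since $\barz_\eta/\tau_\eta = (\epsilon_\eta\barz_\eta)/(\epsilon_\eta\tau_\eta)$ and $\epsilon_\eta\tau_\eta\rightarrow 0$, for each fixed $\delta > 0$ the event $\{|\epsilon_\eta\barz_\eta|>\delta\}$ forces $|\barz_\eta/\tau_\eta|\rightarrow\infty$ uniformly; Condition \ref{ass: symmetry} then implies $\phi^c(\barz_\eta/\tau_\eta) - \phi^s(\barz_\eta/\tau_\eta)\rightarrow -\textrm{sgn}(\barz_\eta)$ uniformly on that event. Splitting the expectation along $\{|\epsilon_\eta\barz_\eta|>\delta\}$ and its complement (the latter bounded by $2\phi_{\max}\P{|\epsilon_\eta\barz_\eta|\le\delta}$), applying Portmanteau to the former at continuity points $\delta$ of $|X|$, and letting $\delta\downarrow 0$ gives the limiting identity
\begin{equation*}
j\theta\,\E{e^{j\theta X}\textrm{sgn}(X)} + \frac{\theta^2 C}{2}\,\psi_X(\theta) = 0 \qquad \textrm{for all } \theta \in \bbR.
\end{equation*}
Direct substitution verifies that $Laplace(0,C/2)$ satisfies this identity; uniqueness follows by inverse-Fourier transforming to the distributional ODE $(\textrm{sgn}(x)f(x))' + (C/2)f''(x) = 0$, whose only probability-density solution in $L^1(\bbR)$ is $f(x) = (1/C)e^{-2|x|/C}$.

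The principal obstacle is precisely this limit interchange, specifically the possibility that $X$ places positive mass at $0$ where $\phi^c - \phi^s$ has no unambiguous limit. One route is to rule this out \emph{a priori} by observing that the one-step increments of $\epsilon_\eta\barz_\eta$ are of size $O(\epsilon_\eta A_{\max})\rightarrow 0$, so the weak limit must be diffuse. Alternatively, one admits an atom of mass $p := \P{X=0}$ into the limit identity and observes that the polynomial-in-$\theta$ correction $(j\theta\,\textrm{sgn}(0) + \theta^2 C/2)\,p$ it introduces is incompatible with the Riemann--Lebesgue decay of the remaining Fourier terms, forcing $p = 0$. Either way, $Laplace(0,C/2)$ is the unique solution, which completes the identification and hence the proof.
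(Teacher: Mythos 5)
Your route differs from the paper's: you work directly with $e^{j\theta\epsilon_\eta z}$ and try to solve the resulting implicit equation by inverse Fourier transform, whereas the paper deliberately avoids this by using $e^{j\omega\epsilon_\eta|z|}$ and $\textrm{sgn}(z)e^{j\omega\epsilon_\eta|z|}$ to obtain \emph{explicit} limiting characteristic functions for $|\barz_\eta|$ and the sign correction separately; indeed the paper remarks that the inverse Fourier method would yield an implicit equation with $g=\textrm{sgn}$, which violates Condition~\ref{ass: c_pol}. Your uniqueness step is in fact salvageable (the distributional ODE forces symmetry through the $\delta_0'$ term produced by $f''$), but you have a genuine gap at the very convergence step you yourself flagged.

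Concretely, you need $\E{e^{j\theta\epsilon_\eta\barz_\eta}\left(\phi^c(\barz_\eta/\tau_\eta)-\phi^s(\barz_\eta/\tau_\eta)\right)}\to -\E{e^{j\theta X}\textrm{sgn}(X)}$, which requires a uniform anti-concentration estimate $\P{|\barz_\eta|\le K\tau_\eta}\to 0$ for each fixed $K$, and neither of your two arguments supplies it. Argument~(a) --- that the increments of $\epsilon_\eta\barz_\eta$ are $O(\epsilon_\eta A_{\max})$, so the weak limit must be diffuse --- is false for stationary chains: a birth--death chain with $p_{k,k+1}=\epsilon$ and $p_{k,k-1}=1/2$ for $k\ge 1$ has stationary mass $1-2\epsilon$ at the origin, so the scaled queue converges to $\delta_0$ despite $O(\epsilon)$ increments of the scaled process. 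Argument~(b) --- that an atom at $0$ produces a clean polynomial-in-$\theta$ correction incompatible with Riemann--Lebesgue decay --- is also unsound, because mass concentrating near $0$ at the scale $|\barz_\eta|\lesssim K\tau_\eta$ picks up the values of $\phi^c-\phi^s$ at all intermediate scales, not the single constant $\phi^c(0)-\phi^s(0)$ or $\textrm{sgn}(0)$; the pre-limit contribution need not tidy into $(j\theta\,\textrm{sgn}(0)+\theta^2 C/2)p$. The paper fills exactly this hole with Lemma~\ref{lemma: anti_markov_inequality}: a drift argument with the test function $z\mathbbm{1}\{\textrm{sgn}(x)z>\textrm{sgn}(x)x\}$ combined with Assumption~\ref{ass: irreducibility} gives $\P{\barz_\eta=x}\le 2\epsilon_\eta\phi_{\max}/p_{\min}$ for every state $x$, hence by a union bound $\P{|\barz_\eta|\le K\tau_\eta}\le \tfrac{4K\phi_{\max}}{p_{\min}}\epsilon_\eta(\tau_\eta+1)\to 0$ in the quality-driven regime. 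Without this estimate (or an equivalent), your limit interchange, and therefore the limiting identity you then solve, is unjustified.
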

Observe that when $|\pfix{\step}(1)| = |\pfix{\step}(-1)|$, then, the above simplifies to a symmetric Laplace distribution as in Proposition~\ref{prop: bernoulli}.

Next, we consider the cost-driven regime. Note that the limiting distribution of $\barz_\eta/\tau_\eta$ is $\operatorname{Gibbs}(g_{l,1})$ in the hybrid regime. Now, if we let $l \rightarrow \infty$, then the PDF of $\operatorname{Gibbs}(g_{l,1})$ will vanish everywhere except where $\int_0^x g_{l,1}(t) dt$ attains its minimum. In particular, \revcolor{let $\Phi^\star$ be the Borel measurable set defined as $\Phi^\star \overset{\Delta}{=} \arg\min_{x \in \bbR}\int_0^x g_{l,1}(t) dt$. Note that $\Phi^\star$ only depends on $(\phi^c, \phi^s)$ and is independent of $l$ due to how $g_{l, 1}$ is defined in \eqref{eq: gibbs_distribution}. We later (Condition~\ref{cond: monotonicity}) impose enough structure on $(\phi^c, \phi^s)$ to ensure that $\Phi^\star$ is non-empty with a finite Lebesgue measure $|\Phi^\star|$. We have}
\begin{align*}
    \frac{e^{\pfix{-\int_0^x g_{l,1}(t) dt}}}{\int_{-\infty}^\infty e^{\pfix{-\int_0^x g_{l,1}(t) dt}} dx} \rightarrow \frac{1}{|\Phi^\star|}\mathbbm{1}\{x \in \Phi^\star\}  \quad \textit{as } l \rightarrow \infty. \numberthis \label{eq: uniform_heuristics}
\end{align*}
\revcolor{This is the same as soft-max (of $-l\int_0^x g_{l, 1}(t) dt$) converging to the exact maximum in the limit as $l \rightarrow \infty$.} Thus, we expect the limiting distribution of $\barz_\eta/\tau_\eta$ to be uniform over the set of arguments that minimizes $\int_0^x (\phi^s(t)-\phi^c(t)) dt$. The case of $\lambda_\eta^\star \neq \mu^\star$ is non-trivial in this regime as the drift for $\barz_\eta/\tau_\eta \notin \Phi^\star$ is $\Theta(\epsilon_\eta)$ and for $\barz_\eta/\tau_\eta \in \Phi^\star$ is $|\lambda_\eta^\star-\mu^\star| = \Theta(1/\tau_\eta)$. As $\epsilon_\eta \gg 1/\tau_\eta$, the probability mass outside $\Phi^\star$ vanishes. Within $\Phi^\star$, we \pfix{obtain either} a Uniform distribution if $\lambda_\eta^\star=\mu^\star$ as in \eqref{eq: uniform_heuristics} \pfix{or a} truncated exponential if $|\lambda_\eta^\star-\mu^\star| = \Theta(1/\tau_\eta)$. Due to technical reasons, we consider a class of control curves satisfying the following conditions, which ensures that the minimum is attained over a unique interval.
\begin{condition}[Monotonicity] \label{cond: monotonicity}
Function $\phi^c(\cdot)$ is non-increasing and $\phi^s(\cdot)$ is non-decreasing. We define $\Phi^\star \overset{\Delta}{=}\left\{x: \phi^c(x)-\phi^s(x)=0\right\}$ and assume $0 \in \Phi^\star$ without loss of generality. We also assume $\phi^c(x)=\phi^s(x) = 0$ for all $x \in \Phi^\star$.
\end{condition}
Intuitively, as the number of customers increases relative to the servers, the arrival rate of customers defined by $\phi^c(\cdot)$ should be non-increasing, and the arrival rate of servers defined by $\phi^s(\cdot)$ should be non-decreasing. This motivates the monotonicity condition which implies the existence of a unique interval for which $(\phi^c(\cdot) - \phi^s(\cdot)) = 0$. \revcolor{We also assume that $\phi^c(x)=\phi^s(x)=0$ for all $x \in \Phi^\star$. In particular, as $\phi^c$ is non-increasing and $\phi^s$ is non-decreasing, there exists $c^\star \in \bbR$ such that $\phi^c(x) = \phi^s(x) = c^\star$ for all $x \in \Phi^\star$. To ensure that the arrival rates defined in \eqref{eq: general_pricing_policy} \pfix{are} not constrained by the maximum and minimum arrival rates $\lambda_{\max}, \mu_{\max}, \lambda_{\min}, \mu_{\min}$, we require $\mu^\star + c^\star \lim_{\eta \rightarrow \infty} \epsilon_\eta \in (\max\{\lambda_{\min}, \mu_{\min}\}, \min\{\lambda_{\max}, \mu_{\max}\})$. We ensure this condition is met by assuming $c^\star = 0$ in Condition~\ref{cond: monotonicity} for simplicity. This assumption is not strong as one could subsume $c^\star \lim_{\eta \rightarrow \infty} \epsilon_\eta$ in the exogenous arrival rate.} Note that $\Phi^\star$ is exactly the set that minimizes $\int_0^x (\phi^s(t)-\phi^c(t)) dt$. By Condition~\ref{ass: neg_drift}, $\Phi^\star \subseteq [-K, K]$ which ensures $|\Phi^\star|< \infty$. Now, we present the result below which extends the Case III of Proposition \ref{prop: bernoulli}.
\begin{theorem}[Cost-Driven Regime] \label{theo: profit_driven_regime_3} Let $\{\epsilon_\eta\}_{\eta>0}$ and $\{\tau_\eta\}_{\eta>0}$ be such that $\lim_{\eta \uparrow \infty}\epsilon_\eta \tau_\eta=\infty$ \revcolor{and $\lambda^\star_\eta = \mu^\star - \drift/\tau_\eta$ for some $\drift \in \bbR$. Also, let $\eta_0 > 0$ be such that $\tau_\eta \epsilon_\eta \geq 2\drift/\delta$ for all $\eta \geq \eta_0$.} Consider the positive recurrent  DTMC $\{z_\eta(k): k \in \bbZ_+\}$ for any \revcolor{$\eta \geq \max\{\eta_0, \eta_p\}$} and for any $\phi^c(\cdot)$ and $\phi^s(\cdot)$ satisfying Conditions \ref{ass: neg_drift} and \ref{ass:poly_growth} and let $\barz_\eta$ denote its steady state random variable. If in addition, Conditions \ref{ass: irreducibility} and \ref{cond: monotonicity} are satisfied, then, we have the following.
\begin{itemize}
    \item If $\Phi^\star \backslash \{0\} \neq \emptyset$ and $\drift = 0$, then as $\eta \uparrow \infty$, we have
\begin{align*}
    \frac{\barz_\eta}{\tau_\eta} \overset{D}{\rightarrow} \operatorname{Uniform}(\Phi^\star).
\end{align*} 
    \item Let $\sigma^\star = \sigma^c\left(\mu^\star\right) + \sigma^s\left(\mu^\star\right)$. If $\Phi^\star \backslash \{0\} \neq \emptyset$ and $\drift \neq 0$, then as $\eta \uparrow \infty$, we have
\begin{align*}
    \frac{\barz_\eta}{\tau_\eta} \overset{D}{\rightarrow} \operatorname{TrunExp}\left(\frac{2d}{\sigma^\star}, \Phi^\star\right). \numberthis \label{eq: trun_expo}
\end{align*} 
\item If $\Phi^\star = \{0\}$, then as $\eta \uparrow \infty$, we have
\begin{align*}
    \frac{\barz_\eta}{\tau_\eta} \overset{D}{\rightarrow} 0. 
\end{align*}
In other words, $\lim_{\eta \uparrow \infty}\P{\frac{|\barz_\eta|}{\tau_\eta} \geq \tilde{\epsilon}} = 0$ for all $\tilde{\epsilon} > 0$.
\end{itemize}
\end{theorem}
When $\Phi^\star$ is an interval, the limiting distribution is Uniform in this interval as in Proposition~\ref{prop: bernoulli} (Case 3). However, if $\Phi^\star$ is a singleton, then, we obtain a Dirac-Delta distribution at this point. Lastly, if $\drift\neq 0$, then, we introduce an additional drift of $\Theta(1/\tau_\eta)$ and so, the distribution within $\Phi^\star$ is not uniform anymore. As the drift is constant within $\Phi^\star$, the limiting distribution is truncated exponential as shown in \eqref{eq: trun_expo}.

With the above result, the limiting distribution of imbalance is characterized for all three regimes - delay-driven, hybrid, and cost-driven for a large class of control curves as summarized in Table~\ref{tab:req_cond}.
\begin{table}[bth!]
\TABLE{Summary of the three regimes and the required conditions on the control curves \label{tab:req_cond}}{
    \centering
    \begin{tabular}{|c|c|} \hline
        Regime & Required Conditions \\ \hline
        Positive Recurrence (all regimes) & C\ref{ass: neg_drift} (Negative drift), C\ref{ass:poly_growth} (sub-Quadratic Growth) \\ \hline 
        Delay-Driven $(\epsilon_\eta \tau_\eta \to 0)$ &  C\ref{cond:limits} (Finite Limits), C\ref{ass: irreducibility} ($\pm$ Jumps) \\ \hline
        Hybrid $(\epsilon_\eta \tau_\eta \to (0,\infty))$ &  C\ref{ass: c_pol} (Smoothness) \\ \hline
        Cost-Driven $(\epsilon_\eta \tau_\eta \to \pfix{\infty})$ &  C\ref{ass: irreducibility} ($\pm$ Jumps), C\ref{cond: monotonicity} (Monotonicity) \\ \hline
    \end{tabular}
}{}
\end{table}
The advantage of considering a heavy traffic limit is apparent now. It allows us to consider general arrival distributions and general control policies while embodying the first-order effect of state-dependent control on the imbalance. In particular, even though the control vanishes in the limit, the limiting distribution depends explicitly on the control curves $(\phi^c, \phi^s)$. Also, the limiting distribution is considerably generalized in all three regimes as compared to the two-price policy and we continue to observe a phase transition. \revcolor{Lastly, we  point out that all our results are valid for settings where it is \pfix{possible to apply the control on only} one of the sides of the market, i.e., either $\phi^c \equiv 0$ or $\pfix{\phi^s} \equiv 0$.} Now, we present the details of the proofs in the next few sections.
\section{Proof of Theorem \ref{theo: case_2_itm}: Inverse Fourier Transform Method} \label{sec: proof_itm}
The proof is based on the novel inverse Fourier transform method. We first outline our proof methodology and then follow it up with the details of the proof. 
\subsection{Overview of the Proof} The proof is divided into three steps as outlined below.
\revcolor{\subsubsection{Step 1: Tightness} \label{sec:tightness}
The first step is to establish the tightness of the family of random variables $\Pi\overset{\Delta}{=}\{\epsilon_\eta \barz_\eta: \eta>0\}$, which can be easily established using the bound on the imbalance in \eqref{eq: imbalance_expectation} of Proposition \ref{prop: pos_rec}.
\begin{lemma} \label{lemma: tightness}
Under Condition \ref{ass: neg_drift}, for the choice of $\epsilon_\eta$ and $\tau_\eta$ such that $\epsilon_\eta \tau_\eta \rightarrow l \in (0, \infty)$ as $\eta \uparrow \infty$, the family of random variables $\Pi=\left\{\epsilon_\eta \barz_\eta \right\}$ is tight. 
\end{lemma}
 The proof of the Lemma is presented in Appendix \ref{app: tightness}. By \citet[Theorem 5.1]{billingsley2013convergence}, $\Pi$ is relatively compact. Thus, for any sequence in the family $\Pi$, there exists a sub-sequence that converges in distribution.}
 Denote by $\zinfty$ the limit of this convergent sub-sequence. We work with this sub-sequence in the further subsections.
\subsubsection{Step 2: Drift Analysis}
The key idea in the proof is to use $e^{j\epsilon_\eta \omega z}$ as the test function for $\omega \in \bbR$ and set its drift to zero in the steady state. \revcolor{Note that $j = \sqrt{-1}$ as defined in Section~\ref{sec:notation}.} While this step is similar to the transform method of \citet{hurtado2020transform}, the key challenge is that when we let $\eta \uparrow \infty$, we do not get an explicit expression for the characteristic function of $\zinfty$. We instead get that the limit of every convergent sub-sequence satisfies the following implicit equation:
\begin{lemma} Under the same setup as in Theorem \ref{theo: case_2_itm}, we have \label{lemma: drift_analysis}
\begin{align*}
    \E{e^{j\omega \zinfty}g_{1,l}(\zinfty)} = j\omega \E{e^{j\omega \zinfty}} \quad \forall \omega \in \bbR. \numberthis \label{eq:IFT_eqn}
\end{align*}
\end{lemma}
The existence of a limit (of the sub-sequence) from Step 1 plays a crucial role in obtaining this equation. 
Suppose we show that there is a \textit{unique} distribution that solves this equation, then
using standard arguments on convergence, it follows that the family $\Pi$ also converges to the same distribution. The details of the proof are presented in Section \ref{sec: drift_gibbs}.
\subsubsection{Step 3: Solving the Functional Equation} \label{sec:ift}
To complete the proof, we show that \eqref{eq:IFT_eqn} has  $\operatorname{Gibbs}(g_{1,l})$ distribution as the unique solution.
\begin{lemma} \label{lemma: distribution_function}
Let $\zinfty$ be a random variable and $g_{1, l} \in \calC_{pol}^\infty(\bbR)$ be such that
\begin{align} \label{eq: functional_equation_mgf}
    \E{e^{j\omega \zinfty}g_{1, l}(\zinfty)} = j\omega\E{e^{j\omega \zinfty}} \quad \forall \omega \in \bbR.
\end{align}
Then $\zinfty$ has $\operatorname{Gibbs}(g_{1,l})$ distribution.
\end{lemma}
The above lemma generalizes Stein's lemma that asserts that $\zinfty$ has a Gaussian distribution when $g_{1,l}(z)=z$. In that case, \eqref{eq: functional_equation_mgf} can be viewed as the generator of the OU process applied to complex exponential functions. In more generality, \eqref{eq: functional_equation_mgf} is the same as applying the generator of Langevin dynamics (w.r.t. $g_{1,l}$) to complex exponential functions. So naturally, the distribution of $\zinfty$ is Gibbs which is same as the stationary distribution of the Langevin dynamics. Now, we provide a short proof sketch below:

\textbf{Step 3a: Inverse Fourier Transform:} Suppose that $\zinfty$ has a continuously differentiable PDF $\rho_{\zinfty}$, whose Fourier transform exists. Then, by substituting $-\omega$ for $\omega$, \eqref{eq: functional_equation_mgf} can be interpreted as 
\begin{align*}
    \calF\left(\rho_{\zinfty}g_{1,l}+ \rho_{\zinfty}'\right)=0,
\end{align*}
since the differentiation theorem of Fourier transform gives $j \omega\calF(\rho_{\zinfty})=\calF\left(\rho_{\zinfty}'\right)$. Applying the inverse Fourier transform, we get the differential equation,
\begin{align*}
    \rho_{\zinfty}g_{1,l}+ \rho_{\zinfty}'=0. \numberthis \label{eq:sketch_diff_eqn_pdf}
\end{align*}
\textbf{Step 3b: Solving the Differential Equation:} It is straightforward to solve the above differential equation to obtain that $\zinfty$ has the Gibbs distribution. However, one cannot assume that $\zinfty$  exhibits a PDF. We use the theory of inverse Fourier transforms based on Schwartz functions to make the above argument formal without assuming the existence of a PDF. We do this by interpreting the distribution of $\zinfty$ as a `generalized' function defined as a linear functional on the Schwartz space. This interpretation leads to a differential equation in terms of the generalized function which can then be solved to obtain the distribution of $\zinfty$, albeit with additional technical overhead. The details of the proof and background on Schwartz functions are provided in Section~\ref{sec:schwartz_lemma}.

\subsection{Proof of Theorem \ref{theo: case_2_itm}}
\proof{Proof of Theorem \ref{theo: case_2_itm}}
\textbf{Step 1:} By Proposition \ref{prop: pos_rec}, the DTMC $\{z_\eta(k) : k \in \bbZ_+\}$ is positive recurrent for all \revcolor{$\eta>\eta_p$}. Consider an arbitrary sequence in the family of random variables $\Pi$ and a sub-sequence that converges to a random variable $\zinfty$ in distribution. The existence of such a sub-sequence is guaranteed by Lemma \ref{lemma: tightness}.

\textbf{Step 2:} By Lemma \ref{lemma: drift_analysis}, we have
\begin{align*}
    \E{e^{j\omega \zinfty}\pfix{g_{1,l}}(\zinfty)}=j\omega \E{e^{j\omega \zinfty}}.
\end{align*}

\textbf{Step 3:} By using Condition \ref{ass: c_pol}, we have $g_{1,l}(\cdot) \in \mathcal{C}_{\textrm{pol}}^{\infty}(\bbR)$. Thus, by Lemma \ref{lemma: distribution_function}, the above equation has $\operatorname{Gibbs}(g_{1,l})$ as the unique solution. Thus, $\zinfty$ has  $\operatorname{Gibbs}(g_{1,l})$ distribution.

Now, consider any sequence in $\Pi$. Any sub-sequence of this sequence has a further sub-sequence that converges to $\zinfty$ in distribution. Thus, by \citet[Theorem 2.6]{billingsley2013convergence}, any sequence in $\Pi$, converges to $\zinfty$ in distribution. Finally, note that, by Slutsky's Theorem, we have
\begin{align*}
    \frac{\barz_\eta}{\tau_\eta}=\frac{\epsilon_\eta \barz_\eta}{\epsilon_\eta \tau_\eta} \overset{D}{\rightarrow} \frac{\zinfty}{l}.
\end{align*}
Thus, as $\zinfty \sim \operatorname{Gibbs}(g_{1,l})$, by simple variable substitution, we have $\zinfty/l \sim \operatorname{Gibbs}(g_{l,1})$ which implies that $\barz_\eta/\tau_\eta \overset{D}{\rightarrow} \operatorname{Gibbs}(g_{l,1})$. This completes the proof. \hfill $\square$
\endproof
The transform method was first introduced by \citet{hurtado2020transform}, and was used to study queues under static arrival rates. Consequently, \citet{hurtado2020transform} directly obtains a closed-form expression for the characteristic function of the limiting distribution which immediately establishes convergence in distribution to an exponential distribution. In contrast, due to the dynamic arrivals, we obtain an \emph{implicit} equation \eqref{eq:IFT_eqn}. A major methodological contribution in this section is the introduction of the use of inverse Fourier transform to establish the uniqueness of the solution of the implicit equation. Moreover, due to the implicit equation, we have to separately establish the guarantee that our family converges in distribution. We believe that our proposed method will enable one to use transform techniques in a large class of stochastic networks beyond the ones studied by \citet{hurtado2020transform}. 
\subsection{Proof of Lemma~\ref{lemma: drift_analysis}} \label{sec: drift_gibbs}
We consider $e^{j \omega \epsilon_\eta z}$ as the test function and set its drift to zero in the steady state. First, we present the evolution equation for the imbalance in the steady state. Denote by $a^c_\eta(\barz_\eta)$ and $a^s_\eta(\barz_\eta)$ \pfix{the} effective arrivals in the steady state. The imbalance after one transition $\barz_\eta^+$ is governed by the following evolution equation:
\begin{align}
    \barz_\eta^+=\barz_\eta+a^c_\eta(\barz_\eta)-a^s_\eta(\barz_\eta). \label{eq: imbalance_update_steady_state}
\end{align}
We formally define the drift of a test function $V(z)$. This is similar to the definition of \citet[Definition 1]{hurtado2020transform} and we present it below for completeness.
\begin{definition}[Drift of a function] Let $V: \bbR \rightarrow \bbC$ be a function. We define the drift of $V$ at $z$ for $\eta>0$ as
\revcolor{\begin{align*}
    \Delta V(z, \eta)=V\left(z + a^c_\eta(z)-a^s_\eta(z)\right)-V(z).
\end{align*}}
If $\E{|V(\barz_\eta)|}<\infty$, then we say that we set the drift of $V$ to zero when we use the property
\begin{align*}
    \E{\Delta V(\barz_\eta, \eta)}=\E{V(\barz^+_\eta)-V(\barz_\eta)}=0.
\end{align*}
\end{definition}
Now we are ready to prove the lemma.
\proof{Proof of Lemma \ref{lemma: drift_analysis}}
For $\omega \in \bbR$, we define the test function $W(z)\overset{\Delta}{=}e^{j\epsilon_\eta \omega z}$. We now analyze its drift in the steady state.
\begin{align*}
   \lefteqn{ \E{\Delta W(\barz_\eta, \eta)}} \\ ={}&\E{e^{j\epsilon_\eta \omega \barz_\eta^+}-e^{j\epsilon_\eta \omega \barz_\eta}} \\
    ={}&\E{e^{j\epsilon_\eta \omega \left(\barz_\eta+a_{\eta}^c(\barz_\eta)-a_{\eta}^s(\barz_\eta)\right)}-e^{j\epsilon_\eta \omega \barz_\eta}} \\
    ={}&\E{e^{j\epsilon_\eta \omega \barz_\eta}\left(e^{j\epsilon_\eta \omega\left(a_{\eta}^c(\barz_\eta)-a_{\eta}^s(\barz_\eta)\right)}-1\right)} \\
    \overset{(a)}{=}{}&\E{e^{j\epsilon_\eta \omega \barz_\eta}\left(j\epsilon_\eta \omega \left(a_{\eta}^c(\barz_\eta)-a_{\eta}^s(\barz_\eta)\right)-\frac{1}{2}\epsilon_\eta^2\omega^2\left(a_{\eta}^c(\barz_\eta)-a_{\eta}^s(\barz_\eta)\right)^2\right)}+o(\epsilon_\eta^2) \\
    \overset{(b)}{=}{}&j\epsilon_\eta^2\omega\E{e^{j\epsilon_\eta \omega \barz_\eta}\left(\phi^c\left(\frac{\barz_\eta}{\tau_\eta}\right)-\phi^s\left(\frac{\barz_\eta}{\tau_\eta}\right)\right)}-\frac{1}{2}\epsilon_\eta^2\omega^2 \E{e^{j\epsilon_\eta \omega \barz_\eta}\E{\left(a_{\eta}^c(\barz_\eta)-a_{\eta}^s(\barz_\eta)\right)^2| \barz_\eta}}\\
    &+o(\epsilon_\eta^2) \\
    \overset{(c)}{=}{}&j\epsilon_\eta^2\omega\E{e^{j\epsilon_\eta \omega \barz_\eta}\left(\phi^c\left(\frac{\barz_\eta}{\tau_\eta}\right)-\phi^s\left(\frac{\barz_\eta}{\tau_\eta}\right)\right)}-\frac{1}{2}\epsilon_\eta^2\omega^2 \E{e^{j\epsilon_\eta \omega \barz_\eta}\left(\sigma^c(\lambda_\eta(\barz_\eta))+\sigma^s(\mu_\eta(\barz_\eta))\right)}\\
    &+o(\epsilon_\eta^2) 
\end{align*}
\revcolor{where $(a)$ follows by Taylor's Theorem,
$\E{|a^c(z)|^3}, \E{|a^s(z)|^3} \leq \moment$, $|e^{j\epsilon_\eta \omega \barz_\eta}| \leq 1$, and Lemma \ref{lemma: taylor_series} in Appendix~\ref{app:taylor_series}. Next, $(b)$ holds by simplifying the first term in the RHS as follows:
\begin{align*}
    \E{e^{j \epsilon_\eta \omega \barz_\eta}a_\eta^c(\barz_\eta)} ={}&  \E{e^{j \epsilon_\eta \omega \barz_\eta}\text{clip}\left(\lambda^\star + \epsilon_\eta \phi^c\left(\frac{\barz_\eta}{\tau_\eta}\right), \lambda_{\max}, \lambda_{\min}\right)} \\
    ={}& \E{e^{j \epsilon_\eta \omega \barz_\eta}\left(\lambda^\star + \epsilon_\eta \phi^c\left(\frac{\barz_\eta}{\tau_\eta}\right)\right)} \\
    &+ \E{e^{j \epsilon_\eta \omega \barz_\eta}\left(\text{clip}\left(\lambda^\star + \epsilon_\eta \phi^c\left(\frac{\barz_\eta}{\tau_\eta}\right), \lambda_{\max}, \lambda_{\min}\right)-\left(\lambda^\star + \epsilon_\eta \phi^c\left(\frac{\barz_\eta}{\tau_\eta}\right)\right)\right)} \\
    ={}& \E{e^{j \epsilon_\eta \omega \barz_\eta}\left(\lambda^\star + \epsilon_\eta \phi^c\left(\frac{\barz_\eta}{\tau_\eta}\right)\right)} + o(\epsilon_\eta) 
\end{align*}
where the last equality holds due to the following:
\begin{align*}
    &\bigg|\E{e^{j \epsilon_\eta \omega \barz_\eta}\left(\text{clip}\left(\lambda^\star + \epsilon_\eta \phi^c\left(\frac{\barz_\eta}{\tau_\eta}\right), \lambda_{\max}, \lambda_{\min}\right)-\left(\lambda^\star + \epsilon_\eta \phi^c\left(\frac{\barz_\eta}{\tau_\eta}\right)\right)\right)}\bigg| \\
    \leq{}& \E{\bigg|\text{clip}\left(\lambda^\star + \epsilon_\eta \phi^c\left(\frac{\barz_\eta}{\tau_\eta}\right), \lambda_{\max}, \lambda_{\min}\right)-\left(\lambda^\star + \epsilon_\eta \phi^c\left(\frac{\barz_\eta}{\tau_\eta}\right)\right)\bigg|} \\
    \leq{}& \epsilon_\eta \E{\phi^c\left(\frac{\barz_\eta}{\tau_\eta}\right)\mathbbm{1}\left\{\phi^c\left(\frac{\barz_\eta}{\tau_\eta}\right) > \frac{\lambda_{\max}-\lambda^\star}{\epsilon_\eta}\right\}} - \epsilon_\eta \E{\phi^c\left(\frac{\barz_\eta}{\tau_\eta}\right)\mathbbm{1}\left\{\phi^c\left(\frac{\barz_\eta}{\tau_\eta}\right) < \frac{\lambda_{\min}-\lambda^\star}{\epsilon_\eta}\right\}} \\
    \overset{(d)}{\leq}{}& 2\epsilon_\eta \E{\left(\frac{|\barz_\eta|}{\tau_\eta}\right)^\beta\mathbbm{1}\left\{\left(\frac{|\barz_\eta|}{\tau_\eta}\right)^\beta > \frac{\min\left\{\lambda_{\max}-\lambda^\star, \lambda^\star-\lambda_{\min}\right\}}{\epsilon_\eta}\right\}} \\
    \overset{(e)}{\leq}{}& 2\epsilon_\eta\E{\left(\frac{|\barz_\eta|}{\tau_\eta}\right)^2}^{\beta/2} \P{\left(\frac{|\barz_\eta|}{\tau_\eta}\right)^\beta > \frac{\min\left\{\lambda_{\max}-\lambda^\star, \lambda^\star-\lambda_{\min}\right\}}{\epsilon_\eta}}^{1-\beta/2} \\
    \overset{(f)}{\leq}{}& \frac{2\epsilon_\eta^{2/\beta}}{\min\left\{\lambda_{\max}-\lambda^\star, \lambda^\star-\lambda_{\min}\right\}^{2/\beta-1}}\E{\left(\frac{|\barz_\eta|}{\tau_\eta}\right)^2} \overset{(g)}{=} o(\epsilon_\eta), \numberthis 
\end{align*}
where $(d)$ follows by Condition~\ref{ass:poly_growth}. Next, $(e)$ holds by H\"older's inequality with $p=2/\beta$ and $q=2/(2-\beta)$. Now, $(f)$ follows by Markov's inequality and $(g)$ follows as $\beta<2$ by Condition~\ref{ass:poly_growth} and $\E{\barz_\eta^2} = O(\tau_\eta^2)$ by \eqref{eq: imbalance_second_moment} in Proposition~\ref{prop: pos_rec}.}

\revcolor{Next, $(c)$ follows by the tower property of expectation and using the definition of arrivals. We have
\begin{align*}
    \E{(a_{\eta}^c(\barz_\eta)-a_{\eta}^s(\barz_\eta))^2 | \barz_\eta}&=\Var{a_{\eta}^c(\barz_\eta) | \barz_\eta}+\Var{a_{\eta}^s(\barz_\eta) | \barz_\eta}+\E{a_{\eta}^c(\barz_\eta)-a_{\eta}^s(\barz_\eta)| \barz_\eta}^2 \\
    &=\sigma^c(\lambda_\eta(\barz_\eta))+\sigma^s(\mu_\eta(\barz_\eta))+  \left(\lambda_\eta(\barz_\eta)-\mu_\eta(\barz_\eta)\right)^2, 
\end{align*}
Thus, by the tower property, we have
\begin{align*}
    &\bigg|\E{e^{j\epsilon_\eta \omega \barz_\eta}\left(a_\eta^c(\barz_\eta)-a^s_\eta(\barz_\eta)\right)^2} - \E{e^{j\epsilon_\eta \omega \barz_\eta}\left(\sigma^c(\lambda_{\pfix{\eta}}(\barz_\eta))+\sigma^s(\mu_{\pfix{\eta}}(\barz_\eta))\right)}\bigg| \\
    \leq{}& \E{\left(\lambda_\eta(\barz_\eta)-\mu_\eta(\barz_\eta)\right)^2} \\
    ={}& \E{\left(\lambda_\eta(\barz_\eta)-\mu_\eta(\barz_\eta)\right)^2\mathbbm{1}\left\{\frac{|\barz_\eta|}{\tau_\eta} > \frac{1}{\epsilon_\eta^{1/4}}\right\}} + \E{\left(\lambda_\eta(\barz_\eta)-\mu_\eta(\barz_\eta)\right)^2\mathbbm{1}\left\{\frac{|\barz_\eta|}{\tau_\eta} \leq \frac{1}{\epsilon_\eta^{1/4}}\right\}} \\
    \overset{*}{\leq}{}& \left(\lambda_{\max}+\mu_{\max}\right)^2 \P{\frac{|\barz_\eta|}{\tau_\eta} > \frac{1}{\epsilon_\eta^{1/4}}} + 4\epsilon_\eta \\
    \overset{**}{\leq}{}& \left(\lambda_{\max}+\mu_{\max}\right)^2 \epsilon_\eta^{1/4} \frac{\E{|\barz_\eta|}}{\tau_\eta} + 4\epsilon_\eta \overset{***}{=} o(1),
\end{align*}
where $(*)$ follows by \eqref{eq: general_pricing_policy} and Condition~\ref{ass:poly_growth}. In particular, the first term is upper bounded by noting that $\lambda_\eta(z) \leq \lambda_{\max}$ and $\mu_\eta(z) \leq \mu_{\max}$ for all $z \in \bbZ$. In addition, the second term is upper bounded by Condition~\ref{ass:poly_growth}. The bound $|z|/\tau_\eta \leq \epsilon_\eta^{-1/4}$ implies $|\phi^c(z/\tau_\eta)|, |\phi^s(z/\tau_\eta)| \leq \epsilon_\eta^{-1/2}$, so, $|\lambda_\eta(z)-\lambda^\star| \leq \sqrt{\epsilon_\eta}$ and $|\mu_\eta(z)-\mu^\star| \leq \sqrt{\epsilon_\eta}$ with $\lambda^\star=\mu^\star$. Next, $(**)$ follows by the Markov's inequality and $(***)$ holds as $\E{|\barz_\eta|} = O(\tau_\eta)$ by \eqref{eq: imbalance_expectation} in Proposition~\ref{prop: pos_rec}.}

Now, as $|e^{j\epsilon_\eta \omega z}|=1$ and so $\E{|W(\barz_\eta)|}<\infty$ is trivially true. By setting $\E{\Delta W(\barz_\eta, \eta)}=0$, dividing by $j\epsilon_\eta^2\omega$, and rearranging the terms, we get
\begin{align}
    &\E{e^{j\epsilon_\eta \omega \barz_\eta}\left(\phi^s\left(\frac{\barz_\eta}{\tau_\eta}\right)-\phi^c\left(\frac{\barz_\eta}{\tau_\eta}\right)\right)}=\frac{j\omega}{2}\left(\sigma^c(\lambda^\star)+\sigma^s(\mu^\star)\right) \E{e^{j\epsilon_\eta \omega \barz_\eta}}+o(1)\nonumber\\
    &+\frac{j\omega}{2} \E{e^{j\epsilon_\eta \omega \barz_\eta}\left(\sigma^c(\lambda_\eta(\barz_\eta))+\sigma^s(\mu_\eta(\barz_\eta))-\sigma^c(\lambda^\star)-\sigma^s(\mu^\star)\right)}. \label{eq: prelimit_functional_equation}
\end{align}
Now, taking the limit as $\eta \uparrow \infty$, the last term on the RHS disappears as $\epsilon_\eta \rightarrow 0$, and so $\lambda_\eta(\barz_\eta) \rightarrow \lambda^\star$ and $\mu_\eta(\barz_\eta) \rightarrow \mu^\star$. Also, as $\epsilon_\eta \tau_\eta \rightarrow l$, $\barz_\eta/\tau_\eta$ converges in distribution to $\zinfty/l$, and so, we expect a functional equation of the following form.
\begin{claim} \label{claim: case2_limiting_functional_equation}
By taking the limit as $\eta \uparrow \infty$ in \eqref{eq: prelimit_functional_equation}, we get
\begin{align}
    \E{e^{j\omega \zinfty}g_{1,l}(\zinfty)}=j\omega \E{e^{j\omega \zinfty}}. \nonumber
\end{align}
\end{claim}
However, this is not straightforward as one needs to show limits of functions as opposed to numbers due to the state-dependent control. The proof of the claim is presented in Appendix \ref{app: drift_analysis}. This completes the proof of the Lemma. \hfill $\square$
\endproof
\subsection{Proof of Lemma~\ref{lemma: distribution_function}} \label{sec:schwartz_lemma}
The proof of this lemma closely follows the steps mentioned in Section~\ref{sec:ift}. To make the proof rigorous without assuming the existence of a PDF, we resort to the theory of Schwartz functions, allowing one to generalize Fourier transforms to a broader class of functions.
\subsubsection{Preliminaries}
Before presenting the proof of Lemma \ref{lemma: distribution_function}, we brief on the theory of Schwartz functions. For more details, we refer the reader to the comprehensive texts of \citet[Chapter 7]{rudin1991functional}, and \citet[Chapter 11]{hunter2001applied}.
\begin{definition} The Schwartz space is a vector space given by
\begin{align*}
    \mathcal{S}(\bbR)=\left\{f: \bbR \rightarrow \bbC : f \in \mathcal{C}^\infty_{pol}(\bbR), ||f||_{\tilde{\alpha}, \tilde{\beta}}<\infty \ \forall \tilde{\alpha}, \tilde{\beta} \in \bbZ_+\right\}
\end{align*}
where $||f||_{\tilde{\alpha}, \tilde{\beta}}=\sup_{x \in \bbR} |x^{\tilde{\alpha}} f^{(\tilde{\beta})}(x)|$ and $f^{(\tilde{\beta})}(\cdot)$ is the $\tilde{\beta}\text{-th}$ derivative of $f(\cdot)$.
\end{definition}
Intuitively, Schwartz functions are infinitely differentiable, and each derivative decays faster than any polynomial. 
The Fourier transform of a Schwartz function is given by
\begin{align}
    \calF(\varphi)(x)=\hat{\varphi}(x)=\frac{1}{\sqrt{2\pi}} \int_\bbR \varphi(y)e^{-jxy} dy. \label{eq: fourier_schwartz}
\end{align}
In addition, the inverse Fourier transform also exists for a Schwartz function, and by Fourier inversion theorem, we have
\begin{align*}
    \calF^{-1}(\varphi)(x)=\check{\varphi}(x)=\frac{1}{\sqrt{2\pi}}\int_\bbR \varphi(y) e^{jxy} dy.
\end{align*}
We will need the following results about Schwartz functions which we present without proof.
\begin{proposition} \label{prop: bijection}
The Fourier transform is a bijection on $\calS(\bbR)$, e.g., see \citet[Theorem 7.7]{rudin1991functional}. 
\end{proposition}
Lastly, we also consider the smaller vector space $\calD(\bbR)$ of functions with bounded support defined as follows:
\begin{definition}
    The space of test functions $\calD(\bbR)$ is a vector space given by
    \begin{align*}
        \calD(\bbR) = \left\{f : \bbR \rightarrow \bbC : f \in \calC_{pol}^\infty(\bbR), \{f \neq 0\}   \textit{ is bounded}\right\}.
    \end{align*}
\end{definition}
\subsubsection{Step 3a: Inverse Fourier Transform}
For any function $\varphi$ such that $\E{|\varphi(\zinfty)|} < \infty$, and in particular, for $\varphi \in \calS(\bbR)$, let the expectation with respect to the probability measure defined by $\zinfty$ be
\begin{align*}
    \calT_{\zinfty}[\varphi]=\int_\bbR \varphi(x) dF_{\zinfty}(x)=\E{\varphi(\zinfty)}, \numberthis \label{eq: tempered_distributions}
\end{align*}
where $F_{\zinfty}$ is the CDF of $\zinfty$. Note that $\calT_{\zinfty}$ is a continuous linear functional on the Schwartz space and such operators are famously known as tempered distributions (e.g., see \citet{gates1956linear} and \citet{gates1952differential}). They were introduced by Laurent Schwartz to generalize the notion of functions which makes them amenable to analysis involving Fourier transforms. Now, we extend \eqref{eq: functional_equation_mgf} to hold for all $\varphi \in \calS(\bbR)$ in the form of a differential equation in $\calT_{\zinfty}$.
\begin{claim} \label{eq: tempered_dist_eqn}
    The functional equation \eqref{eq: functional_equation_mgf} implies $\calT_{\zinfty}[\hat{\varphi}^\prime-g_{1,l}\hat{\varphi}]=0$ for all $\varphi \in \calS(\bbR)$.
\end{claim}
The proof of the claim is deferred to Section~\ref{sec: claim_schwartz_before_fourier}. As Fourier transform is a bijection on $\calS(\bbR)$ (Proposition \ref{prop: bijection}), the above is equivalent to
\begin{align*}
    \calT_{\zinfty}[\varphi^\prime-g_{1,l}\varphi]=0 \quad \forall \varphi \in \calS(\bbR). \numberthis \label{eq: diff_eq_schwartz}
\end{align*}
In spirit, the above step is equivalent to performing inverse Fourier transform of the functional equation \eqref{eq: functional_equation_mgf}, and \eqref{eq: diff_eq_schwartz} is equivalent to the resultant differential equation \eqref{eq:sketch_diff_eqn_pdf}. In particular, we can view \eqref{eq: diff_eq_schwartz} as a differential equation of tempered distribution $\calT_{\zinfty}$. This completes the Step 3a.
\subsubsection{Step 3b: Solving the Differential Equation}
In this section, we solve \eqref{eq: diff_eq_schwartz} to show that $\operatorname{Gibbs}(g_{1,l})$ is its unique solution. Recall that $\calD(\bbR)$ is the space of infinitely differentiable functions with bounded support and define
\begin{align*}
    \tilde{\calD}(\bbR) = \left\{\psi \in \calD(\bbR) : \int_{-\infty}^{\infty} e^{-G(t)} \psi(t) dt = 0\right\},
\end{align*}
where $G(x) = \int_0^x g_{1,l}(t) dt$. Now, for any $\tilde{\psi} \in \tilde{D}(\bbR)$, define $\psi \in \calD(\bbR)$ as
\begin{align*}
    \psi(x) = e^{G(x)} \int_{-\infty}^x e^{-G(t)} \tilde{\psi}(t) dt.
\end{align*}
As $\tilde{\psi} \in \tilde{\calD}(\bbR)$ and $g_{1,l} \in \calC_{pol}^\infty(\bbR)$, it is easy to verify that $\psi \in \calD(\bbR) \subseteq \calS(\bbR)$. Now, by \eqref{eq: diff_eq_schwartz} and noting that $\psi^{\prime} - g_{1, l} \psi = \tilde{\psi}$, we have $\calT_{\zinfty}[\tilde{\psi}] = 0$. As $\tilde{\psi} \in \tilde{\calD}(\bbR)$ is arbitrary, we have
\begin{align*}
    \calT_{\zinfty}[\tilde{\psi}] = 0 \quad \forall \tilde{\psi} \in \tilde{\calD}(\bbR). \numberthis \label{eq: subset_d_space}
\end{align*}
Now, for an arbitrary $\psi \in \calD(\bbR) \backslash \tilde{\calD}(\bbR)$, define $a = \int_{-\infty}^{\infty} e^{-G(t)} \psi(t) dt$. Note that $a \neq 0$ as $\psi \notin \tilde{\calD}(\bbR)$ and $|a| < \infty$ as $\psi \in \calD(\bbR)$. Next, fix a $\psi_0 \in \calD(\bbR) \backslash \tilde{\calD}(\bbR)$ such that $\int_{-\infty}^{\infty} e^{-G(t)} \psi_0(t) dt = 1$ and define 
\begin{align*}
    \tilde{\psi} = \psi - a \psi_0.
\end{align*}
Observe that $\tilde{\psi} \in \tilde{\calD}(\bbR)$. Now, using the above equation and linearity of $\calT_{\zinfty}$, we have
\begin{align*}
    \calT_{\zinfty}[\psi] = \calT_{\zinfty}[\tilde{\psi}] + a\calT_{\zinfty}[\psi_0] \overset{\eqref{eq: subset_d_space}}{=} a \calT_{\zinfty}[\psi_0] = \calT_{\zinfty}[\psi_0] \int_{-\infty}^{\infty} e^{-G(t)} \psi(t) dt.
\end{align*}
As $\psi \in \calD(\bbR) \backslash \tilde{\calD}(\bbR)$ is arbitrary, by the above equation and \eqref{eq: subset_d_space}, we have
\begin{align*}
    \calT_{\zinfty}[\psi] = \calT_{\zinfty}[\psi_0] \int_{-\infty}^{\infty} e^{-G(t)} \psi(t) dt \quad \forall \psi \in \calD(\bbR). \numberthis \label{eq: all_of_d_space_without_const}
\end{align*}
Note that $\calT_{\zinfty}[\psi_0]$ is a constant independent of $\psi$. Next, we use the fact that $\zinfty$ is a random variable to characterize $\calT_{\zinfty}[\psi_0]$. For any $k \geq 1$, let $\chi_k \in \calD(\bbR)$ be such that $\chi_k(x) = 1$ for all $|x| \leq k$, and $|\chi_k(x)| \in [0, 1]$ for all $x \in \bbR$. Then, we have
\begin{align*}
    &\bigg| 1 - \calT_{\zinfty}[\psi_0] \int_{-\infty}^{\infty} e^{-G(t)} dt \bigg| \\
    \overset{(a)}{=}{}& \bigg| \calT_{\zinfty}[1] - \calT_{\zinfty}[\chi_k] + \calT_{\zinfty}[\chi_k] - \calT_{\zinfty}[\psi_0] \int_{-\infty}^{\infty} e^{-G(t)} dt \bigg| \\
    \overset{(b)}{=}{}& \bigg| \calT_{\zinfty}[1 - \chi_k] + \calT_{\zinfty}[\psi_0] \int_{-\infty}^{\infty} e^{-G(t)} \left(1-\chi_k\right) dt \bigg| \\
    \overset{(c)}{\leq}{}& 2\P{|\zinfty| \geq k} + 2\big|\calT_{\zinfty}[\psi_0]\big|\int_{-\infty}^{-k} e^{-G(t)} dt + 2\big|\calT_{\zinfty}[\psi_0]\big|\int_{k}^{\infty} e^{-G(t)} dt \\
    \overset{(d)}{\leq}{}& 2\P{|\zinfty| \geq k} + 4\big|\calT_{\zinfty}[\psi_0]\big| e^{\int_{-K\pfix{l}}^{K\pfix{l}} |g_{1,l}(x)| dx} \int_{k}^\infty e^{-\frac{2\delta[t-K\pfix{l}]^{+}}{\sigma^c(\lambda^\star)+\sigma^s(\mu^\star)}} dt \\
    \leq{}& 2\P{|\zinfty| \geq k} + \frac{2}{\delta}\big|\calT_{\zinfty}[\psi_0]\big| e^{\int_{-K\pfix{l}}^{K\pfix{l}} |g_{1,l}(x)| dx}\left(\sigma^c(\lambda^\star) + \sigma^s(\mu^\star)\right) e^{\pfix{\frac{2\delta Kl}{\sigma^c(\lambda^\star)+\sigma^s(\mu^\star)}}} e^{-\frac{2\delta k}{\sigma^c(\lambda^\star)+\sigma^s(\mu^\star)}} \numberthis \label{eq: characterizing_normalizing_constant},
\end{align*}
where $(a)$ follows as $\calT_{\zinfty}[1]=1$ by the definition given by \eqref{eq: tempered_distributions}. Next, as $\chi_k \in \calD(\bbR)$ by construction, $(b)$ follows by \eqref{eq: all_of_d_space_without_const}. Now, $(c)$ follows by triangle inequality to take the expectation of individual terms followed by Jensen's inequality to get $|\calT_{\zinfty}[\varphi]| \leq \calT_{\zinfty}[|\varphi|]$ and noting that $1-\chi_k(x) = 0$ for all $|x| \leq k$ and $|1-\chi_k(x)| \leq 1+|\chi_k(x)| \leq 2$ for all $x \in \bbR$. Next, $(d)$ follows by bounding $e^{-G}$ using Condition \ref{ass: neg_drift}. In particular, we have
\begin{align*}
    -G(t) = -\int_0^t g_{1,l}(x) dx \leq \int_{-K\pfix{l}}^{K\pfix{l}} |g_{1,l}(x)| dx -\frac{2\delta [|t|-K\pfix{l}]^+}{\sigma^c(\lambda^\star)+\sigma^s(\mu^\star)}.
\end{align*}
Now, taking the limit $k \rightarrow \infty$ on both sides of \eqref{eq: characterizing_normalizing_constant} and noting that the LHS is independent of $k$, we have $\calT_{\zinfty}[\psi_0] = \frac{1}{\int_{-\infty}^\infty e^{-G(t)} dt}$. Thus, by \eqref{eq: all_of_d_space_without_const}, we have
\begin{align*}
    \calT_{\zinfty}[\psi] = \frac{1}{\int_{-\infty}^\infty e^{-G(t)} dt} \int_{-\infty}^{\infty} e^{-G(t)} \psi(t) dt \quad \forall \psi \in \calD(\bbR). \numberthis \label{eq: characterizing_d_space_with_const}
\end{align*}
Now, we extend the above characterization for all $\varphi \in \calS(\bbR)$. We have
\begin{align*}
&\bigg|\calT_{\zinfty}[\varphi] - \frac{1}{\int_{-\infty}^\infty e^{-G(t)} dt} \int_{-\infty}^{\infty} e^{-G(t)} \varphi(t) dt\bigg| \\
={}& \bigg|\calT_{\zinfty}[\varphi(1-\chi_k)] + \calT_{\zinfty}[\varphi\chi_k] - \frac{1}{\int_{-\infty}^\infty e^{-G(t)} dt} \int_{-\infty}^{\infty} e^{-G(t)} \varphi(t) dt\bigg| \\
\overset{(a)}{\leq}{}& \max_{x \in \bbR} |\varphi(x)(1-\chi_k(x))| + \frac{1}{\int_{-\infty}^\infty e^{-G(t)} dt} \int_{-\infty}^{\infty} e^{-G(t)} \varphi(t)\left(\chi_k - 1\right) dt \\
\overset{(b)}{\leq}{}& 2\max_{x \in \bbR} |\varphi(x)(1-\chi_k(x))| 
\overset{(c)}{\leq} 4\max_{|x| \geq k} |\varphi(x)|, \numberthis \label{eq: extending_to_schwartz_space}
\end{align*}
where $(a)$ follows as $\calT_{\zinfty}[\varphi] \leq \max_{x \in \bbR} | \varphi(x)|$ for all $\varphi \in \calS(\bbR)$ by the definition of $\calT_{\zinfty}$ given by \eqref{eq: tempered_distributions}. In addition, we also use \eqref{eq: characterizing_d_space_with_const} to characterize $\calT_{\zinfty} [\varphi \chi_k]$ by noting that $\varphi \chi_k \in \calD(\bbR)$ as $\varphi \in \calC_{pol}^\infty$ and $\chi_k \in \calD(\bbR)$. Next, $(b)$ follows by noting that the second term is the expectation of $\varphi(\chi_k-1)$ with respect to the Gibbs distribution. Lastly, $(c)$ follows by noting that $1-\chi_k(x) = 0$ for all $|x| \leq k$ and $|1-\chi_{\pfix{k}}(x)| \leq 1+|\chi_k(x)| \leq 2$ for all $x \in \bbR$. By taking the limit as $k \rightarrow \infty$ in \eqref{eq: extending_to_schwartz_space} and noting that $\varphi \in \calS(\bbR)$, we have
\begin{align*}
\calT_{\zinfty}[\varphi] = \frac{1}{\int_{-\infty}^\infty e^{-G(t)} dt} \int_{-\infty}^{\infty} e^{-G(t)} \varphi(t) dt \quad \forall \varphi \in \calS(\bbR). \numberthis \label{eq: all_schwartz_func}
\end{align*}
Now, let $\mu_1$ and $\mu_2$ be the probability measures corresponding to $\zinfty$ and Gibbs distribution respectively. Also, let $\hat{\mu}_1$ and $\hat{\mu}_2$ be their characteristic functions. Now, we show that $\hat{\mu}_1 = \hat{\mu}_2$. For any $\varphi \in \calS(\bbR)$ and $k \in \{1, 2\}$, we have
\begin{align*}
    \calT_{\mu_k}[\hat{\varphi}] &= \int_{-\infty}^\infty \hat{\varphi} d\mu_k = \frac{1}{\sqrt{2\pi}}\int_{-\infty}^\infty \int_{-\infty}^\infty \varphi(t) e^{-j t x} dt d\mu_k(x) \\
    &\overset{(a)}{=} \frac{1}{\sqrt{2\pi}}\int_{-\infty}^\infty \varphi(t) \int_{-\infty}^\infty  e^{-jt x}  d\mu_k(x) dt \\
    &= \frac{1}{\sqrt{2\pi}} \int_{-\infty}^\infty \varphi(t) \hat{\mu}_k(t) dt.
\end{align*}
where $(a)$ follows by Fubini's theorem. Thus, we have
\begin{align*}
    0 \overset{\eqref{eq: all_schwartz_func}}{=} \calT_{\mu_1}[\hat{\varphi}] - \calT_{\mu_2}[\hat{\varphi}] = \frac{1}{\sqrt{2\pi}} \int_{-\infty}^\infty \varphi(t) \left(\hat{\mu}_1(t) - \hat{\mu}_2(t)\right) dt \quad \forall \varphi \in \calS(\bbR).
\end{align*}
Now, for any $y \in \bbR$, define $\varphi_{\epsilon_0, y}(x) = \frac{1}{\sqrt{\pi} \epsilon_0} e^{-(x-y)^2/\epsilon_0^2}$ and note that
\begin{align*}
    0 &= \lim_{\epsilon_0 \rightarrow 0} \left(\calT_{\mu_1}[\hat{\varphi}_{\epsilon_0, y}] - \calT_{\mu_2}[\hat{\varphi}_{\epsilon_0, y}]\right) = \lim_{\epsilon_0 \rightarrow 0} \frac{1}{\sqrt{2\pi}} \int_{-\infty}^\infty \varphi_{\epsilon_0, y}(t) \left(\hat{\mu}_1(t) - \hat{\mu}_2(t)\right) dt \\
    &= \frac{1}{\sqrt{2\pi}}\left(\hat{\mu}_1(y)-\hat{\mu}_2(y)\right) 
\end{align*}
where the last equality is followed by the dominated convergence theorem. In particular, let $\hat{\mu} = \hat{\mu}_1 - \hat{\mu}_2$ and note that
\begin{align*}
\bigg|\lim_{\epsilon_0 \rightarrow 0}  \int_{-\infty}^\infty \varphi_{\epsilon_0, y}(t) \hat{\mu}(t) dt - \hat{\mu}(y)\bigg| &= \bigg|\lim_{\epsilon_0 \rightarrow 0}  \int_{-\infty}^\infty \epsilon_0 \varphi_{\epsilon_0, y}(\epsilon_0 t+y) \hat{\mu}(\epsilon_0 t+y) dt - \hat{\mu}(y) \int_{-\infty} ^\infty \frac{e^{-t^2}}{\sqrt{\pi}} dt\bigg| \\
&\leq \lim_{\epsilon_0 \rightarrow 0} \frac{1}{\sqrt{\pi}}\int_{-\infty}^\infty  |\hat{\mu}(\epsilon_0 t+y)-\hat{\mu}(y)| e^{-t^2} dt \\
&= \frac{1}{\sqrt{\pi}}\int_{-\infty}^\infty  \lim_{\epsilon_0 \rightarrow 0} |\hat{\mu}(\epsilon_0 t+y)-\hat{\mu}(y)| e^{-t^2} dt = 0,
\end{align*}
where the last equality follows by the continuity of the characteristic functions.
%
Thus, we have $\hat{\mu}_1(y) = \hat{\mu}_2(y)$ for all $y \in \bbR$ which completes the proof.
\endproof
\section{Preliminary Lemmas for the Proof of Theorems \ref{theo: case_1} and \ref{theo: profit_driven_regime_3}}
The proof of Theorems \ref{theo: case_1} and \ref{theo: profit_driven_regime_3} are based on using complex exponential test functions. To bound certain terms in the process, we need the following lemmas which are obtained by setting the drift of several auxiliary test functions to zero. The proof of all the lemmas is deferred to Appendix \ref{app: preliminary_lemmas}.
\subsection{Auxiliary Bounds on the Imbalance}
Under Condition \ref{ass: neg_drift}, we have the following.
\begin{lemma} \label{lemma: z_sgnz} Let $g_\eta(\cdot)$ be a function such that there exists $G > 0$ with $|g_\eta(x)| \leq G$ for all $x \in \bbR$. Then, for any $\eta >0$, we have
\revcolor{\begin{align}
   \E{\barz_\eta^+(g_\eta(\barz_\eta^+)-g_\eta(\barz_\eta))}=-\E{(\lambda_\eta(\barz_\eta)-\mu_\eta(\barz_\eta))g_\eta(\barz_\eta)} \label{eq: drift_z_sgnz}
\end{align}}
\end{lemma}
The proof of the above lemma follows by setting the drift of the test function $z g(z)$ to zero in the steady state. Now, we prove a high probability bound on imbalance not being in any given state \revcolor{$|x| \leq K \max\left\{\tau_\eta, 1/\epsilon_\eta\right\}$} below.
 \begin{lemma} \label{lemma: prob_bound_on_any_state}
Under Condition~\ref{ass: irreducibility}, for any $\eta > 0$ and \revcolor{$|x| \leq K \max\left\{\tau_\eta, 1/\epsilon_\eta\right\}$}, we have
\revcolor{\begin{align*}
    \P{\barz_\eta=x} \leq \frac{1}{p_{\min}}\E{\big|\lambda_\eta(\barz_\eta)-\mu_\eta(\barz_\eta)\big|} 
\end{align*}}
\end{lemma}
The proof of the above lemma makes use of two test functions. In particular, we set the drift of the test functions $z\mathbbm{1}\{\textrm{sgn}(x)z > \textrm{sgn}(x)x\}$ and $\mathbbm{1}\{\textrm{sgn}(x)z > \textrm{sgn}(x)x\}$ to zero in steady state, where $\textrm{sgn}(x) = 1$ if $x \geq 0$ and $-1$ otherwise.
\subsection{Convergence of State Dependent Random Variables}
Due to state-dependent control, limits of certain quantities in the proof are non-trivial. So, we need the following two lemmas.
\begin{lemma} \label{claim: case1_t3_t5} Assume the same setup as in Theorem \ref{theo: case_1}. \revcolor{Let $g_{\max} > 0$ be a constant. Then, for any Borel measurable function $g: \bbR \rightarrow [-g_{\max}, g_{\max}]$} such that $\lim_{x \rightarrow \infty} g(x)=\lim_{x \rightarrow -\infty} g(x)=c_{\infty}$, we have
\begin{align*}
    \lim_{\eta \uparrow \infty} \E{g\left(\frac{\barz_\eta}{\tau_\eta}\right)}=c_{\infty}.
\end{align*}
\end{lemma}
\begin{lemma} \label{lemma: convergence_technical}
\revcolor{For some $c, g_{\max}>0$, let $g_n: \bbR \rightarrow [-g_{\max}, g_{\max}]$ be a sequence of Borel measurable functions} such that 
\begin{align*}
    \lim_{n \uparrow \infty} g_n(x)=c \quad \forall x \in \bbR.
\end{align*}
Moreover, the convergence is uniform over $x \in \bbR$. Then for any sequence of random variables \revcolor{$\{Y_n \in \bbR\}$}, we have
\begin{align*}
    \lim_{n \uparrow \infty }\E{g_n(Y_n)}= c.
\end{align*}
\end{lemma}
\section{Proof of Theorem \ref{theo: case_1}}
We prove Theorem \ref{theo: case_1} by again building upon the transform method of \citet{hurtado2020transform} and set the drift of complex exponential test function to zero. Similar to Theorem \ref{theo: case_2_itm} in Section \ref{sec: inverse_fourier_transform}, we obtain an implicit equation due to the state-dependent arrivals with $g_{1,l}(\cdot)=c\chi(\cdot)$ for some $c>0$. As $g_{1, l}(\cdot)$ does not satisfy Condition \ref{ass: c_pol}, the inverse Fourier transform method is not directly applicable here. So, we adopt a different technique to resolve this regime which we now outline below.


Positive recurrence under Condition \ref{ass: neg_drift} is already shown in Proposition \ref{prop: pos_rec} \revcolor{for $\eta > \eta_p$.} 
The key idea in proving the convergence result is the following. 
It is known from Levy's continuity theorem (e.g. see: \citet[Chapter 18]{levy_book}) that convergence in distribution is equivalent to convergence of characteristic functions. So, we will focus on finding the characteristic function of the limiting imbalance. \revcolor{Note that
\begin{align}
    \text{Conditions~\ref{ass:poly_growth} and \ref{cond:limits}} \implies \exists \phi_{\max} > 0: \ |\phi^c(x)|, |\phi^s(x)| \leq \phi_{\max}. \label{eq:bounded_control_curves}
\end{align}
Thus, as $\epsilon_\eta \rightarrow 0$, there exists $\eta_1>0$ such that \eqref{eq: general_pricing_policy} can be simplified to the following with $\lambda^\star = \mu^\star$:
\begin{align*}
    \lambda_\eta(z) = \lambda^\star + \epsilon_\eta \phi^c\left(\frac{z}{\tau_\eta}\right), \quad \mu_\eta(z) = \mu^\star + \epsilon_\eta \phi^s\left(\frac{z}{\tau_\eta}\right) \quad \forall z \in \bbZ, \ \eta \geq \eta_{\pfix{1}}. \numberthis \label{eq:simplified_general_pricing_policy}
\end{align*}
For the rest of the proof, we assume $\eta \geq \max\{\eta_1, \eta_p\}$. Now, we proceed with the proof of Theorem~\ref{theo: case_1}.} The proof consists of two key steps. 
\subsection{Overview of the Proof}
\subsubsection{Step 1: Absolute Scaled Imbalance} First, we show that $\epsilon_\eta\barz_\eta \chi(\barz_\eta)$ converges to an exponential distribution. Recall that $\chi$ is a step function defined in \eqref{eq:step_function}.
\begin{lemma} \label{lemma: mod_z}
Under the same conditions as in Theorem \ref{theo: case_1}, we have
\begin{align*}
    \lim_{\eta \uparrow \infty}\E{e^{j\epsilon_\eta \omega \barz_\eta \step(\barz_\eta)}}=\frac{1}{1-j\omega\frac{\sigma^c(\lambda^\star)+\sigma^s(\mu^\star)}{2}}.
\end{align*}
\end{lemma}
To gain intuition, consider the special case of $\chi(1)=-\chi(-1)=1$, then, $z \chi(z) = |z|$. We exploit the symmetry in the limiting distribution by setting the drift of the test function, $e^{j\epsilon_\eta \omega |\barz_\eta|}$ to zero in steady state to obtain $\E{e^{j\epsilon_\eta \omega |\barz_\eta|}}$. Then, taking the limit as $\eta \uparrow \infty$, we show that 
$\lim_{\eta \uparrow \infty} \E{e^{j\epsilon_\eta \omega |\barz_\eta|}}$  converges to the characteristic function of an exponential distribution. More generally, we consider the test function $e^{j \epsilon_\eta \omega \barz_\eta \chi(\barz_\eta)}/\chi(\barz_\eta)^2$. While this step broadly follows the characteristic function method introduced by \citet{hurtado2020transform} (but applied to $z \chi(z)$ instead of $z$),
we overcome several technical challenges that arise due to the state-dependent control in a matching queue. In essence, one needs to work with a sequence of functions as opposed to numbers. The proof details are presented in Section~\ref{sec: lemma_mod_z_proof}.
\subsubsection{Step 2: Symmetry} 
The previous lemma shows that the limiting distribution of $\epsilon_\eta \barz_\eta \step(\barz_\eta)$ is exponential. Now, we establish symmetry in the limiting distribution of scaled imbalance, which will complete Step 2 of proof of the Theorem \ref{theo: case_1}.
\begin{lemma} \label{lemma: symmetry}
Under the same conditions as in Theorem \ref{theo: case_1}, we have
\begin{align*}
    \lim_{\eta \uparrow \infty} \E{\step(\barz_\eta)e^{j\epsilon_\eta \omega \barz_\eta \step(\barz_\eta)}} = 0.
\end{align*}
\end{lemma}
For the special case of $\chi(1)=-\chi(-1)=1$, the above establishes that $\epsilon_\eta |\barz_\eta|$ has a symmetric distribution around the origin. To prove the lemma, we set the drift of the test function, $\text{sgn}(\barz_\eta)e^{j\epsilon_\eta \omega |\barz_\eta|}$ to zero and show that the expectation,  $\E{ \text{sgn}(\barz_\eta)e^{j\epsilon_\eta \omega |\barz_\eta|} }$ converges to zero as $\eta \uparrow \infty$. 
This establishes symmetry of the limiting  $\barz_\eta$. More generally, we consider the test function $e^{j\epsilon_\eta \omega \barz_\eta \step(\barz_\eta)}/\step(\barz_\eta)$. The technical details of the proof are deferred to Appendix \ref{app: lemma_symmetry}.
\subsubsection{Step 3: Proof of Theorem \ref{theo: case_1}}
Putting together the results from Step 1 and Step 2, we show that the limiting distribution of $\epsilon_\eta\barz_\eta$ is the Laplace distribution. To gain intuition, first consider the special case of $\chi(1)=-\chi(-1)=1$. We start with the following functional identity. 
\begin{align*}
    e^{j\omega|x|}+e^{-j\omega|x|}=e^{j\omega x}+e^{-j\omega x}= 2e^{j\omega x}+\chi(x)e^{-j\omega |x|}-\chi(x)e^{j\omega |x|}.
\end{align*}
Now taking expectation of these under the distribution $\epsilon_\eta\barz_\eta$ and taking the limit as $\eta \to \infty$, we get, 
\begin{align*}
   \lim_{\eta \uparrow \infty }\E{e^{j\omega|\epsilon_\eta \barz_\eta|}+e^{-j\omega|\epsilon_\eta \barz_\eta|}}= \lim_{\eta \uparrow \infty} \E{2e^{j\omega \epsilon_\eta \barz_\eta}+\chi(\barz_\eta)e^{-j\omega \epsilon_\eta |\barz_\eta|}-\chi(\barz_\eta)e^{j\omega \epsilon_\eta |\barz_\eta|}}.
\end{align*}
Both the terms on LHS are characterized from Step 1. We know that the last two terms on the RHS are zero from Step 2. Thus, we have an explicit form for $   \lim_{\eta \uparrow \infty }2\E{e^{j\omega \epsilon_\eta \barz_\eta}}$ completing the proof. This argument is suitably generalized in the following proof for a general step function $\chi(x)$.
\proof{Proof of Theorem \ref{theo: case_1}}
For all $x \in \bbR$, we have
\begin{align*}
  e^{j\omega \epsilon_\eta x} = \frac{\step(-1)e^{j \omega \epsilon_\eta x \frac{\step(x)}{\step(1)}}-\step(x)e^{j \omega \epsilon_\eta x  \frac{\step(x)}{\step(1)}}}{\step(-1)-\step(1)} + \frac{\step(1)e^{j \omega \epsilon_\eta x  \frac{\step(x)}{\step(-1)}}-\step(x)e^{j \omega \epsilon_\eta x  \frac{\step(x)}{\step(-1)}}}{\step(1)-\step(-1)}.
\end{align*}
The above equality can be verified by considering cases based on the sign of $x$. For example, when $x \geq 0$, we have $\step(x) = \step(1)$ and so, the first term is equal to $e^{j \omega \epsilon_\eta x}$ and the second term is zero. Now, by substituting $x= \barz_\eta$ and taking expectation on both sides, we get
\begin{align*}
  \E{e^{j\omega \epsilon_\eta \barz_\eta}} ={}& \frac{\step(-1)\E{e^{j \omega \epsilon_\eta \barz_\eta \frac{\step(\barz_\eta)}{\step(1)}}}-\E{\step(\barz_\eta)e^{j \omega \epsilon_\eta \barz_\eta  \frac{\step(\barz_\eta)}{\step(1)}}}}{\step(-1)-\step(1)} \\
  &+ \frac{\step(1)\E{e^{j \omega \epsilon_\eta \barz_\eta  \frac{\step(\barz_\eta)}{\step(-1)}}}-\E{\step(\barz_\eta)e^{j \omega \epsilon_\eta \barz_\eta  \frac{\step(\barz_\eta)}{\step(-1)}}}}{\step(1)-\step(-1)}.  
\end{align*}
 Now, by taking the limit as $\eta \uparrow \infty$ on both sides and using Lemma \ref{lemma: mod_z} and Lemma \ref{lemma: symmetry} for the RHS, we get
\begin{align*}
\lim_{\eta \uparrow \infty} \E{e^{j\epsilon_\eta \omega \barz_\eta}} &= \frac{1}{\step(1)-\step(-1)}\left(\frac{-\step(-1)}{1-\frac{j\omega}{2\step(1)}\left(\sigma^c(\lambda^\star)+\sigma^s(\mu^\star)\right)}+\frac{\step(1)}{1-\frac{j\omega}{2\step(-1)}\left(\sigma^c(\lambda^\star)+\sigma^s(\mu^\star)\right)}\right) \\
   &=\frac{1}{\left(1-\frac{j\omega}{2\step(1)}\left(\sigma^c(\lambda^\star)+\sigma^s(\mu^\star)\right)\right)\left(1-\frac{j\omega}{2\step(-1)}\left(\sigma^c(\lambda^\star)+\sigma^s(\mu^\star)\right)\right)}.
\end{align*}
Note that $\chi(1) > 0$ and $\chi(-1) < 0$ by Condition~\ref{ass: neg_drift} and so the above is the characteristic function of an asymmetric Laplace distribution with the required parameters. Thus, by Levy's continuity theorem (e.g. see \citet[Chapter 18]{levy_book}), $\epsilon_\eta\barz_\eta$ converges to an asymmetric Laplace random variable. This completes the proof. \hfill $\square$
\endproof
\subsection{Proof of Lemma \ref{lemma: mod_z}} \label{sec: lemma_mod_z_proof}
\proof{Proof}
For $\omega \in \bbR$, we define the test function
\begin{align*}
    V(z)\overset{\Delta}{=} \frac{1}{\step(z)^2}e^{j\epsilon_\eta \omega z \step(z)},
\end{align*}
where $\step(\cdot)$ is a step function as defined in \eqref{eq:step_function}. Now, consider the one-step drift of $V(z)$.
\begin{align*}
    \Delta V(\barz_\eta, \eta)&=V(\barz^+_\eta)-V(\barz_\eta) \\
    &=\frac{1}{\step(\barz_\eta^+)^2}e^{j\epsilon_\eta \omega \barz_\eta^+ \step(\barz_\eta^+)}-\frac{1}{\step(\barz_\eta)^2}e^{j\epsilon_\eta \omega \barz_\eta \step(\barz_\eta)}  \\
    &=\underbrace{\frac{1}{\step(\barz_\eta^+)^2}e^{j\epsilon_\eta \omega \barz_\eta^+ \step(\barz_\eta^+)}-\frac{1}{\step(\barz_\eta)^2}e^{j\epsilon_\eta \omega \barz_\eta^+ \step(\barz_\eta)}}_{\calT_1}+\underbrace{\frac{1}{\step(\barz_\eta)^2}e^{j\epsilon_\eta \omega \barz_\eta^+ \step(\barz_\eta)}-\frac{1}{\step(\barz_\eta)^2}e^{j\epsilon_\eta \omega \barz_\eta \step(\barz_\eta)}}_{\calT_2}. \numberthis \label{eq: def_of_t2}
\end{align*}
We will analyze each term separately. To simplify $\calT_1$, directly use Taylor's Theorem and consider the expansion up to the second order term.
\revcolor{\begin{align*}
    \E{\calT_1}&=\E{\frac{1}{\step(\barz_\eta^+)^2}e^{j\epsilon_\eta \omega \barz_\eta^+\step(\barz_\eta^+)}-\frac{1}{\step(\barz_\eta)^2}e^{j\epsilon_\eta \omega \barz_\eta^+ \step(\barz_\eta)}} \\
    &=\E{\left(\frac{1}{\step(\barz_\eta^+)^2}e^{j\epsilon_\eta \omega \barz_\eta^+\step(\barz_\eta^+)}-\frac{1}{\step(\barz_\eta)^2}e^{j\epsilon_\eta \omega \barz_\eta^+ \step(\barz_\eta)}\right)\mathbbm{1}\{\step(\barz_\eta^+) \neq \step(\barz_\eta)\}} \\
     &=\E{\left(\frac{1}{\step(\barz_\eta^+)^2}e^{j\epsilon_\eta \omega \barz_\eta^+\step(\barz_\eta^+)\mathbbm{1}\{\step(\barz_\eta^+) \neq \step(\barz_\eta)\}}-\frac{1}{\step(\barz_\eta)^2}e^{j\epsilon_\eta \omega \barz_\eta^+ \step(\barz_\eta)\mathbbm{1}\{\step(\barz_\eta^+) \neq \step(\barz_\eta)\}}\right)} \\
    &\overset{(*)}{=} \E{\frac{1}{\chi(\barz_\eta^+)^2}-\frac{1}{\chi(\barz_\eta)^2}+j\epsilon_\eta \omega \barz_\eta^+\left(\frac{1}{\step(\barz_\eta^+)}-\frac{1}{\step(\barz_\eta)}\right)\mathbbm{1}\{\step(\barz_\eta^+) \neq \step(\barz_\eta)\}}  +o(\epsilon_\eta^2) \\
    &=\E{\frac{1}{\chi(\barz_\eta^+)^2}-\frac{1}{\chi(\barz_\eta)^2}+j\epsilon_\eta \omega \barz_\eta^+\left(\frac{1}{\step(\barz_\eta^+)}-\frac{1}{\step(\barz_\eta)}\right)}   +o(\epsilon_\eta^2), \numberthis \label{eq:regime1_t1}
\end{align*}
where, $(*)$ follows by Taylor's theorem. Note that the second-order term in the expansion is exactly equal to zero. To make this argument precise, we verify that the third moment of $|\barz_\eta^+\step(\barz_\eta^+)\mathbbm{1}\{\step(\barz_\eta^+) \neq \step(\barz_\eta)\}|$ is bounded and use Lemma \ref{lemma: taylor_series}. Note that if $\mathbbm{1}\{\step(\barz_\eta^+) \neq \step(\barz_\eta)\} = 1$, then, either $\barz_\eta^+ \geq 0$ and $\barz_\eta < 0$ or $\barz_\eta^+ < 0$ and $\barz_\eta \geq 0$. In either case, we obtain $|\barz_\eta^+| \leq |a^c_\eta(\barz_\eta)-a^s_\eta(\barz_\eta)|$. Thus, we get
\begin{align*}
    &\E{|\barz_\eta^+\step(\barz_\eta^+)\mathbbm{1}\{\step(\barz_\eta^+) \neq \step(\barz_\eta)\}|^3} \\
    \leq{}& \E{|\barz_\eta^+\mathbbm{1}\{\step(\barz_\eta^+) \neq \step(\barz_\eta)\}|^3} \max\left\{|\phi^s(\infty)-\phi^c(\infty)|, |\phi^s(-\infty)-\phi^c(-\infty)|\right\} \\
    \leq{}& \E{|(a^c_\eta(\barz_\eta)-a^s_\eta(\barz_\eta))\mathbbm{1}\{\step(\barz_\eta^+) \neq \step(\barz_\eta)\}|^3} \max\left\{|\phi^s(\infty)-\phi^c(\infty)|, |\phi^s(-\infty)-\phi^c(-\infty)|\right\} \\
    \leq{}& 6\moment \max\left\{|\phi^s(\infty)-\phi^c(\infty)|, |\phi^s(-\infty)-\phi^c(-\infty)|\right\}, \numberthis \label{eq:third_moment_sign_change}
\end{align*}
where the last inequality holds by using $(a+b)^3 \leq 3|a|^3+3|b|^3$ and noting that $\E{|a^c_\eta(\barz_\eta)|^3} \leq \moment$ and $\E{|a^s_\eta(\barz_\eta)|^3} \leq \moment$. Similarly, we can also bound $\E{|\barz_\eta^+\step(\barz_\eta)\mathbbm{1}\{\step(\barz_\eta^+) \neq \step(\barz_\eta)\}|^3}$. Now, \eqref{eq:regime1_t1} can be simplified by Lemma \ref{lemma: z_sgnz} $(\lambda^\star_\eta = \mu^\star)$ and \eqref{eq:simplified_general_pricing_policy} to get}
\begin{align*}
    \E{\calT_1}=-j\epsilon^2_\eta \omega\E{\left(\phi^c\left(\frac{\barz_\eta}{\tau_\eta}\right)-\phi^s\left(\frac{\barz_\eta}{\tau_\eta}\right)\right)\frac{1}{\step(\barz_\eta)}}+o(\epsilon_\eta^2).
\end{align*}
Now, to simplify $\calT_2$, we first use the update equation of imbalance given by \eqref{eq: imbalance_update_steady_state}. Then, we use Taylor's Theorem to expand and consider up to the second-order term. After using some basic properties about expectations, we get the following:
\begin{claim} \label{claim: case1_t2}
\begin{align*}
    \E{\calT_2 | \barz_\eta}={}&j\epsilon_\eta^2 \omega e^{j\epsilon_\eta \omega \barz_\eta \step(\barz_\eta)} \left(\phi^c\left(\frac{\barz_\eta}{\tau_\eta}\right)-\phi^s\left(\frac{\barz_\eta}{\tau_\eta}\right)\right)\frac{1}{\step(\barz_\eta)}\\
    &-\frac{1}{2}\epsilon_\eta^2\omega^2 e^{j\epsilon_\eta \omega \barz_\eta \step(\barz_\eta)}\left(\sigma^c(\lambda_\eta(\barz_\eta))+\sigma^s(\mu_\eta(\barz_\eta))\right)+o(\epsilon_\eta^2).
\end{align*}
\end{claim}
The proof of the claim has been deferred to Appendix \ref{app: proof_of_claims} and here we continue with the proof of Lemma \ref{lemma: mod_z}. As $|V(z)|=\frac{1}{\step(z)^2}$, its expectation in steady state is finite. Thus, we set the drift of $V(z)$ to zero in steady state to get $\E{\calT_1+\calT_2}=0$. Now, by substituting $\calT_1$ and $\calT_2$ and dividing by $-j\epsilon^2_\eta\omega$, we get
\begin{align*}
  &-\E{e^{j\epsilon_\eta \omega \barz_\eta \step(\barz_\eta)}\left(\phi^c\left(\frac{\barz_\eta}{\tau_\eta}\right)-\phi^s\left(\frac{\barz_\eta}{\tau_\eta}\right)\right)\frac{1}{\step(\barz_\eta)}} -\frac{j\omega}{2}\E{e^{j\epsilon_\eta \omega \barz_\eta \step(\barz_\eta)}\left(\sigma^c(\lambda_{\pfix{\eta}}(\barz_\eta))+\sigma^s(\mu_{\pfix{\eta}}(\barz_\eta))\right)}\\
  &+\E{\left(\phi^c\left(\frac{\barz_\eta}{\tau_\eta}\right)-\phi^s\left(\frac{\barz_\eta}{\tau_\eta}\right)\right)\frac{1}{\step(\barz_\eta)}}+o(1)=0
\end{align*}
Rearranging the above terms, we get
\begin{align*}
    &\left(-1+\frac{j\omega}{2}\left(\sigma^c(\lambda^\star)+\sigma^s(\mu^\star)\right)\right)\E{e^{j\epsilon_\eta \omega \barz_\eta \step(\barz_\eta)}}\\
    ={}&\mathbb{E}\bigg[\underbrace{\left(\phi^c\left(\frac{\barz_\eta}{\tau_\eta}\right)-\phi^s\left(\frac{\barz_\eta}{\tau_\eta}\right)\right)\frac{1}{\step(\barz_\eta)}}_{\calT_3}\bigg]-\mathbb{E}\bigg[\underbrace{e^{j\epsilon_\eta \omega \barz_\eta \chi(\barz_\eta)}\left(1+\left(\phi^c\left(\frac{\barz_\eta}{\tau_\eta}\right)-\phi^s\left(\frac{\barz_\eta}{\tau_\eta}\right)\right)\frac{1}{\step(\barz_\eta)}\right)}_{\calT_4}\bigg]\\
    &+\frac{j\omega}{2}\mathbb{E}\bigg[\underbrace{e^{j\epsilon_\eta \omega \barz_\eta \chi(\barz_\eta)}\left(\sigma^c(\lambda^\star)+\sigma^s(\mu^\star)-\sigma^c(\lambda_{\pfix{\eta}}(\barz_\eta))-\sigma^s(\mu_{\pfix{\eta}}(\barz_\eta))\right)}_{\calT_5}\bigg] +o(1).
\end{align*}
Now, we show that $\E{\calT_3} \rightarrow -1$ ,$\E{\calT_4} \rightarrow 0$ and $\E{\calT_5} \rightarrow 0$ which give us the result. This is easy to show for state-independent control. Due to state-dependent control, we have a dependence on $\barz_\eta$ in the above terms which makes it non-trivial. We use Lemma \ref{claim: case1_t3_t5} and Lemma \ref{lemma: convergence_technical} to provide a bound on these terms. We start by analyzing $\calT_3$. By the definition of $\step(\cdot)$, we have $\left(\phi^c(x)-\phi^s(x)\right)/\step(x) \rightarrow -1$ as $x \rightarrow \pm \infty$. Also, $|\left(\phi^c(x)-\phi^s(x)\right)/\step(x)| \leq \frac{2\phi_{\max}}{\min\{|\chi(1)|, |\chi(-1)|\}}$ \revcolor{due to \eqref{eq:bounded_control_curves}}. Thus, by Lemma \ref{claim: case1_t3_t5}, we have
\begin{align*}
    \lim_{\eta \uparrow \infty}\E{\calT_3}= \lim_{\eta \uparrow \infty}\E{\left(\phi^c\left(\frac{\barz_\eta}{\tau_\eta}\right)-\phi^s\left(\frac{\barz_\eta}{\tau_\eta}\right)\right)\frac{1}{\step\left(\frac{\barz_\eta}{\tau_\eta}\right)}}&=-1.
\end{align*}
Now, we present the following claim to bound $\calT_4$ and $\calT_5$.
\begin{claim} \label{claim: t4_t5}
\begin{align*}
    \lim_{\eta \uparrow \infty} \E{\bigg|1+\left(\phi^c\left(\frac{\barz_\eta}{\tau_\eta}\right)-\phi^s\left(\frac{\barz_\eta}{\tau_\eta}\right)\right)\frac{1}{\step(\barz_\eta)}\bigg|}&=0 \\
    \lim_{\eta \uparrow \infty} \E{\bigg|\sigma^c(\lambda^\star)+\sigma^s(\mu^\star)-\sigma^c(\lambda_{\pfix{\eta}}(\barz_\eta))-\sigma^s(\mu_{\pfix{\eta}}(\barz_\eta))\bigg|}&=0.
\end{align*}
\end{claim}
The proof of the claim has been deferred to Appendix \ref{app: proof_of_claims} and here we continue with the proof of Lemma \ref{lemma: mod_z}. By the above claim, we have
\begin{align*}
\lim_{\eta \uparrow \infty} |\E{\calT_4}| &\leq \lim_{\eta \uparrow \infty} \E{|\calT_4|} = \lim_{\eta \uparrow \infty} \E{\bigg|1+\left(\phi^c\left(\frac{\barz_\eta}{\tau_\eta}\right)-\phi^s\left(\frac{\barz_\eta}{\tau_\eta}\right)\right)\frac{1}{\step(\barz_\eta)}\bigg|} =0 \\
   \lim_{\eta \uparrow \infty} |\E{\calT_5}| &\leq \lim_{\eta \uparrow \infty}\E{|\calT_5|} =\lim_{\eta \uparrow \infty} \frac{|\omega|}{2}\E{\bigg|\sigma^c(\lambda^\star)+\sigma^s(\mu^\star)-\sigma^c(\lambda_{\pfix{\eta}}(\barz_\eta))-\sigma^s(\mu_{\pfix{\eta}}(\barz_\eta))\bigg|}=0 
\end{align*}
This completes the proof. \hfill $\square$
\endproof
\section{Proof of Theorem \ref{theo: profit_driven_regime_3}}
Similar to the previous section, we prove Theorem \ref{theo: profit_driven_regime_3} by building upon the transform method presented by \citet{hurtado2020transform}. Applying the Transform method as is, does not work for this problem as we get an implicit equation, due to the state-dependent arrival rates. However, unlike in Theorem \ref{theo: case_2_itm}, the implicit equation may not have a unique solution. This suggests that working with $e^{j \omega z/\tau_\eta}$ alone as a test function is insufficient. So, we construct more test functions.

The key idea is to separately analyze the region where the probability mass vanishes and where it does not. We engineer separate test functions for the two regions and then combine the results to obtain the limiting distribution. In particular, we use $e^{\theta z/\tau_\eta}\mathbbm{1}\{z/\tau_\eta \in \Phi^\star\}$ as the test function with $\theta \in \bbR$ and $\theta = j\omega$ for $\omega \in \bbR$ to establish uniform/truncated exponential distribution between the thresholds. Then, we use $z^2$ as the test function to establish vanishing mass outside the threshold.
\subsection{Overview of the Proof}
Similar to the proof of Theorem \ref{theo: case_1}, we focus on finding the characteristic function of the limiting distribution. \revcolor{When $\Phi^\star \backslash \{0\} \neq \emptyset$, there exists $t_\star < t^\star$ such that one of the following four cases arises:  $\Phi^\star = [t_\star, t^\star]$, $\Phi^\star = (t_\star, t^\star)$, $\Phi^\star = (t_\star, t^\star]$, and $\Phi^\star = [t_\star, t^\star)$. Under all the four cases, we focus on characterizing the distribution of $\barz_\eta$ within and outside of $(t_\star, t^\star)$ separately. }
\subsubsection{Step 1: Distribution within the thresholds}
In this section, we show that $\barz_\eta\mathbbm{1}\{\barz_\eta/\tau_\eta \in (t_\star, t^\star)\}/\tau_\eta$ converges to a uniform/truncated exponential distribution depending on the value of $\drift$ as stated in the following lemma.
\begin{lemma} \label{lemma: regime_3_inside}
Consider the same setting as in Theorem \ref{theo: profit_driven_regime_3}. \revcolor{Let $\sigma^\star = \sigma^c\left(\mu^\star\right) + \sigma^s\left(\mu^\star\right)$. Also, there exists $t_\star < 0 < t^\star$ such that $\Phi^\star \in \left\{(t_\star, t^\star), (t_\star, t^\star], [t_\star, t^\star), [t_\star, t^\star]\right\}$}. Then, if $\drift = 0$, then for all $\omega \in \mathbb{R}$,
\begin{align*}
    \E{e^{j\omega \barz_\eta / \tau_\eta} \mathbbm{1}\left\{\frac{\barz_\eta}{\tau_\eta} \in (t_\star, t^\star)\right\}} 
    =
    \frac{e^{j \omega t^\star} - e^{j \omega t_\star}}{j\omega(t^\star-t_\star)}\P{\frac{\barz_\eta}{\tau_\eta} \in (t_\star, t^\star)} +o(1).
\end{align*}
On the other hand, if $\drift \neq 0$, then, for all $\omega \in \mathbb{R}$,
\begin{align*}
    &\E{e^{j\omega \barz_\eta / \tau_\eta} \mathbbm{1}\left\{\frac{\barz_\eta}{\tau_\eta} \in (t_\star, t^\star)\right\}} \\
    ={}& \frac{1}{-1 + \frac{j\omega \sigma^\star}{2\drift}}\left(\P{\frac{\barz_\eta}{\tau_\eta} < t^\star}e^{2\drift t^\star/\sigma^\star}- \P{\frac{\barz_\eta}{\tau_\eta} \leq t_\star}e^{2 \drift t_\star/\sigma^\star}\right)\frac{e^{j \omega t^\star} - e^{j \omega t_\star}}{e^{2\drift t^\star/
    \sigma^\star} - e^{2\drift t_\star/
    \sigma^\star}} \\
    &-\frac{1}{-1 + \frac{j\omega \sigma^\star}{2\drift}}\left(\P{\frac{\barz_\eta}{\tau_\eta} < t^\star}e^{j \omega t^\star}- \P{\frac{\barz_\eta}{\tau_\eta} \leq t_\star}e^{j \omega t_\star}\right) + o(1). 
\end{align*}
\end{lemma}
We prove the above lemma by setting the drift of the test function, $e^{\theta z/\tau_\eta}\mathbbm{1}\{z/\tau_\eta \in (t_\star, t^\star)\}$ to zero in the steady-state for $\theta \in \bbR$ and $\theta = j\omega$ for $\omega \in \bbR$. To gain intuition, consider the extreme case of $\lambda_\eta(z)=\mu_\eta(z)=0$ for all $z/\tau_\eta \notin \Phi^\star$. Then, we obtain a DTMC with a drift of $-d/\tau_\eta$ and two reflections at $t_\star \tau_\eta$ and $t^\star \tau_\eta$ respectively, contrary to a classical $\text{G/G/1}$ queue with a single reflection. Building upon this simpler setting, we consider a $\text{G/G/1}$ queue with a finite buffer in Section \ref{sec: finite_buffer}. We observe that two reflections introduced due to a finite buffer hinder us from yielding a closed-form expression of the characteristic function by a direct application of the transform method of \citet{hurtado2020transform}. We overcome this difficulty by additionally using the test function $e^{\theta z /\tau_\eta}\mathbbm{1}\{z /\tau_\eta \in (t_\star, t^\star)\}$ for $\theta \in \bbR$ and exploiting that $|\barz_\eta| \mathbbm{1}\{\barz_\eta/\tau_\eta \in (t_\star, t^\star)\}/\tau_\eta$ is bounded with probability 1. The proof details of Lemma \ref{lemma: regime_3_inside} \pfix{are} presented in Sections \ref{sec: lemma_regime_3_inside_proof} and \ref{sec: lemma_regime_3_inside_proof_d_0}.
\subsubsection{Step 2: No Mass Outside the Thresholds}
In this step, we show that $\barz_\eta / \tau_\eta$ vanishes outside the set $(t_\star, t^\star)$, thus completing the two main pieces of the proof. First, we assume $t_\star \neq t^\star$ and handle the case of $t_\star = t^\star$ separately. 
\begin{lemma} \label{lemma: regime_3_outside}
Under the same conditions as in Theorem \ref{theo: profit_driven_regime_3}, \revcolor{if there exists $t_\star < 0 < t^\star$ such that $\Phi^\star \in \left\{(t_\star, t^\star), (t_\star, t^\star], [t_\star, t^\star), [t_\star, t^\star]\right\}$,} then for all $\drift \in \bbR$, we have
\begin{align*}
    \lim_{\eta \uparrow \infty} \P{\frac{\barz_\eta}{\tau_\eta} \notin (t_\star, t^\star)} = 0.
\end{align*}
\end{lemma}
We prove the above lemma by first bounding the required probability \revcolor{$\P{\barz_\eta / \tau_\eta \notin (t_\star, t^\star)}$} by the expectation of the absolute imbalance using Markov's inequality. Then, we set the drift of the test function, $\barz_\eta^2$ to zero in steady state to obtain a useful upper bound on $\E{|\barz_\eta|}$. The proof details are presented in Section \ref{sec: lemma_regime_3_outside_proof}. 

\subsubsection{Step 3: Proof of Theorem \ref{theo: profit_driven_regime_3}}
Putting together the results from Step 1 and Step 2, we obtain the limiting distribution of $\barz_\eta / \tau_\eta$. \revcolor{Here, we assume $t_\star \neq t^\star$ and handle the case of $t_\star = t^\star$ in Appendix~\ref{sec:theorem3_singleton}.}
\proof{Proof of Theorem \ref{theo: profit_driven_regime_3} (Case 1 and 2)}
We divide the characteristic function into two regions: i.e. when $\phi^c(\cdot) - \phi^s(\cdot)$ is equal to zero and non-zero respectively. \revcolor{In particular, let $t_\star < 0 < t^\star$ be such that $\Phi^\star \in \left\{(t_\star, t^\star), (t_\star, t^\star], [t_\star, t^\star), [t_\star, t^\star]\right\}$. Then, we have
\begin{align*}
    \E{e^{j\omega\barz_\eta/\tau_\eta}} = \underbrace{\E{e^{j\omega \barz_\eta / \tau_\eta} \mathbbm{1}\left\{\frac{\barz_\eta}{\tau_\eta} \in (t_\star, t^\star)\right\}}}_{\calT_5} + \underbrace{\E{e^{j\omega \barz_\eta / \tau_\eta} \mathbbm{1}\left\{\frac{\barz_\eta}{\tau_\eta} \notin (t_\star, t^\star)\right\}}}_{\calT_6}.
\end{align*}
We characterize the limit of $\calT_6$ using Lemma \ref{lemma: regime_3_outside} as follows:
\begin{align}
0 \leq \liminf_{\eta \uparrow \infty} |\calT_6| \leq \limsup_{\eta \uparrow \infty} |\calT_6| &=
    \limsup_{\eta \uparrow \infty} \bigg|\E{e^{j\omega \barz_\eta / \tau_\eta} \mathbbm{1}\left\{\frac{\barz_\eta}{\tau_\eta} \notin (t_\star, t^\star)\right\}}\bigg| \nonumber \\
    &\leq  \limsup_{\eta \uparrow \infty} \P{\frac{\barz_\eta}{\tau_\eta} \notin (t_\star, t^\star)}  = 0. \label{eq: regime_3_t2}
\end{align}
Thus, we have $\lim_{\eta\uparrow \infty}\calT_6 = 0$. Next, we analyze the limit of $\calT_5$ by using Lemma \ref{lemma: regime_3_inside} as follows.}

\revcolor{\textbf{Case I ($\drift = 0$):} By Lemma~\ref{lemma: regime_3_inside}, we have
\begin{align}
    \calT_5 &= \frac{e^{j \omega t^\star} - e^{j \omega t_\star}}{j\omega(t^\star-t_\star)}\P{\frac{\barz_\eta}{\tau_\eta} \in (t_\star, t^\star)}  + o_{\eta}(1) \nonumber \\
    &= \frac{e^{j \omega t^\star} - e^{j \omega t_\star}}{j\omega(t^\star-t_\star)} -  \frac{e^{j \omega t^\star} - e^{j \omega t_\star}}{j\omega(t^\star-t_\star)}\P{\frac{\barz_\eta}{\tau_\eta} \notin (t_\star, t^\star)} + o_{\eta}(1). \nonumber
\end{align}
Now, by Lemma \ref{lemma: regime_3_outside}, the limit of the RHS exists, thus, we have
\begin{align*}
    \lim_{\eta \uparrow \infty} \calT_5 = \frac{e^{j \omega t^\star} - e^{j \omega t_\star}}{j\omega(t^\star-t_\star)}. \numberthis \label{eq: regime_3_t1}
\end{align*}
Using \eqref{eq: regime_3_t2} and \eqref{eq: regime_3_t1}, we get
\begin{align*}
    \lim_{\eta \uparrow \infty} \E{e^{j \omega \barz_\eta / \tau_\eta}} = \frac{e^{j \omega t^\star} - e^{j \omega t_\star}}{j\omega(t^\star-t_\star)}.
\end{align*}
Note that the RHS is the characteristic function of $\operatorname{Uniform}(\Phi^\star)$. Thus, by Levy's continuity theorem (e.g. see \citet[Chapter 18]{levy_book}), $\barz_\eta / \tau_\eta$ converges to a uniform random variable. This completes one part of the proof.}

\revcolor{\textbf{Case II ($\drift \neq 0$):} By Lemma \ref{lemma: regime_3_inside}, we have
\begin{align*}
    \calT_5 ={}& \frac{1}{-1 + \frac{j\omega \sigma^\star}{2\drift}}\left(\P{\frac{\barz_\eta}{\tau_\eta} < t^\star}e^{2\drift t^\star/\sigma^\star}- \P{\frac{\barz_\eta}{\tau_\eta} \leq t_\star}e^{2 \drift t_\star/\sigma^\star}\right)\frac{e^{j \omega t^\star} - e^{j \omega t_\star}}{e^{2\drift t^\star/
    \sigma^\star} - e^{2\drift t_\star/
    \sigma^\star}} \\
    &-\frac{1}{-1 + \frac{j\omega \sigma^\star}{2\drift}}\left(\P{\frac{\barz_\eta}{\tau_\eta} < t^\star}e^{j \omega t^\star}- \P{\frac{\barz_\eta}{\tau_\eta} \leq t_\star}e^{j \omega t_\star}\right) + o_{\eta}(1). 
\end{align*}
By Lemma \ref{lemma: regime_3_outside}, we have 
\begin{align*}
    1 &\geq \limsup_{\eta \uparrow \infty} \P{\frac{\barz_\eta}{\tau_\eta} < t^\star} \geq \liminf_{\eta \uparrow \infty} \P{\frac{\barz_\eta}{\tau_\eta} < t^\star} = 1 - \limsup_{\eta \uparrow \infty} \P{\frac{\barz_\eta}{\tau_\eta} \geq t^\star} \\
    &\geq 1 - \limsup_{\eta \uparrow \infty} \P{\frac{\barz_\eta}{\tau_\eta} \notin (t_\star, t^\star)} = 1.
\end{align*}
Similarly, we have $\lim_{\eta \uparrow \infty} \P{\frac{\barz_\eta}{\tau_\eta} \leq t_\star} = 0$. Thus, we have
\begin{align*}
    \lim_{\eta \uparrow \infty}\calT_5 &= \frac{1}{-1 + \frac{j\omega \sigma^\star}{2\drift}} \left( e^{2\drift t^\star/\sigma^\star}\frac{e^{j \omega t^\star} - e^{j \omega t_\star}}{e^{2\drift t^\star/
    \sigma^\star} - e^{2\drift t_\star/
    \sigma^\star}} - e^{j \omega t^\star}\right) \\
    &= \frac{1}{-1 + \frac{j\omega \sigma^\star}{2\drift}} \frac{e^{2\drift t_\star/\sigma^\star} e^{j \omega t^\star} - e^{2\drift t^\star/\sigma^\star} e^{j \omega t_\star}}{e^{2\drift t^\star/
    \sigma^\star} - e^{2\drift t_\star/\sigma^\star}}\numberthis \label{eq: t5_positive_drift}
\end{align*}
Using \eqref{eq: regime_3_t2} and \eqref{eq: t5_positive_drift}, we get
\begin{align*}
    \lim_{\eta \uparrow \infty} \E{e^{j \omega \barz_\eta / \tau_\eta}} = \frac{1}{-1 + \frac{j\omega \sigma^\star}{2\drift}} \frac{e^{2\drift t_\star/\sigma^\star} e^{j \omega t^\star} - e^{2\drift t^\star/\sigma^\star} e^{j \omega t_\star}}{e^{2\drift t^\star/
    \sigma^\star} - e^{2\drift t_\star/
    \sigma^\star}}.
\end{align*}
Note that the RHS is the characteristic function of $\operatorname{TrunExp}\left(\frac{2\drift}{\sigma^\star}, \Phi^\star\right)$. Thus, by Levy's continuity theorem (e.g. see \citet[Chapter 18]{levy_book}), $\barz_\eta / \tau_\eta$ converges to a truncated exponential random variable. This completes the proof.}
\hfill $\square$
\endproof
\subsubsection{Discussion}
To analyze the drift of complex exponential test functions, we analyze the drift of several auxiliary test functions in the appendix. The resultant bounds allow us to complete Steps 1 and 2 in the proof of Theorems \ref{theo: case_1} and \ref{theo: profit_driven_regime_3}.

In the proof of Theorem \ref{theo: profit_driven_regime_3}, it is noteworthy that Step 2 which shows bounded support is a form of state space collapse (SSC). In particular, we say that the imbalance stays within the thresholds with a high probability for a finite, large enough $\eta$. Proving such an SSC is a crucial step in obtaining the complete distribution of imbalance. In addition, showing that the imbalance has a symmetrical distribution in the proof of Theorem \ref{theo: case_1} is technically not an SSC but reminiscent of it.
\section{Classical Single Server Queue} \label{sec: discussion}
The heavy-traffic limiting behavior of a matching queue studied in previous sections exhibits a much richer phase transition behavior than that of a classical single server queue studied in the literature. This is primarily because most of the literature focuses on a constant arrival rate for a single server queue, whereas we studied a matching queue under state-dependent control. 
Single server queue with state-dependent control also exhibits phase transition as studied in the previous sections. 

\subsection{Infinite Waiting Area}
Consider a sequence of single server queue $\{q_\eta(k): k \in \bbZ_+\}$ for $\eta>0$ with infinite waiting area for the customers. We consider state-dependent arrival $a_\eta(q_\eta)$ with expectation given by $\E{a_\eta(q_\eta)}=\lambda_\eta(q_\eta)=\lambda^\star+\phi^c\left(\frac{q_\eta}{\tau_\eta}\right)\epsilon_\eta$, and variance given by $\Var{a_\eta(q_\eta)}=\sigma^c(\lambda_\eta(q_\eta))$. Similarly, the state-dependent potential service $s_\eta(q_\eta)$ has expectation  $\E{s_\eta(q_\eta)}=\mu_\eta(q_\eta)=\mu^\star+\phi^s\left(\frac{q_\eta}{\tau_\eta}\right)\epsilon_\eta$, and variance $\Var{s_\eta(q_\eta)}=\sigma^s(\mu_\eta(q_\eta))$. Without loss of generality, we assume that $\lambda^\star=\mu^\star$ and $\phi^s(\infty)-\phi^c(\infty)=1$. Next, we assume that there exists $A_{\max}>0$ and $\sigma_{\max}>0$ such that $|a_\eta(q_\eta)|\leq A_{\max} $, $|s_\eta(q_\eta)|\leq A_{\max}$, and $|\sigma^c(\lambda_\eta(q_\eta))| \leq \sigma_{\max}$, $|\sigma^s(\mu_\eta(q_\eta))| \leq \sigma_{\max}$ with probability 1. \revcolor{We also assume that there exists $\phi_{\max} > 0$ such that $\max_{x \in \bbR}|\phi^c(x)| \leq \phi_{\max}$ and $\max_{x \in \bbR}|\phi^s(x)| \leq \phi_{\max}$.} These boundedness assumptions are made for convenience and can be relaxed by imposing assumptions such as finite third moment for the arrivals and sub-quadratic growth for the control curve. Now we analyze a single server queue with infinite waiting area $(B_\eta = \infty)$ followed by the case of finite waiting area $(B_\eta < \infty)$.

We write the queue evolution equation as follows:
\begin{align*}
    q_\eta(k+1)=q_\eta(k)+a_\eta(q_\eta(k))-s_\eta(q_\eta(k))+u_\eta(q_\eta(k)),
\end{align*}
where $u_\eta(q_\eta(k))$ is the unused service if there are not enough customers waiting in the queue to be served and a fraction of the potential service $s_\eta(q_\eta(k))$ is not utilized. This implies that
\begin{align*}
    q_\eta(k+1)u_\eta(q_\eta(k))=0.
\end{align*}

Before presenting the general result, to illustrate the phase transition, we consider a single server queue operating in discrete time under Bernoulli arrivals, Bernoulli service, and a two-price policy. In particular, assume that $\lambda^\star-\epsilon_\eta>0$, $\lambda^\star<1$, and the control curves are such that $\lambda_\eta(q)=\lambda^\star-\epsilon_\eta\pfix{\mathbbm{1}}\{q>\tau_\eta\}$, and $\mu_\eta(q)=\mu^\star$ for all $q \in \bbZ_+$. Lastly, the arrival and service distribution are given by $a_\eta(q) \sim \operatorname{Bernoulli}(\lambda_\eta(q))$, and $s_\eta(q) \sim \operatorname{Bernoulli}(\mu_\eta(q))$. As the arrival and service distribution is Bernoulli, the single server queue is a birth-death process as shown in Figure \ref{fig: single_server} with $m\overset{\Delta}{=} \lambda^\star(1-\mu^\star)$. Now, we present the phase transition result below:
\begin{figure}[b]
\FIGURE{
      \begin{tikzpicture}[->, >=stealth', auto, semithick, node distance=2.5cm,scale=0.65,transform shape]
\tikzstyle{every state}=[fill=white,draw=black,thick,text=black,scale=1]
\node[state]    (0)                     {$1$};
\node[state]    (1)[right of=0]   {$2$};
\node[state]    (nminusone)[right of=1]   {$\tau_\eta$};
\node[state]    (n)[ right of=nminusone]   {$\tau_\eta+1$};
\node[state,white]    (nplusone)[ right of=n] {};
\node [state](-1)[left of=0] {$0$};
\path
(0) edge[bend right,below]     node{$m$}         (1)
(1) edge[bend right,below,dashed] node{$m$} (nminusone)
(nminusone) edge[bend right,below]     node{$m$}         (n)
(n) edge[bend right,below]     node{$m-\epsilon_\eta(1-\mu^\star)$}         (nplusone)
(nplusone) edge[bend right,above]     node{$m+\epsilon_\eta\mu^\star$}         (n)
(n) edge[bend right,above]     node{$m+\epsilon_\eta\mu^\star$}         (nminusone)
(nminusone) edge[bend right,above,dashed]     node{$m$}         (1)
(1) edge[bend right,above]     node{$m$}         (0)
(0) edge[bend right,above] node{$m$} (-1)
(-1) edge[bend right,below] node{$m$} (0)
(-1) edge[loop, above] node{$1-m$} (-1)
(0) edge[loop, above] node{$1-2m$} (0)
(1) edge[loop, above] node{$1-2m$} (1)
(nminusone) edge[loop, above] node{$1-2m$} (nminusone)
(n) edge[loop, above] node{$1-2m+\epsilon_\eta(1-2\mu^\star)$} (n);
\end{tikzpicture}}{Single server queue with Bernoulli arrivals and two-price policy with $m\overset{\Delta}{=}\lambda^\star(1-\mu^\star)$
\label{fig: single_server}}{}
\end{figure}
\begin{proposition} \label{prop: single_server_queue} Let $\{\epsilon_\eta\}_{\eta>0}$ and $\{\tau_\eta\}_{\eta>0}$ be such that $\lim_{\eta \uparrow \infty}\epsilon_\eta \tau_\eta=l$.
Consider a single server queue operating under the two-price policy for customer arrival. Then, we have the following results. 
\begin{enumerate}
    \item When $l=0$, we have
    \begin{align*}
        \epsilon_\eta \barq_\eta \overset{D}{\rightarrow} \operatorname{Exp}\left(\pfix{\frac{2}{\lambda^\star(1-\lambda^\star)+\mu^\star(1-\mu^\star)}}\right)
    \end{align*}
    \item When $l \in (0,\infty)$, we have $\epsilon_\eta \barq_\eta \overset{D}{\rightarrow} \qinfty$ such that
     \begin{align*}
   \P{\qinfty \leq q}=\begin{cases}
    1-\frac{m}{(l+m)}e^{-\frac{q-l}{m}} &\textit{if } q \geq l \\
    \frac{q}{(l+m)} &\textit{if } q \in [0,l).
    \end{cases}
\end{align*}
The distribution mentioned above is a one-sided hybrid distribution.
    \item When $l=\infty$, we have
    \begin{align*}
        \frac{\bar{\pfix{q}}_\eta}{\tau_\eta} \overset{D}{\rightarrow} \operatorname{Uniform}([0,1]).
    \end{align*}
\end{enumerate}
\end{proposition}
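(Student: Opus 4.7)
The plan is to mirror the proof of Proposition \ref{prop: bernoulli}: exploit the fact that the DTMC $\{q_\eta(k)\}_{k \in \bbZ_+}$ depicted in Figure \ref{fig: single_server} is a one-dimensional birth--death chain whose transition probabilities are piecewise constant (symmetric on the inner block $\{0,1,\ldots,\tau\}$, biased toward $0$ on $\{\tau+1,\tau+2,\ldots\}$), compute its stationary distribution in closed form via detailed balance, and then pass to the limit $\eta\to\infty$ separately in each of the three regimes.

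Writing $m\overset{\Delta}{=}\lambda^\star(1-\mu^\star)=\lambda^\star(1-\lambda^\star)$ (using $\lambda^\star=\mu^\star$), the up- and down-rates both equal $m$ on the inner block, while for $q\geq\tau+1$ the up-rate is $m-\epsilon(1-\mu^\star)$ and the down-rate is $m+\epsilon\mu^\star$. Detailed balance gives $\pi_q=\pi_0$ for $0\leq q\leq\tau$ and
\[
\pi_{\tau+k}=\pi_0\cdot\frac{m}{m+\epsilon\mu^\star}\cdot r^{k-1},\qquad r\overset{\Delta}{=}\frac{m-\epsilon(1-\mu^\star)}{m+\epsilon\mu^\star},\qquad k\geq 1.
\]
Since $1-r=\epsilon/(m+\epsilon\mu^\star)$, the geometric tail telescopes to $\sum_{k\geq 1}\pi_{\tau+k}=\pi_0\, m/\epsilon$, so normalization gives $\pi_0=\epsilon/(\epsilon(\tau+1)+m)$. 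The survival function then reads: for $q\leq\tau$, $\P{\barq_\eta>q}=(\tau-q)\pi_0+\pi_0 m/\epsilon$; and for $q=\tau+k$ with $k\geq 1$, $\P{\barq_\eta>q}=\pi_0\cdot(m/\epsilon)\cdot r^k$.

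It now remains to take the three limits. For $l=0$, $\pi_0\approx\epsilon/m$, $\pi_0 m/\epsilon\to 1$, and $\log r\approx -\epsilon/m$, so for fixed $x>0$ the exponent $k=\lceil x/\epsilon\rceil-\tau$ satisfies $\epsilon k\to x$, giving $\P{\epsilon_\eta\barq_\eta>x}\to e^{-x/m}$, i.e.\ case~1. For $l\in(0,\infty)$, $\pi_0\approx\epsilon/(l+m)$; for $x\geq l$ one has $\epsilon k\to x-l$, so $\P{\barq_\infty>x}=(m/(l+m))e^{-(x-l)/m}$, while for $x\in[0,l)$ the inner-block contribution $(\tau-\lceil x/\epsilon\rceil)\pi_0+\pi_0 m/\epsilon$ converges to $(l-x+m)/(l+m)$, giving $\P{\barq_\infty\leq x}=x/(l+m)$. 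For $l=\infty$ one scales by $\tau_\eta$: $\pi_0\approx 1/\tau$ and $\pi_0 m/\epsilon\to 0$; for $y\in[0,1)$ the survival $(\tau-\lceil y\tau\rceil)\pi_0+\pi_0 m/\epsilon\to 1-y$, while for $y>1$ both $\pi_0 m/\epsilon$ and $r^{(y-1)\tau}$ vanish, so $\barq_\eta/\tau_\eta\overset{D}{\to}\calU(0,1)$. There is no conceptual obstacle; the main point requiring care is the critical regime, where the flat inner block of size $\Theta(\tau)$ and the geometric outer tail (with ratio $r$ close to $1$) are of the \emph{same} asymptotic order and must be expanded consistently so that their sum reproduces the hybrid CDF. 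Convergence in distribution then follows from pointwise convergence of the CDFs at every continuity point of the respective limits.
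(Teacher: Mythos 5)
Your proposal is correct, and it takes a genuinely different route from the paper's. Both proofs start identically: solve the detailed-balance equations of the birth--death chain in Figure \ref{fig: single_server} to get $\pi_q=\pi_0$ for $0\leq q\leq\lfloor\tau\rfloor$ and a geometric tail $\pi_{\lfloor\tau\rfloor+k}=\pi_0\frac{m}{m+\epsilon\mu^\star}r^{k-1}$ with $1-r=\epsilon/(m+\epsilon\mu^\star)$, leading to $\pi_0=\epsilon/(\epsilon(\lfloor\tau\rfloor+1)+m)$. From there the paper computes the MGF $\E{e^{\epsilon t\barq_\eta}}$ (or $\E{e^{t\barq_\eta/\tau}}$) as a sum of two explicit geometric series and passes to the limit term by term via Taylor expansion, mirroring the proof of Proposition \ref{prop: bernoulli} and matching the result against the MGF of Exp, one-sided hybrid, or $\calU(0,1)$. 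You instead write down the survival function $\P{\barq_\eta>q}$ directly as $(\lfloor\tau\rfloor-q)\pi_0+\pi_0 m/\epsilon$ on the flat block and $\pi_0(m/\epsilon)r^{q-\lfloor\tau\rfloor}$ on the geometric tail, and prove pointwise convergence of the CDF at every continuity point in each of the three regimes. Your route is more elementary and more transparent for verifying the CDF given in the proposition (which is how the critical-regime limit is stated); the paper's route via MGFs requires slightly more algebra but dovetails with the transform-based machinery used in the rest of the paper. Both are valid and essentially equivalent in effort here. One cosmetic note: the proposition's part (3) has a typo ($\barz_\eta$ should read $\barq_\eta$), which you correctly ignore.
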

Thus, a single server queue also exhibits phase transition behavior under state-dependent control. 
For general arrivals, service, and pricing policies, one can obtain results similar to the ones in Section \ref{sec: main_theorem}, by essentially following the same methods. We present the result below.  
First, we introduce a negative drift condition analogous to Condition \ref{ass: neg_drift} to ensure positive recurrence for a general pricing policy.
\begin{condition}[Negative Drift]
There exists $\delta>0$ and $K>0$ such that for all $x>K$, we have $\phi^c(x)-\phi^s(x)<-\delta$. \label{ass: classical_neg_drift}
\end{condition}
Next, we show that $\epsilon_\eta \barq_\eta$ converges in distribution to an exponential distribution in the delay-driven regime, uniform distribution in the cost-driven regime, and to Gibbs distribution on the nonnegative axis, denoted by $\operatorname{Gibbs}_{+}(\cdot)$ in the hybrid regime. In particular, the PDF of $\operatorname{Gibbs}_{+}(g)$ is given by 
\begin{align*}
    f_{\operatorname{Gibbs}_+}(x;g) = \frac{1}{\int_{0}^\infty e^{-\int_0^x g(t) dt} dx} e^{-\int_0^x g(t) dt}  \mathbbm{1}\{x \geq 0\} \quad \forall x \in \bbR.
\end{align*}
Now, we present the result below.
\begin{theorem} \label{theo: classical_general}
Consider the positive recurrent  DTMC $\{q_\eta(k): k \in \bbZ_+\}$ for any $\eta >0$ and for any $\phi^c(\cdot)$ and $\phi^s(\cdot)$ satisfying Condition \ref{ass: classical_neg_drift} and let $\barq_\eta$ denote its steady state random variable. 
\begin{enumerate}
    \item If $\epsilon_\eta \tau_\eta \rightarrow 0$, then we have,
\begin{align*}
    \epsilon_\eta \barq_\eta \overset{D}{\rightarrow} \operatorname{Exp}\left(\pfix{\frac{2}{\sigma^c(\lambda^\star)+\sigma^s(\mu^\star)}}\right).
\end{align*}
\item If $\epsilon_\eta \tau_\eta \rightarrow l$ for $l \in (0,\infty)$ and Condition \ref{ass: c_pol} is satisfied, then we have
\begin{align*}
    \epsilon_\eta \barq_\eta \overset{D}{\rightarrow} \operatorname{Gibbs}_+(g_{1,l})
\end{align*}
where $g_{1,l}$ is given by \eqref{eq: gibbs_distribution}.
\item If $\epsilon_\eta\tau_\eta \rightarrow \infty$ and Condition \ref{cond: monotonicity} is satisfied with $\Phi^\star  = [0, t^\star]$ for some $t^\star > 0$, then we have
\begin{align*}
    \frac{\barq_\eta}{\tau_\eta} \overset{D}{\rightarrow} \operatorname{Uniform}([0, t^\star]).
\end{align*}
\end{enumerate}
\end{theorem}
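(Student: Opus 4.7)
The plan is to adapt both the characteristic function method (for Part 1) and the inverse Fourier transform method (for Part 2) from the proofs of Theorems \ref{theo: case_1} and \ref{theo: case_2_itm} to the one-sided state space of the classical single-server queue. The key structural difference is that $\barq_\eta \geq 0$, so the evolution carries an unused-service term $u_\eta$ satisfying the complementarity $q_\eta(k+1) u_\eta(q_\eta(k)) = 0$. On the other hand, because the state is already non-negative, the symmetrization step used in the proof of Theorem \ref{theo: case_1} (analyzing $\textrm{sgn}(\barz_\eta)e^{j\epsilon\omega|\barz_\eta|}$) is unnecessary, and the test function $e^{j\epsilon\omega q}$ can be applied to $\barq_\eta$ directly. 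First I would establish positive recurrence under Condition \ref{ass: classical_neg_drift} by a Foster--Lyapunov drift analysis with test function $q^2$, mirroring Proposition \ref{prop: pos_rec}; a single-sided drift condition suffices because $\barq_\eta \geq 0$. The resulting moment bound yields $\E{\barq_\eta} = O((1+\epsilon_\eta\tau_\eta)/\epsilon_\eta)$, which gives tightness of $\{\epsilon_\eta\barq_\eta\}_{\eta>0}$ and a subsequential weak limit $\barq_\infty$ in both regimes.

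Next, setting to zero the steady-state drift of $e^{j\epsilon_\eta\omega q}$ and Taylor-expanding the increment $a_\eta(\barq_\eta)-s_\eta(\barq_\eta)+u_\eta(\barq_\eta)$, the complementarity $q_\eta(k+1)u_\eta(q_\eta(k))=0$ lets us isolate the boundary contribution as a non-negative real quantity $B_\eta$, which equals $\E{u_\eta(\barq_\eta)}$ to leading order. A standard flow-balance argument matching expected arrivals and services in steady state pins $B_\eta = \epsilon_\eta(\phi^s(\infty)-\phi^c(\infty))+o(\epsilon_\eta)=\epsilon_\eta+o(\epsilon_\eta)$ in Part 1, and more generally $B_\eta = \epsilon_\eta \E{\phi^s(\barq_\eta/\tau_\eta)-\phi^c(\barq_\eta/\tau_\eta)}$ in Part 2. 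For Part 1, since $\epsilon_\eta\tau_\eta\to 0$ forces $\barq_\eta/\tau_\eta\to\infty$ in probability, we have $\phi^i(\barq_\eta/\tau_\eta)\to\phi^i(\infty)$ and the drift identity, after scaling by $\epsilon_\eta$ and taking $\eta\to\infty$, collapses to the algebraic equation
\begin{equation*}
    \left(1 - j\omega\,\tfrac{\sigma^c(\lambda^\star)+\sigma^s(\mu^\star)}{2}\right)\E{e^{j\omega \barq_\infty}} = 1,
\end{equation*}
which is precisely the characteristic function of $\textrm{Exp}\bigl(\tfrac{\sigma^c(\lambda^\star)+\sigma^s(\mu^\star)}{2}\bigr)$. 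L\'evy's continuity theorem then promotes this to convergence in distribution and, since the limit is unique, the entire family converges.

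For Part 2, with $\epsilon_\eta\tau_\eta\to l\in(0,\infty)$, the pricing curves remain non-degenerate in the limit, and any subsequential limit $\barq_\infty$ satisfies an implicit equation of the form
\begin{equation*}
    \E{e^{j\omega \barq_\infty} g_{1,l}(\barq_\infty)} + c = j\omega \E{e^{j\omega \barq_\infty}},
\end{equation*}
where $c$ is the limiting boundary constant. Under Condition \ref{ass: c_pol}, the inverse Fourier transform argument from Section \ref{sec: inverse_fourier_transform}, carried out in the space of Schwartz functions and tempered distributions, interprets this identity as the distributional ODE $\rho' + g_{1,l}\rho = c\,\delta_0$ together with the support constraint $\rho = 0$ on $(-\infty,0)$. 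Solving this ODE on $[0,\infty)$ and normalizing reproduces exactly the $Gibbs_+(g_{1,l})$ density, and the same uniqueness reasoning as in Theorem \ref{theo: case_2_itm} promotes subsequential convergence to convergence of the whole family.

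The main obstacle will be making the boundary / reflection term $B_\eta$ rigorous: I must show that (i) the $u_\eta$-correction in the Taylor expansion of $e^{j\epsilon_\eta\omega q'}-e^{j\epsilon_\eta\omega q}$ can be isolated from the bulk second-order terms (rather than being absorbed as an $o(1)$ error), (ii) $B_\eta$ converges to a well-defined real constant $c$, and (iii) this constant is exactly the coefficient of $\delta_0$ needed to cut the support of $\rho$ to $[0,\infty)$ and make the Gibbs density normalizable. This boundary analysis is genuinely new compared to the two-sided queue, whose state space $\bbZ$ carries no reflection at all, so this is where the bulk of the work beyond the arguments of Theorems \ref{theo: case_1} and \ref{theo: case_2_itm} will lie.
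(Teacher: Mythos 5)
Your proposal follows essentially the same route as the paper: Foster--Lyapunov for positive recurrence and tightness, the complementarity identity $\left(e^{j\omega\epsilon\barq_\eta^+}-1\right)\left(e^{-j\omega\epsilon\baru_\eta}-1\right)=0$ to split the drift of $e^{j\epsilon\omega q}$ into a bulk term and a boundary term $1-\E{e^{-j\omega\epsilon\baru_\eta}}$, the flow-balance identity $\E{\baru_\eta}=\epsilon\E{\phi^s(\barq_\eta/\tau)-\phi^c(\barq_\eta/\tau)}$, and in the critical regime the tempered-distribution ODE $\rho'+g_{1,l}\rho=c\,\delta_0$ whose normalized solution supported on $[0,\infty)$ is $Gibbs_+(g_{1,l})$. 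The boundary bookkeeping you flag as the main obstacle is indeed exactly where the paper's argument differs from the two-sided case, and the paper resolves it with the same flow-balance computation you outline.
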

The transform method introduced by \citet{hurtado2020transform} provided the stationary distribution of a single server queue with static arrival and service rate. In particular, they obtain a closed-form expression for the characteristic function of the limiting distribution which immediately establishes convergence in distribution. In contrast, to prove part 2 of the theorem, we obtain an \emph{implicit} equation (\eqref{eq: classical_functional_eq} in Appendix), to solve which, we use the inverse Fourier transform method. Therefore, the method of \citet{hurtado2020transform} is inadequate to obtain the limiting distribution for a single server queue with dynamic arrival rates. The proof of Theorem \ref{theo: classical_general} is analogous to the two-sided analogs and thus, we defer all the proof details to the Appendix \ref{app: classical_queue}.

\subsection{Finite Waiting Area} \label{sec: finite_buffer}
In this section, we consider a single server queue with a \emph{state-independent} arrival and service rate and a finite waiting area for the customers. This setting relates to the cost-driven regime, and in particular, Lemma \ref{lemma: regime_3_inside}. Specifically, it is a simpler setup while still preserving the technical difficulties posed by two reflections as in Lemma \ref{lemma: regime_3_inside}. The reader can read the proof of Lemma \ref{lemma: ssq_finite_buffer} below before dwelling into the proof of Lemma \ref{lemma: regime_3_inside} to gain intuition.
\begin{lemma} \label{lemma: ssq_finite_buffer}
Consider a discrete-time single server queue $q_\eta(k)$ with a space for $\tau_\eta$ customers. That is, we have $q_\eta(k+1) = \min\{[q_\eta(k)+a(k)-s_\eta(k)]^+, \tau_\eta\}$, where $a(k)$ and $s_\eta(k)$ are potential arrivals and service respectively with $\E{a(k)} = \lambda^\star$, $\E{s_\eta(k)} = \lambda^\star +\drift/\tau_\eta$, and $\max\{a(k), s_\eta(k)\} \leq A_{\max}$ w.p. 1. In addition, let $\sigma^\star = \operatorname{Var}[a(k)] + \lim_{\eta \uparrow \infty} \operatorname{Var}[s_\eta(k)]$. Then, the steady state of the queue length $\barq_\eta$ satisfies
    \begin{align*}
        \frac{\barq_\eta}{\tau_\eta} \overset{D}{\rightarrow} \begin{cases}
            \operatorname{Uniform}[0, 1]  &\textit{if } \drift = 0 \\
            \operatorname{TrunExp}\left(\frac{2\drift}{\sigma^\star}, [0, 1]\right) &\textit{if } \drift \neq 0
        \end{cases}
    \end{align*}
\end{lemma}
\proof{Proof of Lemma \ref{lemma: ssq_finite_buffer}}
For a single server queue with finite capacity, let $\bar{a}$ and $\bar{s}_\eta$ be the potential arrivals and services in the steady state. In addition, let $\baru_{a, \eta}$ and $\baru_{s, \eta}$ be the dropped arrival due to finite capacity and unused service respectively. We consider $\baru_{a, \eta}\baru_{s, \eta} = 0$ w.p. 1 without loss of generality. Then, we have $\barq_\eta^+ = \barq_\eta + \bara - \bars_\eta + \baru_{s, \eta} - \baru_{a, \eta}$, where $\barq_\eta^+$ is the queue-length one step after $\barq_\eta$ in the steady state. In addition, we have $\barq_\eta^+ \baru_{s, \eta} = 0$ and $(\tau_\eta - \barq_\eta^+)\baru_{a, \eta} = 0$ w.p. 1. We start by obtaining useful bounds on $\E{\baru_{a, \eta}}$ and $\E{\baru_{s, \eta}}$. By setting the drift of $V(q)=q$ to zero, we get $\E{\baru_{a, \eta}} = \E{\baru_{s, \eta}}-\drift/\tau_\eta$. Next, we analyze the drift of $V(q)=q^2$. We have 
\begin{align*}
    \left(\barq^+_\eta\right)^2 &= \left(\barq_\eta^+ - \baru_{s, \eta} + \baru_{s, \eta}\right)^2 \\
    &= \left(\barq_\eta^+ - \baru_{s, \eta}\right)^2 - \baru_{s, \eta}^2 \\
    &= \left(\barq_\eta^+ - \baru_{s, \eta} + \baru_{a, \eta} - \baru_{a, \eta}\right)^2 - \baru_{s, \eta}^2 \\
    &= \left(\barq_\eta^+ - \baru_{s,\eta}+\baru_{a, \eta}\right)^2 - \baru_{a, \eta}^2 - 2\tau_\eta \baru_{a, \eta} - \baru_{s, \eta}^2 \\
    &= \left(\barq_\eta + \bar{a} - \bar{s}_\eta\right)^2 - \baru_{a, \eta}^2 - 2\tau_\eta \baru_{a, \eta} - \baru_{s, \eta}^2 \\
    &= \barq_\eta^2 + \left(\bar{a}-\bar{s}_\eta\right)^2 + 2\barq_\eta \left(\pfix{\bar{a}}-\bar{s}_\eta\right)-\baru_{a, \eta}^2 - 2\tau_\eta \baru_{a, \eta} - \baru_{s, \eta}^2.
\end{align*}
Now, by taking the expectation on both sides, letting $\sigma_\eta = \operatorname{Var}[\bara] + \operatorname{Var}[\bars_\eta]$ and noting that $\E{\left(\barq_\eta^+\right)^2} = \E{\barq_\eta^2}$, we have
\begin{align*}
    \sigma_\eta + \frac{\drift^2}{\tau_\eta^2} - \frac{2\drift}{\tau_\eta}\E{\barq_\eta} - \E{\baru_{a, \eta}^2}-2\tau_\eta \E{\baru_{a, \eta}}-\E{\baru_{s, \eta}^2} = 0.
\end{align*}
Now, we consider two cases depending on the value of $\drift \in \bbR$.

\textbf{Case I $(d = 0)$:} Noting that $0 \leq \baru_{a, \eta} \leq A_{\max}$ and $0 \leq \baru_{s, \eta} \leq A_{\max}$, we have
\begin{align*}
    \frac{\sigma_\eta}{2\tau_\eta + 2A_{\max}} \leq \E{\baru_{a, \eta}} = \E{\baru_{s, \eta}} \leq \frac{\sigma_\eta}{2\tau_\eta} \implies \E{\baru_{a, \eta}} = \E{\baru_{s, \eta}} = \frac{\sigma^\star}{2\tau_\eta} + o\left(\frac{1}{\tau_\eta}\right). \numberthis \label{eq: no_drift_unused_service}
\end{align*}
\textbf{Case II $(\drift \neq 0)$:} In this case, we have two equations but three unknowns $(\E{\barq_\eta}, \E{\baru_{a, \eta}}, \E{\baru_{s, \eta}})$ and so, we do not get a tight characterization of $\E{\baru_{a, \eta}}$ and $\E{\baru_{s, \eta}}$. However, we can obtain the following weaker bound that will be useful later. As $\barq_\eta \leq \tau_\eta$, we have
\begin{align*}
    \E{\baru_{a, \eta}} \leq \frac{\sigma_\eta+2|d|}{2\tau_\eta} + \frac{\drift^2}{2\tau_\eta^3} = o(1), \quad \E{\baru_{s, \eta}} \leq \frac{\sigma_\eta+4|\drift|}{2\tau_\eta}+ \frac{\drift^2}{2\tau_\eta^3} = o(1). \numberthis \label{eq: drift_unused_service}
\end{align*}
With these preliminary bounds, we are now ready to characterize the distribution of $\barq_\eta/\tau_\eta$. We do this by analyzing the drift of $e^{\theta \barq_\eta/\tau_\eta}$ for $\theta \in \bbR$. We use the following algebraic trick (similar to \citet{hurtado2020transform}) to simplify the drift of $e^{\theta \barq_\eta/\tau_\eta}$. As $\barq_\eta^+ \baru_{s, \eta} = 0$ with probability 1, we have
\begin{align*}
    &\left(e^{\theta \barq^+_\eta/\tau_\eta}-1\right)\left(e^{-\theta \baru_{s, \eta}/\tau_\eta}-1\right)=0 \\
    \implies{}& e^{\theta (\barq^+_\eta-\baru_{s, \eta})/\tau_\eta} - e^{- \theta \baru_{s, \eta}/\tau_\eta} - e^{\theta \barq^+_\eta/\tau_\eta}+1 = 0
\end{align*}
In addition, as $(\barq_\eta^+-\tau_\eta)\baru_{a, \eta}= 0$ with probability 1, we have
\begin{align*}
&\left(e^{\theta\left(\barq^+_\eta-\baru_{s, \eta}-\tau_\eta\right)/\tau_\eta}-1\right)\left(e^{\theta \baru_{a, \eta}/\tau_\eta}-1\right) = 0 \\
    \implies{}& e^{\theta\left(\barq^+_\eta-\baru_{s, \eta}+\baru_{a, \eta}\right)/\tau_\eta} e^{-\theta} - e^{\theta \baru_{a, \eta}/\tau_\eta} - e^{\theta \left(\barq^+_\eta-\baru_{s, \eta}-\tau_\eta\right)/\tau_\eta}+1 = 0 \\
    \implies{}& e^{\theta\left(\barq^+_\eta-\baru_{s, \eta}+\baru_{a, \eta}\right)/\tau_\eta} e^{-\theta} - e^{\theta \baru_{a, \eta}/\tau_\eta} -e^{-\theta \baru_{s, \eta}/\tau_\eta} e^{-\theta} - e^{\theta \barq^+_\eta/\tau_\eta} e^{-\theta} + e^{-\theta}+1 = 0.
\end{align*}
Noting that $0 \leq \barq_\eta \leq \tau_\eta$, and taking the expectation on both sides, we get
\begin{align*}
    \left(\E{e^{\theta \baru_{a, \eta}/\tau_\eta}}-1\right) e^\theta + \left(\E{e^{-\theta \baru_{s, \eta}/\tau_\eta}}-1\right) &= \E{e^{\theta \left(\barq^+_\eta - \baru_{s, \eta} + \baru_{a, \eta}\right)/\tau_\eta}} - \E{e^{\theta \barq_\eta/\tau_\eta}} \\\
    &= \E{e^{\theta \barq_\eta/\tau_\eta}} \left(\E{e^{\theta (\bar{a}-\bar{s}_\eta)/\tau_\eta}}-1\right) \numberthis \label{eq: ssq_just_before_taylor_series}
\end{align*}
Now, by using Taylor's theorem (Lemma \ref{lemma: taylor_series}), we have
\begin{align*}
    \E{e^{\theta (\bar{a}-\bar{s}_\eta)/\tau_\eta}}-1 &= -\frac{\drift\theta}{\tau_\eta^2} + \frac{\theta^2 \sigma^\star}{2\tau_\eta^2} + o\left(\frac{1}{\tau_\eta^2}\right), \\
    \E{e^{\theta \baru_{a, \eta}/\tau_\eta}}-1 &= \frac{\theta}{\tau_\eta}\E{\baru_{a, \eta}} + \frac{\theta^2}{2\tau_\eta^2}\E{\baru_{a, \eta}^2} + o\left(\frac{1}{\tau_\eta^2}\right) \overset{(a)}{=} \frac{\theta}{\tau_\eta}\E{\baru_{a, \eta}} + o\left(\frac{1}{\tau_\eta^2}\right), \\
    \E{e^{-\theta \baru_{s, \eta}/\tau_\eta}}-1 &= -\frac{\theta}{\tau_\eta}\E{\baru_{s, \eta}} + \frac{\theta^2}{2\tau_\eta^2} \E{\baru_{s, \eta}^2} + o\left(\frac{1}{\tau_\eta^2}\right) \overset{(b)}{=} -\frac{\drift\theta}{\tau_\eta^2} - \frac{\theta}{\tau_\eta}\E{\baru_{a, \eta}} + o\left(\frac{1}{\tau_\eta^2}\right),
\end{align*}
where $(a)$ follows as $\E{\baru_{a, \eta}^2} \leq A_{\max}\E{\baru_{a, \eta}} = o(1)$ by \eqref{eq: no_drift_unused_service} and \eqref{eq: drift_unused_service}. Similarly, we also get $\E{\baru_{s, \eta}^2} = o(1)$ which implies $(b)$. Now, by substituting the above equations back in \eqref{eq: ssq_just_before_taylor_series}, we get
\begin{align*}
     \left(-\frac{\drift\theta}{\tau_\eta^2} + \frac{\theta^2 \sigma^\star}{2\tau_\eta^2} + o\left(\frac{1}{\tau_\eta^2}\right)\right) \E{e^{\theta \barq_\eta/\tau_\eta}} = -\frac{\drift\theta}{\tau_\eta^2} + \frac{\theta \E{\baru_{a, \eta}}}{\tau_\eta} \left(e^\theta-1\right) + o\left(\frac{1}{\tau_\eta^2}\right). \numberthis \label{eq: pre_limit_mgf}
\end{align*}
\textbf{Case I $(d = 0)$:} Using \eqref{eq: no_drift_unused_service}, we have
\begin{align*}
    \E{e^{\theta \barq_\eta / \tau_\eta}} = \frac{e^{\theta}-1 + o(1)}{\theta +o(1)} \implies \lim_{\eta \uparrow \infty} \E{e^{\theta \barq_\eta/\tau_\eta}} = 
    \frac{e^\theta-1}{\theta}.
\end{align*}
This completes the first part of the proof.

\textbf{Case II $(d \neq 0)$:} Set $\theta  = 2\drift/\sigma^\star$ and observe that $|\E{e^{\theta \barq_\eta/\tau_\eta}}| \leq e^{\theta}$ to obtain 
\begin{align*}
    \E{\baru_{a, \eta}} = \frac{\drift}{\tau_\eta \left(e^{2\drift/\sigma^\star}-1\right)} + o\left(\frac{1}{\tau_\eta}\right).
\end{align*}
Note that the above step is the main difference from the proof of \citet{hurtado2020transform} for a single server queue with infinite buffer. Although, the term $\E{\baru_{a, \eta}}$ appears in \eqref{eq: pre_limit_mgf} due to a finite buffer and hinders us from obtaining the closed-form expression of the MGF; the finite buffer ensures $\barq_\eta / \tau_\eta \leq 1$ w.p. 1 which in turn allows us to obtain $\E{\baru_{a, \eta}}$ as above. The above step is also the reason why we are working with the MGF and not the characteristic function. Now, substituting this back again in \eqref{eq: pre_limit_mgf}, we get
\begin{align*}
    \E{e^{\theta \barq_\eta/\tau_\eta}} = \frac{\frac{e^{\theta}-e^{2\drift/\sigma^\star}}{e^{2\drift/\sigma^\star}-1} + o(1)}{-1+\frac{\theta \sigma^\star}{2\drift} +o(1)}.
\end{align*}
Now, by taking the limit as $\eta \uparrow \infty$, we get
\begin{align*}
    \lim_{\eta \uparrow \infty}\E{e^{\theta \barq_\eta/\tau_\eta}} = \frac{1}{-1+\frac{\theta \sigma^\star}{2\drift}}\frac{e^{\theta}-e^{2\drift/\sigma^\star}}{e^{2\drift/\sigma^\star}-1} = \frac{1}{1-\frac{\theta \sigma^\star}{2\drift}}\frac{1-e^{\theta-2\drift/\sigma^\star}}{1-e^{-2\drift/\sigma^\star}}.
\end{align*}
This completes the proof. \hfill $\square$
\endproof

\section{Conclusion and Future Work}
In this paper, we develop a heavy traffic theory of a matching or a two-sided queue. We consider general, state-dependent arrivals and define heavy traffic as the limit such that the external control goes to zero. There are two different ways in which the control can be sent to zero which is modeled by the magnitude parameter, $\epsilon_\eta$ that goes to zero, and the position parameter, $\tau_\eta$ that goes to infinity. Based on the relative speeds at which these parameters converge to their asymptotes, we observe a phase transition in the distribution of the imbalance. A similar phase transition is also observed in a single server queue under state-dependent controls. To obtain these results, we develop non-trivial generalizations of the transform method of  \citet{hurtado2020transform} in each of the three regimes. The hybrid regime employed the inverse Fourier transforms and the other two regimes engineer multiple complex exponential Lyapunov functions.


This paper analyzes the simplest matching queue, a building block of matching networks that arise in applications such as blockchain (\citet{sushil_blockchain}), ride-hailing (\citet{SIGMETRICS_strategic_servers, varma_twosided_or}), quantum switches (\citet{quantum_towsley_sigmetrics, quantum2-towsley}), and assemble-to-order systems (\citet{plambeck2006optimal, reiman2015asymptotically}). Future work includes studying the heavy traffic behavior of these systems. The key ingredient in such an endeavor is to establish an appropriate state space collapse. Another line of future work building upon our result on state-dependent control in a classical single server queue is to study stochastic processing networks (e.g., load balancing studied by \citet{atilla}) under state-dependent control.

\bibliographystyle{informs2014} 
\bibliography{references.bib} 
\newpage
\renewcommand{\theHsection}{A\arabic{section}}
\begin{APPENDICES}
\section{Proof of Proposition \ref{prop: bernoulli}} \label{app: bernoulli}
\proof{Proof}
In this case, the underlying DTMC governing the imbalance is just a birth-and-death process. Thus, we can evaluate its stationary distribution exactly. In particular, the imbalance increase by 1 if a customer arrives and no server arrives. That occurs with probability $\lambda_\eta(z)(1-\mu_\eta(z))$ when the imbalance is $z \in \bbZ$. Similarly, the imbalance decrease by 1 if a server arrives and no customer arrives. That occurs with probability $(1-\lambda_\eta(z))\mu_\eta(z)$ when the imbalance is $z \in \bbZ$. If both customer and server arrive or if no one arrives, the imbalance will stay the same. Define $m\overset{\Delta}{=}\mu^\star(1-\lambda^\star)=\lambda^\star(1-\mu^\star)$ and the transition probability matrix is given by
\begin{align*}
    P_{i,j}= \begin{cases} 
    m &\textit{if } j=i \pm 1, |i| \leq \lfloor \tau_\eta \rfloor \\
    m+\epsilon_\eta \mu^\star &\textit{if } j=i-\textrm{sgn}(i), |i|>\lfloor \tau_\eta \rfloor \\
    m-\epsilon_\eta(1-\mu^\star) &\textit{if } j=i+\textrm{sgn}(i), |i|>\lfloor \tau_\eta \rfloor \\
    1-2m &\textit{if } j=i, |i| \leq \lfloor \tau_\eta \rfloor \\
    1-2m+\epsilon_\eta(1-2\mu^\star) &\textit{if } j=i, |i|> \lfloor \tau_\eta \rfloor \\
    0 &\textit{otherwise}.
    \end{cases}
\end{align*}
As this is a birth and death process, it is a reversible DTMC. Thus, by solving the detailed balance equations, we get
\begin{align*}
    \pi_i=\begin{cases}\frac{1}{2\lfloor\tau_\eta\rfloor+1+2m/\epsilon_\eta} &\textit{if } |i| \leq \lfloor\tau_\eta\rfloor \\
    \frac{1}{2\lfloor\tau_\eta\rfloor+1+2m/\epsilon_\eta}\frac{m}{m+\epsilon_\eta\mu^\star}\left(1-\frac{\epsilon_\eta}{m+\epsilon_\eta\mu^\star}\right)^{|i|-\lfloor\tau_\eta\rfloor-1} &\textit{otherwise}.
    \end{cases} \numberthis \label{eq: stead_state_bernoulli}
\end{align*}
\begin{figure}[b]
\FIGURE{
      \begin{tikzpicture}[->, >=stealth', auto, semithick, node distance=2.5cm,scale=0.65,transform shape]
\tikzstyle{every state}=[fill=white,draw=black,thick,text=black,scale=1]
\node[state]    (0)                     {$0$};
\node[state]    (1)[right of=0]   {$1$};
\node[state]    (nminusone)[right of=1]   {{$\tau_\eta$}};
\node[state]    (n)[ right of=nminusone]   {$\tau_\eta+1$};
\node[state,white]    (nplusone)[ right of=n] {};
\node [state](-1)[left of=0] {$-1$};
\node[state](-n+1)[left of=-1]{$-\tau_\eta$};
\node[state](-n)[left of =-n+1]{\tiny{$-\tau_\eta-1$}};
\node[state,white](-n-1)[left of=-n]{};
\path
(0) edge[bend right,below]     node{$m$}         (1)
(1) edge[bend right,below,dashed] node{$m$} (nminusone)
(nminusone) edge[bend right,below]     node{$m$}         (n)
(n) edge[bend right,below]     node{$m-\epsilon_\eta(1-\mu^\star)$}         (nplusone)
(nplusone) edge[bend right,above]     node{$m+\epsilon_\eta\mu^\star$}         (n)
(n) edge[bend right,above]     node{$m+\epsilon_\eta\mu^\star$}         (nminusone)
(nminusone) edge[bend right,above,dashed]     node{$m$}         (1)
(1) edge[bend right,above]     node{$m$}         (0)
(0) edge[bend right,above] node{$m$} (-1)
(-1) edge[bend right,above,dashed] node{$m$} (-n+1)
(-n+1) edge[bend right,above] node{$m$} (-n)
(-n) edge[bend right,above] node{$m-\epsilon_\eta(1-\mu^\star)$} (-n-1)
(-n-1) edge[bend right,below] node{$m+\epsilon_\eta\mu^\star$} (-n)
(-n) edge[bend right,below] node{$m+\epsilon_\eta\mu^\star$} (-n+1)
(-n+1) edge[bend right,below,dashed] node{$m$} (-1)
(-1) edge[bend right,below] node{$m$} (0)
(-n) edge[loop, above] node{$1-2m+\epsilon_\eta(1-2\mu^\star)$} (-n)
(-n+1) edge[loop, above] node{$1-2m$} (-n+1)
(-1) edge[loop, above] node{$1-2m$} (-1)
(0) edge[loop, above] node{$1-2m$} (0)
(1) edge[loop, above] node{$1-2m$} (1)
(nminusone) edge[loop, above] node{$1-2m$} (nminusone)
(n) edge[loop, above] node{$1-2m+\epsilon_\eta(1-2\mu^\star)$} (n);
\end{tikzpicture}}{Single link matching queue with Bernoulli arrivals
\label{fig: intuition_singlelinkbirthanddeath}}{}
\end{figure}
Now, we calculate the moment generating function of $\epsilon_\eta \barz_\eta$ for Case 1 and 2 and $\barz_\eta/\tau_\eta$ for Case 3. We fix an arbitrary $t$ such that $t<1/m$. Then, there exists $\epsilon_0(t) > 0$ such that for all $\epsilon_\eta < \epsilon_0$, we have $(1-\epsilon_\eta/(m+\epsilon_\eta \mu^\star))e^{\epsilon_\eta t}<1$. We have
\begin{align*}
    \E{e^{\epsilon_\eta t \barz_\eta}}={}&\sum_{i=-\infty}^\infty \pi_i e^{\epsilon_\eta t i} \\
    ={}&\sum_{i=-\infty}^{-\lfloor\tau_\eta\rfloor-1} \pi_i e^{\epsilon_\eta t i} + \sum_{i=-\lfloor\tau_\eta\rfloor}^{\lfloor\tau_\eta\rfloor} \pi_i e^{\epsilon_\eta t i} + \sum_{i=\lfloor\tau_\eta\rfloor+1}^\infty \pi_i e^{\epsilon_\eta t i} \\
    ={}&\frac{1}{2\lfloor\tau_\eta\rfloor+1+2m/\epsilon_\eta}\left(\frac{m}{m+\epsilon_\eta \mu^\star}\sum_{i=-\infty}^{-\lfloor\tau_\eta\rfloor-1}\left(1-\frac{\epsilon_\eta}{m+\epsilon_\eta \mu^\star}\right)^{-i-\lfloor\tau_\eta\rfloor-1}e^{\epsilon_\eta t i}+\sum_{i=-\lfloor\tau_\eta\rfloor}^{\lfloor\tau_\eta\rfloor} e^{\epsilon_\eta t i}\right.\\
    &\left.+\frac{m}{m+\epsilon_\eta \mu^\star}\sum_{i=\lfloor\tau_\eta\rfloor+1}^\infty \left(1-\frac{\epsilon_\eta}{m+\epsilon_\eta \mu^\star}\right)^{i-\lfloor\tau_\eta\rfloor-1}e^{\epsilon_\eta t i} \right) \\
    ={}&\frac{\epsilon_\eta}{2\lfloor\tau_\eta\rfloor\epsilon_\eta+\epsilon_\eta+2m}\left(\frac{m}{m+\epsilon_\eta\mu^\star}\frac{e^{-\epsilon_\eta t (\lfloor\tau_\eta\rfloor+1)}}{1-e^{-\epsilon_\eta t}+\frac{\epsilon_\eta e^{-\epsilon_\eta t}}{m+\epsilon_\eta \mu^\star}}+\frac{e^{-\epsilon_\eta t\lfloor\tau_\eta\rfloor}-e^{\epsilon_\eta t(\lfloor\tau_\eta\rfloor+1)}}{1-e^{\epsilon_\eta t}}\right.\\
    &\left. +\frac{m}{m+\epsilon_\eta\mu^\star}\frac{e^{\epsilon_\eta t (\lfloor\tau_\eta\rfloor+1)}}{1-e^{\epsilon_\eta t}+\frac{\epsilon_\eta e^{\epsilon_\eta t}}{m+\epsilon_\eta \mu^\star}}\right) \numberthis \label{eq: mgf_bernoulli}
\end{align*}
Now, we take the limit as $\eta \uparrow \infty$ for the above equation for Case 1 and 2 separately.

\textbf{Case 1:} In this case, we have $\epsilon_\eta \tau_\eta \rightarrow 0$ and $\epsilon_\eta \rightarrow 0$. In addition, note that $|\epsilon_\eta \lfloor\tau_\eta\rfloor -\epsilon_\eta \tau_\eta| \leq \epsilon_\eta$, which gives us $\epsilon_\eta \lfloor\tau_\eta\rfloor \rightarrow 0$. Now, we take the limit for each of the three terms in \eqref{eq: mgf_bernoulli} separately. In particular, we resort to Taylor series expansion of the exponential terms and then take the limit as $\eta \uparrow \infty$.
\begin{subequations}
\label{eq: case1_bernoulli}
\begin{align}
    \epsilon_\eta\frac{e^{-\epsilon_\eta t\lfloor\tau_\eta\rfloor}-e^{\epsilon_\eta t(\lfloor\tau_\eta\rfloor+1)}}{1-e^{\epsilon_\eta t}}&=\epsilon_\eta\frac{
    -\epsilon_\eta t \lfloor\tau_\eta\rfloor -\epsilon_\eta t(\lfloor\tau_\eta\rfloor+1)  + o(\epsilon_\eta \tau_\eta)}{-\epsilon_\eta t+o(\epsilon_\eta)}\nonumber\\
    &= \frac{-\epsilon_\eta t \lfloor\tau_\eta\rfloor -\epsilon_\eta t(\lfloor\tau_\eta\rfloor+1)  + o(\epsilon_\eta \tau_\eta)}{-t+o(1)} \rightarrow 0  \\
    \epsilon_\eta\frac{e^{\epsilon_\eta t (\lfloor\tau_\eta\rfloor+1)}}{1-e^{\epsilon_\eta t}+\frac{\epsilon_\eta e^{\epsilon_\eta t}}{m+\epsilon_\eta \mu^\star}}&=\epsilon_\eta \frac{e^{\epsilon_\eta t (\lfloor\tau_\eta\rfloor+1)}}{-\epsilon_\eta t +\frac{\epsilon_\eta}{m+\epsilon_\eta \mu^\star}+o(\epsilon_\eta)}= \frac{e^{\epsilon_\eta t (\lfloor\tau_\eta\rfloor+1)}}{- t +\frac{1}{m+\epsilon_\eta \mu^\star}+o(1)} \rightarrow \frac{1}{1/m-t} \\
    \epsilon_\eta\frac{e^{-\epsilon_\eta t (\lfloor\tau_\eta\rfloor+1)}}{1-e^{-\epsilon_\eta t}+\frac{\epsilon_\eta e^{-\epsilon_\eta t}}{m+\epsilon_\eta \mu^\star}}&=\epsilon_\eta \frac{e^{-\epsilon_\eta t (\lfloor\tau_\eta\rfloor+1)}}{\epsilon_\eta t +\frac{\epsilon_\eta}{m+\epsilon_\eta \mu^\star}+o(\epsilon_\eta)}= \frac{e^{-\epsilon_\eta t (\lfloor\tau_\eta\rfloor+1)}}{t +\frac{1}{m+\epsilon_\eta \mu^\star}+o(1)} \rightarrow \frac{1}{1/m+t}.
\end{align}
\end{subequations}
Now, by using \eqref{eq: case1_bernoulli} to simplify \eqref{eq: mgf_bernoulli} we get
\begin{align*}
    \E{e^{\epsilon_\eta t \barz_\eta}} \rightarrow \frac{1}{2m}\left(\frac{1}{1/m+t}+0+\frac{1}{1/m-t}\right)=\frac{1}{1-m^2t^2} \quad \textit{as } \eta \uparrow \infty.
\end{align*}
The above is the MGF of a Laplace distribution with parameters $(0,\pfix{1/m})$. Thus, by Levy's continuity theorem (\citet[Chapter 18]{levy_book}), the proof of Case 1 is complete.

\textbf{Case 2:} In this case, we have $\epsilon_\eta \tau_\eta \rightarrow l \in (0,\infty)$, $\epsilon_\eta \rightarrow 0$ and $\tau_\eta \rightarrow \infty$. In addition, note that $|\epsilon_\eta \lfloor\tau_\eta\rfloor-\epsilon_\eta \tau_\eta| \leq \epsilon_\eta$ which gives us $\epsilon_\eta \lfloor\tau_\eta\rfloor \rightarrow l$. Now, we evaluate the limit of \eqref{eq: mgf_bernoulli} by using Taylor series expansion.
\begin{subequations}
\label{eq: case2_bernoulli}
\begin{align}
    \epsilon_\eta\frac{e^{-\epsilon_\eta t\lfloor\tau_\eta\rfloor}-e^{\epsilon_\eta t(\lfloor\tau_\eta\rfloor+1)}}{1-e^{\epsilon_\eta t}}&= \epsilon_\eta\frac{e^{-\epsilon_\eta t\lfloor\tau_\eta\rfloor}-e^{\epsilon_\eta t(\lfloor\tau_\eta\rfloor+1)}}{-\epsilon_\eta t+o(\epsilon_\eta)}=\frac{e^{-\epsilon_\eta t\lfloor\tau_\eta\rfloor}-e^{\epsilon_\eta t(\lfloor\tau_\eta\rfloor+1)}}{- t+o(1)} \rightarrow \frac{e^{l t}-e^{-l t}}{t} \\
    \epsilon_\eta\frac{e^{\epsilon_\eta t (\lfloor\tau_\eta\rfloor+1)}}{1-e^{\epsilon_\eta t}+\frac{\epsilon_\eta e^{\epsilon_\eta t}}{m+\epsilon_\eta \mu^\star}}&=\epsilon_\eta \frac{e^{\epsilon_\eta t (\lfloor\tau_\eta\rfloor+1)}}{-\epsilon_\eta t +\frac{\epsilon_\eta}{m+\epsilon_\eta \mu^\star}+o(\epsilon_\eta)}= \frac{e^{\epsilon_\eta t (\lfloor\tau_\eta\rfloor+1)}}{- t +\frac{1}{m+\epsilon_\eta \mu^\star}+o(1)} \rightarrow \frac{e^{l t}}{1/m-t}  \\
    \epsilon_\eta\frac{e^{-\epsilon_\eta t (\lfloor\tau_\eta\rfloor+1)}}{1-e^{-\epsilon_\eta t}+\frac{\epsilon_\eta e^{-\epsilon_\eta t}}{m+\epsilon_\eta \mu^\star}}&=\epsilon_\eta \frac{e^{-\epsilon_\eta t (\lfloor\tau_\eta\rfloor+1)}}{\epsilon_\eta t +\frac{\epsilon_\eta}{m+\epsilon_\eta \mu^\star}+o(\epsilon_\eta)}= \frac{e^{-\epsilon_\eta t (\lfloor\tau_\eta\rfloor+1)}}{t +\frac{1}{m+\epsilon_\eta \mu^\star}+o(1)} \rightarrow \frac{e^{-lt}}{1/m+t} 
\end{align}
\end{subequations}
Now, by using \eqref{eq: case2_bernoulli} to simplify \eqref{eq: mgf_bernoulli} we get
\begin{align}
    \E{e^{\epsilon_\eta t \barz_\eta}} \rightarrow \frac{1}{2l+2m}\left(\frac{e^{-lt}}{1/m+t}+\frac{e^{l t}-e^{-lt}}{t}+\frac{e^{lt}}{1/m-t}\right). \label{eq: hybrind_with_epsilon}
\end{align}
The above is the MGF of the hybrid distribution with parameters $(\pfix{1/m},l)$. We verify it for $X \sim \textrm{Hybrid}(b,c)$ as follows:
\begin{align*}
    \E{e^{tX}}&=\pfix{\int_{-\infty}^\infty \rho_{\textrm{Hybrid}}(x) e^{t x} dx = \frac{b}{2(1+bc)}\left(\int_{-\infty}^{-c} e^{b(x+c)}e^{tx} dx+\int_{-c}^c e^{tx} dx +\int_c^\infty e^{-b(x-c)}e^{tx} dx \right)} \\
    &=\pfix{\frac{b}{2(1+bc)}\left(e^{bc}\frac{e^{x(b+t)}}{b+t}\bigg|_{-\infty}^{-c}+\frac{e^{tx}}{t}\bigg|_{-c}^c+e^{bc}\frac{e^{x(t-b)}}{t-b}\bigg|_c^\infty\right)} \\
    &=\pfix{\frac{b}{2(1+bc)}\left(\frac{e^{-ct}}{b+t}+\frac{e^{ct}-e^{-ct}}{t}+\frac{e^{ct}}{b-t}\right)}.
\end{align*}
Note that, the limiting value of  $\E{e^{t\frac{\barz_\eta}{\tau_\eta}}}$ can be similarly calculated. We omit the details here as they are repetitive. Intuitively, by substituting $t \rightarrow t/l$ in \eqref{eq: hybrind_with_epsilon}, we get
\begin{align*}
     \E{e^{ t \frac{\barz_\eta}{\tau_\eta}}} \rightarrow \frac{1}{2+2m/l}\left(\frac{e^{\pfix{-t}}}{l/m+t}+\frac{e^{\pfix{t}}-e^{\pfix{-t}}}{t}+\frac{e^{\pfix{t}}}{l/m-t}\right)
\end{align*}
The above is the MGF of the Hybrid distribution with parameters $(\pfix{l/m},1)$. Thus, by Levy's continuity theorem (\citet[Chapter 18]{levy_book}), the proof of Case 2 is complete.

\textbf{Case 3:} In this case, we consider the MGF of $\barz_\eta/\tau_\eta$ and carry out similar calculations as in  \eqref{eq: mgf_bernoulli}. 
Fix an arbitrary $t \in \bbR$ and let $\eta>\eta_0$ such that $\epsilon_\eta\tau_\eta>2t(m+\epsilon_\eta \mu^\star)$ which implies that $(1-\epsilon_\eta/(m+\epsilon_\eta\mu^\star))e^{t/\tau_\eta} <1$. We skip some steps as they are repetitive and directly write the MGF below.
\begin{align*}
    \E{e^{t\frac{\barz_\eta}{\tau_\eta}}}={}&\frac{\epsilon_\eta}{2\lfloor\tau_\eta\rfloor\epsilon_\eta+\epsilon_\eta+2m}\left(\frac{m}{m+\epsilon_\eta\mu^\star}\frac{e^{- \frac{t}{\tau_\eta} (\lfloor\tau_\eta\rfloor+1)}}{1-e^{- t/\tau_\eta}+\frac{\epsilon_\eta e^{-t/\tau_\eta}}{m+\epsilon_\eta \mu^\star}}+\frac{e^{- t\lfloor\tau_\eta\rfloor/\tau_\eta}-e^{ t(\lfloor\tau_\eta\rfloor+1)/\tau_\eta}}{1-e^{t/\tau_\eta}}\right.\\
    &\left. +\frac{m}{m+\epsilon_\eta\mu^\star}\frac{e^{t (\lfloor\tau_\eta\rfloor+1)/\tau_\eta}}{1-e^{t/\tau_\eta}+\frac{\epsilon_\eta e^{t/\tau_\eta}}{m+\epsilon_\eta \mu^\star}}\right) \numberthis \label{eq: case3_mgf_bernoulli}
\end{align*}
In this case, we have $\epsilon_\eta \tau_\eta \rightarrow \infty$ and $\tau_\eta \rightarrow \infty$. In addition, note that $|\epsilon_\eta \lfloor\tau_\eta\rfloor-\epsilon_\eta \tau_\eta| \leq \epsilon_\eta$ which gives us $\epsilon_\eta \lfloor\tau_\eta\rfloor \rightarrow \infty$. In addition, as $|\lfloor \tau_\eta \rfloor/\tau_\eta-1| \leq 1/\tau_\eta$, we have $\lfloor \tau_\eta \rfloor/\tau_\eta \rightarrow 1$. Now, we evaluate the limit of \eqref{eq: case3_mgf_bernoulli} by using Taylor series expansion.
\begin{subequations}
\begin{align}
    \frac{\epsilon_\eta}{2\lfloor \tau_\eta \rfloor \epsilon_\eta+\epsilon_\eta+2m}\frac{e^{ \frac{t}{\tau_\eta} (\lfloor \tau_\eta \rfloor+1)}}{1-e^{ t/\tau_\eta}+\frac{\epsilon_\eta e^{ t/\tau_\eta}}{m+\epsilon_\eta \mu^\star}}&= \frac{\epsilon_\eta}{2\lfloor \tau_\eta \rfloor \epsilon_\eta+\epsilon_\eta+2m}\frac{e^{ \frac{t}{\tau_\eta} (\lfloor \tau_\eta \rfloor+1)}}{ -t/\tau_\eta+\frac{\epsilon_\eta }{m+\epsilon_\eta \mu^\star}+o(1/\tau_\eta)}\nonumber\\
    &=\frac{ e^{ \frac{t}{\tau_\eta} (\lfloor \tau_\eta \rfloor+1)} }{-2t \lfloor \tau_\eta \rfloor/\tau_\eta+\frac{2 \lfloor \tau_\eta \rfloor \epsilon_\eta+2m+\epsilon_\eta}{m+\epsilon_\eta \mu^\star}+o(1)} \rightarrow 0 \\
    \frac{\epsilon_\eta}{2\lfloor \tau_\eta \rfloor \epsilon_\eta+\epsilon_\eta+2m}\frac{e^{ -\frac{t}{\tau_\eta} (\lfloor \tau_\eta \rfloor+1)}}{1-e^{ -t/\tau_\eta}+\frac{\epsilon_\eta e^{ -t/\tau_\eta}}{m+\epsilon_\eta \mu^\star}}&= \frac{\epsilon_\eta}{2\lfloor \tau_\eta \rfloor \epsilon_\eta+\epsilon_\eta+2m}\frac{e^{ -\frac{t}{\tau_\eta} (\lfloor \tau_\eta \rfloor+1)}}{ t/\tau_\eta+\frac{\epsilon_\eta }{m+\epsilon_\eta \mu^\star}+o(1/\tau_\eta)}\nonumber\\
    &=\frac{ e^{ -\frac{t}{\tau_\eta} (\lfloor \tau_\eta \rfloor+1)} }{2t\lfloor \tau_\eta \rfloor/\tau_\eta+\frac{2 \lfloor \tau_\eta \rfloor \epsilon_\eta+2m+\epsilon_\eta}{m+\epsilon_\eta \mu^\star}+o(1)} \rightarrow 0  \\
    \frac{\epsilon_\eta}{2\lfloor \tau_\eta \rfloor \epsilon_\eta+\epsilon_\eta+2m} \frac{e^{-t\lfloor \tau_\eta \rfloor/\tau_\eta}-e^{t(\lfloor \tau_\eta \rfloor+1)/\tau_\eta}}{1-e^{t/\tau_\eta}}&=\frac{\epsilon_\eta}{2\lfloor \tau_\eta \rfloor\epsilon_\eta+\epsilon_\eta+2m} \frac{e^{-t\lfloor \tau_\eta \rfloor/\tau_\eta}-e^{t(\lfloor \tau_\eta \rfloor+1)/\tau_\eta}}{-t/\tau_\eta+o(1/\tau_\eta)} \nonumber\\
    &=\frac{e^{-t\lfloor \tau_\eta \rfloor/\tau_\eta}-e^{t(\lfloor \tau_\eta \rfloor+1)/\tau_\eta}}{-2t\lfloor \tau_\eta \rfloor/\tau_\eta+o(1)} \rightarrow \frac{e^t-e^{-t}}{2t}
\end{align}
\label{eq: case3_bernoulli}
\end{subequations}
Now, by using \eqref{eq: case3_bernoulli} in \eqref{eq: case3_mgf_bernoulli}, we get
\begin{align*}
    \E{e^{t\frac{\barz_\eta}{\tau_\eta}}} \rightarrow \frac{e^t-e^{-t}}{2t} \text{ and so } \frac{\barz_\eta}{\tau_\eta} \overset{D}{\rightarrow} \calU([-1,1]),
\end{align*}
where the last assertion follows Levy's continuity theorem (\citet[Chapter 18]{levy_book}). This completes the proof of Case 3. \hfill $\square$
\section{Positive Recurrence} \label{app: pos_rec}
\proof{Proof of Proposition \ref{prop: pos_rec}}
We will analyze the drift of the test function $z^2$ and show that it is negative outside a finite set. 
\revcolor{\begin{align*}
    \lefteqn{\E{z^2_\eta(k+1)-z^2_\eta(k) | z_\eta(k)=z}}\\
    &\overset{(a)}{=}\E{(z+a_{\eta}^c(z)-a_{\eta}^s(z))^2-z^2} \\
    &= \E{(a_{\eta}^c(z)-a_{\eta}^s(z))^2+2z(a_{\eta}^c(z)-a_{\eta}^s(z))} \\
    &=\E{(a_{\eta}^c(z)-a_{\eta}^s(z))^2}+2z\E{a_{\eta}^c(z)-a_{\eta}^s(z)} \\
    &\overset{(b)}{\leq} 4\moment+2z\E{a_{\eta}^c(z)-a_{\eta}^s(z)} \\
    &= 4\moment+2z\left(\lambda_\eta(z)-\mu_\eta(z)\right) \numberthis \label{eq: bound_with_phi} \\
    &= 4\moment+2z\left(\lambda_\eta(z)-\mu_\eta(z)\right)\left(\mathbbm{1}\{|z| \leq K\tau_\eta\}+\mathbbm{1}\{|z|>K \tau_\eta\}\right) \\
    &\overset{(c)}{\leq} 4\moment+\epsilon_\eta \tau_\eta K\left(\delta + 4 K^\beta\right)-\epsilon_\eta |z| \delta \mathbbm{1}\{|z|>K\tau_\eta\} \numberthis \label{eq: bound_without_phi} \\
    &\leq -4\moment-\epsilon_\eta \tau_\eta K\left(\delta + 4 K^\beta\right) \quad \forall |z| > \max\left\{K\tau_\eta,\frac{8\moment+2\epsilon_\eta \tau_\eta K\left(\delta + 4 K^\beta\right)}{\epsilon_\eta \delta}\right\},
\end{align*}
where $(a)$ follows by the update equation of imbalance given by \eqref{eq: imabalance_evolution}. Next,  $(b)$ follows as 
\begin{align*}
    \E{(a^c_\eta(z)-a^s_\eta(z))^2} \leq 2\E{(a^c_\eta(z))^2+(a^s_\eta(z))^2} \leq 4\moment
\end{align*}
for all $z \in \bbZ$ and $k \in \bbZ_+$ as second moment is upper bounded by the third moment for a non-negative integer-valued random variable.
Finally, $(c)$ follows for $\eta>0$ large enough by Condition~\ref{ass:poly_growth} and Condition~\ref{ass: neg_drift}. In particular, $|\phi^c(x)| \leq K^\beta$ and $|\phi^s(x)| \leq K^\beta$ for all $x \in [-K, K]$. Thus, by \eqref{eq: general_pricing_policy}, we have 
\begin{align}
    |\lambda_\eta(z) - \mu_\eta(z)| \leq |\lambda_\eta^\star - \mu^\star| + \epsilon_\eta \bigg|\phi^c\left(\frac{z}{\tau_\eta}\right)\bigg|+\epsilon_\eta\bigg|\phi^s\left(\frac{z}{\tau_\eta}\right)\bigg| \leq \frac{\delta \epsilon_\eta}{2} + 2\epsilon_\eta K^\beta \quad \forall |z| \leq K\tau_\eta. \label{eq:diff_arr_rate_small_z}
\end{align}
Moreover, by Condition~\ref{ass: neg_drift}, we have 
\begin{align*}
    \lambda_\eta(z)-\mu_\eta(z) &\overset{*}{=} \max\left\{\lambda^\star_\eta + \epsilon_\eta\phi^c\left(\frac{z}{\tau_\eta}\right), \lambda_{\min}\right\} - \min\left\{\mu^\star + \epsilon_\eta\phi^s\left(\frac{z}{\tau_\eta}\right), \mu_{\max}\right\} \\
    &\overset{**}{\leq} \lambda_\eta^\star -\mu^\star+\epsilon_\eta\left(\phi^c\left(\frac{z}{\tau_\eta}\right) - \phi^s\left(\frac{z}{\tau_\eta}\right)\right) \overset{***}{\leq} -\frac{\delta \epsilon_\eta}{2} \quad \forall z \geq K \tau_\eta,
\end{align*}
where $(*)$ holds for all $\eta>\eta_{p1}$ for some $\eta_{p1}>0$. In particular, by Condition~\ref{ass: neg_drift}, $[\phi^c(x)]^+ \leq B$ for $x > K$. Thus, for $\eta>0$ large enough, we have $\lambda_\eta^\star + \epsilon_\eta \phi^c(z/\tau_\eta) \leq \lambda_{\max}$ for $z > K \tau_\eta$. Similarly, $\mu + \epsilon_\eta \phi^s(z/\tau_\eta) \geq \mu_{\min}$ for $z > K\tau_\eta$ as $[\phi^s(x)]^- \leq B$ for $x > K$. Next, $(**)$ holds for all $\eta > \eta_{p2}$ for some $\eta_{p2}>0$ by noting that $\lambda_{\min} < \mu^\star < \mu_{\max}$ and $|\lambda^\star_\eta - \mu^\star| \leq \delta \epsilon_\eta/2$. Lastly, $(**)$ holds by Condition~\ref{ass: neg_drift} as $\phi^c(x)-\phi^s(x) \leq -\delta$ for $x > K$. Similarly, $\lambda_\eta(z)-\mu_\eta(z) > \delta \epsilon_\eta/2$ for all $z < -K\tau_\eta$.}

\revcolor{So, we have negative drift outside a finite set for all $\eta > \eta_p \overset{\Delta}{=} \max\{\eta_{p1}, \eta_{p2}\}$. Thus, by the Foster-Lyapunov theorem, $\{z_\eta(k): k \in \bbZ_+\}$ is positive recurrent. By Moment bound theorem (\citet[Proposition 6.14]{hajekrandomprocbook}) and \eqref{eq: bound_without_phi}, we have
\begin{align*}
    \epsilon_\eta \delta \E{|\barz_\eta| \mathbbm{1}\{|\barz_\eta|>K\tau_\eta\}} &\leq 4\moment+\epsilon_\eta \tau_\eta K\left(\delta + 4 K^\beta\right) \\
    \Rightarrow \E{|\barz_\eta|} &\leq  \frac{4\moment+\epsilon_\eta \tau_\eta K\left(2\delta + 4 K^\beta\right)}{\epsilon_\eta \delta}. \numberthis \label{eq:first_moment_in_proof}
\end{align*}
Lastly, by Moment bound theorem (\citet[Proposition 6.14]{hajekrandomprocbook}) and \eqref{eq: bound_with_phi}, we get
\begin{align*}
    -\E{\barz_\eta\left(\lambda_\eta(\barz_\eta)-\mu_\eta(\barz_\eta)\right)} \leq 2\moment.
\end{align*}
Now, to prove \eqref{eq: imbalance_second_moment}, consider $V(z) = z^3$ as the Lyapunov function.
 \begin{align*}
    &\E{\barz_\eta(k+1)^3-\barz_\eta(k)^3 \big| \barz_\eta(k)=z} \\
    ={}& \E{(a^c_\eta(z)-a^s_\eta(z))^3} + 3\E{(a^c_\eta(z)-a^s_\eta(z))^2} z + 3z^2\E{(a^c_\eta(z)-a^s_\eta(z))} \\
    \overset{(a)}{\leq}{}& 6\moment + 12\moment |z| + 3z^2\left(\lambda_\eta(z)-\mu_\eta(z)\right) \\
    \leq{}& 6\moment + 12\moment |z| + 3z^2\left(\lambda_\eta(z)-\mu_\eta(z)\right)\mathbbm{1}\left\{|z| \leq K \tau_\eta\right\} + 3z^2\left(\lambda_\eta(z)-\mu_\eta(z)\right)\mathbbm{1}\left\{|z| > K \tau_\eta\right\} \\
    \overset{(b)}{\leq}{}& 6\moment + 12\moment |z| + 3K^2\tau_\eta^2\epsilon_\eta \left(\frac{\delta}{2} + 2K^\beta\right) - \frac{3\epsilon_\eta \delta}{2} z^2 \numberthis \label{eq:second_moment_drift} \\
    \leq{}& -\moment \quad \forall |z| \geq \sqrt{\frac{14\moment + 3K^2\tau_\eta^2\epsilon_\eta(\delta + 4K^\beta)}{3\epsilon_\eta \delta}} + \frac{8\moment}{\epsilon_\eta \delta},
\end{align*}
where $(a)$ follows as $\E{(a_\eta^c(z)-a_\eta^s(z))^3} \leq 3\E{|a_\eta^c(z)|^3 + |a_\eta^s(z)|^3} \leq 6\moment$. Next, $(b)$ holds by \eqref{eq:diff_arr_rate_small_z} similar to how \eqref{eq: bound_without_phi} follows. Now, by the moment bound theorem (\citet[Proposition 6.14]{hajekrandomprocbook}) and \eqref{eq:second_moment_drift}, we have
\begin{align*}
    \E{\barz_\eta^2} &\leq \frac{4\moment +8\moment \E{|\barz_\eta|} + K^2\tau_\eta^2\epsilon_\eta\left(\delta + 4K^\beta\right)}{\epsilon_\eta \delta} \\
    &\leq \frac{4\moment}{\epsilon_\eta \delta} + \frac{K^2\tau_\eta^2\left(\delta + 4K^\beta\right)}{\delta} + \frac{8\moment\left(4\moment+2\epsilon_\eta \tau_\eta K  (2K^\beta+\delta)\right)}{ \epsilon_\eta^2 \delta^2},
\end{align*}
where the last inequality holds by \eqref{eq:first_moment_in_proof}.} \hfill $\square$
\endproof
\section{Technical Details for the Proof of Theorem \ref{theo: case_2_itm}}
\subsection{Tightness} \label{app: tightness}
\proof{Proof of Lemma \ref{lemma: tightness}}
We show the tightness of the sequence of random variables $\epsilon_\eta \barz_\eta$. As $\epsilon_\eta \tau_\eta \rightarrow l$, there exists $\tau_{\max}>0$ such that $\epsilon_\eta \tau_\eta \leq \tau_{\max}$ for all $\eta>0$. Now, by Proposition \ref{prop: pos_rec}, we have
\begin{align*}
    \E{\epsilon_\eta|\barz_\eta|}&\leq \frac{4\moment+2\epsilon_\eta \tau_\eta K (2K^\beta+\delta)}{ \delta} \leq  \frac{4\moment+2\tau_{\max}K (2K^\beta+\delta)}{ \delta} 
\end{align*}
Now, for any $\omega>0$, we pick $N=\frac{4\moment+
2\tau_{\max}K (2K^\beta+\delta)}{\delta \omega}$ independent of $\eta$ to get
\begin{align*}
    \P{\epsilon_\eta |\barz_\eta| > N} \leq \frac{1}{N}\E{\epsilon_\eta |\barz_\eta|}\leq\omega \quad \forall \eta >0.
\end{align*}
Thus, the family of random variables $\Pi=\left\{\epsilon_\eta \barz_\eta \right\}$ is tight, which completes the proof. 
\endproof
\subsection{Drift Analysis} \label{app: drift_analysis}
Before proving Claim \ref{claim: case2_limiting_functional_equation}, we need the following technical lemma which we present with proof below.
\begin{lemma} \label{lemma: convergence_uoc}
For some $c \in \bbR$, let $g_n : \bbR \rightarrow \bbR$ be a Borel measurable sequence of functions such that $\max_{x \in \bbR}|g_n(x)| \leq g_{\max}$ for some constant $g_{\max} > 0$ and
\begin{align*}
    \lim_{n \uparrow \infty} g_n(x)=c \quad \forall x \in \bbR.
\end{align*}
\revcolor{Moreover, the convergence is uniform
on compact sets.} Then for any tight sequence of random variables $\{Y_n \in \bbR\}$, we have
\begin{align*}
    \lim_{n \uparrow \infty }\E{g_n(Y_n)}= c.
\end{align*}
\end{lemma}
Note that the above lemma is similar to Lemma \ref{lemma: convergence_technical} with key differences. Lemma \ref{lemma: convergence_technical} assumed uniform convergence of $g_n(\cdot)$ which is much stronger than uniform convergence over compact sets. This allows one to relax the tightness condition on the sequence of random variables $\{Y_n\}$ in Lemma \ref{lemma: convergence_technical}. Now, we present the proof of Lemma \ref{lemma: convergence_uoc}.
\proof{Proof of Lemma \ref{lemma: convergence_uoc}}
For any $\delta,K>0$, there exists $n_0$ such that for all $n>n_0$
\begin{align*}
    |g_n(x)-c| &\leq \delta \quad \forall |x| \leq K ,
\end{align*}
As the sequence of random variables $\{Y_n\}$ is tight, for any $\delta'>0$, there exists $K>0$ such that $\P{|Y_n| > K} \leq \delta'$ for all $n$. Pick such a $K$ for the rest of the proof. Thus for all $n \geq n_0$
\begin{align*}
    \P{|g_n(Y_n)-c|>\delta}\leq \P{|Y_n| > K} \leq \delta'
\end{align*}
Thus for all $n \geq n_0$, by using Jensen's inequality, we have
\revcolor{\begin{align*}
  \big| \E{g_n(Y_n)-c}\big| \leq \E{\big|g_n(Y_n)-c|} \leq \delta(1-\delta')+(g_{\max}+c)\delta'
\end{align*}}
Now, by taking $n \uparrow \infty$, we get
\revcolor{\begin{align*}
    \lim \sup_{n \uparrow \infty}\big|\E{g_n(Y_n)}-c\big| \leq \delta(1-\delta')+(g_{\max}+c)\delta'
\end{align*}}
As the above is true for all $\delta,\delta'>0$, we get
\begin{align*}
   \lim_{n \uparrow \infty}\E{g_n(Y_n)} = c. 
\end{align*}
This completes the proof of the Lemma. \hfill $\square$
\endproof
Now, we will use the above lemma to prove Claim \ref{claim: case2_limiting_functional_equation}.
\proof{Proof of Claim \ref{claim: case2_limiting_functional_equation}}
Divide both sides of the pre-limit equation \eqref{eq: prelimit_functional_equation} by $2/(\sigma^c(\lambda^\star)+\sigma^s(\mu^\star))$ to get
\begin{align*}
    &\frac{2}{\sigma^c(\lambda^\star)+\sigma^s(\mu^\star)}\E{e^{j\epsilon_\eta \omega \barz_\eta}\left(\phi^s\left(\frac{\barz_\eta}{\tau_\eta}\right)-\phi^c\left(\frac{\barz_\eta}{\tau_\eta}\right)\right)}+o(1)\\
    ={}&j\omega \E{e^{j\epsilon_\eta \omega \barz_\eta}}+\frac{j\omega}{\sigma^c(\lambda^\star)+\sigma^s(\mu^\star)} \E{e^{j\epsilon_\eta \omega \barz_\eta}\left(\sigma^c(\lambda_{\pfix{\eta}}(\barz_\eta))+\sigma^s(\mu_{\pfix{\eta}}(\barz_\eta))-\sigma^c(\lambda^\star)-\sigma^s(\mu^\star)\right)}.
\end{align*}
Now, the first term in the RHS converges by Levy's continuity theorem (e.g. see: \citet[Chapter 18]{levy_book}). In particular, if a sequence of random variables converges in distribution, the corresponding characteristic functions also converge.
\begin{align*}
    \lim_{\eta \uparrow \infty} \E{e^{j\epsilon_\eta \omega \barz_\eta}}=\E{e^{j\omega \zinfty}} \quad \textit{(as $\epsilon_\eta \barz_\eta \overset{D}{\rightarrow } \zinfty$)}.
\end{align*}
Next, the second term on the RHS can be bounded as follows:
\begin{align*}
   & \bigg|\frac{j\omega}{\sigma^c(\lambda^\star)+\sigma^s(\mu^\star)} \E{e^{j\epsilon_\eta \omega \barz_\eta}\left(\sigma^c(\lambda_{\pfix{\eta}}(\barz_\eta))+\sigma^s(\mu_{\pfix{\eta}}(\barz_\eta))-\sigma^c(\lambda^\star)-\sigma^s(\mu^\star)\right)}\bigg| \\
    \leq{}& \frac{|\omega|}{\sigma^c(\lambda^\star)+\sigma^s(\mu^\star)} \E{\bigg|\sigma^c(\lambda_{\pfix{\eta}}(\barz_\eta))+\sigma^s(\mu_{\pfix{\eta}}(\barz_\eta))-\sigma^c(\lambda^\star)-\sigma^s(\mu^\star)\bigg|}.
\end{align*}
Now, by Lemma \ref{lemma: convergence_uoc} the upper bound converges to zero, since by the continuity of $\sigma^c(\cdot)$ and $\sigma^s(\cdot)$, we have for all $x \in \bbR$ that
\begin{align*}
    &\lim_{\eta \uparrow \infty}\big(\sigma^c\left(\text{clip}\left(\lambda^\star+\epsilon_\eta \phi^c\left(x\right), \lambda_{\max}, \lambda_{\min}\right)\right)+\sigma^s\left(\text{clip}\left(\mu^\star+\epsilon_\eta \phi^s\left(x\right), \mu_{\max}, \mu_{\min}\right)\right)\\
    &-\sigma^c(\lambda^\star)-\sigma^s(\mu^\star)\big)=0.
\end{align*}
\revcolor{Moreover, the convergence is uniformly over compact sets due to Condition~\ref{ass:poly_growth}. In addition, we have $\max_{x\in \bbR} |\sigma^c(x)| \leq \sigma_{\max},  \max_{x \in \bbR}|\sigma^s(x)| \leq \sigma_{\max}$ and $\barz_\eta/\tau_\eta$ is a tight sequence of random variables by Lemma~\ref{lemma: tightness} and $\epsilon_\eta \tau_\eta \rightarrow l$.} So, the result of Lemma \ref{lemma: convergence_uoc} is valid. Now, we simplify the LHS. \revcolor{First, we verify that the random variable on the LHS is uniformly integrable. We have
\begin{align*}
    &\E{\bigg|e^{j\omega \epsilon_\eta \barz_\eta}\left(\phi^c\left(\frac{\barz_\eta}{\tau_\eta}\right)-\phi^s\left(\frac{\barz_\eta}{\tau_\eta}\right)\right)\bigg|^{2/\beta}} \overset{(a)}{\leq} 4\E{\left(\frac{\barz_\eta}{\tau_\eta}\right)^{2}} \\
    \overset{(b)}{\leq}{}& \frac{16\moment}{\epsilon_\eta \tau_\eta^2 \delta} + \frac{4K^2\left(\delta + 4K^\beta\right)}{\delta} + \frac{32\moment\left(4\moment+2\epsilon_\eta \tau_\eta K  (2K^\beta+\delta)\right)}{ \epsilon_\eta^2\tau_\eta^2 \delta^2} \\
    \overset{(c)}{\leq}{}& \frac{32\moment}{l \delta} + \frac{4K^2\left(\delta + 4K^\beta\right)}{\delta} + \frac{128\moment\left(4\moment+2K  (2K^\beta+\delta)\right)}{ l^2 \delta^2}
\end{align*}
where $(a)$ follows by Condition~\ref{ass:poly_growth}, $(b)$ due to \eqref{eq: imbalance_second_moment} in Proposition~\ref{prop: pos_rec}, and $(c)$ holds for $\eta$ large enough. In particular, as $\epsilon_\eta \tau_\eta \rightarrow l$, there exists $\eta_0 > 0$ such that for all $\eta \geq \eta_0$, we have $\epsilon_\eta \tau_\eta \geq l/2$. As the upper bound is independent of $\eta$ and $\beta<2$ by Condition~\ref{ass:poly_growth}, $e^{j\omega \epsilon_\eta \barz_\eta}\left(\phi^c\left(\frac{\barz_\eta}{\tau_\eta}\right)-\phi^s\left(\frac{\barz_\eta}{\tau_\eta}\right)\right)$ is a uniformly integrable sequence of random variables. Next, by \citet[Theorem 2.7]{van2000asymptotic}, we have $(\epsilon_\eta\barz_\eta, \epsilon_\eta\tau_\eta) \overset{D}{\rightarrow} (\zinfty, l)$. Thus, by the continuous mapping theorem and continuity of $(\phi^c, \phi^s)$, we have
\begin{align*}
    e^{j\omega \epsilon_\eta \barz_\eta}\left(\phi^c\left(\frac{\epsilon_\eta\barz_\eta}{\epsilon_\eta\tau_\eta}\right)-\phi^s\left(\frac{\epsilon_\eta\barz_\eta}{\epsilon_\eta\tau_\eta}\right)\right) \overset{D}{\rightarrow} e^{j\omega \zinfty}\left(\phi^c\left(\frac{\zinfty}{l}\right)-\phi^s\left(\frac{\zinfty}{l}\right)\right). 
\end{align*}
Thus, by \citet[Theorem 3.5]{billingsley2013convergence}, the limit of the LHS of \eqref{eq: prelimit_functional_equation} is as follows:
\begin{align*}
    \lim_{\eta \uparrow \infty}\E{e^{j \omega \epsilon_\eta \barz_\eta}\left(\phi^c\left(\frac{\epsilon_\eta\barz_\eta}{\epsilon_\eta\tau_\eta}\right)-\phi^s\left(\frac{\epsilon_\eta\barz_\eta}{\epsilon_\eta\tau_\eta}\right)\right)} &= \E{e^{j \omega \zinfty}\left(\phi^c\left(\frac{\zinfty}{l}\right)-\phi^s\left(\frac{\zinfty}{l}\right)\right)}.
\end{align*}
This completes the proof.} \hfill $\square$
\endproof
\subsection{Solving the Functional Equation} \label{sec: claim_schwartz_before_fourier}
\proof{Proof of Claim \ref{eq: tempered_dist_eqn}}
Replace $\omega$ with $-\omega$ in \eqref{eq: functional_equation_mgf} and note that for any $\varphi \in \calS$, we have
\begin{align*}
    \int_{-\infty}^\infty\frac{-j\omega}{\sqrt{2\pi}}\E{e^{-j \omega \zinfty}} \varphi(\omega) d\omega &= \int_{-\infty}^\infty \int_{-\infty}^\infty \frac{-j\omega}{\sqrt{2\pi}}e^{-j\omega z}\varphi(\omega) dF_{\zinfty}(z) d\omega \\
    &\overset{(a)}{=}\int_{-\infty}^\infty \int_{-\infty}^\infty \frac{-j\omega}{\sqrt{2\pi}}e^{-j\omega z}\varphi(\omega)  d\omega dF_{\zinfty}(z) \\
    &=\int_{-\infty}^\infty \int_{-\infty}^\infty \frac{\partial}{\partial z} \left(\frac{1}{\sqrt{2\pi}}e^{-j\omega z}\varphi(\omega)\right)  d\omega dF_{\zinfty}(z) \\
    &\overset{(b)}{=}\int_{-\infty}^\infty  \frac{d \hat{\varphi}(z)}{dz}  dF_{\zinfty}(z) \\
    &\overset{\eqref{eq: tempered_distributions}}{=}\calT_{\zinfty}[\hat{\varphi}^\prime]. \numberthis \label{eq: lhs_mgf_equation}
\end{align*}
where $(a)$ follows by Fubini's Theorem as $|\varphi(\omega)|$ is integrable with respect to $dF_{\zinfty} d\omega$. Next, $(b)$ follows by the definition of Fourier transform given by \eqref{eq: fourier_schwartz}, where we rely on the Leibniz integral rule to interchange the derivative and integral. In addition, we also have
\begin{align*}
    \int_{-\infty}^\infty \frac{1}{\sqrt{2\pi}} \E{g_{1,l}(\zinfty)e^{-j\omega \zinfty}} \varphi(\omega) d\omega &= \int_{-\infty}^\infty \int_{-\infty}^\infty \frac{1}{\sqrt{2\pi}} g_{1,l}(z) e^{-j \omega z} \varphi(\omega) dF_{\zinfty}(z) d\omega \\
    &\overset{(a)}{=}\int_{-\infty}^\infty g_{1,l}(z) \int_{-\infty}^\infty \frac{1}{\sqrt{2\pi}}  e^{-j \omega z} \varphi(\omega)  d\omega dF_{\zinfty}(z) \\
    &\overset{\eqref{eq: fourier_schwartz}}{=}\int_{-\infty}^\infty g_{1,l}(z)\hat{\varphi}(z) dF_{\zinfty}(z) \\
    &\overset{\eqref{eq: tempered_distributions}}{=}\calT_{\zinfty}[g_{1,l}\hat{\varphi}] \numberthis \label{eq: rhs_mgf_equation}
\end{align*}
\revcolor{where $(a)$ follows by Fubini's Theorem as $|\varphi(\omega)g_{1,l}(z)| $ is integrable with respect to $dF_{\zinfty}(z) d\omega$ for any sub-Quadratically growing $g_{1, l}$ (as $\phi^c, \phi^s$ are sub-Quadratic by Condition~\ref{ass:poly_growth}, $g_{1, l}$ is also sub-Quadratic).  Thus, 
\begin{align*}
    \calT_{\zinfty}[|g_{1, l}\hat{\varphi}|] &\overset{(b)}{\leq} \varphi_{\max}\calT_{\zinfty}[|g_{1, l}|] \leq \pfix{\frac{2\varphi_{\max}}{\sigma^c(\lambda^\star) + \sigma^s(\mu^\star)}}\left(\E{\bigg|\phi^c\left(\frac{\zinfty}{l}\right)\bigg|} + \E{\bigg|\phi^s\left(\frac{\zinfty}{l}\right)\bigg|}\right) \\
    &\leq \pfix{\frac{4\varphi_{\max}}{\left(\sigma^c(\lambda^\star) + \sigma^s(\mu^\star)\right)l^\beta}}\E{\pfix{|}\zinfty\pfix{|}^\beta} \overset{(c)}{=} \pfix{\frac{4\varphi_{\max}}{\left(\sigma^c(\lambda^\star) + \sigma^s(\mu^\star)\right)l^\beta}}\lim_{\eta \rightarrow \infty}\E{\pfix{|}\epsilon_\eta\barz_\eta\pfix{|}^\beta} \overset{(d)}{<} \infty.
\end{align*}
As Fourier transform is a bijection on Schwartz space by Proposition~\ref{prop: bijection}, $\hat{\varphi} \in \calS(\bbR)$. Thus, there exists a constant $\varphi_{\max} > 0$ such that $\max_{x \in \bbR} |\hat{\varphi}(x)| \leq \varphi_{\max}$. So, $(b)$ follows. Next, $(c)$ holds by \citet[Theorem 3.5]{billingsley2013convergence} as $\left(\epsilon_\eta\barz_\eta\right)^\beta$ is absolutely continuous and $\epsilon_\eta \barz_\eta \overset{D}{\rightarrow} \zinfty$. In particular, absolute continuity of $\left(\epsilon_\eta\barz_\eta\right)^\beta$ holds as $\beta<2$ by Condition~\ref{ass:poly_growth} and $\E{(\epsilon_\eta\barz_\eta)^2} = O(1)$ by \eqref{eq: imbalance_second_moment} in Proposition~\ref{prop: pos_rec}. The bound $\E{(\epsilon_\eta\barz_\eta)^2} = O(1)$ also implies $(d)$.}
Now, multiplying \eqref{eq: functional_equation_mgf} on both sides by $\varphi(\cdot)$ and integrating and using \eqref{eq: lhs_mgf_equation} and \eqref{eq: rhs_mgf_equation} imply
\begin{align*}
    \calT_{\zinfty}[\hat{\varphi}^\prime-g_{1,l}\hat{\varphi}]=0 \quad \forall \varphi \in \calS(\bbR).
\end{align*}
This completes the proof of the claim. \hfill $\square$
\endproof
\section{Proof of Preliminary Lemmas Required for Theorem \ref{theo: case_1} and \ref{theo: profit_driven_regime_3}}
\label{app: preliminary_lemmas}
\proof{Proof of Lemma \ref{lemma: z_sgnz}}
We consider $z g_\eta(z)$ as the test function. Note that $$\E{|\barz_\eta g_\eta(\barz_\eta)|} \leq G \E{|\barz_\eta|}<\infty$$ by Proposition \ref{prop: pos_rec}. Thus, we can set the drift of this function to zero in the steady state. We have
\revcolor{\begin{align*}
    0 ={}& \E{\barz_\eta^+g_\eta(\barz_\eta^+)-\barz_\eta g_\eta(\barz_\eta)} \\
    ={}&\E{\barz_\eta^+(g_\eta(\barz_\eta^+)-g_\eta(\barz_\eta))}+\E{(\barz_\eta^+ -\barz_\eta)g_\eta(\barz_\eta)} \\
    ={}&\E{\barz_\eta^+(g_\eta(\barz_\eta^+)-g_\eta(\barz_\eta))}+\E{(\bara^c_\eta(\barz_\eta) -\bara_\eta^s(\barz_\eta))g_\eta(\barz_\eta)} \\
    ={}&\E{\barz_\eta^+(g_\eta(\barz_\eta^+)-g_\eta(\barz_\eta))}+\E{(\lambda_\eta(\barz_\eta)-\mu_\eta(\barz_\eta))g_\eta(\barz_\eta)} 
\end{align*}}
where the last \pfix{equality follows} by the tower property of expectation and Eq. \eqref{eq: general_pricing_policy}.
This completes the proof. \hfill $\square$
\endproof
\proof{Proof of Lemma \ref{lemma: prob_bound_on_any_state}}
For any given \revcolor{$|x| \leq K \max\left\{\tau_\eta, 1/\epsilon_\eta\right\}$}, consider the test function
\begin{align*}
    V_x(z)=z\mathbbm{1}\{\textrm{sgn}(x)z > \textrm{sgn}(x)x\}.
\end{align*}
We will set the drift of above defined test function to zero in steady state to get a bound on $\P{\barz_\eta=x}$. Alternatively, one can analyze the drift of $z\mathbbm{1}\{z >x\}$ for $x \geq 0$ and $z\mathbbm{1}\{z <x\}$ for $x <0$. To make the presentation compact, we consider $V_x(z)$ which combines both the cases mentioned above. For a better understanding of the proof, the reader can consider the case when $x \geq 0$ and $x<0$ separately.

Note that, by Proposition \ref{prop: pos_rec}, we have $\E{|V_x(\barz_\eta)|} \leq \E{|\barz_\eta|}<\infty$. Thus, setting the drift to zero of $V_x(\cdot)$ in steady state, we get
\revcolor{\begin{align*}
    0={}&\E{\barz_\eta^+ \mathbbm{1}\{\textrm{sgn}(x)\barz_\eta^+>\textrm{sgn}(x)x\}-\barz_\eta \mathbbm{1}\{\textrm{sgn}(x)\barz_\eta>\textrm{sgn}(x)x\}} \\
    \overset{(a)}{=}{}&\E{\barz_\eta^+ \mathbbm{1}\{\textrm{sgn}(x)\barz_\eta^+>\textrm{sgn}(x)x\}-\barz_\eta^+ \mathbbm{1}\{\textrm{sgn}(x)\barz_\eta>\textrm{sgn}(x)x\}}\\
    &+\E{\barz_\eta^+ \mathbbm{1}\{\textrm{sgn}(x)\barz_\eta>\textrm{sgn}(x)x\}-\barz_\eta \mathbbm{1}\{\textrm{sgn}(x)\barz_\eta>\textrm{sgn}(x)x\}} \\\overset{(b)}{=}{}&\E{\barz_\eta^+ \mathbbm{1}\{\textrm{sgn}(x)\barz_\eta^+>\textrm{sgn}(x)x\}-\barz_\eta^+ \mathbbm{1}\{\textrm{sgn}(x)\barz_\eta>\textrm{sgn}(x)x\}}\\
    &+ \E{\left(\lambda_\eta(\barz_\eta)-\mu_\eta(\barz_\eta)\right)\mathbbm{1}\{\textrm{sgn}(x)\barz_\eta>\textrm{sgn}(x)x\}}.
\end{align*}}
In step $(a)$, we add and subtract $\barz_\eta^+\mathbbm{1}\{\textrm{sgn}(x)\barz_\eta > \textrm{sgn}(x)x\}$. Next, $(b)$ follows by the evolution equation given by \eqref{eq: imbalance_update_steady_state} and the tower property of expectation. Now, by sending the first term to LHS and taking absolute value on both sides, we get
\begin{align*}
\E{\big|\lambda_\eta(\barz_\eta)-\mu_\eta(\barz_\eta)\big|} 
    \geq{}& \bigg|\E{\barz_\eta^+\left( \mathbbm{1}\{\textrm{sgn}(x)\barz_\eta^+>\textrm{sgn}(x)x\}- \mathbbm{1}\{\textrm{sgn}(x)\barz_\eta>\textrm{sgn}(x)x\}\right)}\bigg| \\
    \overset{(c)}{=}{}&\bigg|\E{(\barz_\eta^+-x)\left( \mathbbm{1}\{\textrm{sgn}(x)\barz_\eta^+>\textrm{sgn}(x)x\}- \mathbbm{1}\{\textrm{sgn}(x)\barz_\eta>\textrm{sgn}(x)x\}\right)}\\
    &+x\E{\mathbbm{1}\{\textrm{sgn}(x)\barz_\eta^+>\textrm{sgn}(x)x\}- \mathbbm{1}\{\textrm{sgn}(x)\barz_\eta>\textrm{sgn}(x)x\}}\bigg| \\
    \overset{(d)}{=}{}&\bigg|\E{(\barz_\eta^+-x)\left( \mathbbm{1}\{\textrm{sgn}(x)\barz_\eta^+>\textrm{sgn}(x)x\}- \mathbbm{1}\{\textrm{sgn}(x)\barz_\eta>\textrm{sgn}(x)x\}\right)}\bigg| \\
    \overset{(e)}{=}{}&\E{\bigg|(\barz_\eta^+-x)\left( \mathbbm{1}\{\textrm{sgn}(x)\barz_\eta^+>\textrm{sgn}(x)x\}- \mathbbm{1}\{\textrm{sgn}(x)\barz_\eta>\textrm{sgn}(x)x\}\right)\bigg|} \\
    \overset{(f)}{\geq}{}& \E{ \mathbbm{1}\{\barz_\eta=x\}\mathbbm{1}\{\textrm{sgn}(x)a^c_\eta(x)>\textrm{sgn}(x)a^s_\eta(x)\}} \\
    ={}&\P{\barz_\eta=x}\P{\textrm{sgn}(x)a^c_\eta(x)>\textrm{sgn}(x)a^s_\eta(x)}.
\end{align*}
 In $(c)$, we simply add and subtract $x\E{\mathbbm{1}\{\textrm{sgn}(x)\barz_\eta^+>\textrm{sgn}(x)x\}- \mathbbm{1}\{\textrm{sgn}(x)\barz_\eta>\textrm{sgn}(x)x\}}$ and note that it is just the drift of $\mathbbm{1}\{\textrm{sgn}(x)\barz_\eta>\textrm{sgn}(x)x\}$ in steady state which is equal to zero. This gives us $(d)$. Next, $(e)$ follows by noting that the term whose expectation we are calculating is non-negative with probability 1 when $x\geq 0$ and non-positive with probability 1 when $x<0$. Lastly, $(f)$ follows by expanding the expectation and only preserving the term corresponding to $\barz_\eta=x$. Now, using the above equation, for all \revcolor{$|x| \leq K \max\left\{\tau_\eta, 1/\epsilon_\eta\right\}$}, by Condition~\ref{ass: irreducibility}, we have
\revcolor{\begin{align*}
    \P{\barz_\eta=x} \leq \frac{1}{p_{\min}}\E{\big|\lambda_\eta(\barz_\eta)-\mu_\eta(\barz_\eta)\big|} 
\end{align*}}
This completes the proof. \hfill $\square$
\endproof
\proof{Proof of Lemma \ref{claim: case1_t3_t5}}
For any given $\delta>0$,  there exists $\tilde{K}>0$ such that $|g(x)-c_{\infty}| \leq \delta$ and $|g(-x)-c_{\infty}| \leq \delta$ for all $x \geq \tilde{K}$. Thus, we have
\begin{align*}
    \P{\bigg|g\left(\frac{\barz_\eta}{\tau_\eta}\right)-c_{\infty}\bigg|>\delta}={}& \P{\bigg|g\left(\frac{\barz_\eta}{\tau_\eta}\right)-c_{\infty}\bigg|>\delta \bigg| |\barz_\eta| \leq \tilde{K} \tau_\eta}\P{|\barz_\eta| \leq \tilde{K} \tau_\eta}\\
    &+\P{\bigg|g\left(\frac{\barz_\eta}{\tau_\eta}\right)-c_{\infty}\bigg|>\delta \bigg| |\barz_\eta| > \tilde{K} \tau_\eta}\P{|\barz_\eta| > \tilde{K} \tau_\eta} \\
    ={}&\P{\bigg|g\left(\frac{\barz_\eta}{\tau_\eta}\right)-c_{\infty}\bigg|>\delta \bigg| |\barz_\eta| \leq \tilde{K} \tau_\eta}\P{|\barz_\eta| \leq \tilde{K} \tau_\eta} \\
    \leq{}&\P{|\barz_\eta| \leq \tilde{K} \tau_\eta} \\
    \overset{*}{\leq}{}& \frac{(2\tilde{K}\tau_\eta+1)\epsilon_\eta}{p_{\min}}\E{\bigg|\phi^c\left(\frac{\barz_\eta}{\tau_\eta}\right) - \phi^s\left(\frac{\barz_\eta}{\tau_\eta}\right)\bigg|} \\
    \overset{**}{\leq}{}&  \frac{2(2\tilde{K}\tau_\eta+1)\epsilon_\eta\phi_{\max}}{p_{\min}},
\end{align*}
\revcolor{where $(*)$ follows for $\eta\geq\eta_0$ for some $\eta_0 > 0$ (depending on $\tilde{K}$) by Lemma \ref{lemma: prob_bound_on_any_state}, Condition~\ref{ass: irreducibility}, and \eqref{eq:simplified_general_pricing_policy}. In particular, for $\eta > 0$ large enough, as $\epsilon_\eta \tau_\eta \rightarrow 0$, we have $K/\epsilon_\eta \geq \tilde{K}\tau_\eta$, ensuring that Lemma \ref{lemma: prob_bound_on_any_state} holds. Next, $(**)$ holds due to \eqref{eq:bounded_control_curves}.} Thus, we have
\begin{align*}
   \E{\bigg|g\left(\frac{\barz_\eta}{\tau_\eta}\right)-c_{\infty}\bigg|} &\leq \delta\P{\bigg|g\left(\frac{\barz_\eta}{\tau_\eta}\right)-c_{\infty}\bigg| \leq \delta} + (g_{\max}+c_\infty)\P{\bigg|g\left(\frac{\barz_\eta}{\tau_\eta}\right)-c_{\infty}\bigg|>\delta} \\
   &\leq \delta + (g_{\max}+c_\infty) \frac{2(2\tilde{K}\tau_\eta+1)\epsilon_\eta\phi_{\max}}{p_{\min}}
\end{align*}
Now, by taking the limit as $\eta \uparrow \infty$, and noting that $\epsilon_\eta \tau_\eta \rightarrow 0$ and $\epsilon_\eta \rightarrow 0$, we get
\begin{align*}
    \lim_{\eta \uparrow \infty}\bigg|\E{g\left(\frac{\barz_\eta}{\tau_\eta}\right)}-c_{\infty}\bigg| \leq \delta.
\end{align*}
As $\delta>0$ is arbitrary, the proof is complete. \hfill $\square$
\endproof
\proof{Proof of Lemma \ref{lemma: convergence_technical}}
For any $\delta>0$, there exists $n_0 > 0$ such that for all $n>n_0$
\begin{align*}
    |g_n(x)-c| &\leq \delta \quad \forall x \in \bbR.
\end{align*}
By substituting $Y_n$ for $x$, taking expectations on both sides and using Jensen's inequality, we get
\begin{align*}
    |\E{g_n(Y_n)-c}| \leq \E{|g_n(Y_n)-c|} \leq \delta
\end{align*}
Now, by taking $n \uparrow \infty$, we get
\begin{align*}
    \lim_{n \uparrow \infty}\big|\E{g_n(Y_n)}-c\big| \leq \delta
\end{align*}
As the above is true for all $\delta>0$, the proof is complete.
\endproof
\section{Technical Details for the Proof of Theorem \ref{theo: case_1}}
\subsection{Proof of Lemma \ref{lemma: symmetry}} \label{app: lemma_symmetry}
\proof{Proof of Lemma \ref{lemma: symmetry}}
For $\omega \in \bbR$, we define the test function 
\begin{align*}
    U(z)\overset{\Delta}{=} \frac{1}{\step(z)}e^{j\epsilon_\eta \omega z \step(z)}
\end{align*}
Now, consider the one-step drift of the above test function.
\begin{align*}
    \Delta U(\barz_\eta, \eta) ={}& \frac{1}{\step(\barz_\eta^+)}e^{j\epsilon_\eta \omega \barz_\eta^+ \chi(\barz_\eta^+)}-\frac{1}{\step(\barz_\eta)}e^{j\epsilon_\eta \omega \barz_\eta \step(\barz_\eta)} \\
    ={}&\underbrace{\frac{1}{\step(\barz_\eta^+)}e^{j\epsilon_\eta \omega \barz_\eta^+ \step(\barz_\eta^+)}-\frac{1}{\step(\barz_\eta)}e^{j\epsilon_\eta \omega \barz_\eta^+ \step(\barz_\eta)}}_{\calT_6}+\underbrace{\frac{1}{\step(\barz_\eta)}e^{j\epsilon_\eta \omega \barz_\eta^+ \step(\barz_\eta)}-\frac{1}{\step(\barz_\eta)}e^{j\epsilon_\eta \omega \barz_\eta \step(\barz_\eta)}}_{\calT_7}.
\end{align*}
We analyze each of the above terms separately. We simplify $\calT_6$ by using Taylor's Theorem to get 
 \begin{claim} \label{claim: case1_t6}
 \begin{align*}
    \E{\calT_6}=o(\epsilon_\eta^2).
\end{align*}
 \end{claim}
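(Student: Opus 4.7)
The plan is to exploit a support-type cancellation combined with stationarity to show that $\calT_6$ is much smaller than $\epsilon^2$. The key structural observation is that the integrand
$$\textrm{sgn}(\barz_\eta^+)e^{j\epsilon \omega \barz_\eta^+ \textrm{sgn}(\barz_\eta^+)} - \textrm{sgn}(\barz_\eta)e^{j\epsilon \omega \barz_\eta^+ \textrm{sgn}(\barz_\eta)}$$
vanishes identically on $\{|\barz_\eta^+| > A_{\max}\}$. Since arrivals are bounded almost surely by $A_{\max}$, one has $|\barz_\eta - \barz_\eta^+| \leq A_{\max}$, so $\textrm{sgn}(\barz_\eta) = \textrm{sgn}(\barz_\eta^+)$ on that event and the two terms coincide. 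Hence the indicator $\mathbbm{1}\{|\barz_\eta^+| \leq A_{\max}\}$ can be inserted inside the expectation without changing its value, exactly as in the analysis of $\calT_1$.

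Next, I would Taylor expand each exponential around zero. On the truncated event the exponent has modulus at most $\epsilon|\omega|A_{\max}$, so Lemma \ref{lemma: taylor_series} produces expansions of the form $1 + j\epsilon\omega\barz_\eta^+\textrm{sgn}(\cdot) + O(\epsilon^2)$ with a uniform second-order remainder. After multiplying by the respective sign prefactor and using $\textrm{sgn}(x)^2 = 1$, each first-order contribution reduces to the common quantity $j\epsilon\omega\barz_\eta^+$, so the two first-order pieces cancel exactly. What survives inside the expectation is $\textrm{sgn}(\barz_\eta^+) - \textrm{sgn}(\barz_\eta) + O(\epsilon^2)\mathbbm{1}\{|\barz_\eta^+|\leq A_{\max}\}$.

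The zeroth-order piece is handled by the same support argument in reverse: because $\textrm{sgn}(\barz_\eta^+) - \textrm{sgn}(\barz_\eta)$ already vanishes off $\{|\barz_\eta^+| \leq A_{\max}\}$, the indicator is redundant, and stationarity gives $\E{\textrm{sgn}(\barz_\eta^+)} = \E{\textrm{sgn}(\barz_\eta)}$, so this contribution is exactly zero. The remainder is bounded by a constant multiple of $\epsilon^2 \P{|\barz_\eta^+| \leq A_{\max}}$; stationarity of $\barz_\eta$ together with Lemma \ref{lemma: anti_markov_inequality} (applied with any fixed $K > 0$, which eventually satisfies $A_{\max} \leq K\tau$ since $\tau \to \infty$) yields $\P{|\barz_\eta^+| \leq A_{\max}} = O(\epsilon\tau)$. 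Combining, $\calT_6 = O(\epsilon^3 \tau) = o(\epsilon^2)$ in the quality-driven regime where $\epsilon\tau \to 0$.

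The main subtlety is keeping the Taylor remainder genuinely uniform: the naive second-order term carries a factor $(\barz_\eta^+)^2$, and the bound $O(\epsilon^2)$ is legitimate only because the truncation to $\{|\barz_\eta^+| \leq A_{\max}\}$ absorbs this factor into a constant depending on $A_{\max}$ alone. Once that bookkeeping is done, the argument is actually cleaner than for $\calT_1$: the additional $\textrm{sgn}(\barz_\eta^+)$ and $\textrm{sgn}(\barz_\eta)$ prefactors force the first-order Taylor terms to collapse to the same quantity, which is precisely the cancellation that reduces the analysis to the zeroth order plus a probability estimate from Lemma \ref{lemma: anti_markov_inequality}.
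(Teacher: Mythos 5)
Your proposal is correct, and the skeleton matches the paper's proof: truncate by $\mathbbm{1}\{|\barz_\eta^+|\leq A_{\max}\}$ using the support argument, Taylor expand, observe that the zeroth-order piece $\E{\textrm{sgn}(\barz_\eta^+)-\textrm{sgn}(\barz_\eta)}$ vanishes by stationarity, and that the first-order pieces collapse to the same $j\epsilon\omega\barz_\eta^+$ and cancel. The one genuine difference is how the surviving $O(\epsilon^2)$ remainder is shown to be $o(\epsilon^2)$. The paper keeps the second-order Taylor term explicit, bounds $|\barz_\eta^+|^2 \leq A_{\max}|\barz_\eta^+|$ on the truncated event, and then invokes Lemma \ref{lemma: z_sgnz} (the drift identity) to get $\E{\barz_\eta^+(\textrm{sgn}(\barz_\eta^+)-\textrm{sgn}(\barz_\eta))}=O(\epsilon)$, yielding an $O(\epsilon^3)$ bound. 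You instead fold the second-order term into a uniform $O(\epsilon^2)$ on the truncated event and multiply by $\P{|\barz_\eta^+|\leq A_{\max}}$, which you control via Lemma \ref{lemma: anti_markov_inequality} and stationarity ($\barz_\eta^+\overset{D}{=}\barz_\eta$), giving $O(\epsilon^3\tau)$. This is slightly weaker and works precisely because the claim is invoked only in the quality-driven regime $\epsilon\tau\to 0$; the paper's route is regime-agnostic and one order sharper. (A mildly cleaner version of your route: use the pointwise bound $\P{\barz_\eta=x}\leq 2\epsilon\phi_{\max}/p_{\min}$ from the proof of Lemma \ref{lemma: anti_markov_inequality} directly, which gives $\P{|\barz_\eta|\leq A_{\max}}=O(\epsilon)$ and hence $O(\epsilon^3)$ without appealing to $\tau\to\infty$.) Either way, the conclusion $\calT_6=o(\epsilon^2)$ stands.
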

 The proof of the Claim \ref{claim: case1_t6} has been deferred to Appendix \ref{app: proof_of_claims} and here we continue with the proof of Lemma \ref{lemma: symmetry}. Now, to simplify $\calT_7$, note that $\calT_7=\calT_2\step(\barz_\eta)$, where $\calT_2$ is defined in \eqref{eq: def_of_t2}. Thus, by Claim \ref{claim: case1_t2}, we have
\begin{align*}
   \E{ \calT_7 | \barz_\eta}={}&j\epsilon_\eta^2 \omega e^{j\epsilon_\eta \omega \barz_\eta \step(\barz_\eta)} \left(\phi^c\left(\frac{\barz_\eta}{\tau_\eta}\right)-\phi^s\left(\frac{\barz_\eta}{\tau_\eta}\right)\right)\\
    &-\frac{1}{2}\epsilon_\eta^2\omega^2 \step(\barz_\eta)e^{j\epsilon_\eta \omega \barz_\eta \step(\barz_\eta)}\left(\sigma^c(\lambda_\eta(\barz_\eta))+\sigma^s(\mu_\eta(\barz_\eta))\right)+o(\epsilon_\eta^2).
\end{align*}
Note that, $|\E{U(\barz_\eta)}|\leq \E{|U(\barz_\eta)|} \leq 1/\min\{|\step(1)|, |\step(-1)|\}$. Thus, by setting the drift of $U(\cdot)$ to zero in steady state, we get $\E{\calT_6}+\E{\calT_7}=0$. Now, by substituting $\calT_6$ and $\calT_7$ and dividing by $j\epsilon_\eta^2\omega$, we get
\begin{align*}
&\E{e^{j\epsilon_\eta \omega \barz_\eta \step(\barz_\eta)}\left(\phi^c\left(\frac{\barz_\eta}{\tau_\eta}\right)-\phi^s\left(\frac{\barz_\eta}{\tau_\eta}\right)\right)}\\
&+\frac{j\omega}{2}\E{\step(\barz_\eta)e^{j\epsilon_\eta \omega \barz_\eta \step(\barz_\eta)}\left(\sigma^c(\lambda_{\pfix{\eta}}(\barz_\eta))+\sigma^s(\mu_{\pfix{\eta}}(\barz_\eta))\right)}+o(1) =0.
\end{align*}
Now, by adding and subtracting certain terms, we get
\begin{align*}
    &\left(-1+\frac{j\omega}{2}(\sigma^c(\lambda^\star)+\sigma^s(\mu^\star))\right)\E{\step(\barz_\eta)e^{j\epsilon_\eta \omega \barz_\eta \step(\barz_\eta)}} \\
    ={}& -\E{e^{j\epsilon_\eta \omega \barz_\eta \step(\barz_\eta)}\left(1+\left(\phi^c\left(\frac{\barz_\eta}{\tau_\eta}\right)-\phi^s\left(\frac{\barz_\eta}{\tau_\eta}\right)\right)\frac{1}{\step(\barz_\eta)}\right)\step(\barz_\eta)}\\
    &+\frac{j\omega}{2}\E{\step(\barz_\eta)e^{j\epsilon_\eta \omega \barz_\eta \step(\barz_\eta)}\left(\sigma^c(\lambda^\star)+\sigma^s(\mu^\star)-\sigma^c(\lambda_{\pfix{\eta}}(\barz_\eta))-\sigma^s(\mu_{\pfix{\eta}}(\barz_\eta))\right)}+o(1).
\end{align*}
Now, by taking the absolute value on both sides and upper bounding the RHS by using triangle inequality and Jensen's inequality, we get
\begin{align*}
    &\left(1+\omega^2\frac{(\sigma^c(\lambda^\star)+\sigma^s(\mu^\star))^2}{4}\right)\bigg|\E{\step(\barz_\eta)e^{j\epsilon_\eta \omega \barz_\eta \step(\barz_\eta)}}\bigg|\\
    \leq{}& \frac{|\omega|\max\{|\step(1)|, |\step(-1)|\}}{2}\E{\big|\sigma^c(\lambda^\star)+\sigma^s(\mu^\star)-\sigma^c(\lambda_{\pfix{\eta}}(\barz_\eta))-\sigma^s(\mu_{\pfix{\eta}}(\barz_\eta))\big|} \\
    &+\max\{|\step(1)|, |\step(-1)|\}\E{\bigg|\left(1+\left(\phi^c\left(\frac{\barz_\eta}{\tau_\eta}\right)-\phi^s\left(\frac{\barz_\eta}{\tau_\eta}\right)\right)\frac{1}{\step(\barz_\eta)}\right)\bigg|}+o(1).
\end{align*}
Lastly, by taking the limit $\eta \uparrow \infty$ and using Claim \ref{claim: t4_t5}, we get the result. \hfill $\square$
\endproof
\subsection{Proof of Claims for Lemma \ref{lemma: mod_z} and \ref{lemma: symmetry}} \label{app: proof_of_claims}
\proof{Proof of Claim \ref{claim: case1_t2}}
$\calT_2$ can be simplified as follows:
\begin{align*}
    &\step(\barz_\eta)^2\E{\calT_2 | \barz_\eta } \\
    ={}& \E{e^{j\epsilon_\eta \omega \barz_\eta^+ \step(\barz_\eta)}-e^{j\epsilon_\eta \omega \barz_\eta \step(\barz_\eta)} \big| \barz_\eta} \\
    ={}& \E{e^{j\epsilon_\eta \omega \left(\barz_\eta+a_\eta^c(\barz_\eta)-a^s_\eta(\barz_\eta)\right) \step(\barz_\eta)}-e^{j\epsilon_\eta \omega \barz_\eta \step(\barz_\eta)} \big| \barz_\eta} \\
    ={}& \E{e^{j \epsilon_\eta \omega \barz_\eta \step(\barz_\eta)}\left(e^{j\epsilon_\eta \omega \left(a_\eta^c(\barz_\eta)-a^s_\eta(\barz_\eta)\right) \step(\barz_\eta)}-1\right)\big| \barz_\eta} \\
    \overset{(a)}{=}{}& \E{e^{j \epsilon_\eta \omega \barz_\eta \step(\barz_\eta)}\left(j\epsilon_\eta \omega \left(a_\eta^c(\barz_\eta)-a^s_\eta(\barz_\eta)\right) \step(\barz_\eta) - \frac{1}{2}\epsilon_\eta^2\omega^2\left(a_\eta^c(\barz_\eta)-a^s_\eta(\barz_\eta)\right)^2 \step(\barz_\eta)^2\right)\hspace{-1pt}\big| \barz_\eta} +o(\epsilon_\eta^2) \\
    \overset{(b)}{=}{}& e^{j \epsilon_\eta \omega \barz_\eta \step(\barz_\eta)}\left(j\epsilon_\eta^2 \omega \left(\phi^c\left(\frac{\barz_\eta}{\tau_\eta}\right)-\phi^s\left(\frac{\barz_\eta}{\tau_\eta}\right)\right) \step(\barz_\eta) - \frac{1}{2}\epsilon_\eta^2\omega^2 \E{\left(a_\eta^c(\barz_\eta)-a^s_\eta(\barz_\eta)\right)^2 \big| \barz_\eta} \step(\barz_\eta)^2\right) \\
    &+o(\epsilon_\eta^2)
\end{align*}
where $(a)$ follows by Lemma \ref{lemma: taylor_series} \revcolor{and $\E{|a^c_\eta(\barz_\eta)|^3}, \E{|a^s_\eta(\barz_\eta)|^3} \leq \moment$.} Next, $(b)$ follows by using the tower property of expectation \revcolor{and \eqref{eq:simplified_general_pricing_policy}.} Now, we simplify the second term above by calculating the second moment of the arrivals.
\begin{align*}
    \E{(a_{\eta}^c(\barz_\eta)-a_{\eta}^s(\barz_\eta))^2 | \barz_\eta}&=\Var{a_{\eta}^c(\barz_\eta) | \barz_\eta}+\Var{a_{\eta}^s(\barz_\eta) | \barz_\eta}+\E{a_{\eta}^c(\barz_\eta)-a_{\eta}^s(\barz_\eta)| \barz_\eta}^2 \\
    &=\sigma^c(\lambda_\eta(\barz_\eta))+\sigma^s(\mu_\eta(\barz_\eta))+ \epsilon_\eta^2 \left(\phi^c\left(\frac{\barz_\eta}{\tau_\eta}\right)-\phi^s\left(\frac{\barz_\eta}{\tau_\eta}\right)\right)^2 \\
    &=\sigma^c(\lambda_\eta(\barz_\eta))+\sigma^s(\mu_\eta(\barz_\eta))+o(\epsilon_\eta), \quad \numberthis \label{eq: second_moment}
\end{align*}
\revcolor{where the last equality holds due to \eqref{eq:bounded_control_curves}.} This completes the proof. \hfill $\square$
\endproof
\proof{Proof of Claim \ref{claim: t4_t5}}
By the definition of $\step(\cdot)$, we have $|1+(\phi^c(x)-\phi^s(x))/\step(x)| \rightarrow 0$ as $x \rightarrow \pm \infty$. Also, we have $|1+(\phi^c(x)-\phi^s(x))/\step(x)| \leq 1+\frac{2\phi_{\max}}{\min\{|\step(1)|, |\step(-1)|\}}$ \revcolor{by \eqref{eq:bounded_control_curves}}. Thus, by Lemma \ref{claim: case1_t3_t5}, we have 
\begin{align*}
    \lim_{\eta \uparrow \infty} \E{\bigg|1+\left(\phi^c\left(\frac{\barz_\eta}{\tau_\eta}\right)-\phi^s\left(\frac{\barz_\eta}{\tau_\eta}\right)\right)\frac{1}{\step\left(\frac{\barz_\eta}{\tau_\eta}\right)}\bigg|}=0.
\end{align*}
This completes the first part of the claim. Now, to prove the second part of the claim, note that
\begin{align*}
    \big|\sigma^c(\lambda^\star)+\sigma^s(\mu^\star)-\sigma^c\left(\lambda^\star+\epsilon_\eta \phi^c(x)\right)-\sigma^s\left(\mu^\star+\epsilon_\eta \phi^s(x)\right)\big| \rightarrow 0 \quad \forall x \in \bbR,
\end{align*}
\revcolor{and the convergence is uniformly over $x \in \bbR$ as $|\phi^c(x)| \leq \phi_{\max}$, $|\phi^s(x)| \leq \phi_{\max}$ by \eqref{eq:bounded_control_curves}.} Also note that the above function is absolutely bounded by $4\sigma_{\max}$. Thus, by setting $Y_n=\barz_\eta/\tau_\eta$ in Lemma \ref{lemma: convergence_technical} and using \eqref{eq:simplified_general_pricing_policy}, we get
\begin{align*}
    \lim_{\eta \uparrow \infty} \E{\bigg|\sigma^c(\lambda^\star)+\sigma^s(\mu^\star)-\sigma^c\left(\lambda^\star + \epsilon_\eta \phi^c\left(\frac{\barz_\eta}{\tau_\eta}\right)\right)-\sigma^s\left(\mu^\star + \epsilon_\eta \phi^s\left(\frac{\barz_\eta}{\tau_\eta}\right)\right)\bigg|}&=0.
\end{align*}
This completes the proof of the claim. \hfill $\square$
\endproof
\proof{Proof of Claim \ref{claim: case1_t6}}
\revcolor{\begin{align*}
\E{\calT_6}&=\E{\frac{1}{\step(\barz_\eta^+)}e^{j\epsilon_\eta \omega \barz_\eta^+ \step(\barz_\eta^+)}-\frac{1}{\step(\barz_\eta)}e^{j\epsilon_\eta \omega \barz_\eta^+ \step(\barz_\eta)}} \\
&=\E{\left(\frac{1}{\step(\barz_\eta^+)}e^{j\epsilon_\eta \omega \barz_\eta^+ \step(\barz_\eta^+)}-\frac{1}{\step(\barz_\eta)}e^{j\epsilon_\eta \omega \barz_\eta^+ \step(\barz_\eta)}\right)\mathbbm{1}\{\step(\barz_\eta^+) \neq \step(\barz_\eta)\}} \\
&=\E{\frac{1}{\step(\barz_\eta^+)}e^{j\epsilon_\eta \omega \barz_\eta^+ \step(\barz_\eta^+)\mathbbm{1}\{\step(\barz_\eta^+) \neq \step(\barz_\eta)\}}-\frac{1}{\step(\barz_\eta)}e^{j\epsilon_\eta \omega \barz_\eta^+ \step(\barz_\eta)\mathbbm{1}\{\step(\barz_\eta^+) \neq \step(\barz_\eta)\}}} \\
&\overset{(a)}{=}\E{\frac{1}{\step(\barz_\eta^+)}-\frac{1}{\step(\barz_\eta)}}-\E{\frac{1}{2}\epsilon_\eta^2\omega^2(\barz_\eta^+)^2\left(\step(\barz_\eta^+)-\step(\barz_\eta)\right)\mathbbm{1}\{\step(\barz_\eta^+) \neq \step(\barz_\eta)\}}+o(\epsilon_\eta^2) \\
&=\E{\frac{1}{\step(\barz_\eta^+)}-\frac{1}{\step(\barz_\eta)}}-\E{\frac{1}{2}\epsilon_\eta^2\omega^2(\barz_\eta^+)^2\left(\step(\barz_\eta^+)-\step(\barz_\eta)\right)}+o(\epsilon_\eta^2) \\
    &\overset{(b)}{=}-\frac{1}{2}\epsilon_\eta^2\omega^2\E{(\barz_\eta^+)^2\left(\step(\barz_\eta^+)-\step(\barz_\eta)\right)}+o(\epsilon_\eta^2) 
\end{align*}
where $(a)$ follows by Lemma \ref{lemma: taylor_series} and noting that the third moment of $|\barz_\eta^+ \chi(\barz_\eta^+)\mathbbm{1}\{\step(\barz_\eta^+) \neq \step(\barz_\eta)\}$ is bounded by \eqref{eq:third_moment_sign_change}.} Further, $(b)$ follows as $1/|\step(z)| \leq 1/\min\{|\step(1)|, |\step(-1)|\}$ and thus, $\E{\Delta (1/\step)(\barz_\eta, \eta)}=0$. Now, by taking the absolute value on both sides, and using Jensen's inequality, we get
\revcolor{\begin{align*}
     |\E{\calT_6}|\leq{}& \frac{1}{2}\epsilon_\eta^2\omega^2 \E{\big|(\barz_\eta^+)^2\left(\step(\barz_\eta^+)-\step(\barz_\eta)\right)\big|}+o(\epsilon_\eta^2) \\ 
     ={}& \frac{1}{2}\epsilon_\eta^2\omega^2 \E{\big|(\barz_\eta^+)^2\left(\step(\barz_\eta^+)-\step(\barz_\eta)\right)\big| \mathbbm{1}\left\{|\barz_\eta^+| \leq \frac{1}{\sqrt{\epsilon_\eta}}\right\}} \\
     &+ \frac{1}{2}\epsilon_\eta^2\omega^2\E{\big|(\barz_\eta^+)^2\left(\step(\barz_\eta^+)-\step(\barz_\eta)\right)\big|\mathbbm{1}\left\{|\barz_\eta^+| \geq \frac{1}{\sqrt{\epsilon_\eta}}\right\}} +o(\epsilon_\eta^2).
\end{align*}
Now, we bound the above two terms separately. We have
\begin{align*}
    \E{\big|(\barz_\eta^+)^2\left(\step(\barz_\eta^+)-\step(\barz_\eta)\right)\big| \mathbbm{1}\left\{|\barz_\eta^+| \leq \frac{1}{\sqrt{\epsilon_\eta}}\right\}}
    &\leq  \frac{1}{\sqrt{\epsilon_\eta}}\E{\big|\barz_\eta^+\left(\step(\barz_\eta^+)-\step(\barz_\eta)\right)\big|} \\
    &\overset{(a)}{=} \frac{1}{\sqrt{\epsilon_\eta}}\big|\E{\barz_\eta^+\left(\step(\barz_\eta^+)-\step(\barz_\eta)\right)}\big| \\
    &\overset{(b)}{=}\sqrt{\epsilon_\eta}\bigg|\E{\left(\phi^s\left(\frac{\barz_\eta}{\tau_\eta}\right)-\phi^c\left(\frac{\barz_\eta}{\tau_\eta}\right)\right)\step(\barz_\eta)}\bigg| \\
    &\overset{(c)}{\leq} \sqrt{\epsilon_\eta}  \phi_{\max}\max\{|\step(1)|, |\step(-1)|\}=o(1),
\end{align*}
where, $(a)$ follows as $\barz_\eta^+(\step(\barz_\eta^+)-\step(\barz_\eta)) \geq 0$ by noting that $\step(x) = \phi^s(\infty)-\phi^c(\infty) > 0$ for all $x \geq 0$ and $\step(x) = \phi^s(-\infty)-\phi^c(-\infty) < 0$ for all $x < 0$. Also, $(b)$ follows by Lemma \ref{lemma: z_sgnz} and \eqref{eq:simplified_general_pricing_policy} and $(c)$ follows by \eqref{eq:bounded_control_curves}. Next, we have
\begin{align*}
    &\E{\big|(\barz_\eta^+)^2\left(\step(\barz_\eta^+)-\step(\barz_\eta)\right)\big|\mathbbm{1}\left\{|\barz_\eta^+| \geq \frac{1}{\sqrt{\epsilon_\eta}}\right\}} \\
    \overset{(a)}{\leq}{}& \E{(a^c_\eta(\barz_\eta)-a^s_\eta(\barz_\eta))^2\big|\step(\barz_\eta^+)-\step(\barz_\eta)\big|\mathbbm{1}\left\{|a^c_\eta(\barz_\eta)-a^s_\eta(\barz_\eta)| \geq \frac{1}{\sqrt{\epsilon_\eta}}\right\}} \\
    \leq{}& \left(\step(1)-\step(-1)\right)\E{(a^c_\eta(\barz_\eta)-a^s_\eta(\barz_\eta))^2\mathbbm{1}\left\{|a^c_\eta(\barz_\eta)-a^s_\eta(\barz_\eta)| \geq \frac{1}{\sqrt{\epsilon_\eta}}\right\}} \\
    \overset{(b)}{\leq}{}& \left(\step(1)-\step(-1)\right)\E{\pfix{|}a^c_\eta(\barz_\eta)-a^s_\eta(\barz_\eta)\pfix{|}^3}^{2/3}\P{|a^c_\eta(\barz_\eta)-a^s_\eta(\barz_\eta)| \geq \frac{1}{\sqrt{\epsilon_\eta}}}^{1/3} \\
    \overset{(c)}{\leq}{}& \epsilon_\eta^{1/2}\left(\step(1)-\step(-1)\right)\E{\pfix{|}a^c_\eta(\barz_\eta)-a^s_\eta(\barz_\eta)\pfix{|}^3} \overset{(d)}{=} o(1),
\end{align*}
where $(a)$ holds by considering the following two cases: if $\barz_\eta^+$ and $\barz_\eta$ have the same sign, then $\step(\barz_\eta^+)-\step(\barz_\eta) = 0$. Otherwise, if $\barz_\eta^+$ and $\barz_\eta$ have opposite signs, then, we have $|\barz_\eta^+| \leq |a^c_\eta(\barz_\eta)-a^s_\eta(\barz_\eta)|$. Next, $(b)$ follows by H\"older's inequality with $p=3/2$ and $q=3$. Further, $(c)$ follows by the Markov's inequality. Lastly, $(d)$ follows as $\E{\pfix{|}a^c_\eta(z)\pfix{|}^3} \leq \moment$ and $\E{\pfix{|}a^s_\eta(z)\pfix{|}^3} \leq \moment$ for all $\eta > 0, z \in \bbZ$. This completes the proof.} \hfill $\square$
\endproof

\section{Technical Details for the Proof of Theorem \ref{theo: profit_driven_regime_3}} 
\subsection{Proof of Theorem \ref{theo: profit_driven_regime_3} with \texorpdfstring{$\Phi^\star = \{0\}$}{}} \label{sec:theorem3_singleton}
\proof{Proof of Theorem \ref{theo: profit_driven_regime_3} (Case 3)}
For the ease of notation, we define $\phi(\cdot) = \phi^c(\cdot) - \phi^s(\cdot)$. Let $\gamma_{\eta}^\star = \inf\left\{x \geq 0: |x\phi(x)| \geq \frac{1}{\sqrt{\epsilon_\eta \tau_\eta}}\right\}$ and $\gamma_{\star, \eta} = \inf\left\{x \leq 0: |x\phi(x)| \geq \frac{1}{\sqrt{\epsilon_\eta \tau_\eta}}\right\}$. As $\phi(\cdot)$ is non-increasing and $\phi(0)=0$, $|x\phi(x)|$ is non-decreasing for $x \geq 0$ and non-increasing for $x \leq 0$. Consider an arbitrary convergent sub-sequence of $\gamma_{\eta}^\star$ and denote its limit by $\gamma_{\infty}^\star$. Now we show that $\gamma_{\infty}^\star = 0$ by contradiction. Assume $\gamma_{\infty}^\star > 0$. Then, there exists $\eta_0 > 0$ such that for all $\eta \geq \eta_0$, we have $\gamma_{\eta}^\star \geq \gamma_{\infty}^\star/2 > 0$. Now, by monotonicity of $|x\phi(x)|$ and the definition of infimum, we have for all $\eta \geq \eta_0$,
\begin{align*}
\bigg|\frac{\gamma_\infty^\star}{4} \phi\left(\frac{\gamma_\infty^\star}{4}\right)\bigg| \leq \bigg|\frac{\gamma_\eta^\star}{2} \phi\left(\frac{\gamma_\eta^\star}{2}\right)\bigg| \leq \frac{1}{\sqrt{\epsilon_\eta \tau_\eta}} \implies \bigg|\frac{\gamma_\infty^\star}{4} \phi\left(\frac{\gamma_\infty^\star}{4}\right)\bigg| = 0 \implies \gamma_{\infty}^\star = 0,
\end{align*}
where the last assertion follows as $\phi(x) \neq 0$ for all $x \neq 0$. Thus, we have $\lim_{\eta \uparrow \infty} \gamma_{\eta}^\star = 0$. Similarly, we also have $\lim_{\eta \uparrow \infty} \gamma_{\star, \eta} = 0$. Now we are ready to provide a tail bound on $\barz_\eta/\tau_\eta$. \revcolor{For simplicity, first define 
\begin{align*}
    \Delta = \min\{\mu^\star - \lambda_{\min}, \lambda_{\max}-\mu^\star, \mu^\star-\mu_{\min},\mu_{\max}-\mu^\star\} > 0.
\end{align*}
Then, as $\phi^c(x) \leq 0$ and $\phi^s(x) \geq 0$ for all $x \geq 0$ by Condition~\ref{cond: monotonicity}, we have
\begin{align*}
    \lambda_\eta(z)-\mu_\eta(z) + \frac{\drift}{\tau_\eta} &= \max\left\{\mu^\star + \epsilon_\eta \phi^c\left(\frac{z}{\tau_\eta}\right), \lambda_{\min}+ \frac{\drift}{\tau_\eta}\right\} - \min\left\{\mu^\star + \epsilon_\eta\phi^s\left(\frac{z}{\tau_\eta}\right), \mu_{\max}\right\} \\
    &\leq \max\left\{\epsilon_\eta\phi\left(\frac{z}{\tau_\eta}\right), -\Delta + \frac{\drift}{\tau_\eta}\right\} \leq \max\left\{\epsilon_\eta\phi\left(\frac{z}{\tau_\eta}\right), -\frac{\Delta}{2}\right\} \leq 0 \quad \forall z \geq 0, \numberthis \label{eq:diff_arrivals_pos}
\end{align*}
where the last inequality holds for $\eta>0$ large enough as $\tau_\eta \rightarrow \infty$. Similarly, we have
\begin{align*}
    \lambda_\eta(z)-\mu_\eta(z) + \frac{\drift}{\tau_\eta} \geq \min\left\{\epsilon_\eta\phi\left(\frac{z}{\tau_\eta}\right), \frac{\Delta}{2}\right\} \geq 0 \quad \forall z \leq 0 \numberthis \label{eq:diff_arrivals_neg}
\end{align*}
Using the above two inequalities, we get
\begin{align*}
\P{\bigg|\frac{\barz_\eta}{\tau_\eta}\bigg| \geq 2\max\left\{\gamma_{\eta}^\star, \big|\gamma_{\star, \eta}\big|, \frac{\sqrt{\epsilon_\eta}}{\Delta\sqrt{\tau_\eta}}\right\}} 
&\overset{(a)}{\leq} \P{\bigg|\frac{\barz_\eta}{\epsilon_\eta\tau_\eta}\left(\lambda_\eta(\barz_\eta)-\mu_\eta(\barz_\eta)+\frac{\drift}{\tau_\eta}\right)\bigg| \geq \frac{1}{\sqrt{\epsilon_\eta \tau_\eta}}}\\
&\overset{(b)}{\leq} \frac{1}{\sqrt{\epsilon_\eta\tau_\eta}}\E{\bigg|\barz_\eta\left(\lambda_\eta(\barz_\eta)-\mu_\eta(\barz_\eta)+\frac{\drift}{\tau_\eta}\right)\bigg|} \\
&\overset{(c)}{=} -\frac{1}{\sqrt{\epsilon_\eta\tau_\eta}}\E{\barz_\eta\left(\lambda_\eta(\barz_\eta)-\mu_\eta(\barz_\eta)+\frac{\drift}{\tau_\eta}\right)}\\
&\overset{(d)}{\leq} \frac{2\moment}{\sqrt{\epsilon_\eta\tau_\eta}} + \frac{|\drift|}{\epsilon_\eta^{0.5} \tau_\eta^{1.5}} \E{|\barz_\eta|} \\
&\overset{(e)}{\leq} \frac{2\moment}{\sqrt{\epsilon_\eta\tau_\eta}} + \frac{|\drift|}{\epsilon_\eta^{1.5} \tau_\eta^{1.5}} \frac{4\moment + 2\epsilon_\eta \tau_\eta K(2K^\beta + \delta)}{\delta} = o(1)\\
\end{align*}
where $(a)$ follows by the definition of $\gamma^\star_\eta$ and $\gamma_{\star, \eta}$ and by \eqref{eq:diff_arrivals_pos} and \eqref{eq:diff_arrivals_neg}. For example, if $z \geq 0$, then
\begin{align*}
    \frac{z}{\epsilon_\eta\tau_\eta} \left(\lambda_\eta(z)-\mu_\eta(z) + \frac{\drift}{\tau_\eta}\right) \leq \max\left\{\frac{z}{\tau_\eta} \phi\left(\frac{z}{\tau_\eta}\right), -\frac{z\Delta}{2\epsilon_\eta \tau_\eta}\right\} \leq -\frac{1}{\sqrt{\epsilon_\eta \tau_\eta}}
\end{align*}
where the first term in the max is at most $-\frac{1}{\sqrt{\epsilon_\eta \tau_\eta}}$ as $z/\tau_\eta > \gamma^\star_\eta$ and the second term is at most $-\frac{1}{\sqrt{\epsilon_\eta \tau_\eta}}$ as $z \geq 2\sqrt{\epsilon_\eta \tau_\eta}/\Delta$.
Next, $(b)$ follows by the Markov's inequality. Further, $(c)$ follows by \eqref{eq:diff_arrivals_pos} and \eqref{eq:diff_arrivals_neg}. Now, $(d)$ follows by the first equation of Proposition \ref{prop: pos_rec} and $(e)$ follows by the second equation of Proposition \ref{prop: pos_rec}. In particular, $|\lambda^\star_\eta-\mu^\star_\eta| = |d|/\tau_\eta \leq \epsilon_\eta/2$ for $\eta$ large enough as $\epsilon_\eta \tau_\eta \rightarrow \infty$. Now, by taking the limit as $\eta \uparrow \infty$, we get
\begin{align*}
\lim_{\eta \uparrow \infty}\P{\bigg|\frac{\barz_\eta}{\tau_\eta}\bigg| \geq 2\max\left\{\gamma_{\eta}^\star, \big|\gamma_{\star, \eta}\big|, \frac{2\sqrt{\epsilon_\eta}}{\Delta\sqrt{\tau_\eta}}\right\}} = 0.
\end{align*}
This completes the proof by observing that $2\max\left\{\gamma_{\eta}^\star, \big|\gamma_{\star, \eta}\big|, \frac{1}{\tau_\eta}\right\} \rightarrow 0$ as $\eta \uparrow \infty$ as $\lim_{\eta \uparrow \infty} \gamma_{\eta}^\star = 0$, $\lim_{\eta \uparrow \infty} \gamma_{\star, \eta} = 0$, and $\epsilon_\eta \tau_\eta \rightarrow \infty$.} \hfill $\square$
\endproof

\subsection{Proof of Lemma \ref{lemma: regime_3_inside} with \texorpdfstring{$\drift \neq 0$}{}}
\proof{Proof of Lemma \ref{lemma: regime_3_inside} $[d \neq 0]$} 
\label{sec: lemma_regime_3_inside_proof}
To prove the lemma, we analyze the drift of the Lyapunov function defined as follows:
\begin{align*}
    V(z) = e^{\theta z / \tau_\eta}\mathbbm{1}\left\{\frac{z}{\tau_\eta} \in (t_\star, t^\star)\right\}
\end{align*}
for $\theta =d/2\sigma^\star$ or $\theta = j\omega$ with $\omega \in \bbR$. As $|V(z)|\leq e^{|\theta|\max\{-t_\star, t^\star\}}$, its expectation in the steady state is finite. Thus, we set the drift of the above-defined test function to zero in the steady state. Define $B_\eta = \tau_\eta^{3/4}$ so that $1/B_\eta^2 = o(1/\tau_\eta)$ and $1/B_\eta^3 = o(1/\tau_\eta^2)$. Then, we have
\begin{align*}
    0 ={}& \E{\Delta V(\barz_\eta, \eta)} \\
    ={}& \E{e^{\theta \barz_\eta^+ / \tau_\eta} \mathbbm{1}\left\{\frac{\barz_\eta^+}{\tau_\eta} \in (t_\star, t^\star)\right\} - e^{\theta \barz_\eta / \tau_\eta} \mathbbm{1}\left\{\frac{\barz_\eta}{\tau_\eta} \in (t_\star, t^\star)\right\}} \\
    ={}& \E{e^{\theta  \barz_\eta^+ / \tau_\eta} \mathbbm{1}\left\{\frac{\barz_\eta^+}{\tau_\eta} \in (t_\star, t^\star)\right\} - e^{\theta (\barz_\eta + \text{clip}(a^c_\eta(\barz_\eta)-a^s_\eta(\barz_\eta), B_\eta)) / \tau_\eta} \mathbbm{1}\left\{\frac{\barz_\eta}{\tau_\eta} \in (t_\star, t^\star)\right\}} \\
    & + \E{e^{\theta (\barz_\eta + \text{clip}(a^c_\eta(\barz_\eta)-a^s_\eta(\barz_\eta), B_\eta)) / \tau_\eta} \mathbbm{1}\left\{\frac{\barz_\eta}{\tau_\eta} \in (t_\star, t^\star)\right\} - e^{\theta \barz_\eta / \tau_\eta} \mathbbm{1}\left\{\frac{\barz_\eta}{\tau_\eta} \in (t_\star, t^\star)\right\}} \\
    ={}& \E{e^{\theta  \barz_\eta^+ / \tau_\eta} \left(\mathbbm{1}\left\{\frac{\barz_\eta^+}{\tau_\eta} \in (t_\star, t^\star)\right\} - \mathbbm{1}\left\{\frac{\barz_\eta}{\tau_\eta} \in (t_\star, t^\star)\right\}\right)\mathbbm{1}\left\{|a^c_\eta(\barz_\eta)-a^s_\eta(\barz_\eta)| \leq B_\eta\right\}} \\
    & + \E{e^{\theta  \barz_\eta^+ / \tau_\eta} \mathbbm{1}\left\{\frac{\barz_\eta^+}{\tau_\eta} \in (t_\star, t^\star)\right\}\mathbbm{1}\{|a^c_\eta(\barz_\eta)-a^s_\eta(\barz_\eta)| > B_\eta\}} \\
    &- \E{e^{\theta (\barz_\eta + \text{clip}(a^c_\eta(\barz_\eta)-a^s_\eta(\barz_\eta), B_\eta)) / \tau_\eta} \mathbbm{1}\left\{\frac{\barz_\eta}{\tau_\eta} \in (t_\star, t^\star)\right\}\mathbbm{1}\{|a^c_\eta(\barz_\eta)-a^s_\eta(\barz_\eta)| > B_\eta\}} \\
    & + \E{e^{\theta (\barz_\eta + \text{clip}(a^c_\eta(\barz_\eta)-a^s_\eta(\barz_\eta), B_\eta)) / \tau_\eta} \mathbbm{1}\left\{\frac{\barz_\eta}{\tau_\eta} \in (t_\star, t^\star)\right\} - e^{\theta \barz_\eta / \tau_\eta} \mathbbm{1}\left\{\frac{\barz_\eta}{\tau_\eta} \in (t_\star, t^\star)\right\}} \\
    ={}& \E{e^{\theta  \barz_\eta^+ / \tau_\eta} \left(\mathbbm{1}\left\{\frac{\barz_\eta^+}{\tau_\eta} \in (t_\star, t^\star)\right\} - \mathbbm{1}\left\{\frac{\barz_\eta}{\tau_\eta} \in (t_\star, t^\star)\right\}\right)\mathbbm{1}\left\{|a^c_\eta(\barz_\eta)-a^s_\eta(\barz_\eta)| \leq B_\eta\right\}} \\
    & + \E{e^{\theta (\barz_\eta + \text{clip}(a^c_\eta(\barz_\eta)-a^s_\eta(\barz_\eta), B_\eta)) / \tau_\eta} \mathbbm{1}\left\{\frac{\barz_\eta}{\tau_\eta} \in (t_\star, t^\star)\right\} - e^{\theta \barz_\eta / \tau_\eta} \mathbbm{1}\left\{\frac{\barz_\eta}{\tau_\eta} \in (t_\star, t^\star)\right\}} + o\left(\frac{1}{\tau_\eta^2}\right),
    \numberthis \label{eq: drift_case3}
\end{align*}
where the last inequality follows by noting the following bounds:
\begin{align*}
    &\bigg|\E{e^{\theta (\barz_\eta + \text{clip}(a^c_\eta(\barz_\eta)-a^s_\eta(\barz_\eta), B_\eta)) / \tau_\eta} \mathbbm{1}\left\{\frac{\barz_\eta}{\tau_\eta} \in (t_\star, t^\star)\right\}\mathbbm{1}\{|a^c_\eta(\barz_\eta)-a^s_\eta(\barz_\eta)| > B_\eta\}}\bigg| \\
    \leq{}& e^{|\text{Re}(\theta)| (t^\star-t_\star + B_\eta/\tau_\eta)}\P{|a^c_\eta(\barz_\eta)-a^s_\eta(\barz_\eta)| > B_\eta} \leq e^{|\text{Re}(\theta)| (t^\star-t_\star + B_\eta/\tau_\eta)}\frac{\E{|a^c_\eta(\barz_\eta)-a^s_\eta(\barz_\eta)|^3}}{B_\eta^3} \\
    \leq{}& 6\moment e^{|\text{Re}(\theta)| (t^\star-t_\star + 1)} \frac{1}{B_\eta^3} = o \left(\frac{1}{\tau_\eta^2}\right).
\end{align*}
Similarly, we can show that
\begin{align*}
    \bigg|\E{e^{\theta  \barz_\eta^+ / \tau_\eta} \mathbbm{1}\left\{\frac{\barz_\eta^+}{\tau_\eta} \in (t_\star, t^\star)\right\}\mathbbm{1}\{|a^c_\eta(\barz_\eta)-a^s_\eta(\barz_\eta)| > B_\eta\}}\bigg| = o \left(\frac{1}{\tau_\eta^2}\right). 
\end{align*}
Now, denote the two terms in \eqref{eq: drift_case3} by $\calT_1$ and $\calT_2$. First, we consider $\calT_1$. We have
\begin{align*}
   \calT_1  
    ={}& \E{e^{\theta \barz_\eta^{+} / \tau_\eta}\left( \mathbbm{1}\left\{\frac{\barz_\eta^+}{\tau_\eta} < t^\star\right\} - \mathbbm{1}\left\{\frac{\barz_\eta}{\tau_\eta} <  t^\star\right\}\right)\mathbbm{1}\left\{|a^c_\eta(\barz_\eta)-a^s_\eta(\barz_\eta)| \leq B_\eta\right\}} \\
    &- \E{e^{\theta \barz_\eta^{+} / \tau_\eta}\left( \mathbbm{1}\left\{\frac{\barz_\eta^+}{\tau_\eta} \leq t_\star\right\}  - \mathbbm{1}\left\{\frac{\barz_\eta}{\tau_\eta} \leq  t_\star\right\}\right)\mathbbm{1}\left\{|a^c_\eta(\barz_\eta)-a^s_\eta(\barz_\eta)| \leq B_\eta\right\} } \\
     ={}& e^{\theta t^\star} \E{e^{\theta (\barz_\eta^{+} - \tau_\eta t^\star) / \tau_\eta}\left( \mathbbm{1}\left\{\frac{\barz_\eta^+}{\tau_\eta} < t^\star\right\} - \mathbbm{1}\left\{\frac{\barz_\eta}{\tau_\eta} < t^\star\right\}\right)\mathbbm{1}\left\{|a^c_\eta(\barz_\eta)-a^s_\eta(\barz_\eta)| \leq B_\eta\right\}} \\
     &- e^{\theta t_\star} \E{e^{\theta (\barz_\eta^{+} - \tau_\eta t_\star) / \tau_\eta}\left( \mathbbm{1}\left\{\frac{\barz_\eta^+}{\tau_\eta} \leq t_\star\right\} - \mathbbm{1}\left\{\frac{\barz_\eta}{\tau_\eta} \leq t_\star\right\}\right)\mathbbm{1}\left\{|a^c_\eta(\barz_\eta)-a^s_\eta(\barz_\eta)| \leq B_\eta\right\}}. \numberthis \label{eq:two_terms}
\end{align*}
Now, to analyze the two terms in \eqref{eq:two_terms}, we first use Taylor's series expansion and then show that the first-order term is dominating. We present the following claim:
\begin{claim} \label{claim: t1_regime_3_d_neq_0} For any $t \in \bbR$, let $g_t(\cdot) \in \left\{\mathbbm{1}\left\{\cdot \leq t\right\}, \mathbbm{1}\left\{\cdot < t\right\}\right\}$. Then, we have
\begin{align*}
    & \E{e^{\theta (\barz_\eta^{+} - \tau_\eta t) / \tau_\eta}\left( g_t\left(\frac{\barz_\eta^+}{\tau_\eta}\right)- g_t\left(\frac{\barz_\eta}{\tau_\eta}\right)\right)\mathbbm{1}\left\{|a^c_\eta(\barz_\eta)-a^s_\eta(\barz_\eta)| \leq B_\eta\right\}} \\
    ={}& -\frac{\theta}{\tau_\eta}\E{g_t\left(\frac{\barz_\eta}{\tau_\eta}\right) \left(\lambda_\eta(\barz_\eta)-\mu_\eta(\barz_\eta)\right)} + o\left(\frac{1}{\tau_\eta^2}\right).
\end{align*}
\end{claim}
We defer the proof of the above claim to the Appendix \ref{appendix: claim_third_regime} and continue with the proof of Lemma \ref{lemma: regime_3_inside} here. Now, we work with $\calT_2$ which is defined as follows:
\begin{align*}
    \calT_2 = \E{e^{\theta (\barz_\eta + \text{clip}(a^c_\eta(\barz_\eta)-a^s_\eta(\barz_\eta), B_\eta)) / \tau_\eta} \mathbbm{1}\left\{\frac{\barz_\eta}{\tau_\eta} \in (t_\star, t^\star)\right\} - e^{\theta \barz_\eta / \tau_\eta} \mathbbm{1}\left\{\frac{\barz_\eta}{\tau_\eta} \in (t_\star, t^\star)\right\}}. \numberthis \label{eq:definition_t2d0}
\end{align*}
We simplify $\calT_2$ in the following claim:
\begin{claim} We have \label{claim: t_2_regime_3_inside_d_neq_0}
\begin{align*}
    \calT_2 ={}& -\frac{\theta}{\tau_\eta^2}\left(\drift - \frac{\theta \sigma^\star}{2}\right)\E{ \mathbbm{1}\left\{\frac{\barz_\eta}{\tau_\eta} \in (t_\star, t^\star)\right\} e^{\theta \barz_\eta / \tau_\eta}} +  
     o\left(\frac{1}{\tau_\eta^2}\right).
\end{align*}
\end{claim}
We defer the details of the proof of the claim to Appendix \ref{appendix: claim_third_regime}. Now, we use the above two claims to substitute the expressions for $\calT_1, \calT_2$ in \eqref{eq: drift_case3}. First, note that
\begin{align*}
    &\E{\mathbbm{1}\left\{\frac{\barz_\eta}{\tau_\eta} < t^\star\right\} \left(\lambda_\eta(\barz_\eta)-\mu_\eta(\barz_\eta)\right)} \\
    ={}& \E{\mathbbm{1}\left\{\frac{\barz_\eta}{\tau_\eta} \leq t_\star\right\} \left(\lambda_\eta(\barz_\eta)-\mu_\eta(\barz_\eta)\right)} - \frac{\drift}{\tau_\eta}\P{t_\star<\frac{\barz_\eta}{\tau_\eta} < t^\star},
\end{align*}
as $\phi^c(x) = \phi^s(x) = 0$ for $x \in (t_\star, t^\star)$. Now, dividing both sides of \eqref{eq: drift_case3} by $\frac{\theta}{\tau_\eta^2}$, we get
\begin{align*}
    &\left(-\drift + \frac{\theta \sigma^\star}{2}\right)\E{ \mathbbm{1}\left\{\frac{\barz_\eta}{\tau_\eta} \in (t_\star, t^\star)\right\} e^{\theta \barz_\eta / \tau_\eta}}+  o_{\eta}\left(1\right)  \\
     ={}& \tau_\eta \E{\mathbbm{1}\left\{\frac{\barz_\eta}{\tau_\eta} \leq t_\star\right\} \left(\lambda_\eta(\barz_\eta)-\mu_\eta(\barz_\eta)\right)}\left(e^{\theta t^\star} - e^{\theta t_\star}\right)-\drift\P{t_\star < \frac{\barz_\eta}{\tau_\eta} < t^\star}e^{\theta t^\star}. \numberthis \label{eq: master_eq_d_neq_0}
\end{align*}
Now, substitute $\theta = \frac{2\drift}{\sigma^\star}$ in \eqref{eq: master_eq_d_neq_0}, we get
\begin{align*}
    &\tau_\eta \E{\mathbbm{1}\left\{\frac{\barz_\eta}{\tau_\eta} \leq t_\star\right\} \left(\lambda_\eta(\barz_\eta)-\mu_\eta(\barz_\eta)\right)}\left(e^{2\drift t^\star/
    \sigma^\star} - e^{2\drift t_\star/
    \sigma^\star}\right)= \drift\P{t_\star < \frac{\barz_\eta}{\tau_\eta} < t^\star}e^{2\drift t^\star/\sigma^\star} + o_{\eta}(1)
\end{align*}
Now, substituting the above back in \eqref{eq: master_eq_d_neq_0} along with $\theta=j\omega$ for $\omega \in \bbR$, we get
\begin{align*}
    &\left(-\drift + \frac{j\omega \sigma^\star}{2}\right)\E{ \mathbbm{1}\left\{\frac{\barz_\eta}{\tau_\eta} \in (t_\star, t^\star)\right\} e^{j\omega \barz_\eta / \tau_\eta}} + o_{\eta}\left(1\right) \\
    ={}& \drift\P{t_\star < \frac{\barz_\eta}{\tau_\eta} < t^\star}\frac{e^{j \omega t^\star} - e^{j \omega t_\star}}{e^{2\drift t^\star/
    \sigma^\star} - e^{2\drift t_\star/
    \sigma^\star}}e^{2\drift t^\star/\sigma^\star} - \drift\P{t_\star < \frac{\barz_\eta}{\tau_\eta} < t^\star} e^{j \omega t^\star} + o_{\eta}(1) \\
     ={}& \drift\left(\P{\frac{\barz_\eta}{\tau_\eta} < t^\star}e^{2\drift t^\star/\sigma^\star}- \P{\frac{\barz_\eta}{\tau_\eta} \leq t_\star}e^{2 \drift t_\star/\sigma^\star}\right)\frac{e^{j \omega t^\star} - e^{j \omega t_\star}}{e^{2\drift t^\star/
    \sigma^\star} - e^{2\drift t_\star/
    \sigma^\star}} \\
    &-\drift\P{\frac{\barz_\eta}{\tau_\eta} < t^\star}e^{j \omega t^\star}+\drift \P{\frac{\barz_\eta}{\tau_\eta} \leq t_\star}e^{j \omega t_\star} + o_{\eta}(1)
\end{align*}
This completes the proof. \hfill $\square$
\subsection{Proof of Lemma \ref{lemma: regime_3_inside} with \texorpdfstring{$\drift = 0$}{}}
\proof{Proof of Lemma \ref{lemma: regime_3_inside} $[d=0]$} \label{sec: lemma_regime_3_inside_proof_d_0}
\revcolor{To prove the lemma, we will analyze the drift of the Lyapunov function defined as follows:
\begin{align*}
    V(z, \eta) = e^{j \omega z / \tau_\eta}\mathbbm{1}\left\{\frac{z}{\tau_\eta} \in (t_\star, t^\star)\right\}
\end{align*}
for $\omega \in \bbR$. As $|V(z)|\leq 1$, its expectation in the steady state is finite. Thus, we set the drift of the above-defined test function to zero in the steady state.
\begin{align*}
    0 ={}& \E{\Delta V(\barz_\eta, \eta)} \\
    ={}& \E{e^{j \omega \barz_\eta^+ / \tau_\eta} \mathbbm{1}\left\{\frac{\barz_\eta^+}{\tau_\eta} \in (t_\star, t^\star)\right\} - e^{j \omega \barz_\eta / \tau_\eta} \mathbbm{1}\left\{\frac{\barz_\eta}{\tau_\eta} \in (t_\star, t^\star)\right\}} \\
    ={}& \underbrace{\E{e^{j \omega  \barz_\eta^+ / \tau_\eta} \mathbbm{1}\left\{\frac{\barz_\eta^+}{\tau_\eta} \in (t_\star, t^\star)\right\} - e^{j \omega \barz_\eta^+ / \tau_\eta} \mathbbm{1}\left\{\frac{\barz_\eta}{\tau_\eta} \in (t_\star, t^\star)\right\}}}_{\calT_1} \\
    & + \underbrace{\E{e^{j \omega \barz_\eta^+ / \tau_\eta} \mathbbm{1}\left\{\frac{\barz_\eta}{\tau_\eta} \in (t_\star, t^\star)\right\} - e^{\pfix{j\omega} \barz_\eta / \tau_\eta} \mathbbm{1}\left\{\frac{\barz_\eta}{\tau_\eta} \in (t_\star, t^\star)\right\}}}_{\calT_2}. \numberthis \label{eq: drift_case3_d_0}
\end{align*}
Now, we analyze the two terms - $\calT_1$ and $\calT_2$ separately. First, we consider $\calT_1$. 
\begin{align*}
   \calT_1 ={}& \E{e^{j \omega \barz_\eta^{+} / \tau_\eta}\left( \mathbbm{1}\left\{\frac{\barz_\eta^+}{\tau_\eta} < t^\star\right\} - \mathbbm{1}\left\{\frac{\barz_\eta^+}{\tau_\eta} \leq t_\star\right\} - \mathbbm{1}\left\{\frac{\barz_\eta}{\tau_\eta} <  t^\star\right\} + \mathbbm{1}\left\{\frac{\barz_\eta}{\tau_\eta} \leq  t_\star\right\}\right)} \\
    ={}& \E{e^{j \omega \barz_\eta^{+} / \tau_\eta}\left( \mathbbm{1}\left\{\frac{\barz_\eta^+}{\tau_\eta} < t^\star\right\} - \mathbbm{1}\left\{\frac{\barz_\eta}{\tau_\eta} \leq  t^\star\right\}\right)} \\
    &- \E{e^{j \omega \barz_\eta^{+} / \tau_\eta}\left( \mathbbm{1}\left\{\frac{\barz_\eta^+}{\tau_\eta} \leq t_\star\right\}  - \mathbbm{1}\left\{\frac{\barz_\eta}{\tau_\eta} \leq  t_\star\right\}\right)} \\
     ={}& e^{j \omega t^\star} \E{e^{j \omega (\barz_\eta^{+} - \tau_\eta t^\star) / \tau_\eta}\left( \mathbbm{1}\left\{\frac{\barz_\eta^+}{\tau_\eta} < t^\star\right\} - \mathbbm{1}\left\{\frac{\barz_\eta}{\tau_\eta} < t^\star\right\}\right)} \\
     &- e^{j \omega t_\star} \E{e^{j \omega (\barz_\eta^{+} - \tau_\eta t_\star) / \tau_\eta}\left( \mathbbm{1}\left\{\frac{\barz_\eta^+}{\tau_\eta} \leq t_\star\right\} - \mathbbm{1}\left\{\frac{\barz_\eta}{\tau_\eta} \leq t_\star\right\}\right)}.
\end{align*}
To analyze the above two terms, we first use Taylor's series expansion and then show that the first-order term is dominating. We present the following claim:
\begin{claim} \label{claim: t1_regime_3} For any $t \in \bbR$, let $g_t(\cdot) \in \left\{\mathbbm{1}\left\{\cdot \leq t\right\}, \mathbbm{1}\left\{\cdot < t\right\}\right\}$. Then, we have
\begin{align*}
    & \E{e^{j \omega (\barz_\eta^{+} - \tau_\eta t) / \tau_\eta}\left( g_t\left(\frac{\barz_\eta^+}{\tau_\eta}\right) - g_t\left(\frac{\barz_\eta}{\tau_\eta}\right)\right)} \\
    ={}& -\frac{j \omega}{\tau_\eta} \E{g_t\left(\frac{\barz_\eta}{\tau_\eta}\right) \left(\lambda_\eta(\barz_\eta)-\mu_\eta(\barz_\eta)\right)} + |\omega|^2 o\left(\frac{1}{\tau_\eta^2}\right) + o\left(\frac{1}{\tau_\eta^2}\right)o\left(|\omega|^2\right).
\end{align*}
\end{claim}
We defer the proof of the above claim to the Appendix \ref{appendix: claim_third_regime} and continue with the proof of Lemma \ref{lemma: regime_3_inside} here. Now, we present the following claim that characterizes $\calT_2$.
\begin{claim} We have \label{claim: t_2_regime_3_inside}
\begin{align*}
    \calT_2 ={}& -\frac{\omega^2 \sigma^\star}{2\tau_\eta^2}\E{ \mathbbm{1}\left\{\frac{\barz_\eta}{\tau_\eta} \in (t_\star, t^\star)\right\} e^{j \omega \barz_\eta / \tau_\eta}} + |\omega|^2 o\left(\frac{1}{\tau_\eta^2}\right) + 
     o\left(\frac{1}{\tau_\eta^2}\right)o\left(|\omega|^2\right).
\end{align*}
\end{claim}
We defer the details of the proof of the claim to Appendix \ref{appendix: claim_third_regime}. Now, we use the above two claims to substitute the expressions for $\calT_1, \calT_2$ in \eqref{eq: drift_case3_d_0}. First, note that $\E{\mathbbm{1}\left\{\frac{\barz_\eta}{\tau_\eta} < t^\star\right\} \left(\lambda_\eta(\barz_\eta)-\mu_\eta(\barz_\eta)\right)} = \E{\mathbbm{1}\left\{\frac{\barz_\eta}{\tau_\eta} \leq t_\star\right\} \left(\lambda_\eta(\barz_\eta)-\mu_\eta(\barz_\eta)\right)}$ as $\lambda_\eta(x)-\mu_\eta(x) = 0$ for $x \in (t_\star \tau_\eta, t^\star \tau_\eta)$ by Condition~\ref{cond: monotonicity} and $\drift=0$. Now, dividing both sides of \eqref{eq: drift_case3_d_0} by $\frac{-\omega^2\sigma^\star}{2\tau_\eta^2}$, we get
\begin{align*}
    &\E{ \mathbbm{1}\left\{\frac{\barz_\eta}{\tau_\eta} \in (t_\star, t^\star)\right\} e^{j \omega \barz_\eta / \tau_\eta}}+  o_{\eta}\left(1\right) + 
     o_{\eta}\left(1\right)o_\omega\left(1\right) \\
     ={}& -\frac{2j \tau_\eta}{\omega \sigma^\star}\E{\mathbbm{1}\left\{\frac{\barz_\eta}{\tau_\eta} \leq t_\star\right\} \left(\lambda_\eta(\barz_\eta)-\mu_\eta(\barz_\eta)\right)}\left(e^{j \omega t^\star} - e^{j \omega t_\star}\right). \numberthis \label{eq: lemma_regime_3_uniform}
\end{align*}
Note that, as we are keeping track of the order in terms of $\tau_\eta$ and $\omega$, we make it explicit by adding a sub-script whenever necessary. Now, we take the limit as $\omega \rightarrow 0^+$ on both sides. As $\big|\mathbbm{1}\left\{\frac{\barz_\eta}{\tau_\eta} \in (t_\star, t^\star)\right\} e^{j\omega \barz_\eta / \tau_\eta}\big| \leq 1$, we use dominated convergence theorem to interchange the limit and expectation to get
\begin{align}
    \P{\frac{\barz_\eta}{\tau_\eta} \in (t_\star, t^\star)} \nonumber 
    ={}& - \frac{2j\tau_\eta}{\sigma^\star} \E{\mathbbm{1}\left\{\frac{\barz_\eta }{\tau_\eta} \leq t_\star\right\} \left(\lambda_\eta(\barz_\eta)-\mu_\eta(\barz_\eta)\right)}\lim_{\omega \rightarrow 0^+}\frac{e^{j \omega t^\star} - e^{j \omega t_\star}}{\omega} +o_{\eta}(1) \nonumber \\
    ={}& \frac{2\tau_\eta}{\sigma^\star} \E{\mathbbm{1}\left\{\frac{\barz_\eta }{\tau_\eta} \leq t_\star\right\} \left(\lambda_\eta(\barz_\eta)-\mu_\eta(\barz_\eta)\right)} (t^\star - t_\star) +o_{\eta}(1). \label{eq: lemma_regime_3_omega_to_zero}
\end{align}
Substituting \eqref{eq: lemma_regime_3_omega_to_zero} in \eqref{eq: lemma_regime_3_uniform}, we get
\begin{align*}
    &\E{ \mathbbm{1}\left\{\frac{\barz_\eta}{\tau_\eta} \in (t_\star, t^\star)\right\} e^{j\omega \barz_\eta / \tau_\eta}} \\
    ={}& -\frac{j}{\omega(t^\star-t_\star)}\P{\frac{\barz_\eta}{\tau_\eta} \in (t_\star, t^\star)} \left(e^{j \omega t^\star} - e^{j \omega t_\star}\right) + o_{\eta}(1)o_{\omega}(1)+o_{\eta}(1) \\
    ={}& \frac{1}{j\omega(t^\star-t_\star)}\P{\frac{\barz_\eta}{\tau_\eta} \in (t_\star, t^\star)} \left(e^{j \omega t^\star} - e^{j \omega t_\star}\right) + o_{\eta}(1)o_{\omega}(1)+o_{\eta}(1) \\
    ={}& \frac{1}{j\omega(t^\star-t_\star)}\P{\frac{\barz_\eta}{\tau_\eta} \in (t_\star, t^\star)} \left(e^{j \omega t^\star} - e^{j \omega t_\star}\right) + o_{\eta}(1).
\end{align*}
This completes the proof for $\drift=0$.} \hfill $\square$
\subsection{Proof of Lemma \ref{lemma: regime_3_outside}}
\proof{Proof of Lemma \ref{lemma: regime_3_outside}} \label{sec: lemma_regime_3_outside_proof}
For the ease of notation, define $\phi^c(\cdot) - \phi^s(\cdot) = \phi(\cdot)$. 
Now, define $h_{\star, \eta} =  t_\star -  \sup\{x : \phi(x) \geq \frac{1}{\sqrt{\epsilon_\eta\tau_\eta}}\}$ and $h^\star_\eta = \inf\{x : \phi(x) \leq -\frac{1}{\sqrt{\epsilon_\eta\tau_\eta}}\} - t^\star$. Using Condition \ref{cond: monotonicity}, note that $h_{\star, \eta} \geq 0$ and $h^\star_\eta \geq 0$. Moreover, noting that $\epsilon_\eta \tau_\eta \rightarrow \infty$ as $\eta \uparrow \infty$, we have $\lim_{\eta \uparrow \infty} h_{\star, \eta}=0$ and $\lim_{\eta \uparrow \infty} h_{\eta}^\star=0$. Now, we proceed by dividing $\P{\frac{\barz_\eta}{\tau_\eta} \notin (t_\star, t^\star)}$ into the following two terms.
\begin{align*} 
\calT_3 &= \P{\frac{\barz_\eta}{\tau_\eta} \in  \left[t_\star - h_{\star, \eta}, t_\star\right] \cup \left[t^\star,t^\star + h^\star_\eta\right]} \\ 
\calT_4 &= \P{\frac{\barz_\eta}{\tau_\eta} \in \left(-\infty, t_\star - h_{\star, \eta}\right) \cup \left(t^\star + h^\star_\eta, \infty\right)} 
\end{align*}
Note that, $\calT_3$ comprises the region close to the thresholds. We will upper bound it using Lemma \ref{lemma: prob_bound_on_any_state}. We get
\revcolor{\begin{align*}
    \calT_3 ={}& \sum_{k = \lceil(t_\star - h_{\star, \eta})\tau_\eta\rceil}^{\lfloor t_\star \tau_\eta \rfloor} \P{\barz_\eta = k} + \sum_{k = \lceil t^\star\tau_\eta \rceil}^{\lfloor (t^\star + h^\star_\eta)\tau_\eta \rfloor} \P{\barz_\eta = k} \\
    \overset{(a)}{\leq}{}& \frac{(h_{\star, \eta} + h^\star_\eta)\tau_\eta + 2}{p_{\min}}\E{\big|\lambda_\eta(\barz_\eta)-\mu_\eta(\barz_\eta)\big|} \\
    \leq{}& \frac{(h_{\star, \eta} + h^\star_\eta)\tau_\eta + 2}{p_{\min}}\E{\bigg|\lambda_\eta(\barz_\eta)-\mu_\eta(\barz_\eta)+\frac{\drift}{\tau_\eta}\bigg|} + \frac{(h_{\star, \eta}+h^\star_\eta)|\drift|\tau_\eta+2|\drift|}{p_{\min}\tau_\eta} \\
    \overset{(b)}{\leq}{}& -\frac{(h_{\star, \eta} + h^\star_\eta)\tau_\eta + 2}{p_{\min}\tau_\eta\min\left\{-t_\star, t^\star\right\}}\E{\barz_\eta\left(\lambda_\eta(\barz_\eta)-\mu_\eta(\barz_\eta)+\frac{\drift}{\tau_\eta}\right)} + \frac{(h_{\star, \eta}+h^\star_\eta)|\drift|\tau_\eta+2|\drift|}{p_{\min}\tau_\eta} \\
    \overset{(c)}{\leq}{}& 2\moment\frac{(h_{\star, \eta} + h^\star_\eta)\tau_\eta + 2}{p_{\min}\tau_\eta\min\left\{-t_\star, t^\star\right\}} + |\drift|\frac{(h_{\star, \eta} + h^\star_\eta)\tau_\eta + 2}{p_{\min}\tau_\eta^2\min\left\{-t_\star, t^\star\right\}}\E{|\barz_\eta|} + \frac{(h_{\star, \eta}+h^\star_\eta)|\drift|\tau_\eta+2|\drift|}{p_{\min}\tau_\eta} \\
    \overset{(c)}{\leq}{}& 2\moment\frac{(h_{\star, \eta} + h^\star_\eta)\tau_\eta + 2}{p_{\min}\tau_\eta\min\left\{-t_\star, t^\star\right\}} + |\drift|\frac{(h_{\star, \eta} + h^\star_\eta)\tau_\eta + 2}{p_{\min}\tau_\eta^2\min\left\{-t_\star, t^\star\right\}}\frac{4\moment+2\epsilon_\eta \tau_\eta K (2K^\beta+\delta)}{\epsilon_\eta \delta} \\
    &+ \frac{(h_{\star, \eta}+h^\star_\eta)|\drift|\tau_\eta+2|\drift|}{p_{\min}\tau_\eta} \numberthis \label{eq:small_prob} 
\end{align*}
where $(a)$ follows by Lemma \ref{lemma: prob_bound_on_any_state} and Condition~\ref{ass: irreducibility}. In particular, the $K>0$ in Condition~\ref{ass: irreducibility} is such that $K \geq \max\{t^\star+h^\star_\eta, |t_\star-h_{\star, \eta}|\}$ for $\eta>0$ large enough. Next, $(b)$ follows as $\lambda_\eta(z)-\mu_\eta(z)+d/\tau_\eta=0$ for all $z \in (-t_\star,t^\star]$ and $\text{sgn}(z)\left(\lambda_\eta(z)-\mu_\eta(z)+d/\tau_\eta\right) \leq 0$. Lastly, $(c)$ follows by Proposition \ref{prop: pos_rec}. As $\lim_{\eta \uparrow \infty} h_{\star, \eta}=0$, $\lim_{\eta \uparrow \infty} h_{\eta}^\star=0$, and $\lim_{\eta \uparrow \infty} \epsilon_\eta \tau_\eta = \infty$, we get $\limsup_{\eta \uparrow \infty}\calT_3 \leq 0$. Next, to upper bound $\calT_4$, note that, as $\phi(\cdot)$ is monotonic, by the definition of $h^\star_\eta$ and $h_{\star, \eta}$, we have
\begin{align*}
    \phi(x) \leq -\frac{1}{\sqrt{\epsilon_\eta \tau_\eta}} \ \forall x > t^\star + h^\star_\eta, \quad
    \phi(x) \geq \frac{1}{\sqrt{\epsilon_\eta \tau_\eta}} \ \forall x < t_\star - h_{\star, \eta}.
\end{align*}
Thus, as $\phi^c(x) \leq 0$ for $x \geq 0$ and $\phi^s(x) \geq 0$ for $x \geq 0$, we have for $\eta > 0$ large enough
\begin{align*}
    \bigg|\lambda_\eta(\barz_\eta)-\mu_\eta(\barz_\eta)+\frac{\drift}{\tau_\eta}\bigg| \geq \sqrt{\frac{\epsilon_\eta}{\tau_\eta}} \quad \forall \ \frac{\barz_\eta}{\tau_\eta} \in (-\infty, t_\star-h_{\star, \eta}) \cup (t^\star+h^\star_\eta, \infty)
\end{align*}
Thus, we have
\begin{align*}
    &\P{\frac{\barz_\eta}{\tau_\eta} \in  \left(-\infty, t_\star - h_{\star, \eta}\right) \cup \left(t^\star + h^\star_\eta, \infty\right)} \\
    \leq{}& \P{\bigg|\lambda_\eta(\barz_\eta)-\mu_\eta(\barz_\eta)+\frac{\drift}{\tau_\eta}\bigg| \geq \sqrt{\frac{\epsilon_\eta}{\tau_\eta}}} 
    \overset{(a)}{\leq} \sqrt{\frac{\tau_\eta}{\epsilon_\eta}}\E{\bigg|\lambda_\eta(\barz_\eta)-\mu_\eta(\barz_\eta)+\frac{\drift}{\tau_\eta}\bigg|} \\
    \overset{(b)}{\leq}{}& -\frac{1}{\min\left\{-t_\star, t^\star\right\}\sqrt{\epsilon_\eta\tau_\eta}}\E{\barz_\eta\left(\lambda_\eta(\barz_\eta)-\mu_\eta(\barz_\eta)+\frac{\drift}{\tau_\eta}\right)} \\
    \overset{(c)}{\leq}{}& \frac{2\moment}{\min\{-t_\star, t^\star\}\sqrt{\epsilon_\eta \tau_\eta}} + \frac{|\drift|}{\min\{-t_\star, t^\star\}\sqrt{\epsilon_\eta}\tau_\eta^{3/2}} \E{|\barz_\eta|} \\
    \overset{(c)}{\leq}{}& \frac{2\moment}{\min\{-t_\star, t^\star\}\sqrt{\epsilon_\eta \tau_\eta} } +  \frac{|\drift|}{\min\{-t_\star, t^\star\}\sqrt{\epsilon_\eta\tau_\eta}} \frac{4\moment+2\epsilon_\eta \tau_\eta K (2K^\beta+\delta)}{\epsilon_\eta \tau_\eta \delta}
\end{align*}
where $(a)$ is followed by Markov's inequality. Next, $(b)$ follows as  $(\phi^c\left(x\right) - \phi^s\left(x\right))\text{sgn}(x) \leq 0$ and $\phi^c\left(x\right) - \phi^s\left(x\right) = 0$ for $t_\star < 0 \leq t^\star$. Lastly, $(c)$ follows by Proposition \ref{prop: pos_rec}.} Now, taking the limit supremum as $\eta \uparrow \infty$, we get $\limsup_{\eta \uparrow \infty} \calT_4 \leq 0$. Combining everything, we get
\begin{align*}
0 &\leq \liminf_{\eta \uparrow \infty}\P{\phi\left(\frac{\barz_\eta}{\tau_\eta}\right) \neq 0}\leq \limsup_{\eta \uparrow \infty}\P{\phi\left(\frac{\barz_\eta}{\tau_\eta}\right) \neq 0} \leq \limsup_{\eta \uparrow \infty} \calT_3 + \limsup_{\eta \uparrow \infty} \calT_4 \leq 0.
\end{align*}
This completes the proof. \hfill $\square$
\endproof
\subsection{Proof of Claims for Lemma \ref{lemma: regime_3_inside}} \label{appendix: claim_third_regime}
\proof{Proof of Claim \ref{claim: t1_regime_3_d_neq_0}}
\revcolor{Note the functional inequality: $|e^{\theta x} - 1 - \theta x - \theta^2 x^2/2| \leq |\theta x|^3 e^{|\theta x|}/6$. Then, we have
\begin{align*}
    &\bigg|\E{e^{\theta (\barz_\eta^{+} - \tau_\eta t) / \tau_\eta}\left( g_t\left(\frac{\barz_\eta^+}{\tau_\eta}\right) - g_t\left(\frac{\barz_\eta}{\tau_\eta}\right)\right)\mathbbm{1}\left\{|a^c_\eta(\barz_\eta)-a^s_\eta(\barz_\eta)| \leq B_\eta\right\}} \\
    &-\E{\left( g_t\left(\frac{\barz_\eta^+}{\tau_\eta}\right) - g_t\left(\frac{\barz_\eta}{\tau_\eta}\right)\right)\mathbbm{1}\left\{|a^c_\eta(\barz_\eta)-a^s_\eta(\barz_\eta)| \leq B_\eta\right\}} \\
    &- \frac{\theta}{\tau_\eta} \E{(\barz_\eta^{+} - \tau_\eta t)\left(g_t\left(\frac{\barz_\eta^+}{\tau_\eta}\right) - g_t\left(\frac{\barz_\eta}{\tau_\eta}\right)\right)\mathbbm{1}\left\{|a^c_\eta(\barz_\eta)-a^s_\eta(\barz_\eta)| \leq B_\eta\right\}} \\
    &- \frac{\theta^2}{2\tau_\eta^2} \E{(\barz_\eta^{+} - \tau_\eta t)^2\left( g_t\left(\frac{\barz_\eta^+}{\tau_\eta}\right) - g_t\left(\frac{\barz_\eta}{\tau_\eta}\right)\right)\mathbbm{1}\left\{|a^c_\eta(\barz_\eta)-a^s_\eta(\barz_\eta)| \leq B_\eta\right\}}\bigg| \\
    \leq{}& \frac{|\theta|^3}{6\tau_\eta^3} \E{e^{|\theta| |\barz_\eta^+-\tau_\eta t|/\tau_\eta}|\barz_\eta^{+} - \tau_\eta t|^3\bigg|g_t\left(\frac{\barz_\eta^+}{\tau_\eta}\right) - g_t\left(\frac{\barz_\eta}{\tau_\eta}\right)\bigg|\mathbbm{1}\left\{|a^c_\eta(\barz_\eta)-a^s_\eta(\barz_\eta)| \leq B_\eta\right\}} \\
    \overset{(a)}{\leq}{}& \frac{|\theta|^3}{6\tau_\eta^3} \E{e^{|\theta||a^c_\eta(\barz_\eta)-a^s_\eta(\barz_\eta)|/\tau_\eta}|a^c_\eta(\barz_\eta)-a^s_\eta(\barz_\eta)|^3\bigg|g_t\left(\frac{\barz_\eta^+}{\tau_\eta}\right) - g_t\left(\frac{\barz_\eta}{\tau_\eta}\right)\bigg|\mathbbm{1}\left\{|a^c_\eta(\barz_\eta)-a^s_\eta(\barz_\eta)| \leq B_\eta\right\}} \\
    \leq{}& \frac{|\theta|^3 e^{|\theta| B_\eta/\tau_\eta}}{6\tau_\eta^3} \E{|a^c_\eta(\barz_\eta)-a^s_\eta(\barz_\eta)|^3} \overset{(b)}{\leq} \frac{\moment |\theta|^3 e^{|\theta|}}{\tau_\eta^3} = o\left(\frac{1}{\tau_\eta^2}\right),
\end{align*}
where $(a)$ holds by noting the following: by the definition of $g_t(\cdot)$, if $\barz_\eta^+, \barz_\eta > \tau_\eta t$ or $\barz_\eta^+, \barz_\eta < \tau_\eta t$, the RHS is zero. On the other hand, if $\barz_\eta^+ \geq \tau_\eta t$ and $\barz_\eta \leq \tau_\eta t$, then, we must have $|\barz_\eta^+-\tau_\eta t| \leq |a^c(\barz_\eta)-a^s(\barz_\eta)|$. The same bound continues to hold when $\barz_\eta^+ \leq \tau_\eta t$ and $\barz_\eta \geq \tau_\eta t$. Next, $(b)$ holds as $B_\eta \leq \tau_\eta$. Now, we simplify the three terms in the Taylor's expansion below. We have
\begin{align*}
    &\E{\left( g_t\left(\frac{\barz_\eta^+}{\tau_\eta}\right) - g_t\left(\frac{\barz_\eta}{\tau_\eta}\right)\right)\mathbbm{1}\left\{|a^c_\eta(\barz_\eta)-a^s_\eta(\barz_\eta)| \leq B_\eta\right\}} \\
    \overset{(a)}{=}{}& \E{\left( g_t\left(\frac{\barz_\eta^+}{\tau_\eta}\right) - g_t\left(\frac{\barz_\eta}{\tau_\eta}\right)\right)\mathbbm{1}\left\{|a^c_\eta(\barz_\eta)-a^s_\eta(\barz_\eta)| > B_\eta\right\}} \\
    \leq{}& \P{|a^c_\eta(\barz_\eta)-a^s_\eta(\barz_\eta)| > B_\eta} \overset{(b)}{\leq} \frac{\E{|a^c_\eta(\barz_\eta)-a^s_\eta(\barz_\eta)|^3}}{B_\eta^3} \overset{(c)}{\leq} \frac{8\moment}{B_\eta^3} = o\left(\frac{1}{\tau_\eta^2}\right),
\end{align*}
where $(a)$ holds as $\E{g_t\left(\frac{\barz_\eta^+}{\tau_\eta}\right)} = \E{g_t\left(\frac{\barz_\eta}{\tau_\eta}\right)}$. Next, $(b)$ holds due to the Markov's inequality and $(c)$ by noting that $(a+b)^3 \leq 4(a^3+b^3)$ for $a, b \geq 0$ and $\E{|a^c_\eta(z)|^3}, \E{|a^s_\eta(z)|^{\pfix{3}}} \leq \moment$ for all $\eta > 0, z \in \bbZ$. Next, we have
\begin{align*}
 &\bigg|\E{(\barz_\eta^{+} - \tau_\eta t)\left( g_t\left(\frac{\barz_\eta^+}{\tau_\eta}\right) - g_t\left(\frac{\barz_\eta}{\tau_\eta}\right)\right)\mathbbm{1}\left\{|a^c_\eta(\barz_\eta)-a^s_\eta(\barz_\eta)| > B_\eta\right\}}\bigg| \\
 \overset{(a)}{\leq}{}& \E{|a^c_\eta(\barz_\eta)-a^s_\eta(\barz_\eta)|\mathbbm{1}\left\{|a^c_\eta(\barz_\eta)-a^s_\eta(\barz_\eta)| > B_\eta\right\}} \\
 \overset{(b)}{\leq}{}& \E{|a^c_\eta(\barz_\eta)-a^s_\eta(\barz_\eta)|^3}^{1/3} \P{|a^c_\eta(\barz_\eta)-a^s_\eta(\barz_\eta)| > B_\eta}^{2/3} \\
 \overset{(c)}{\leq}{}& \frac{\E{|a^c_\eta(\barz_\eta)-a^s_\eta(\barz_\eta)|^3}}{B_\eta^2} \overset{(d)}{\leq} \frac{8\moment}{B_\eta^2} = o\left(\frac{1}{\tau_\eta}\right), \numberthis \label{eq:arrival_tail}
\end{align*}
where $(a)$ holds by noting the following: if $\barz_\eta^+, \barz_\eta > \tau_\eta t$ or $\barz_\eta^+, \barz_\eta < \tau_\eta t$, then, the RHS is zero. On the other hand, if $\barz_\eta^+ \geq \tau_\eta t$ and $\barz_\eta \leq \tau_\eta t$, then, we must have $|\barz_\eta^+-\tau_\eta t| \leq |a^c(\barz_\eta)-a^s(\barz_\eta)|$. The same bound continues to hold when $\barz_\eta^+ \leq \tau_\eta t$ and $\barz_\eta \geq \tau_\eta t$. Next, $(b)$ follows by H\"older's inequality with $p=3$ and $q=3/2$. Further, $(c)$ follows by the Markov's inequality. Lastly, $(d)$ holds by noting that $(a+b)^3 \leq 4(a^3+b^3)$ for $a, b \geq 0$ and $\E{|a^c_\eta(z)|^3}, \E{|a^s_\eta(z)|^{\pfix{3}}} \leq \moment$ for all $\eta > 0, z \in \bbZ$. Thus, we get
\begin{align*}
    &\E{(\barz_\eta^{+} - \tau_\eta t)\left( g_t\left(\frac{\barz_\eta^+}{\tau_\eta}\right) - g_t\left(\frac{\barz_\eta}{\tau_\eta}\right)\right)\mathbbm{1}\left\{|a^c_\eta(\barz_\eta)-a^s_\eta(\barz_\eta)| \leq B_\eta\right\}} \\
    \overset{\eqref{eq:arrival_tail}}{=}{}& \E{(\barz_\eta^{+} - \tau_\eta t)\left( g_t\left(\frac{\barz_\eta^+}{\tau_\eta}\right) - g_t\left(\frac{\barz_\eta}{\tau_\eta}\right)\right)} + o\left(\frac{1}{\tau_\eta}\right) \\
    ={}& \E{\barz_\eta^{+} \left( g_t\left(\frac{\barz_\eta^+}{\tau_\eta}\right) - g_t\left(\frac{\barz_\eta}{\tau_\eta}\right)\right)} + o\left(\frac{1}{\tau_\eta}\right) \\
    \overset{*}{=}{}& -\E{g_t\left(\frac{\barz_\eta}{\tau_\eta}\right) \left(\lambda_\eta(\barz_\eta)-\mu_\eta(\barz_\eta)\right)}  + o\left(\frac{1}{\tau_\eta}\right),
\end{align*}
where $(*)$ follows by Lemma~\ref{lemma: z_sgnz}. Now, the third order term in the Taylor's expansion is shown to be small using the following claim:
\begin{claim} For any $t \in \bbR$, let $g_t(\cdot) \in \left\{\mathbbm{1}\left\{\cdot \leq t\right\}, \mathbbm{1}\left\{\cdot < t\right\}\right\}$. Then, we have \label{claim: t_11_regime_3_inside}
\begin{align*}
    \E{(\barz_\eta^{+} - \tau_\eta t)^2\bigg| g_t\left(\frac{\barz_\eta^+}{\tau_\eta}\right) - g_t\left(\frac{\barz_\eta}{\tau_\eta}\right)\bigg|} = o_\eta(1).
\end{align*}
\end{claim}
We defer the proof of the claim to the end of this section. With the above result, the proof is complete.} \hfill $\square$
\endproof
\proof{Proof of Claim \ref{claim: t_2_regime_3_inside_d_neq_0}} Note the following functional inequality: $$|e^{\theta x}-1-\theta x - \theta^2 x^2/2| \leq |\pfix{\theta} x|^3 e^{|\theta x|}/6.$$ Also, recall that $\calT_2$ is defined in \eqref{eq:definition_t2d0}. Thus, we have
\begin{align*}
    &\bigg|\calT_2 - \frac{\theta}{\tau_\eta}\E{ \mathbbm{1}\left\{\frac{\barz_\eta}{\tau_\eta} \in (t_\star, t^\star)\right\} e^{\theta \barz_\eta / \tau_\eta}\text{clip}(a_\eta^c(\barz_\eta) - a_\eta^s(\barz_\eta), B_\eta)} \\
    &-\frac{\theta^2}{2\tau_\eta^2}\E{ \mathbbm{1}\left\{\frac{\barz_\eta}{\tau_\eta} \in (t_\star, t^\star)\right\} e^{\theta \barz_\eta / \tau_\eta}\text{clip}(a_\eta^c(\barz_\eta) - a_\eta^s(\barz_\eta), B_\eta)^2}\bigg| \numberthis \label{eq:taylor_series_clip} \\
    \leq{}& \frac{|\theta|^3}{6\tau_\eta^3}\E{\mathbbm{1}\left\{\frac{\barz_\eta}{\tau_\eta} \in (t_\star, t^\star)\right\} e^{|\theta \barz_\eta| / \tau_\eta}|a_\eta^c(\barz_\eta) - a_\eta^s(\barz_\eta)|^3 e^{|\theta \text{ clip}(a_\eta^c(\barz_\eta) - a_\eta^s(\barz_\eta), B_\eta)|/\tau_\eta}} \\
    \leq{}& \frac{|\theta|^3 e^{|\theta| B_\eta/\tau_\eta} e^{|\theta| \max\{|t^\star|, |t_\star|\}}}{6\tau_\eta^3}\E{|a_\eta^c(\barz_\eta) - a_\eta^s(\barz_\eta)|^3} \overset{*}{\leq} \frac{4\moment |\theta|^3 e^{|\theta|} e^{|\theta| \max\{|t^\star|, |t_\star|\}}}{3\tau_\eta^3} = o\left(\frac{1}{\tau_\eta^2}\right),
\end{align*}
where $(*)$ holds as $B_\eta \leq \tau_\eta$, $(a+b)^3 \leq 4(|a|^3+|b|^3)$, and noting that $\E{|a^c_\eta(z)|^3}, \E{|a^s_\eta(z)|^3} \leq \moment$ for all $\eta > 0, z \in \bbZ$. Now we simplify the two terms on the LHS above. First, we have
\begin{align*}
    &\E{\mathbbm{1}\left\{\frac{\barz_\eta}{\tau_\eta} \in (t_\star, t^\star)\right\} e^{\theta \barz_\eta / \tau_\eta}\text{clip}(a_\eta^c(\barz_\eta) - a_\eta^s(\barz_\eta), B_\eta)} \\
    \overset{*}{=}{}&\E{\mathbbm{1}\left\{\frac{\barz_\eta}{\tau_\eta} \in (t_\star, t^\star)\right\} e^{\theta \barz_\eta / \tau_\eta}(a_\eta^c(\barz_\eta) - a_\eta^s(\barz_\eta))} +o\left(\frac{1}{\tau_\eta}\right) \\
    \overset{**}{=}{}&\E{\mathbbm{1}\left\{\frac{\barz_\eta}{\tau_\eta} \in (t_\star, t^\star)\right\} e^{\theta \barz_\eta / \tau_\eta}(\lambda_\eta(\barz_\eta) - \mu_\eta(\barz_\eta))} +o\left(\frac{1}{\tau_\eta}\right) \\
    \overset{***}{=}{}& - \frac{\drift}{\tau_\eta}\E{ \mathbbm{1}\left\{\frac{\barz_\eta}{\tau_\eta} \in (t_\star, t^\star)\right\} e^{\theta \barz_\eta / \tau_\eta}}+o\left(\frac{1}{\tau_\eta}\right), \numberthis \label{eq:taylor_series_first_term_clip}
\end{align*}
where $(*)$ follows as 
\begin{align*}
    &\bigg|\E{\mathbbm{1}\left\{\frac{\barz_\eta}{\tau_\eta} \in (t_\star, t^\star)\right\} e^{\theta \barz_\eta / \tau_\eta}\left(\text{clip}(a_\eta^c(\barz_\eta) - a_\eta^s(\barz_\eta), B_\eta)-(a_\eta^c(\barz_\eta) - a_\eta^s(\barz_\eta))\right)}\bigg| \\
    \leq{}& e^{|\theta| \max\{|t^\star|, |t_\star|\}}\E{\big|\text{clip}(a_\eta^c(\barz_\eta) - a_\eta^s(\barz_\eta), B_\eta)-(a_\eta^c(\barz_\eta) - a_\eta^s(\barz_\eta))\big|} \\
    \leq{}& e^{|\theta| \max\{|t^\star|, |t_\star|\}}\E{\big|a_\eta^c(\barz_\eta) - a_\eta^s(\barz_\eta)\big|\mathbbm{1}\{\big|a_\eta^c(\barz_\eta) - a_\eta^s(\barz_\eta)\big| > B_\eta\}} \\
    \overset{(a)}{\leq}{}&e^{|\theta| \max\{|t^\star|, |t_\star|\}} \E{\big|a_\eta^c(\barz_\eta) - a_\eta^s(\barz_\eta)\big|^3}^{1/3}\P{\big|a_\eta^c(\barz_\eta) - a_\eta^s(\barz_\eta)\big| > B_\eta}^{2/3} \\
    \overset{(b)}{\leq}{}& \frac{\E{\big|a_\eta^c(\barz_\eta) - a_\eta^s(\barz_\eta)\big|^3}}{B_\eta^2} \leq \frac{6\moment}{B_\eta^2} = o\left(\frac{1}{\tau_\eta}\right),
\end{align*}
where $(a)$ holds due to the H\"older's inequality with $p=3$ and $q=3/2$ and $(b)$ follows by the Markov's inequality. Next, $(**)$ holds using the tower property of expectation. Lastly, $(***)$ holds as $\phi^c(x)=\phi^s(x)=0$ for all $x \in (t_\star, t^\star)$ by Condition~\ref{cond: monotonicity}.
Now, we focus on simplifying the second term in the LHS of \eqref{eq:taylor_series_clip}.
\begin{align*}
    &\E{ \mathbbm{1}\left\{\frac{\barz_\eta}{\tau_\eta} \in (t_\star, t^\star)\right\} e^{\theta \barz_\eta / \tau_\eta}\text{clip}(a_\eta^c(\barz_\eta) - a_\eta^s(\barz_\eta), B_\eta)^2} \\
    \overset{(a)}{=}{}& \E{\mathbbm{1}\left\{\frac{\barz_\eta}{\tau_\eta} \in (t_\star, t^\star)\right\} e^{\theta \barz_\eta / \tau_\eta}(a_\eta^c(\barz_\eta) - a_\eta^s(\barz_\eta))^2} + o_{\eta}(1) \\
    ={}& \E{\mathbbm{1}\left\{\frac{\barz_\eta}{\tau_\eta} \in (t_\star, t^\star)\right\} e^{\theta \barz_\eta / \tau_\eta}\E{(a_\eta^c(\barz_\eta) - a_\eta^s(\barz_\eta))^2 | \barz_\eta}} + o_{\eta}(1) \\
    \overset{(b)}{=}{}& \E{\mathbbm{1}\left\{\frac{\barz_\eta}{\tau_\eta} \in (t_\star, t^\star)\right\} e^{\theta \barz_\eta / \tau_\eta}(\sigma^c(\lambda_\eta(\barz_\eta)) + \sigma^s(\mu_\eta(\barz_\eta)))} + o_{\eta}(1) \\
    \overset{(c)}{=}{}& \sigma^\star \E{\mathbbm{1}\left\{\frac{\barz_\eta}{\tau_\eta} \in (t_\star, t^\star)\right\} e^{\theta \barz_\eta / \tau_\eta}} + o_{\eta}(1) \numberthis \label{eq:taylor_series_second_term_clip}
\end{align*}
where $(a)$ holds due to the following:
\begin{align*}
    &\bigg|\E{\mathbbm{1}\left\{\frac{\barz_\eta}{\tau_\eta} \in (t_\star, t^\star)\right\} e^{\theta \barz_\eta / \tau_\eta}\left(\text{clip}(a_\eta^c(\barz_\eta) - a_\eta^s(\barz_\eta), B_\eta)^2-(a_\eta^c(\barz_\eta) - a_\eta^s(\barz_\eta))^2\right)}\bigg| \\
    \leq{}& e^{|\theta| \max\{|t^\star|, |t_\star|\}}\E{\big|\text{clip}(a_\eta^c(\barz_\eta) - a_\eta^s(\barz_\eta), B_\eta)^2-(a_\eta^c(\barz_\eta) - a_\eta^s(\barz_\eta))^2\big|} \\
    \leq{}& e^{|\theta| \max\{|t^\star|, |t_\star|\}}\E{(a_\eta^c(\barz_\eta) - a_\eta^s(\barz_\eta))^2\mathbbm{1}\{\big|a_\eta^c(\barz_\eta) - a_\eta^s(\barz_\eta)\big| > B_\eta\}} \\
    \overset{*}{\leq}{}&e^{|\theta| \max\{|t^\star|, |t_\star|\}} \E{\big|a_\eta^c(\barz_\eta) - a_\eta^s(\barz_\eta)\big|^3}^{2/3}\P{\big|a_\eta^c(\barz_\eta) - a_\eta^s(\barz_\eta)\big| > B_\eta}^{1/3} \\
    \overset{**}{\leq}{}& \frac{\E{\big|a_\eta^c(\barz_\eta) - a_\eta^s(\barz_\eta)\big|^3}}{B_\eta} \leq \frac{6\moment}{B_\eta} = o_{\eta}\left(1\right),
\end{align*}
where $(*)$ holds due to the H\"older's inequality with $p=3/2$ and $q=3$ and $(**)$ follows by the Markov's inequality.
Next, $(b)$ is obtained by computing the second moment of the difference of the arrivals as follows:
\begin{align*}
    &\E{(a_\eta^c(\barz_\eta) - a_\eta^s(\barz_\eta))^2 | \barz_\eta}\mathbbm{1}\left\{\frac{\barz_\eta}{\tau_\eta} \in (t_\star, t^\star)\right\} \\ ={}& \left(\sigma^c(\lambda_\eta(\barz_\eta)) + \sigma^s(\mu_\eta(\barz_\eta)) + \E{a_\eta^c(\barz_\eta) - a_\eta^s(\barz_\eta) | \barz_\eta}^2\right)\mathbbm{1}\left\{\frac{\barz_\eta}{\tau_\eta} \in (t_\star, t^\star)\right\} \\
    ={}& \left(\sigma^c(\lambda_\eta(\barz_\eta)) + \sigma^s(\mu_\eta(\barz_\eta)) + \frac{d^2}{\tau_\eta^2} \right)\mathbbm{1}\left\{\frac{\barz_\eta}{\tau_\eta} \in (t_\star, t^\star)\right\} \\
    ={}& \left(\sigma^c(\lambda_\eta(\barz_\eta)) + \sigma^s(\mu_\eta(\barz_\eta)) \right)\mathbbm{1}\left\{\frac{\barz_\eta}{\tau_\eta} \in (t_\star, t^\star)\right\} + o_{\eta}(1) \numberthis \label{eq:second_moment_ac_as}
\end{align*}
Lastly, $(c)$ follows by Condition \ref{cond: monotonicity}. We have $\phi^c(z)=\phi^s(z)=0$ for all $z \in (t_\star, t^\star)$, and so 
\begin{align*}
    &\bigg|\E{ \mathbbm{1}\left\{\frac{\barz_\eta}{\tau_\eta} \in (t_\star, t^\star)\right\} e^{\theta \barz_\eta / \tau_\eta}(\sigma^c(\mu^\star) + \sigma^s(\mu^\star) - \sigma^c(\lambda_\eta(\barz_\eta)) - \sigma^s(\mu_\eta(\barz_\eta)))}\bigg| \\
    \leq{}&  e^{|\theta|(t^\star-t_\star)}\E{ \bigg|\sigma^c(\mu^\star) - \sigma^c\left(\mu^\star - \frac{\drift}{\tau_\eta}\right)\bigg|\mathbbm{1}\left\{\frac{\barz_\eta}{\tau_\eta} \in (t_\star, t^\star)\right\}}  = o_{\eta}(1), \numberthis \label{eq:second_moment_convergence}
\end{align*}
where the last equality holds as $\sigma^c(\mu^\star) - \sigma^c(\mu^\star-\drift/\tau_\eta) \rightarrow 0$ as $\sigma^c(\cdot)$ is a continuous functions. This completes the proof by combining \eqref{eq:taylor_series_clip},\eqref{eq:taylor_series_first_term_clip}, and \eqref{eq:taylor_series_second_term_clip}. \hfill $\square$
\endproof

\proof{Proof of Claim \ref{claim: t1_regime_3}}
\revcolor{\begin{align*}
    &\E{e^{j \omega (\barz_\eta^{+} - \tau_\eta t) / \tau_\eta}\left( g_t\left(\frac{\barz_\eta^+}{\tau_\eta}\right) - g_t\left(\frac{\barz_\eta}{\tau_\eta}\right)\right)}  \\
    ={}&  \E{e^{\frac{j \omega}{\tau_\eta} (\barz_\eta^{+} - \tau_\eta t)\mathbbm{1}\left\{ g_t\left(\frac{\barz_\eta^+}{\tau_\eta}\right) \neq g_t\left(\frac{\barz_\eta}{\tau_\eta}\right)\right\}}\left( g_t\left(\frac{\barz_\eta^+}{\tau_\eta}\right) - g_t\left(\frac{\barz_\eta}{\tau_\eta}\right)\right)} \\
    \overset{(a)}{=}{}&  \E{ g_t\left(\frac{\barz_\eta^+}{\tau_\eta}\right) - g_t\left(\frac{\barz_\eta}{\tau_\eta}\right)} + \frac{j \omega}{\tau_\eta} \E{(\barz_\eta^{+} - \tau_\eta t)\left( g_t\left(\frac{\barz_\eta^+}{\tau_\eta}\right) - g_t\left(\frac{\barz_\eta}{\tau_\eta}\right)\right)} \\
    &- \frac{\omega^2}{2\tau_\eta^2} \E{(\barz_\eta^{+} - \tau_\eta t)^2\left( g_t\left(\frac{\barz_\eta^+}{\tau_\eta}\right) - g_t\left(\frac{\barz_\eta}{\tau_\eta}\right)\right)} + o\left(\frac{1}{\tau_\eta^2}\right)o\left(|\omega|^2\right) \\
    \overset{(b)}{=}{}&  \frac{j \omega}{\tau_\eta} \E{\barz_\eta^{+} \left( g_t\left(\frac{\barz_\eta^+}{\tau_\eta}\right) - g_t\left(\frac{\barz_\eta}{\tau_\eta}\right)\right)} - \frac{\omega^2}{2\tau_\eta^2} \E{(\barz_\eta^{+} - \tau_\eta t)^2\left( g_t\left(\frac{\barz_\eta^+}{\tau_\eta}\right) - g_t\left(\frac{\barz_\eta}{\tau_\eta}\right)\right)} \\
    &+ o\left(\frac{1}{\tau_\eta^2}\right)o\left(|\omega|^2\right)\\
     \overset{(c)}{=}{}&  \frac{j \omega}{\tau_\eta} \E{\barz_\eta^{+} \left( g_t\left(\frac{\barz_\eta^+}{\tau_\eta}\right) - g_t\left(\frac{\barz_\eta}{\tau_\eta}\right)\right)} + |\omega|^2 o\left(\frac{1}{\tau_\eta^2}\right) + o\left(\frac{1}{\tau_\eta^2}\right)o\left(|\omega|^2\right) \\
     \overset{(d)}{=}{}& -\frac{j \omega}{\tau_\eta} \E{g_t\left(\frac{\barz_\eta}{\tau_\eta}\right) \left(\lambda_\eta(\barz_\eta)-\mu_\eta(\barz_\eta)\right)} + |\omega|^2 o\left(\frac{1}{\tau_\eta^2}\right) + o\left(\frac{1}{\tau_\eta^2}\right)o\left(|\omega|^2\right),
\end{align*}
where $(a)$ follows by Taylor's series expansion up to the second order. In particular, we apply Lemma~\ref{lemma: taylor_series} by noting that $|\barz_\eta^+-\tau_\eta t|\mathbbm{1}\left\{ g_t\left(\frac{\barz_\eta^+}{\tau_\eta}\right) \neq g_t\left(\frac{\barz_\eta}{\tau_\eta}\right)\right\} \leq |a^c(\barz_\eta)-a^s(\barz_\eta)|$ which has a finite third moment. Next, $(b)$ follows by noting that $\E{ g_t\left(\frac{\barz_\eta^+}{\tau_\eta}\right) - g_t\left(\frac{\barz_\eta}{\tau_\eta}\right)} = 0$ as it is the drift of $g_t\left(\frac{\barz_\eta}{\tau_\eta}\right)$ in steady state. Now, $(c)$ follows by Claim~\ref{claim: t_11_regime_3_inside}. Lastly, $(d)$ follows by Lemma \ref{lemma: z_sgnz}. This completes the proof.} \hfill $\square$
\endproof
\proof{Proof of Claim \ref{claim: t_2_regime_3_inside}} \revcolor{We simplify $\calT_2$ as follows:
\begin{align*}
    \calT_2 ={}&\E{ \mathbbm{1}\left\{\frac{\barz_\eta}{\tau_\eta} \in (t_\star, t^\star)\right\}\left(e^{j \omega \barz_\eta^{+} / \tau_\eta} -e^{j \omega \barz_\eta / \tau_\eta}\right)} \\
    \overset{(a)}{=}{}& \E{ \mathbbm{1}\left\{\frac{\barz_\eta}{\tau_\eta} \in (t_\star, t^\star)\right\} e^{j \omega \barz_\eta / \tau_\eta}\left(e^{j \omega(a_\eta^c(\barz_\eta) - a_\eta^s(\barz_\eta)) / \tau_\eta}-1\right)} \\
    \overset{(b)}{=}{}& \frac{j \omega}{\tau_\eta}\E{ \mathbbm{1}\left\{\frac{\barz_\eta}{\tau_\eta} \in (t_\star, t^\star)\right\} e^{j \omega \barz_\eta / \tau_\eta}(a_\eta^c(\barz_\eta) - a_\eta^s(\barz_\eta))} \\
    &- \frac{\omega^2}{2\tau_\eta^2}\E{ \mathbbm{1}\left\{\frac{\barz_\eta}{\tau_\eta} \in (t_\star, t^\star)\right\} e^{j \omega \barz_\eta / \tau_\eta}(a_\eta^c(\barz_\eta) - a_\eta^s(\barz_\eta))^2} + o(|\omega|^2)o\left(\frac{1}{\tau_\eta^2}\right) \\ 
    \overset{(c)}{=}{}& \frac{j \omega}{\tau_\eta}\E{ \mathbbm{1}\left\{\frac{\barz_\eta}{\tau_\eta} \in (t_\star, t^\star)\right\} e^{j \omega \barz_\eta / \tau_\eta}\left(\lambda_\eta(\barz_\eta)-\mu_\eta(\barz_\eta)\right)} \\
     &-\frac{\omega^2}{2\tau_\eta^2}\E{ \mathbbm{1}\left\{\frac{\barz_\eta}{\tau_\eta} \in (t_\star, t^\star)\right\} e^{j \omega \barz_\eta / \tau_\eta}\E{(a_\eta^c(\barz_\eta) - a_\eta^s(\barz_\eta))^2 | \barz_\eta}} + o(|\omega|^2)o\left(\frac{1}{\tau_\eta^2}\right) \\
     \overset{(d)}{=}{}& -\frac{\omega^2}{2\tau_\eta^2}\E{ \mathbbm{1}\left\{\frac{\barz_\eta}{\tau_\eta} \in (t_\star, t^\star)\right\} e^{j \omega \barz_\eta / \tau_\eta}\E{(a_\eta^c(\barz_\eta) - a_\eta^s(\barz_\eta))^2 | \barz_\eta}} + o(|\omega|^2)o\left(\frac{1}{\tau_\eta^2}\right) \\
     \overset{(e)}{=}{}& -\frac{\omega^2}{2\tau_\eta^2}\E{ \mathbbm{1}\left\{\frac{\barz_\eta}{\tau_\eta} \in (t_\star, t^\star)\right\} e^{j \omega \barz_\eta / \tau_\eta}(\sigma^c(\lambda_\eta(\barz_\eta)) + \sigma^s(\mu_\eta(\barz_\eta)))} + o(|\omega|^2)o\left(\frac{1}{\tau_\eta^2}\right) \\
     \overset{(f)}{=}{}& -\frac{\omega^2\sigma^\star}{2\tau_\eta^2}\E{ \mathbbm{1}\left\{\frac{\barz_\eta}{\tau_\eta} \in (t_\star, t^\star)\right\} e^{j \omega \barz_\eta / \tau_\eta}}+ |\omega|^2 o\left(\frac{1}{\tau_\eta^2}\right)+
     o(|\omega|^2)o\left(\frac{1}{\tau_\eta^2}\right),
\end{align*}
where $(a)$ follows by the queue evolution equation given by \eqref{eq: imabalance_evolution}. Next, $(b)$ follows by Taylor series expansion and noting that $\bigg|\mathbbm{1}\left\{\frac{\barz_\eta}{\tau_\eta} \in (t_\star, t^\star)\right\} e^{j \omega \barz_\eta / \tau_\eta}\bigg| \leq 1$ with probability 1 and $a_\eta^c(\barz_\eta)-a_\eta^s(\barz_\eta)$ has a finite third moment. The argument is made precise by Lemma \ref{lemma: taylor_series}.  Next, $(c)$ follows by using the law of total expectation (tower property). Note that $(d)$ follows by dropping the first term as it is equal to zero by Condition~\ref{cond: monotonicity} and $\drift=0$. 
Next, $(e)$ is obtained by computing the second moment of the difference of the arrivals as in \eqref{eq:second_moment_ac_as}.
Lastly, $(f)$ follows exactly as in \eqref{eq:second_moment_convergence}.}
\hfill $\square$
\endproof
\proof{Proof of Claim \ref{claim: t_11_regime_3_inside}} \revcolor{Let $t \in \bbR$. Now, we prove the claim as follows:
\begin{align*}
    & \E{(\barz_\eta^{+} - \tau_\eta t)^2\bigg| g_t\left(\frac{\barz_\eta^+}{\tau_\eta}\right) - g_t\left(\frac{\barz_\eta}{\tau_\eta}\right)\bigg|}\\
    \leq{}& \E{(\barz_\eta^{+} - \tau_\eta t)^2\bigg| g_t\left(\frac{\barz_\eta^+}{\tau_\eta}\right) - g_t\left(\frac{\barz_\eta}{\tau_\eta}\right)\bigg|\mathbbm{1}\left\{|a^c_\eta(\barz_\eta)-a^s_\eta(\barz_\eta)| \leq \sqrt{\tau_\eta}\right\}} \\
    &+ \E{(\barz_\eta^{+} - \tau_\eta t)^2\bigg| g_t\left(\frac{\barz_\eta^+}{\tau_\eta}\right) - g_t\left(\frac{\barz_\eta}{\tau_\eta}\right)\bigg|\mathbbm{1}\left\{|a^c_\eta(\barz_\eta)-a^s_\eta(\barz_\eta)| > \sqrt{\tau_\eta}\right\}} 
\end{align*}
Now, we simplify the two terms separately below. We have
\begin{align*}
    &\E{(\barz_\eta^{+} - \tau_\eta t)^2\bigg| g_t\left(\frac{\barz_\eta^+}{\tau_\eta}\right) - g_t\left(\frac{\barz_\eta}{\tau_\eta}\right)\bigg|\mathbbm{1}\left\{|a^c_\eta(\barz_\eta)-a^s_\eta(\barz_\eta)| > \sqrt{\tau_\eta}\right\}} \\
    \overset{(a)}{\leq}{}& \E{|a^c_\eta(\barz_\eta)-a^s_\eta(\barz_\eta)|^2\mathbbm{1}\left\{|a^c_\eta(\barz_\eta)-a^s_\eta(\barz_\eta)| > \sqrt{\tau_\eta}\right\}} \\
    \overset{(b)}{\leq}{}& \E{|a^c_\eta(\barz_\eta)-a^s_\eta(\barz_\eta)|^3}^{2/3}\P{|a^c_\eta(\barz_\eta)-a^s_\eta(\barz_\eta)| > \sqrt{\tau_\eta}}^{1/3} \\
    \overset{(c)}{\leq}{}& \frac{\E{|a^c_\eta(\barz_\eta)-a^s_\eta(\barz_\eta)|^3}}{\sqrt{\tau_\eta}} \overset{(d)}{\leq} \frac{8\moment}{\sqrt{\tau_\eta}},
\end{align*}
where $(a)$ holds by noting the following: if $\barz_\eta^+, \barz_\eta > \tau_\eta t$ or $\barz_\eta^+, \barz_\eta < \tau_\eta t$, then, the RHS is zero. On the other hand, if $\barz_\eta^+ \geq \tau_\eta t$ and $\barz_\eta \leq \tau_\eta t$, then, we must have $|\barz_\eta^+-\tau_\eta t| \leq |a^c_\eta(\barz_\eta)-a^s_\eta(\barz_\eta)|$. The same bound continues to hold when $\barz_\eta^+ \leq \tau_\eta t$ and $\barz_\eta \geq \tau_\eta t$. Next, $(b)$ follows by H\"older's inequality with $p=3/2$ and $q=3$. Further, $(c)$ follows by the Markov's inequality. Lastly, $(d)$ holds by noting that $(a+b)^3 \leq 4(a^3+b^3)$ for $a, b \geq 0$ and $\E{|a^c_\eta(z)|^3}, \E{|a^s_\eta(z)|^3} \leq \moment$ for all $\eta > 0, z \in \bbZ$. Next, we have
\begin{align*}
     &\E{\bigg|(\barz_\eta^{+} - \tau_\eta t)^2\left( g_t\left(\frac{\barz_\eta^+}{\tau_\eta}\right) - g_t\left(\frac{\barz_\eta}{\tau_\eta}\right)\right) \mathbbm{1}\left\{|a^c_\eta(\barz_\eta)-a^s_\eta(\barz_\eta)| \leq \sqrt{\tau_\eta}\right\}\bigg|} \\
     \leq{}&  \sqrt{\tau_\eta} \E{\bigg|(\barz_\eta^{+} - \tau_\eta t)\left( g_t\left(\frac{\barz_\eta^+}{\tau_\eta}\right) - g_t\left(\frac{\barz_\eta}{\tau_\eta}\right)\right)\bigg|} \\
    \overset{(a)}{=}{}& -\sqrt{\tau_\eta} \E{(\barz_\eta^{+} - \tau_\eta t)\left( g_t\left(\frac{\barz_\eta^+}{\tau_\eta}\right) - g_t\left(\frac{\barz_\eta}{\tau_\eta}\right)\right)} \\
    \overset{(b)}{=}{}& -\sqrt{\tau_\eta} \E{\barz_\eta^{+} \left( g_t\left(\frac{\barz_\eta^+}{\tau_\eta}\right) - g_t\left(\frac{\barz_\eta}{\tau_\eta}\right)\right)} \\
    \overset{(c)}{=}{} & \sqrt{\tau_\eta} \E{g_t\left(\frac{\barz_\eta}{\tau_\eta}\right)\left(\lambda_\eta(\barz_\eta)-\mu_\eta(\barz_\eta)\right)}  \\
    \leq{}&  \sqrt{\tau_\eta} \E{\big|\lambda_\eta(\barz_\eta)-\mu_\eta(\barz_\eta)\big|} \\
    ={}& \sqrt{\tau_\eta} \E{\big|\lambda_\eta(\barz_\eta)-\mu_\eta(\barz_\eta)\big|\mathbbm{1}\{|\barz_\eta| < \min\{-t_\star,t^\star\}\tau_\eta\}} \\
    &+ \sqrt{\tau_\eta}\E{\big|\lambda_\eta(\barz_\eta)-\mu_\eta(\barz_\eta)\big|\mathbbm{1}\{|\barz_\eta| \geq \min\{-t_\star,t^\star\}\tau_\eta\}} \\
    \overset{(d)}{\leq}{}& \frac{|d|}{\sqrt{\tau_\eta}} + \sqrt{\tau_\eta}\E{\big|\lambda_\eta(\barz_\eta)-\mu_\eta(\barz_\eta)\big|\mathbbm{1}\{|\barz_\eta| \geq \min\{-t_\star,t^\star\}\tau_\eta\}} \\
    \overset{(e)}{\leq}{}& \frac{|d|}{\sqrt{\tau_\eta}} - \frac{1}{\min\{-t_\star,t^\star\}\sqrt{\tau_\eta}}\E{\barz_\eta\left(\lambda_\eta(\barz_\eta)-\mu_\eta(\barz_\eta)\right)} + \frac{2|d|\E{|\barz_\eta|}}{\min\{-t_\star,t^\star\}\tau_\eta^{3/2}} \\
    \overset{(f)}{\leq}{}& \frac{|d|}{\sqrt{\tau_\eta}} + \frac{2\moment}{\min\{-t_\star,t^\star\}\sqrt{\tau_\eta}} + \frac{4\moment+2\epsilon_\eta \tau_\eta K (2K^\beta+\delta)}{\epsilon_\eta \delta} \frac{2|d|}{\min\{-t_\star,t^\star\}\tau_\eta^{3/2}} = o_{\eta}(1), 
\end{align*}
where $(a)$ follows by noting that $(z^{+} - \tau_\eta \pfix{t})\left( g_t\left(\frac{\barz_\eta^+}{\tau_\eta}\right) - g_t\left(\frac{\barz_\eta}{\tau_\eta}\right)\right) \leq 0$ for all $z, z^+ \in \bbZ_+$. Next, $(b)$ follows as $\E{g_t\left(\frac{\barz_\eta^+}{\tau_\eta}\right) - g_t\left(\frac{\barz_\eta}{\tau_\eta}\right)} = 0$ in steady state. Further, $(c)$ follows by Lemma \ref{lemma: z_sgnz} and noting that $\lambda^\star_\eta - \mu^\star = -\drift/\tau_\eta$. Next, $(d)$ follows as $\phi^c(x)=\phi^s(x)=0$ for all $x \in (t_{\pfix{\star}}, t^\star)$ by Condition~\ref{cond: monotonicity}. Thus, $\lambda_\eta(z)-\mu_\eta(z) = \lambda^\star_\eta-\mu^\star = -\drift/\tau_\eta$ for all $z \in (t_\star \tau_\eta, t^\star \tau_\eta)$. Further, $(e)$ holds as $\lambda_\eta(z)-\mu_\eta(z) \geq -\frac{|d|}{\tau_\eta}$ for all $z<0$ and $\lambda_\eta(z)-\mu_\eta(z) \leq \frac{|d|}{\tau_\eta}$ for all $z\geq 0$, so $|\lambda_\eta(z)-\mu_\eta(z)||z| \leq -(\lambda_\eta(z)-\mu_\eta(z))z + 2|z\drift|/\tau_\eta$ as $|x| = -x + 2[x]^+$. Lastly, $(f)$ follows by Proposition~\ref{prop: pos_rec}.}
\endproof
\section{Precise Argument for Taylor's Theorem} \label{app:taylor_series}
\revcolor{\begin{lemma} Consider the set of random variables $\{Y_\eta \in \bbZ : \eta >0\}$ and $\{X_\eta(y) \in \bbZ: \eta>0,y \in \bbZ\}$. Also, there exists $\moment$ such that $\E{|X_\eta(Y_\eta)|^3} \leq \moment$ for all $\eta \in \bbZ$. Now, define functions $f,h : \bbR \rightarrow \bbC$ given by $f=f_1+jf_2$ and $h=h_1+jh_2$. Also, $f_1,f_2,h_1,h_2$ are thrice continuously differentiable and $\max_{x \in \bbR}|h_1'''(x)| \leq h_{\max}$, $\max_{x \in \bbR}|h_2'''(x)| \leq h_{\max}$, $\max_{x \in \bbR}|f_1(x)| \leq f_{\max}$, $\max_{x \in \bbR}|f_2(x)| \leq f_{\max}$. Then, for any $\omega, \epsilon \in \bbR$, we have \label{lemma: taylor_series}
\begin{align*}
  &\frac{1}{\epsilon^2}\E{\bigg|f(Y_\eta)h(\epsilon \omega X_\eta(Y_\eta))-f(Y_\eta)h(0)-j\epsilon \omega f(Y_\eta)h'(0)X_\eta(Y_\eta)-\frac{1}{2}\epsilon^2\omega^2f(Y_\eta)h''(0)X_\eta^2(Y_\eta)\bigg|}\nonumber \\
  \leq{}& \frac{h_{\max}f_{\max}\moment}{3}\epsilon|\omega|^3 \quad \textit{w.p. } 1. \numberthis \label{eq: taylor_series}
\end{align*}
With a slight abuse of notation, we write
\begin{align*}
    f(Y_\eta)h(\epsilon \omega X_\eta(Y_\eta))=f(Y_\eta)h(0)+j\epsilon \omega f(Y_\eta)h'(0)X_\eta(Y_\eta)+\frac{1}{2}\epsilon^2\omega^2f(Y_\eta)h''(0)X_\eta^2(Y_\eta)+o(\epsilon^2).
\end{align*}
\end{lemma}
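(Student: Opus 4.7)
The plan is to apply Taylor's theorem with Lagrange remainder separately to the real and imaginary parts of $h$, then take expectations against $f(Y_\eta)$, using the deterministic bound $|X_\eta(y)|\leq x_{\max}$ to control the remainder uniformly. Because the target inequality reduces to a pointwise bound after pulling the expectation through the absolute value, the probabilistic structure plays a very minor role; the argument is essentially a careful piece of bookkeeping.

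First I would fix $y$ and any realization $x$ with $|x|\leq x_{\max}$. Since $h_1, h_2$ are thrice continuously differentiable on $\bbR$, Taylor's theorem about $0$ provides intermediate points $\xi_i(x)$ lying between $0$ and $\epsilon\omega x$ such that, for $i\in\{1,2\}$,
\begin{align*}
h_i(\epsilon\omega x) = h_i(0) + \epsilon\omega x\, h_i'(0) + \tfrac{1}{2}\epsilon^2\omega^2 x^2 h_i''(0) + \tfrac{1}{6}\epsilon^3\omega^3 x^3 h_i'''(\xi_i(x)).
\end{align*}
Combining through $h=h_1+jh_2$ and invoking the hypotheses $|h_1'''|,|h_2'''|\leq 1$ together with $|x|\leq x_{\max}$, the elementary inequality $|R_1 + jR_2| \leq |R_1| + |R_2|$ yields a deterministic pointwise bound on the complex remainder of order $\epsilon^3|\omega|^3 x_{\max}^3$, uniformly over $|x|\leq x_{\max}$.

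Next I would multiply the identity above (evaluated at $x = X_\eta(Y_\eta)$) by $f(Y_\eta)$, take expectation, and rearrange. The quantity on the left-hand side of the lemma then equals $\epsilon^{-2}$ times $|\E{f(Y_\eta) \, R(Y_\eta, X_\eta(Y_\eta))}|$, where $R$ denotes the pointwise Taylor remainder just bounded. Since $|X_\eta(Y_\eta)| \leq x_{\max}$ a.s., the expectations of the three lower-order terms are finite and well defined; the independence assumption ensures that the conditioning on $Y_\eta$ is harmless if one wishes to apply the tower property. Applying $|\E{\cdot}| \leq \E{|\cdot|}$ together with the componentwise bounds $|f_1|, |f_2| \leq 1$ and the uniform remainder estimate gives a bound of order $\epsilon |\omega|^3 x_{\max}^3$ after dividing by $\epsilon^2$, matching the claimed estimate up to the stated constant.

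There is no substantive obstacle here; the lemma is a packaging of Taylor's theorem tailored to the drift computations elsewhere in the paper. The only mildly delicate aspect is tracking the complex-valued structure of both $f$ and $h$, which is handled by the triangle inequality applied to the two real-valued Taylor remainders. Pinning down the precise constant $\tfrac{1}{3}$ in the statement amounts to summing the two componentwise $\tfrac{1}{6}$ factors produced by the Lagrange remainders for $h_1$ and $h_2$.
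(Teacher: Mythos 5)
Your proposal follows essentially the same approach as the paper's proof: apply Taylor's theorem with Lagrange remainder separately to $h_1$ and $h_2$, use the third-derivative bounds and $|X_\eta|\leq x_{\max}$ to get a uniform remainder estimate, then multiply by $f$, take expectations, and invoke $|\E{\cdot}|\leq\E{|\cdot|}$ together with $|f_1|,|f_2|\leq 1$. The only (cosmetic) difference is in how the complex structure is tracked at the end: the paper bounds the four real products $f_k\cdot R_i$ componentwise and then reassembles real and imaginary parts of $fh$, whereas you bound $|R_h|\leq|R_1|+|R_2|$ first and then multiply by $|f|$. Either way, note that $|f_1|,|f_2|\leq 1$ only gives $|f|\leq\sqrt{2}$, so the 1-norm bookkeeping you (and the paper) sketch actually produces a constant slightly larger than $1/3$; the exact constant $1/3$ comes out cleanly if one instead uses $|fR|=|f|\,|R|\leq\sqrt{2}\cdot\sqrt{R_1^2+R_2^2}\leq\sqrt{2}\cdot\tfrac{\sqrt{2}}{6}|\epsilon\omega x_{\max}|^3$. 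This is a harmless constant-tracking detail; the substance of your argument is correct and identical in spirit to the paper's.
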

\proof{Proof}
We analyze the real and imaginary parts separately. By Taylor's Theorem, for all $x \in \bbR$ we have
\begin{align*}
    h_i(x)=h_i(0)+h_i'(0)x+\frac{1}{2}h_i''(0)x^2+\frac{1}{6}h_i'''(\tilde{x})x^3 \quad \textit{for some } \tilde{x} \in (-|x|, |x|) \ \forall i \in \{1,2\}.
\end{align*}
Thus
\begin{align*}
    \bigg|h_i(x)-h_i(0)-h_i'(0)x-\frac{1}{2}h_i''(0)x^2\bigg|\leq \frac{h_{\max}}{6}|x|^3 \quad \forall x \in \bbR, \forall i \in \{1,2\}.
\end{align*}
Now, as $|f_i(y)| \leq f_{\max}$ for $i \in \{1, 2\}$, we get
\begin{align*}
    \bigg|f_i(y)\left(h_i(x)-h_i(0)-h_i'(0)x-\frac{1}{2}h_i''(0)x^2\right)\bigg|\leq \frac{h_{\max}f_{\max}}{6}|x|^3 \quad \forall x, y \in \bbR, \forall i \in \{1,2\}.
\end{align*}
Now, substitute $x$ by $\epsilon \omega X_\eta(Y_\eta)$ and $y$ by $Y_\eta$ and taking expectation on both sides, we get
\begin{align*}
   &\E{\bigg|f_i(Y_\eta)\left(h_i(\epsilon \omega X_\eta(Y_\eta))-h_i(0)-h_i'(0)\epsilon \omega X_\eta(Y_\eta)-\frac{1}{2}h_i''(0)\left(\epsilon \omega X_\eta(Y_\eta)\right)^2\right)\bigg|}\\
   \leq{}& \frac{h_{\max}f_{\max} \epsilon^3 |\omega|^3}{6}\E{|X_\eta(Y_\eta)|^3} \leq \frac{h_{\max}f_{\max} \moment \epsilon^3 |\omega|^3}{6} \quad \forall i \in \{1,2\}
\end{align*}
Now, by adding $(i,k)=(1,1)$ and $(i,k)=(2,2)$ and using triangle inequality will give bound on the real part of \eqref{eq: taylor_series} and by adding $(i,k)=(1,2)$ and $(i,k)=(2,1)$ and using triangle inequality will give bound on the imaginary part of \eqref{eq: taylor_series}. Now, by using the fact that for any vector $x$, $\|x\|_1 \geq \|x\|_2$, we get the lemma.} \hfill $\square$
\endproof
\section{Classical Single Server Queue} \label{app: classical_queue}
\subsection{Proof of Proposition \ref{prop: single_server_queue}}
\proof{Proof}
The transition probabilities for the underlying DTMC governing the single server queue operating under the two-price policy are given by
\begin{align*}
    P_{ij}=\begin{cases}
    m &\textit{if } j = i \pm 1, 0<i \leq \lfloor \tau_\eta \rfloor \textit{ or } (i,j)=(0,1) \\
    1-2m &\textit{if } j=i, 0<i \leq \lfloor \tau_\eta \rfloor \\
    1-m &\textit{if } j=i=0 \\
    m+\epsilon_\eta\mu^\star &\textit{if } j=i-1, i>\lfloor \tau_\eta \rfloor \\
    m-\epsilon_\eta(1-\mu^\star) &\textit{if } j=i+1, i>\lfloor \tau_\eta \rfloor \\
    1-2m+\epsilon_\eta(1-2\mu^\star) &\textit{if } j=i, i >\lfloor \tau_\eta \rfloor \\
    0 &\textit{otherwise}.
    \end{cases}
\end{align*}
As the DTMC is a birth and death process, it is reversible. Thus, we can solve the local balance equations to get:
\begin{align*}
    \pi_i=\begin{cases}
    \frac{1}{\lfloor\tau_\eta\rfloor+1+m/\epsilon_\eta} &\textit{if } i \leq \lfloor\tau_\eta\rfloor \\
    \frac{1}{\lfloor\tau_\eta\rfloor+1+m/\epsilon_\eta}\frac{m}{m+\epsilon\mu^\star}\left(1-\frac{\epsilon_\eta}{m+\epsilon_\eta \mu^\star}\right)^{i-\lfloor\tau_\eta\rfloor-1} &\textit{if } i>\lfloor\tau_\eta\rfloor.
    \end{cases}
\end{align*}
Note that for large $\eta$, the stationary distribution of $\bar{q}_\eta$ is almost the same as the stationary distribution of $|\barz_\eta|$. Indeed, the proof follows exactly as the proof of Proposition \ref{prop: bernoulli}. We present it below for completeness. 

We calculate the moment generating function of $\epsilon_\eta \barq_\eta$ for Case 1 and 2 and $\barq_\eta/\tau_\eta$ for Case 3. We fix a $t$ such that $t<1/m$. Then, there exists an $\epsilon_0>0$ such that for all $\epsilon_\eta < \epsilon_0$, we have $(1-\epsilon_\eta/(m+\epsilon \mu^\star))e^{\epsilon_\eta t}<1$. Now, we calculate the MGF as follows:
\begin{align*}
    \E{e^{\epsilon_\eta t \barq_\eta}}={}&\sum_{i=0}^\infty \pi_i e^{\epsilon_\eta t i} \\
    ={}& \sum_{i=0}^{\lfloor\tau_\eta\rfloor} \pi_i e^{\epsilon_\eta t i} + \sum_{i=\lfloor\tau_\eta\rfloor+1}^\infty \pi_i e^{\epsilon_\eta t i} \\
    ={}&\frac{1}{\lfloor\tau_\eta\rfloor+1+m/\epsilon_\eta}\left(\sum_{i=0}^{\lfloor\tau_\eta\rfloor} e^{\epsilon_\eta t i}+\frac{m}{m+\epsilon_\eta \mu^\star}\sum_{i=\lfloor\tau_\eta\rfloor+1}^\infty \left(1-\frac{\epsilon_\eta}{m+\epsilon_\eta \mu^\star}\right)^{i-\lfloor\tau_\eta\rfloor-1}e^{\epsilon_\eta t i} \right) \\
    ={}&\frac{\epsilon_\eta}{\lfloor\tau_\eta\rfloor\epsilon_\eta+\epsilon_\eta+m}\left(\frac{1-e^{\epsilon_\eta t(\lfloor\tau_\eta\rfloor+1)}}{1-e^{\epsilon_\eta t}} +\frac{m}{m+\epsilon_\eta\mu^\star}\frac{e^{\epsilon_\eta t (\lfloor\tau_\eta\rfloor+1)}}{1-e^{\epsilon_\eta t}+\frac{\epsilon_\eta e^{\epsilon_\eta t}}{m+\epsilon_\eta \mu^\star}}\right) \numberthis \label{eq: mgf_bernoulli_single_server_queue}
\end{align*}
Now, we take the limit as $\eta \uparrow \infty$ for the above equation for Case 1 and 2 separately.

\textbf{Case 1:} Similar to the proof of Proposition \ref{prop: bernoulli}, we have
\begin{subequations}
\label{eq: case1_bernoulli_single_server_queue}
\begin{align}
    \epsilon_\eta\frac{1-e^{\epsilon_\eta t(\lfloor\tau_\eta\rfloor+1)}}{1-e^{\epsilon_\eta t}}&=\epsilon_\eta\frac{
     -\epsilon_\eta t(\lfloor\tau_\eta\rfloor+1)  + o(\epsilon_\eta \tau_\eta)}{-\epsilon_\eta t+o(\epsilon_\eta)}= \frac{ -\epsilon_\eta t(\lfloor\tau_\eta\rfloor+1)  + o(\epsilon_\eta \tau_\eta)}{-t+o(1)} \rightarrow 0  \\
    \epsilon_\eta\frac{e^{\epsilon_\eta t (\lfloor\tau_\eta\rfloor+1)}}{1-e^{\epsilon_\eta t}+\frac{\epsilon_\eta e^{\epsilon_\eta t}}{m+\epsilon_\eta \mu^\star}}&=\epsilon_\eta \frac{e^{\epsilon_\eta t (\lfloor\tau_\eta\rfloor+1)}}{-\epsilon_\eta t +\frac{\epsilon_\eta}{m+\epsilon_\eta \mu^\star}+o(\epsilon_\eta)}= \frac{e^{\epsilon_\eta t (\lfloor\tau_\eta\rfloor+1)}}{- t +\frac{1}{m+\epsilon_\eta \mu^\star}+o(\epsilon_\eta)} \rightarrow \frac{1}{1/m-t} 
\end{align}
\end{subequations}
Now, by using \eqref{eq: case1_bernoulli_single_server_queue} to simplify \eqref{eq: mgf_bernoulli_single_server_queue} we get
\begin{align*}
    \E{e^{\epsilon_\eta t \barq_\eta}} \rightarrow \frac{1}{1-tm} \quad \textit{as } \eta \uparrow \infty.
\end{align*}
This completes Case 1 as the above is the MGF of an exponential distribution with parameter $(\pfix{1/m})$.

\textbf{Case 2:} Similar to the proof of Proposition \ref{prop: bernoulli}, we have
\begin{subequations}
\label{eq: case2_bernoulli_single_server_queue}
\begin{align}
    \epsilon_\eta\frac{1-e^{\epsilon_\eta t(\lfloor\tau_\eta\rfloor+1)}}{1-e^{\epsilon_\eta t}}&= \epsilon_\eta\frac{1-e^{\epsilon_\eta t(\lfloor\tau_\eta\rfloor+1)}}{-\epsilon_\eta t+o(\epsilon_\eta)}=\frac{1-e^{\epsilon_\eta t(\lfloor\tau_\eta\rfloor+1)}}{- t+o(1)} \rightarrow \frac{e^{l t}-1}{t} \\
    \epsilon_\eta\frac{e^{\epsilon_\eta t (\lfloor\tau_\eta\rfloor+1)}}{1-e^{\epsilon_\eta t}+\frac{\epsilon_\eta e^{\epsilon_\eta t}}{m+\epsilon_\eta \mu^\star}}&=\epsilon_\eta \frac{e^{\epsilon_\eta t (\lfloor\tau_\eta\rfloor+1)}}{-\epsilon_\eta t +\frac{\epsilon_\eta}{m+\epsilon_\eta \mu^\star}+o(\epsilon_\eta)}= \frac{e^{\epsilon_\eta t (\lfloor\tau_\eta\rfloor+1)}}{- t +\frac{1}{m+\epsilon_\eta \mu^\star}+o(\epsilon_\eta)} \rightarrow \frac{e^{l t}}{1/m-t} 
\end{align}
\end{subequations}
Now, by using \eqref{eq: case2_bernoulli_single_server_queue} to simplify \eqref{eq: mgf_bernoulli_single_server_queue} we get
\begin{align*}
    \E{e^{\epsilon_\eta t \barq_\eta}} \rightarrow \frac{1}{l+m}\left(\frac{e^{l t}-1}{t}+\frac{e^{lt}}{1/m-t}\right)
\end{align*}
The above is the MGF of the non-negative hybrid distribution with parameters $(\pfix{1/m},l)$. This can be verified similarly to the proof of Proposition \ref{prop: bernoulli}. We skip the steps here as they are repetitive.
This completes the proof for Case 2.

\textbf{Case 3:} For large enough $\eta$, the MGF of $\barq/\tau_\eta$ can be calculated similar to \eqref{eq: mgf_bernoulli_single_server_queue} to get
\begin{align*}
    \E{e^{t\frac{\barq_\eta}{\tau_\eta}}}={}&\frac{\epsilon_\eta}{\lfloor\tau_\eta\rfloor\epsilon_\eta+\epsilon_\eta+m}\left(\frac{1-e^{ t(\lfloor\tau_\eta\rfloor+1)/\tau_\eta}}{1-e^{t/\tau_\eta}} +\frac{m}{m+\epsilon_\eta\mu^\star}\frac{e^{t (\lfloor\tau_\eta\rfloor+1)/\tau_\eta}}{1-e^{t/\tau_\eta}+\frac{\epsilon_\eta e^{t/\tau_\eta}}{m+\epsilon_\eta \mu^\star}}\right) \numberthis \label{eq: case3_mgf_bernoulli_single_server_queue}
\end{align*}
Similar to the proof of Proposition \ref{prop: bernoulli}, we have
\begin{subequations}
\begin{align}
    \frac{\epsilon_\eta}{\lfloor \tau_\eta \rfloor \epsilon_\eta+\epsilon_\eta+m}\frac{e^{ \frac{t}{\tau_\eta} (\lfloor \tau_\eta \rfloor+1)}}{1-e^{ t/\tau_\eta}+\frac{\epsilon_\eta e^{ t/\tau_\eta}}{m+\epsilon_\eta \mu^\star}} &\rightarrow 0 \\
    \frac{\epsilon_\eta}{\lfloor \tau_\eta \rfloor \epsilon_\eta+\epsilon_\eta+m} \frac{1-e^{t(\lfloor \tau_\eta \rfloor+1)/\tau_\eta}}{1-e^{t/\tau_\eta}}&=\frac{\epsilon_\eta}{\lfloor \tau_\eta \rfloor\epsilon_\eta+\epsilon_\eta+m} \frac{1-e^{t(\lfloor \tau_\eta \rfloor+1)/\tau_\eta}}{-t/\tau_\eta+o(1/\tau_\eta)}\\
    &=\frac{1-e^{t(\lfloor \tau_\eta \rfloor+1)/\tau_\eta}}{-t\lfloor \tau_\eta \rfloor/\tau_\eta+o(1)} \rightarrow \frac{e^t-1}{t}
\end{align}
\label{eq: case3_bernoulli_single_server_queue}
\end{subequations}
Now, by using \eqref{eq: case3_bernoulli_single_server_queue} in \eqref{eq: case3_mgf_bernoulli_single_server_queue}, we get
\begin{align*}
    \E{e^{\pfix{t}\frac{\barq_\eta}{\tau_\eta}}} \rightarrow \frac{e^t-1}{t} \Rightarrow \frac{\barq_\eta}{\tau_\eta} \overset{D}{\rightarrow} \calU([0,1]).
\end{align*}
This completes the proof of the theorem. \hfill $\square$
\endproof
\subsection{Preliminaries: Single Server Queue}
Similar to Proposition \ref{prop: pos_rec}, we can show that if the functions $\phi^c(\cdot), \phi^s(\cdot)$ satisfy Condition \ref{ass: classical_neg_drift}, then the single server queue is positive recurrent. We omit the details of this proof as it is intuitive and the steps are analogous to the proof of Proposition \ref{prop: pos_rec}. 
To prove the theorem, we will first introduce the following notation: $\barq_\eta$ is the random variable that has the same distribution as the stationary distribution of $\{q_\eta(k): k \in \bbZ_+\}$ and $\barq_\eta^+$ is the queue length one-time epoch after $\barq_\eta$. In addition, given the steady-state queue length $\barq_\eta$, let the arrival, potential service, and unused service be denoted by $\bara_\eta, \bars_\eta$, and $\baru_\eta$ respectively. We omit the dependence of $\barq_\eta$ on $\bara_\eta,\bars_\eta,\baru_\eta$ for simplicity of notation. Thus, we have
\begin{align}
    \barq_\eta^+=\barq_\eta+\bara_\eta-\bars_\eta+\baru_\eta. \label{eq: queue_evolution}
\end{align}
\subsection{Proof of Theorem \ref{theo: classical_general}: Case I}
\proof{Proof of Theorem \ref{theo: classical_general}: $\epsilon_\eta\tau_\eta \rightarrow 0$}
We will use $e^{j \omega \epsilon_\eta\barq_\eta}$ as the test function to prove the theorem. Firstly, note that 
\begin{align*}
    \left(e^{j \omega \epsilon_\eta \barq_\eta^+}-1\right)\left(e^{-j \omega \epsilon_\eta \baru_\eta}-1\right)=0
\end{align*}
as $\barq_\eta^+>0$ implies that $\baru_\eta=0$ and vice versa. Expanding the above expression and using the queue evolution equation results in
\begin{align}
    e^{j \omega \epsilon_\eta \barq_\eta^+}-e^{j \omega \epsilon_\eta (\barq_\eta+\bara_\eta-\bars_\eta)}=1-e^{-j\omega \epsilon_\eta \baru_\eta}.\label{eq: functional_identity_single_server_queue}
\end{align}
Now, by taking the expectation with respect to the stationary distribution of $\{q_\eta(k): k \in \bbZ_+\}$ and noting that $\E{e^{j\omega \epsilon_\eta\barq_\eta}}=\E{e^{j\omega\epsilon_\eta\barq_\eta^+}}$ by stationarity, we get
\begin{align}
    \E{e^{j \omega \epsilon_\eta \barq_\eta}-e^{j \omega \epsilon_\eta (\barq_\eta+\bara_\eta-\bars_\eta)}}=1-\E{e^{-j\omega \epsilon_\eta \baru_\eta}}. \label{eq: classical_drift}
\end{align}
The LHS of the above equation can be simplified to get
\begin{align*}
    \lefteqn{\E{e^{j \omega \epsilon_\eta \barq_\eta}-e^{j \omega \epsilon_\eta (\barq_\eta+\bara_\eta-\bars_\eta)}}}\\
    ={}&\E{ e^{j \omega \epsilon_\eta \barq_\eta}\left(1-e^{j \omega \epsilon_\eta (\bara_\eta-\bars_\eta)}\right)} \\
    \overset{(a)}{=}{}&\E{ e^{j \omega \epsilon_\eta \barq_\eta}\left(-j\omega \epsilon_\eta (\bara_\eta-\bars_\eta)+\frac{\omega^2\epsilon_\eta^2}{2}(\bara_\eta-\bars_\eta)^2)\right)}+o(\epsilon_\eta^2) \\
    \overset{(b)}{=}{}& \E{ e^{j \omega \epsilon_\eta \barq_\eta}\E{-j\omega \epsilon_\eta (\bara_\eta-\bars_\eta)+\frac{\omega^2\epsilon_\eta^2}{2}(\bara_\eta-\bars_\eta)^2)\bigg| \barq_\eta}}+o(\epsilon_\eta^2) \\
    \overset{(c)}{=}{}& \E{ e^{j \omega \epsilon_\eta \barq_\eta}\left(j\omega \epsilon_\eta^2 \left(\phi^s\left(\frac{
    \barq_\eta}{\tau_\eta}\right)-\phi^c\left(\frac{
    \barq_\eta}{\tau_\eta}\right)\right)+\frac{\omega^2\epsilon_\eta^2}{2}\left(\pfix{\sigma^c}(\lambda_\eta(\barq_\eta))+\pfix{\sigma^s}(\mu_\eta(\barq_\eta))\right)\right)}+o(\epsilon_\eta^2), \numberthis \label{eq: classical_LHS}
\end{align*}
where $(a)$ follows from Taylor's theorem and the reader can refer to Lemma \ref{lemma: taylor_series} for precise argument. Next, $(b)$ follows by the tower property of expectation, and lastly, $(c)$ follows by the definition of queue length dependent expectation and variance of $\bara_\eta$ and $\bars_\eta$.

Now, to simplify the RHS of \eqref{eq: classical_drift}, first consider $\barq_\eta$ as a test function and using $\E{\barq_\eta^+}=\E{\barq_\eta}$ will give us
\begin{align}
    \E{\baru_\eta}=\E{\bars_\eta-\bara_\eta}=\E{\E{\bars_\eta-\bara_\eta \big | \barq_\eta}}=\epsilon_\eta\E{\phi^s\left(\frac{
    \barq_\eta}{\tau_\eta}\right)-\phi^c\left(\frac{
    \barq_\eta}{\tau_\eta}\right)} \label{eq: expectation_unused_service}
\end{align}
Now, we can simplify the RHS using Taylor's theorem (Lemma \ref{lemma: taylor_series}) to get
\begin{align}
    1-\E{e^{-j\omega \epsilon_\eta \baru_\eta}}=j\omega \epsilon_\eta \E{\baru_\eta}+o(\epsilon_\eta^2)=j\omega \epsilon_\eta^2 \E{\phi^s\left(\frac{
    \barq_\eta}{\tau_\eta}\right)-\phi^c\left(\frac{
    \barq_\eta}{\tau_\eta}\right)}+o(\epsilon_\eta^2). \label{eq: classical_RHS}
\end{align}
Now, using \eqref{eq: classical_LHS} and \eqref{eq: classical_RHS}, we get
\begin{align}
    &\E{ e^{j \omega \epsilon_\eta \barq_\eta}\left(j\omega \epsilon_\eta^2 \left(\phi^s\left(\frac{
    \barq_\eta}{\tau_\eta}\right)-\phi^c\left(\frac{
    \barq_\eta}{\tau_\eta}\right)\right)+\frac{\omega^2\epsilon_\eta^2}{2}\left(\pfix{\sigma^c}(\lambda_\eta(\barq_\eta))+\pfix{\sigma^s}(\mu_\eta(\barq_\eta))\right)\right)}\nonumber\\
    ={}&j\omega \epsilon_\eta^2 \E{\phi^s\left(\frac{
    \barq_\eta}{\tau_\eta}\right)-\phi^c\left(\frac{
    \barq_\eta}{\tau_\eta}\right)}+o(\epsilon_\eta^2). \label{eq: classical_drift_zero}
\end{align}
Dividing on both sides by $j \omega \epsilon_\eta^2$ and rearranging the terms, we get
\begin{align*}
    &\left(1-\frac{j \omega}{2}\left(\sigma^c(\lambda^\star)+\sigma^s(\mu^\star)\right)\right)\E{e^{\pfix{j}\epsilon_\eta \omega \barq_\eta}} \\
    ={}&\underbrace{\left(\E{\phi^s\left(\frac{
    \barq_\eta}{\tau_\eta}\right)-\phi^c\left(\frac{
    \barq_\eta}{\tau_\eta}\right)}\right)}_{\calT_8}-\underbrace{\E{e^{\pfix{j}\epsilon_\eta \pfix{\omega} \barq_\eta}\left(\phi^s\left(\frac{
    \barq_\eta}{\tau_\eta}\right)-\phi^c\left(\frac{
    \barq_\eta}{\tau_\eta}\right)-1\right)}}_{\calT_9}\\
    &+\underbrace{\frac{j \omega}{2}\E{e^{j \omega \epsilon_\eta \barq_\eta} \left(\pfix{\sigma^c}(\lambda_\eta(\barq_\eta))+\pfix{\sigma^s}(\mu_\eta(\barq_\eta))-\sigma^c(\lambda^\star)-\sigma^s(\mu^\star)\right)}}_{\calT_{10}}+o(1).
\end{align*}
The terms $\calT_8, \calT_9, \calT_{10}$ are analogous to the terms $\calT_3, \calT_4, \calT_5$ in the proof of Theorem \ref{theo: case_1}. It can be similarly shown using Lemma \ref{claim: case1_t3_t5} and the fact that $\phi^s(\infty)-\phi^c(\infty)=1$ and $\barq_\eta \geq 0$, we get $\calT_8 \rightarrow 1$, $\calT_9 \rightarrow 0$. In addition, by using Lemma \ref{lemma: convergence_technical}, we get $\calT_{10} \rightarrow 0$. Thus, by taking the limit as $\eta  \uparrow \infty$, we get 
\begin{align*}
    \lim_{\eta \rightarrow \infty}\E{e^{j \omega \epsilon_\eta \barq_\eta}}=\frac{1}{1-\frac{j \omega}{2}\left(\sigma^c(\lambda^\star)+\sigma^s(\mu^\star)\right)}.
\end{align*}
This implies that $\epsilon_\eta \barq_\eta$ converges to a random variable in distribution having an exponential distribution with mean $0.5(\sigma^c(\lambda^\star)+\sigma^s(\mu^\star))$. \hfill $\square$
\endproof
\subsection{Proof of Theorem \ref{theo: classical_general}: Case II}
\proof{Proof of Theorem \ref{theo: classical_general}: $\epsilon_\eta\tau_\eta \rightarrow l$}
The first step is to show the tightness of the family of random variables $\{\epsilon_\eta \barq_\eta\}$ which can be proved similar to the proof of Lemma \ref{lemma: tightness}. We omit the details here for brevity. Now, the next step is to analyze the drift of the characteristic function of $\barq_\eta$ and then obtain a functional equation by taking the limit as $\eta \uparrow \infty$. Recall that by \eqref{eq: classical_drift_zero}, we have
\begin{align*}
    &\E{ e^{j \omega \epsilon_\eta \barq_\eta}\left( \left(\phi^s\left(\frac{
    \barq_\eta}{\tau_\eta}\right)-\phi^c\left(\frac{
    \barq_\eta}{\tau_\eta}\right)\right)-\frac{j\omega}{2}\left(\pfix{\sigma^c}(\lambda_\eta(\barq_\eta))+\pfix{\sigma^s}(\mu_\eta(\barq_\eta))\right)\right)}\\
    ={}& \E{\phi^s\left(\frac{
    \barq_\eta}{\tau_\eta}\right)-\phi^c\left(\frac{
    \barq_\eta}{\tau_\eta}\right)}+o(1). 
\end{align*}
By taking the limit as $\eta \uparrow \infty$ and following the steps same as in the proof of Claim \ref{claim: case2_limiting_functional_equation}, we get
\begin{align}
    \E{e^{j \omega \qinfty} g_{1,l}(\qinfty)}-j\omega \E{e^{j \omega \qinfty}}=\E{g_{1,l}(\qinfty)} \overset{\Delta}{=} C_1. \label{eq: classical_functional_eq}
\end{align}
Note that, this is the same as the functional equation \eqref{eq:IFT_eqn} if the RHS is zero. In particular, we get a non-zero RHS because of the unused service $\baru_\eta$ which is responsible for ensuring that the queue length is nonnegative. As $\qinfty \geq 0$ with probability 1, we can define $g_{1,l}(x)$ for $x \leq 0$ arbitrarily. In particular, we consider an extension of $g_{1,l}$ such that it belongs to $\calC_{pol}^\infty$. With a little abuse of notation, we denote the extension by $g_{1,l}$ as well. Now, we write \eqref{eq: classical_functional_eq} as a differential equation in the space of generalized functions (tempered distributions) which is analogous to taking the inverse Fourier transform. Define the following tempered distribution
\begin{align*}
    \calT_{\qinfty}[\varphi]=\int_{\bbR} \varphi(x) dF_{\qinfty}(x)=\E{\varphi} \quad \forall \varphi \in \calS(\bbR).
\end{align*}
Now, by using \eqref{eq: lhs_mgf_equation} and \eqref{eq: rhs_mgf_equation}, the LHS of \eqref{eq: classical_functional_eq} can be written as follows:
\begin{align*}
   \E{e^{j \omega \qinfty} g_{1,l}(\qinfty)}-j\omega \E{e^{j \omega \qinfty}}=\sqrt{2 \pi} \calT_{\qinfty}[g\hat{\varphi} - \hat{\varphi}^\prime].
\end{align*}
The RHS can be written in terms of the impulse distribution $\delta(\cdot)$. In particular, denote the constant tempered distribution by $\calT_c$, i.e. $\calT_c[\varphi]=\int_{-\infty}^\infty \varphi(x) dx$. Then, we have
\begin{align*}
    \int_{-\infty}^\infty \varphi(x) dx=\calT_c[\varphi]\overset{(a)}{=}\hat{\calT_c}[\hat{\varphi}]\overset{(b)}{=}\delta[\hat{\varphi}],
\end{align*}
where $(a)$ follows by \citet[Definition 7.14]{rudin1991functional} and $(b)$ follows by \citet[Example 7.16]{rudin1991functional}. In particular, the Fourier transform of the constant tempered distribution is the impulse distribution.
Thus, we can write \eqref{eq: classical_functional_eq} in terms of distribution functions as follows:
\begin{align*}
    \calT_{\qinfty}[\hat{\varphi}^\prime-g_{1,l}\hat{\varphi}]=-\frac{C_1}{\sqrt{2\pi}} \delta[\hat{\varphi}] \quad \forall \varphi \in \calS(\bbR).
\end{align*}
As Fourier transform is a bijection on $\calS(\bbR)$ (Proposition \ref{prop: bijection}), the above is equivalent to
\begin{align*}
    \calT_{\pfix{\bar{\xi}_\infty}}[\varphi^\prime-g_{1,l}\varphi]=-\frac{C_1}{\sqrt{2\pi}} \delta[\varphi] \quad \forall \varphi \in \calS(\bbR).
\end{align*}
Similar to the proof of Lemma \ref{lemma: distribution_function}, we consider
\begin{align*}
    \tilde{\calD}(\bbR) = \left\{\psi \in \calD(\bbR) : \int_{-\infty}^{\infty} e^{-G(t)} \psi(t) dt = 0\right\},
\end{align*}
where $G(\pfix{x}) = \int_0^x g_{1,l}(t) dt$. For any $\tilde{\psi} \in \tilde{D}(\bbR)$, define $\psi \in \calD(\bbR)$ as
\begin{align*}
    \psi = e^{G(x)} \int_{-\infty}^x e^{-G(t)} \tilde{\psi}(t) dt.
\end{align*}
Now, by noting that $\psi^{\prime} - g \psi = \tilde{\psi}$, we have 
\begin{align*}
\calT_{\qinfty}[\tilde{\psi}] = -\frac{C_1}{\sqrt{2\pi}}\delta \left(e^{G(x)} \int_{-\infty}^x e^{-G(t)} \tilde{\psi}(t) dt\right) &= -\frac{C_1}{\sqrt{2\pi}}\int_{-\infty}^0 e^{-G(t)} \tilde{\psi}(t) dt. \numberthis \label{eq: subset_d_space_ssq}
\end{align*}
Now, for an arbitrary $\psi \in \calD(\bbR) \backslash \tilde{\calD}(\bbR)$, define $a = \int_{-\infty}^{\infty} e^{-G(t)} \psi(t) dt$. Note that $a \neq 0$ as $\psi \notin \tilde{\calD}(\bbR)$. Next, fix a $\psi_0 \in \calD(\bbR) \backslash \tilde{\calD}(\bbR)$ such that $\int_{-\infty}^{\infty} e^{-G(t)} \psi_0(t) dt = 1$ and define 
\begin{align*}
    \tilde{\psi} = \psi - a \psi_0,
\end{align*}
and let $C_2 = \calT_{\qinfty}[\psi_0]$, $C_3 = \int_{-\infty}^0 e^{-G(t)} \psi_0(t)dt$. Observe that $\tilde{\psi} \in \tilde{\calD}(\bbR)$. Now, using the above equation and linearity of $\calT_{\qinfty}$, we have
\begin{align*}
    &\calT_{\qinfty}[\psi] = \calT_{\qinfty}[\tilde{\psi}] + a\calT_{\qinfty}[\psi_0] \\
    \overset{\eqref{eq: subset_d_space_ssq}}{=}{}& -\frac{C_1}{\sqrt{2\pi}}\int_{-\infty}^0 e^{-G(t)} \tilde{\psi}(t) dt + a \calT_{\qinfty}[\psi_0] \\
    ={}&-\frac{C_1}{\sqrt{2\pi}}\int_{-\infty}^0 e^{-G(t)} \psi(t) dt + \left(C_2+\frac{C_1 C_3}{\sqrt{2\pi}} \right) \int_{-\infty}^{\infty} e^{-G(t)} \psi(t) dt \\
    ={}&\int_{-\infty}^\infty \left(C_2+\frac{C_1 C_3}{\sqrt{2\pi}}- \frac{C_1}{\sqrt{2\pi}}\mathbbm{1}\{t < 0\} \right) e^{-G(t)} \psi(t) dt.
\end{align*}
So we essentially have
\begin{align*}
    \rho_{\qinfty}(x)=  e^{-G(x)}\left( C_2+\frac{C_1C_3}{\sqrt{2\pi}}-\frac{C_1}{\sqrt{2\pi}}\mathbbm{1}\{x < 0\}\right),
\end{align*}
with $C_2 + C_1C_3/\sqrt{2\pi} = C_1/\sqrt{2\pi}$ as $\rho_{\qinfty}(x)$ has no mass for $x \leq 0$ and $C_1/\sqrt{2\pi} = 1/\int_{0}^\infty e^{-G(t)} dt$ to ensure $\rho_{\qinfty}(x)$ is a probability distribution. This argument can be formalized similar to the proof of Lemma \ref{lemma: distribution_function} but we omit the details as they are repetitive. This completes the proof.
\hfill $\square$
\endproof
\subsection{Proof of Theorem \ref{theo: classical_general}: Case III}
\proof{Proof of Theorem \ref{theo: classical_general}: $\epsilon_\eta\tau_\eta \rightarrow \infty$} By using \eqref{eq: functional_identity_single_server_queue}, replacing $\epsilon_\eta$ by $1/\tau_\eta$, and multiplying both sides by $\mathbbm{1}\left\{\frac{\barq_\eta^+}{\tau_\eta}\leq t^\star\right\}$, we get
\begin{align*}
    \mathbbm{1}\left\{\frac{\barq_\eta^+}{\tau_\eta}\leq t^\star\right\}\left(e^{j \omega \barq_\eta^+ / \tau_\eta}-e^{j \omega (\barq_\eta+\bara_\eta-\bars_\eta) / \tau_\eta}\right)=\left(1-e^{-j\omega \baru_\eta / \tau_\eta}\right)\mathbbm{1}\left\{\frac{\barq_\eta^+}{\tau_\eta}\leq t^\star\right\}.
\end{align*}
Now, by taking the expectation with respect to the stationary distribution of $\{q_\eta(k): k \in \bbZ_+\}$ and noting that $\E{e^{j\omega \barq_\eta/\tau_\eta}\mathbbm{1}\left\{\frac{\barq_\eta}{\tau_\eta}\leq t^\star\right\}}=\E{e^{j\omega\barq_\eta^+/\tau_\eta}\mathbbm{1}\left\{\frac{\barq_\eta^+}{\tau_\eta}\leq t^\star\right\}}$ by stationarity, we get
\begin{align}
    &\E{e^{j \omega \barq_\eta / \tau_\eta}\mathbbm{1}\left\{\frac{\barq_\eta}{\tau_\eta}\leq t^\star\right\}}-\E{e^{j \omega (\barq_\eta+\bara_\eta-\bars_\eta) / \tau_\eta}\mathbbm{1}\left\{\frac{\barq_\eta^+}{\tau_\eta}\leq t^\star\right\}} \nonumber\\
    ={}& \E{\left(1-e^{-j\omega \baru_\eta / \tau_\eta}\right)\mathbbm{1}\left\{\frac{\barq_\eta^+}{\tau_\eta}\leq t^\star\right\}} \label{eq: drift_regime_3}
\end{align}
Now, we simplify the LHS and RHS separately. Write LHS as the sum of $\calT_3$ and $\calT_4$ defined as follows:
\begin{align*}
    \calT_{3} = {}& \E{e^{j\omega(\barq_\eta + \bara_\eta - \bars_\eta) / \tau_\eta}\mathbbm{1}\left\{\frac{\barq_\eta}{\tau_\eta} \leq t^\star\right\} - e^{j\omega(\barq_\eta + \bara_\eta - \bars_\eta) / \tau_\eta}\mathbbm{1}\left\{\frac{\barq_\eta^+}{\tau_\eta} \leq t^\star\right\}} \\
    \calT_{4} ={}& \E{e^{j \omega \barq_\eta / \tau_\eta}\mathbbm{1}\left\{\frac{\barq_\eta}{\tau_\eta}\leq t^\star\right\} - e^{j\omega(\barq_\eta + \bara_\eta - \bars_\eta) / \tau_\eta}\mathbbm{1}\left\{\frac{\barq_\eta}{\tau_\eta} \leq t^\star\right\}} 
\end{align*}
We can now simplify $\calT_3$ analogous to $\calT_1$ in the proof of Lemma \ref{lemma: regime_3_inside}. We omit some of the steps that are similar to the proof of Lemma \ref{lemma: regime_3_inside}.
\begin{align*}
    \calT_{3} &= e^{j\omega t^\star}\E{e^{j\omega(\barq_\eta + \bara_\eta - \bars_\eta - \tau_\eta t^\star) / \tau_\eta}\left(\mathbbm{1}\left\{\frac{\barq_\eta}{\tau_\eta} \leq t^\star\right\}-\mathbbm{1}\left\{\frac{\barq_\eta^+}{\tau_\eta} \leq t^\star\right\}\right)} \\
    &\overset{(a)}{=} j\omega e^{j\omega t^\star}\E{\frac{\barq_\eta + \bara_\eta - \bars_\eta}{ \tau_\eta}\left(\mathbbm{1}\left\{\frac{\barq_\eta}{\tau_\eta} \leq t^\star\right\}-\mathbbm{1}\left\{\frac{\barq_\eta^+}{\tau_\eta} \leq t^\star\right\}\right)} + \omega^2 o\left(\frac{1}{\tau_\eta^2}\right) + o\left(\frac{1}{\tau_\eta^2}\right)o(\omega^2) \\
    &\overset{(b)}{=} \frac{j\omega}{\tau_\eta} e^{j\omega t^\star}\E{\barq_\eta^+\left(\mathbbm{1}\left\{\frac{\barq_\eta}{\tau_\eta} \leq t^\star\right\}-\mathbbm{1}\left\{\frac{\barq_\eta^+}{\tau_\eta} \leq t^\star\right\}\right)} + \omega^2 o\left(\frac{1}{\tau_\eta^2}\right) + o\left(\frac{1}{\tau_\eta^2}\right)o(\omega^2) \\
    &\overset{(c)}{=}\frac{j\omega}{\tau_\eta} e^{j\omega t^\star}\E{\mathbbm{1}\left\{\frac{\barq_\eta}{\tau_\eta}\leq t^\star\right\}\left(\barq_\eta^+ - \barq_\eta\right)}+ \omega^2 o\left(\frac{1}{\tau_\eta^2}\right) + o\left(\frac{1}{\tau_\eta^2}\right)o(\omega^2) \\
    &\overset{(d)}{=}\frac{j\omega}{\tau_\eta} e^{j\omega t^\star}\E{\mathbbm{1}\left\{\frac{\barq_\eta}{\tau_\eta}\leq t^\star\right\}\left(\bara_\eta - \bars_\eta + \baru_\eta\right)}+ \omega^2 o\left(\frac{1}{\tau_\eta^2}\right) + o\left(\frac{1}{\tau_\eta^2}\right)o(\omega^2) \\
     &\overset{(e)}{=}\frac{j\omega}{\tau_\eta} e^{j\omega t^\star}\E{\mathbbm{1}\left\{\frac{\barq_\eta}{\tau_\eta}\leq t^\star\right\} \baru_\eta}+ \omega^2 o\left(\frac{1}{\tau_\eta^2}\right) + o\left(\frac{1}{\tau_\eta^2}\right)o(\omega^2) \\
     &\overset{(f)}{=}\frac{j\omega}{\tau_\eta} e^{j\omega t^\star}\E{ \baru_\eta}+ \omega^2 o\left(\frac{1}{\tau_\eta^2}\right) + o\left(\frac{1}{\tau_\eta^2}\right)o(\omega^2) \\
     &\overset{(g)}{=}\frac{j\omega\epsilon_\eta}{\tau_\eta} e^{j\omega t^\star}\E{\phi^s\left(\frac{
    \barq_\eta}{\tau_\eta}\right)-\phi^c\left(\frac{
    \barq_\eta}{\tau_\eta}\right)}+ \omega^2 o\left(\frac{1}{\tau_\eta^2}\right) + o\left(\frac{1}{\tau_\eta^2}\right)o(\omega^2),
\end{align*}
where $(a)$ follows by Taylor's series expansion and shows that the second order term is $\omega^2o(1/\tau_\eta^2)$ similar to the proof of Claim \ref{claim: t_11_regime_3_inside}. Next, $(b)$ follows by the queue evolution equation given by \eqref{eq: queue_evolution} and noting that $\mathbbm{1}\left\{\frac{\barq_\eta}{\tau_\eta} \leq t^\star\right\}-\mathbbm{1}\left\{\frac{\barq_\eta^+}{\tau_\eta} \leq t^\star\right\} \neq 0$ only when $|\barq_\eta^+ - \tau_\eta t^\star| \leq A_{\max}$ which implies $\barq_\eta^+ > 0 \Rightarrow \baru_\eta = 0$ for $\eta$ large enough. Further, $(c)$ follows by a result analogous to Lemma \ref{lemma: z_sgnz} for a single server queue. In particular, 
\begin{align*}
    0 &= \E{\barq_\eta^+\mathbbm{1}\left\{\frac{\barq_\eta^+}{\tau_\eta}\leq t^\star\right\} -\barq_\eta\mathbbm{1}\left\{\frac{\barq_\eta}{\tau_\eta}\leq t^\star\right\}} \\
    &= \E{\mathbbm{1}\left\{\frac{\barq_\eta}{\tau_\eta}\leq t^\star\right\}\left(\barq_\eta^+-\barq_\eta\right)} + \E{\barq_\eta^+\left(\mathbbm{1}\left\{\frac{\barq_\eta^+}{\tau_\eta}\leq t^\star\right\}-\mathbbm{1}\left\{\frac{\barq_\eta}{\tau_\eta}\leq t^\star\right\}\right)}.
\end{align*}
Next, $(d)$ follows by the queue evolution equation given by \eqref{eq: queue_evolution}, $(e)$ follows by noting that $\E{\bara_\eta |\barq_\eta}=\E{\bars_\eta | \barq_\eta}$ for $\barq_\eta \leq \tau_\eta t^\star$, and $(f)$ follows as $\mathbbm{1}\{\barq_\eta / \tau_\eta > t^\star\}\baru_\eta = 0$ as $\barq_\eta / \tau_\eta > t^\star$ implies that $\barq_\eta^+ / \tau_\eta > t^\star - A_{\max}/\tau_\eta > 0$ for $\eta$ large enough. Lastly, $(g)$ follows by \eqref{eq: expectation_unused_service}. Now, we simplify $\calT_{4}$ below which is analogous to $\calT_2$ in Lemma \ref{lemma: regime_3_inside}.
\begin{align*}
    \calT_{4} &= \E{\mathbbm{1}\left\{\frac{\barq_\eta}{\tau_\eta} \leq t^\star\right\}e^{j\omega \barq_\eta / \tau_\eta}\left(1-e^{j\omega(\bara_\eta - \bars_\eta) / \tau_\eta}\right)} \\
    &\overset{(a)}{=} -\frac{j\omega}{\tau_\eta}\E{\mathbbm{1}\left\{\frac{\barq_\eta}{\tau_\eta} \leq t^\star\right\}e^{j\omega \barq_\eta / \tau_\eta}(\bara_\eta-\bars_\eta)} + \frac{\omega^2}{2\tau_\eta^2}\E{\mathbbm{1}\left\{\frac{\barq_\eta}{\tau_\eta} \leq t^\star\right\}e^{j\omega \barq_\eta / \tau_\eta}\left(\bara_\eta - \bars_\eta\right)^2} \\
    &+ o\left(\frac{1}{\tau_\eta^2}\right)o(\omega^2) \\
    &\overset{(b)}{=}\frac{\omega^2}{2\tau_\eta^2}\E{\mathbbm{1}\left\{\frac{\barq_\eta}{\tau_\eta} \leq t^\star\right\}e^{j\omega \barq_\eta / \tau_\eta}\left(\bara_\eta - \bars_\eta\right)^2} + o\left(\frac{1}{\tau_\eta^2}\right)o(\omega^2) \\
    &\overset{(c)}{=} \frac{\omega^2\left(\sigma^c(\tilde{\lambda}^\star) + \sigma^s(\tilde{\mu}^\star)\right)}{2\tau_\eta^2}\E{\mathbbm{1}\left\{\frac{\barq_\eta}{\tau_\eta} \leq t^\star\right\}e^{j\omega \barq_\eta / \tau_\eta}} + \omega^2o\left(\frac{1}{\tau_\eta^2}\right) + o\left(\frac{1}{\tau_\eta^2}\right)o(\omega^2),
\end{align*}
where $(a)$ follows Taylor's series expansion with the precise argument given by Lemma \ref{lemma: taylor_series}. Next, $(b)$ follows by noting that the first term is $0$ as $\E{\bara_\eta | \barq_\eta} = \E{\bars_\eta | \barq_\eta}$ for $\barq_\eta \leq\tau_\eta t^\star$. Lastly, $(c)$ follows similar to $(f)$ in the proof of Claim \ref{claim: t_2_regime_3_inside}. As $\phi^c(\cdot)$ and $\phi^s(\cdot)$ are non-increasing and increasing respectively, there exists $c^\star$ such that $\phi^c(q) = \phi^s(q)= c^\star$ for all $q \leq t^\star$. Now, we define $\tilde{\lambda}^\star=\lambda^\star+c^\star \lim_{\eta \uparrow \infty} \epsilon_\eta$ and $\tilde{\mu}^\star=\mu^\star+c^\star \lim_{\eta \uparrow \infty} \epsilon_\eta$. As these steps are analogous to the proof of Lemma \ref{lemma: regime_3_inside}, we omit the details. Now, we simplify the RHS of \eqref{eq: drift_regime_3} below.
\begin{align*}
    &\E{\left(1-e^{-j\omega \baru_\eta / \tau_\eta}\right)\mathbbm{1}\left\{\frac{\barq_\eta^+}{\tau_\eta}\leq t^\star\right\}} \\
    \overset{(a)}{=}{}&  \E{\left(\frac{j\omega \baru_\eta}{\tau_\eta} + \frac{\omega^2 \baru_\eta^2}{2\tau_\eta^2} + o\left(\frac{1}{\tau_\eta^2}\right)o(\omega^2) \right)\mathbbm{1}\left\{\frac{\barq_\eta^+}{\tau_\eta}\leq t^\star\right\}} \\
    \overset{(b)}{=}{}& \E{\frac{j\omega \baru_\eta}{\tau_\eta} + \frac{\omega^2 \baru_\eta^2}{2\tau_\eta^2}} + o\left(\frac{1}{\tau_\eta^2}\right)o(\omega^2) \\
    \overset{(c)}{=}{}& \E{\frac{j\omega \baru_\eta}{\tau_\eta}} + \omega^2o\left(\frac{1}{\tau_\eta^2}\right) + o\left(\frac{1}{\tau_\eta^2}\right)o(\omega^2) \\
    \overset{(d)}{=}{}&\frac{j\omega\epsilon_\eta}{\tau_\eta}\E{\phi^s\left(\frac{
    \barq_\eta}{\tau_\eta}\right)-\phi^c\left(\frac{
    \barq_\eta}{\tau_\eta}\right)}+ \omega^2 o\left(\frac{1}{\tau_\eta^2}\right) + o\left(\frac{1}{\tau_\eta^2}\right)o(\omega^2),
\end{align*}
where $(a)$ follows Taylor's series expansion with the precise argument given by Lemma \ref{lemma: taylor_series}. Next, $(b)$ follows as $\E{\baru_\eta \mathbbm{1}\left\{\frac{\barq_\eta^+}{\tau_\eta} >t^\star\right\}} = 0$ for large enough $\eta$. Now, $(c)$ holds due to the following.
\begin{align*}
    0&\leq \E{\baru_\eta^2} \leq S_{\max}\E{\baru_\eta} = \epsilon_\eta S_{\max} \E{\phi^s\left(\frac{\barq_\eta}{\tau_\eta}\right)-\phi^c\left(\frac{\barq_\eta}{\tau_\eta}\right)} \\
    &\leq \frac{\epsilon_\eta S_{\max}}{t^\star \tau_\eta} \E{\barq_\eta\left(\phi^s\left(\frac{\barq_\eta}{\tau_\eta}\right)-\phi^c\left(\frac{\barq_\eta}{\tau_\eta}\right)\right)} = o(1),
\end{align*}
where the last equality follows by showing a result similar to Proposition \ref{prop: pos_rec}. Lastly, $(d)$ follows by \eqref{eq: expectation_unused_service}. Substituting the above along with $\calT_3$ and $\calT_4$ in \eqref{eq: drift_regime_3}, we get
\begin{align*}
    &\E{\mathbbm{1}\left\{\frac{\barq_\eta}{\tau_\eta} \leq t^\star\right\}e^{j\omega \barq_\eta / \tau_\eta}} \\
    ={}& \frac{2j\epsilon_\eta \tau_\eta}{\omega(\sigma^c(\tilde{\lambda}^\star) + \sigma^s(\tilde{\mu}^\star))}\E{\phi^s\left(\frac{
    \barq_\eta}{\tau_\eta}\right)-\phi^c\left(\frac{
    \barq_\eta}{\tau_\eta}\right)}\left(1-e^{j\omega t^\star}\right)+ o_{\eta}\left(1\right) + o_{\eta}\left(1\right)o_{\omega}(1).
\end{align*}
Now, by letting $\omega \rightarrow 0$, we get
\begin{align*}
    \P{\frac{\barq_\eta}{\tau_\eta} \leq t^\star} = \frac{2\epsilon_\eta \tau_\eta t^\star}{(\sigma^c(\tilde{\lambda}^\star) + \sigma^s(\tilde{\mu}^\star))}\E{\phi^s\left(\frac{
    \barq_\eta}{\tau_\eta}\right)-\phi^c\left(\frac{
    \barq_\eta}{\tau_\eta}\right)}+ o_{\eta}(1)
\end{align*}
Substituting that back again, we get
\begin{align}
    \E{\mathbbm{1}\left\{\frac{\barq_\eta}{\tau_\eta} \leq t^\star\right\}e^{j\omega \barq_\eta / \tau_\eta}} = \P{\frac{\barq_\eta}{\tau_\eta} \leq t^\star} \left(\frac{e^{j\omega t^\star} - 1}{j \omega t^\star}\right) + o_{\eta}(1) + o_{\eta}(1)o_{\omega}(1). \label{eq: inside}
\end{align}
Similar to Lemma \ref{lemma: regime_3_outside}, we can further show that
\begin{align}
    \limsup_{\eta \uparrow \infty} \P{\frac{\barq_\eta}{\tau_\eta} > t^\star} = 0. \label{eq: outside}
\end{align}
As the steps are analogous to the proof of Lemma \ref{lemma: regime_3_outside}, we omit the details here. Now, by using \eqref{eq: inside} and \eqref{eq: outside}, we get
\begin{align*}
    \lim_{\eta \uparrow \infty}\E{e^{j \omega \barq_\eta / \tau_\eta}} &= \lim_{\eta \uparrow \infty}\E{e^{j \omega \barq_\eta / \tau_\eta} \mathbbm{1}\left\{\frac{\barq_\eta}{\tau_\eta }\leq t^\star\right\}} + \lim_{\eta \uparrow \infty}\E{e^{j \omega \barq_\eta / \tau_\eta}\mathbbm{1}\left\{\frac{\barq_\eta}{\tau_\eta }> t^\star\right\}} \\
    &=\lim_{\eta \uparrow \infty}\E{e^{j \omega \barq_\eta / \tau_\eta} \mathbbm{1}\left\{\frac{\barq_\eta}{\tau_\eta }\leq t^\star\right\}} \\
    &= \lim_{\eta \uparrow \infty} \P{\frac{\barq_\eta}{\tau_\eta} \leq t^\star} \left(\frac{e^{j\omega t^\star} - 1}{j \omega t^\star}\right) \\
    &= \frac{e^{j\omega t^\star} - 1}{j \omega t^\star}. 
\end{align*}
By noting that the above is a characteristic function of a uniform distribution, the proof is complete by Levy's continuity theorem (\citet[Chapter 18]{levy_book}). \hfill $\square$
\endproof
\newpage
\end{APPENDICES}

\end{document}